\documentclass[article,reqno]{amsart}
\usepackage{enumerate}
\usepackage{amsfonts,amssymb,amsmath,amsthm,mathrsfs}
\usepackage{dsfont}
\usepackage{epsfig}
\usepackage{graphicx}
\usepackage{subcaption}
\usepackage[normalem]{ulem}
\usepackage{color}
\usepackage{comment}
\usepackage{soul}

\usepackage{tikz-cd}

\usepackage{hyperref}
\hypersetup{
    linktoc=all,  }   

\usepackage{tikz}
\usetikzlibrary{arrows.meta, decorations.pathreplacing}

\input xy 
\xyoption{all}
\numberwithin{equation}{section}

\definecolor{OrangeRed}{cmyk}{0,0.6,1,0}            
\definecolor{DarkBlue}{cmyk}{1,1,0,0.20}
\definecolor{DarkGreen}{cmyk}{1,0,0.6,0.2}
\definecolor{myblue}{rgb}{0.66,0.78,1.00}
\definecolor{Violet}{cmyk}{0.79,0.88,0,0}
\definecolor{Lavender}{cmyk}{0,0.48,0,0}
\newcommand{\violet }{\color{Violet}}

\newtheorem{thm}{Theorem}[section]
\newtheorem{theorem}[thm]{Theorem}
\newtheorem{main theorem}[thm]{Main Theorem}
\newtheorem{corollary}[thm]{Corollary}

\newtheorem{lemma}[thm]{Lemma}
\newtheorem{lem}[thm]{Lemma}
\newtheorem{prop}[thm]{Proposition}

\theoremstyle{definition}

\newtheorem{defn}[thm]{Definition}
\newtheorem{definition}[thm]{Definition}
\newtheorem{remark}[thm]{Remark}
\newtheorem{rem}[thm]{Remark}

\def\C{\mathbb C}
\def\P{\mathbb P}

\def\bcases{\begin{cases}}

\def\ecases{\end{cases}}

\newcommand{\D}{\mathbb D}

\newcommand{\im}{\text{\rm Im}\,}
\renewcommand{\Im}{\operatorname{Im}\,}

\newcommand{\N}{\mathbb N}

\newcommand{\R}{\mathbb R}
\newcommand{\re}{\text{\rm Re}\,}

\newcommand{\supp}{\operatorname{supp}}
\newcommand{\Id}{\operatorname{Id}}

\newcommand{\Z}{\mathbb Z}

\newcommand{\DD}{\mathbb D}

\newcommand{\bea}{\begin{eqnarray*}}
\newcommand{\eea}{\end{eqnarray*}}

\newcommand{\be}{\begin{equation}}
\newcommand{\ee}{\end{equation}}

\newcommand{\ra}{\rightarrow}

\renewcommand{\Re}{\mathrm{Re}\,}
\newcommand{\diam}{\mathrm{diam}}

\newcommand{\dist}{\operatorname{dist}}
\renewcommand{\epsilon}{\varepsilon}
\renewcommand{\phi}{\varphi}

\newcommand{\ov}{\overline}

\newcommand{\bC}{{\bf C}}
\newcommand{\QQ}{\mathcal{Q}}

\newcommand{\TT}{\mathcal{T}}
\newcommand{\XX}{\mathcal{X}}

\renewcommand{\H}{\mathbb{H}}

\renewcommand{\emptyset}{\varnothing}

\newcommand{\bG}{{\bf G}}

\newcommand{\PmuQN}{\P^{\mu}_{|_{Q_N}}}
\newcommand{\PmuQB}{\P^{\mu}_{|_{Q_B}}}
\newcommand{\PmuQBm}{\P^{\mu}_{|_{Q_{B-1}}}}

\makeatletter
\@namedef{subjclassname@2020}{\textup{2020} Mathematics Subject Classification}
\makeatother

\begin{document}

\title{Equidistribution Measures of infinite entropy for Transcendental Functions}

\author[L. Arosio]{Leandro Arosio$^{\dag}$}
\author[A.M. Benini]{Anna Miriam Benini}
\author[J.E.  Forn{\ae}ss ]{John Erik Forn{\ae}ss}
\author[H. Peters]{Han Peters}
\today

\thanks{\dag Partially supported by the MIUR Excellence Department Project 2023-2027 MatMod@Tov awarded to the	Department of Mathematics, University of Rome Tor Vergata, by PRIN Real and Complex Manifolds: Topology, Geometry and holomorphic dynamics n.2017JZ2SW5 and by GNSAGA of INdAM}
\subjclass[2020]{Primary: 37F80, 32H50, 37F10. Secondary:  37A35}

\begin{abstract}
In the 1980s Lyubich and Freire--Lopes--Ma\~n\'e proved that for any rational function of degree $d \ge 2$, both  preimages and periodic points equidistribute to the unique measure of maximal entropy $\log(d)$. Their results provide a fundamental understanding of the dynamics of iterated rational functions, and have since been generalized to many different contexts, including  classes of higher-dimensional polynomial and rational maps.

In the current paper we depart from the algebraic category and aim to prove analogous statements for transcendental functions in the complex plane, which have infinite topological entropy.  We introduce two different methods for constructing invariant measures in the transcendental setting, namely via embedded symbolic dynamical systems and via transfer operators  associated to suitably chosen weights. In the latter case we isolate three properties of the weights ---normality, tightness, and irreducibility--- which together imply convergence to an invariant measure. We provide examples  for each method, given by three classes of transcendental entire functions: disjoint-type maps, strongly polynomial-like maps, and a class of maps inspired by Baker's construction of  multiply connected wandering domains and by Bishop's construction of  Julia sets of Hausdorff dimension $1$, which we call Baker--Bishop maps.

 For each of these classes we prove that with respect to carefully chosen weights, preimages equidistribute to an invariant  mixing probability measure of infinite entropy. For Baker--Bishop maps and disjoint-type maps we also prove equidistribution of  periodic points. In contrast to the rational setting, the measures we construct are not unique: by varying the weights one obtains infinitely many distinct measures.
\end{abstract}
\maketitle
\tableofcontents

\section{Introduction}

A fundamental result in complex dynamical systems is the equidistribution of preimages and periodic points towards the measure of maximal entropy:

\begin{theorem}
    Let $f: \widehat{\mathbb C} \rightarrow \widehat{\mathbb C}$ be a rational map of degree $d \ge 2$. Then there exists a unique invariant ergodic probability measure $\mu$ of maximal entropy $\log d$. 
    
    Moreover,
    \begin{enumerate}
    \item  
   there exists an \emph{exceptional set} $\mathcal{E}_f \subset \widehat{\mathbb C}$ of cardinality at most $2$ such that, for any $w \in \widehat{\mathbb C} \setminus \mathcal{E}_f$, the sequence of probability measures
    $$
    \frac{1}{d^n}\sum_{z \in f^{-n}(w)} \delta_z
    $$
   converges weakly to $\mu$;
   \item  the sequence of finite measures  $$
    \frac{1}{d^{n}}\sum_{z : f^n(z)=z} \delta_z 
    $$
   converges  weakly to $\mu$.
    \end{enumerate}
\end{theorem}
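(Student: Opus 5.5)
I would follow the potential-theoretic route of Lyubich and Freire--Lopes--Ma\~n\'e, in the form later streamlined by Hubbard--Papadopol and Forn\ae ss--Sibony. The first step is to lift $f$ to a homogeneous polynomial map $F:\C^2\to\C^2$ of degree $d$ with $F^{-1}(0)=\{0\}$. I then set
$$
G=\lim_{n \to \infty} d^{-n}\log\|F^n\|.
$$
The limit exists and is uniform on compact subsets of $\C^2\setminus\{0\}$, because $c\|z\|^d\le\|F(z)\|\le C\|z\|^d$ makes $d^{-n}\log\|F^n\|-\log\|z\|$ a uniformly Cauchy sequence. The function $G$ is plurisubharmonic, continuous, and satisfies $G(\lambda z)=G(z)+\log|\lambda|$. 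It therefore descends to a continuous potential, and I define
$$
\mu=dd^c G
$$
pushed down to $\widehat{\C}$. This $\mu$ is a probability measure with continuous potential and no atoms. The identity $G\circ F=dG$ gives $f^*\mu=d\mu$ and $f_*\mu=\mu$, and $\supp\mu=J(f)$.

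For part (1), let $w\notin\mathcal E_f$. Then
$$
d^{-n}(f^n)^*\delta_w=\mu+dd^c u_n,
$$
where $u_n=d^{-n}\log|\ell_w\circ F^n|-G$ on the lift. Here $\ell_w$ is a linear form vanishing on the line over $w$. The functions $u_n$ are uniformly bounded above, so by Hartogs compactness any limit is either $\equiv-\infty$ or converges in $L^1$ along a subsequence. The case $\equiv-\infty$ is excluded, and the key claim is that every $L^1$ limit $u$ satisfies $u=0$.

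I would first handle $w\notin$ the postcritical closure. For such $w$, Montel's theorem applies to the inverse branches defined on a disc around $w$. On branches that avoid exceptional behaviour these inverse branches have diameters tending to $0$, so preimages are spread out. More robustly, I would use Lyubich's counting argument: all but $o(d^n)$ of the $d^n$ inverse branches of $f^n$ on a small disc $D$ exist and shrink. Hence $d^{-n}(f^n)^*\delta_w$ and $d^{-n}(f^n)^*\delta_{w'}$ agree asymptotically for $w,w'\in D$. Averaging $w'$ against area measure and applying the same argument to $d^{-n}(f^n)^*(\text{area})$ shows that the limit is independent of $w$ and equals $\mu$. The general non-exceptional $w$ is then reached by a chain of discs: the backward orbit of $w$ accumulates on $J$, and one can pass through preimages outside the postcritical set.

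The main obstacle is this step, namely controlling the branches near critical values. The quantitative input is that the number of critical points of $f^n$ is $2(d^n-1)$. A disc $D$ avoiding the $2d-2$ critical values of $f$ meets at most $2d-2$ critical values of each $f^k$ "at level $k$". Summing over levels with a geometric series then gives the $o(d^n)$ bound on bad branches. A length–area argument shows that the good branches have spherical area summing to at most $4\pi$. From this, all but $o(d^n)$ of them have small diameter.

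For part (2), the plan is as follows.
\begin{enumerate}
\item The number of fixed points of $f^n$, counted with multiplicity, is $d^n+1$. All but finitely many are repelling, by Fatou's bound on non-repelling cycles.
\item Take the good shrinking inverse branches $g$ of $f^n$ on a disc $D$ with $\mu(D)>0$, using that $\supp\mu=J$. Each branch with $g(D)\Subset D$ produces exactly one fixed point in $g(D)$.
\item The measure $\mu(D)$ counts, asymptotically, the fraction of branches landing back in $D$. This yields $\liminf d^{-n}\#\{\text{fixed points in }D\}\ge\mu(D)$ for a generating family of discs.
\item Total mass $\sim 1$ then forces weak convergence to $\mu$.
\end{enumerate}

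Finally, I would prove uniqueness of the measure of maximal entropy via Ma\~n\'e's argument. By Rokhlin's formula and the Jacobian, $h_\nu\le\log d$. Equality forces the Jacobian of $\nu$ to be constant $d$, which means $f^*\nu=d\nu$. A measure with this property is then shown to equal $\mu$ by the same equidistribution of preimages, integrated against $\nu$. Ergodicity, and indeed mixing, follows from the fact that $d^{-n}(f^n)^*\phi\to\int\phi\,d\mu$ for continuous $\phi$.
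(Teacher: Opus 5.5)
The paper does not prove this theorem: it is quoted from Lyubich and Freire--Lopes--Ma\~n\'e. Later the paper only recalls Lyubich's transfer-operator route (equicontinuity of $(A^n\varphi)$, almost periodicity, and the fact that the only unimodular eigenvalue is $1$ with constant eigenfunctions), in which $\mu$ emerges as the limit functional rather than being built first. Constructing $\mu$ from the Green function is a legitimate alternative. It relies on the homogeneous lift, which has no transcendental analogue, and that is why the paper adapts Lyubich's route instead. Your periodic-point and uniqueness outlines are the standard ones.

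However, the step you yourself single out as the main obstacle fails as written. The claim that all but $o(d^n)$ branches on a small disc exist is false. Suppose $D$ contains a postcritical point $f^k(c)$ with $c\in\operatorname{Crit}(f)$ and $k\ge 1$. Then the component of $f^{-k}(D)$ through $c$ maps onto $D$ with degree at least $2$. Every component of $f^{-n}(D)$ lying over it is therefore non-univalent, and these carry normalized mass at least $2d^{-k}$ for every $n\ge k$.

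Your geometric series does not rescue this. If $D$ only avoids the critical values of $f$, the mass lost passing from level $k$ to level $k+1$ is at most $(2d-2)d^{-k}$, and $\sum_{k\ge 1}(2d-2)d^{-k}=2$, which is vacuous. The correct form of Lyubich's lemma (compare Proposition~\ref{lemma:inverse branches}) is as follows. If $D$ avoids the finite set $E_m:=\bigcup_{j=1}^{m}f^j(\operatorname{Crit}(f))$, then for every $n$ the univalent branches carry mass at least $1-\sum_{k\ge m}(2d-2)d^{-k}=1-2d^{1-m}$. This is an $\epsilon$-statement, uniform in $n$, and it is available only near points $w\notin E_m$. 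Combined with your area/Koebe argument, it gives $d^{-n}(f^n)^*\delta_{w'}\approx d^{-n}(f^n)^*\delta_w$ up to $O(d^{1-m})$ for $w'$ near such $w$, which suffices.

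The reduction to an arbitrary non-exceptional $w$ then has a real hole. Your treatment of $w$ outside the postcritical closure is fine, but that set can be empty (e.g.\ $J(f)=\widehat{\mathbb C}$ with a dense critical orbit). Also, finding \emph{some} preimage outside a bad set is not enough. Writing
$$d^{-n}(f^n)^*\delta_w=\int d^{-(n-n_0)}(f^{n-n_0})^*\delta_{w'}\,d\bigl(d^{-n_0}(f^{n_0})^*\delta_w\bigr)(w'),$$
you need almost all of the mass of $d^{-n_0}(f^{n_0})^*\delta_w$ to lie off $E_m$. The missing lemma is: for $w\notin\mathcal E_f$ and any finite set $E$, $d^{-n}(f^n)^*\delta_w(E)\to 0$. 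To see it, note that for $z\in f^{-n}(w)$ the mass at $z$ is $d^{-n}\prod_{i<n}\deg_{f^i(z)}f$. This is $O(d^{-n})$ unless the orbit of $z$ falls into a cycle through critical points. In that case it decays like $(\Lambda/d^p)^{n/p}$, where $p$ is the period and $\Lambda$ is the product of the local degrees along the cycle, and $\Lambda<d^p$ unless the cycle is totally invariant, i.e.\ consists of exceptional points. This is exactly where $\mathcal E_f$ enters, and your sketch never uses non-exceptionality in a substantive way.

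Three smaller points. First, in general Rokhlin's formula only yields $h_\nu\ge\int\log J_\nu\,d\nu$. The equality you need in the equality case requires a generating partition, which is where the real work in Ma\~n\'e's argument lies; the bound $h_\nu\le\log d$ itself is Gromov's. Second, you never check $h_\mu=\log d$; it follows from $J_\mu\equiv d$ together with that same inequality. Third, in the mixing step the relevant operator is the pushforward $d^{-n}(f^n)_*\varphi=A^n\varphi$, not $\varphi\circ f^n$.
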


This result is due to Lyubich~\cite{Lyubich83}, and independently to Freire--Lopes--Ma\~n\'e~\cite{FLM}. The measure of maximal entropy was previously constructed for polynomials by Brolin~\cite{Brolin} via different methods.

In the current paper we investigate the existence of an invariant ergodic measure of maximal entropy for transcendental functions: entire self-maps of $\mathbb C$ that are not polynomials. We consider three different families of maps:

\begin{enumerate}
\item[(1)] Transcendental functions for which the norms of the coefficients decrease sufficiently rapidly, referred to as \emph{elementary strongly polynomial-like maps}.
\item[(2)] A subclass of maps of class (1) for which all critical orbits escape rapidly, referred to as \emph{Baker--Bishop maps}.
\item[(3)] Transcendental functions of \emph{disjoint type}. To reduce the complexity of the arguments we will moreover assume that the functions have finitely many tracts and are finite compositions of functions of finite order.
\end{enumerate}

While Baker--Bishop maps are  a subset of elementary strongly polynomial-like maps,  the separate treatment of this class is necessary in order to be able to prove the results for class (1). Each of these classes will be discussed and defined more precisely later in the introduction.

\subsection{Weighted equidistribution}
Let $f\colon \C\to \C$ be a transcendental function.  By the great Picard theorem every point  $w\in \C$ except possibly one 
has infinitely many preimages. Thus one cannot define a probability measure that places point masses of equal weights at all preimages of a point $w$. Instead, 
 let $w\in \C$ which is not in the forward orbit of the omitted value (if any).
Then one can define for each $w \in \mathbb C$ a probability vector $Q(w,\cdot)$, where the second index runs over all preimages of $w$.
If $n\geq 1$, then for all $z\in f^{-n}(w)$  we can define
$$
Q^n(w,z) = \prod_{j=0}^{n-1} Q(f^{j+1}(z), f^j(z)),
$$
which gives new probability vectors $Q^n(w,\cdot)$. For a given initial point $w$ we therefore obtain a sequence of probability measures
$$
Q^n_w = \sum_{z\in f^{-n}(w)} Q^n(w,z)  \delta_z.
$$

We note that when $f$ is a rational function of degree $d$, setting $Q(w,z) = \frac{1}{d}$ gives the standard equidistribution, hence weighted equidistribution is a generalization thereof; a necessary generalization when intending to include transcendental functions.

In this paper we deal with several classes of maps that can be understood well enough to define weights $Q(w,z)$ in such a way that the sequence of measures $Q^n_w$  converges weakly to a probability measure $\mu$, which  captures the full  complexity of the dynamical system.

%
%

\subsection{Main result}

Our main result is the following:

\begin{theorem}\label{Theorem: Main1}
For any of the maps in class (1), (2), or (3), there exist infinitely many   invariant  mixing (hence ergodic)  measures of infinite entropy $\widetilde \mu$ supported on the Julia set that can be obtained via weighted equidistribution of preimages, that is, the sequence of probability measures  $$\sum_{z\in f^{-n}(w)}Q^n(w,z)\delta_z$$ converges  weakly to $\widetilde \mu$.
\end{theorem}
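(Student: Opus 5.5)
The proof will go through a general transfer-operator criterion, which is then checked for each of the three classes with explicitly constructed weights. A weight function $Q$ defines a Markov-type operator on bounded continuous functions,
$$
(P\phi)(w)=\sum_{z\in f^{-1}(w)} Q(w,z)\,\phi(z),
$$
so that $\int \phi\, dQ^n_w = P^n\phi(w)$. Weak convergence of $Q^n_w$ then amounts to convergence of $P^n\phi$ to a constant $\int\phi\,d\widetilde\mu$.

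The first step is to show that three properties of the weights together force this convergence.
\begin{enumerate}
\item[(i)] \emph{Normality}: the family $\{P^n\phi\}_n$ is equicontinuous on compact subsets of a suitable invariant closed set $K$ containing the relevant part of the Julia set. This comes from controlling $Q$ under perturbation of $w$, via local inverse branches and Koebe distortion.
\item[(ii)] \emph{Tightness}: $Q^n_w$ places mass at most $\epsilon$ outside a compact set, uniformly in $n$. This is obtained by choosing weights that decay fast enough on preimages near $\infty$.
\item[(iii)] \emph{Irreducibility}: some iterate transfers definite mass between any two open sets of $K$. This follows from expansion on the Julia set.
\end{enumerate}
With these in hand, an Arzelà--Ascoli limit of $P^n\phi$ is harmonic in the Markov sense, $Ph=h$. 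A maximum principle argument using irreducibility then shows $h$ is constant, and tightness upgrades this to weak convergence to a probability measure $\widetilde\mu$. Invariance $f_*\widetilde\mu=\widetilde\mu$ follows from $f_*Q^{n+1}_w = Q^n_w$. Mixing follows by applying the same constancy argument to $P^n(\phi\cdot \psi\circ f^m)$, i.e.\ decay of correlations in the weak sense.

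Second, for each class I will choose the weights.
\begin{itemize}
\item For disjoint-type maps (class (3)), I will use the symbolic coding of the Julia set by tracts and addresses. The weights will be $Q(w,z)=p_{s}$, with $s$ the tract/branch label, where $(p_s)$ is a probability vector on the countably many branches with infinite Shannon entropy, $-\sum p_s\log p_s=\infty$, yet decaying fast enough for tightness. Expansion (hyperbolicity) gives normality and irreducibility directly.
\item For Baker--Bishop maps (class (2)) and elementary strongly polynomial-like maps (class (1)), the map is close to polynomials $p_k$ of rapidly growing degree on nested annuli. I will put weight $q_k$ on preimages in the $k$-th annulus, split equally, $1/d_k$ each, among the $d_k$ preimages there.
\end{itemize}
Varying the sequence $(p_s)$ or $(q_k)$ yields infinitely many distinct limits, distinguished by the masses they assign to the tracts or annuli.

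Third, entropy. Since $\widetilde\mu$ is a Bernoulli-like (Markov) measure on the coding, the partition into branches is generating. By Kolmogorov--Sinai, $h_{\widetilde\mu}(f)\ge -\sum_s p_s\log p_s=\infty$. Alternatively, by Rokhlin's formula the Jacobian is $1/Q$, so the entropy is $\int -\log Q\, d\widetilde\mu=\infty$. Support in the Julia set follows because the limit measure charges no Fatou component: preimages of Fatou points escape or accumulate on $J$.

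The main obstacle is class (1), where critical points may not escape and normality, i.e.\ distortion control of inverse branches, fails near postcritical points. There I will first handle Baker--Bishop maps, where the critical orbits escape quickly, and then pass to general class (1) maps via a quasiconformal or semiconjugacy comparison. The difficulty is to transport the measure while preserving the convergence and the infinite entropy. A second obstacle is to verify that the equilibrium limit does not depend on $w$ for $w$ outside an exceptional set.
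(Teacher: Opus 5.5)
Your route differs from the paper's. The paper uses the normality/tightness/irreducibility criterion (Theorem~\ref{thm: non-uniform Lyubich}) only for class (1). For classes (2) and (3) it embeds the shift of one specific locally finite Markov chain via cells, takes the time-reversed probabilities $Q_{ts}$ as weights, and reads off equidistribution from $(P^n)_{st}\to\mu(t)$.

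As a plan, yours has genuine gaps. The most serious is class (1). You cannot expect a quasiconformal conjugacy, or any natural semiconjugacy, with a Baker--Bishop map. An elementary strongly polynomial-like map may have recurrent or periodic critical points (hence superattracting cycles) or critical points in $J(f)$, whereas every critical orbit of a Baker--Bishop map escapes. Normality must therefore be proved for $f$ itself, and the paper does so with three ingredients your sketch lacks:
\begin{enumerate}
\item a uniform bound $\operatorname{diam}\le 1/2$ for all pullbacks of a small disc, from $|f'|\ge 2$ off $\bigcup_i D(x_i,1/6)$ and the long escape times $k_d$;
\item the fact that non-regular pullbacks carry little \emph{weight}: at depth $n$ at most $n$ components can contain critical values, each of weight at most $2^{-n}$;
\item an area--Koebe argument showing that, up to small weight, regular pullbacks meeting the interpolation zones $\bigcup_d\overline{B_d}$ shrink like $2^{-n/2}$, which together with the Lipschitz estimate for the interpolated weights gives equicontinuity.
\end{enumerate}
The Baker--Bishop map enters only combinatorially, through the escape times $k_d$.

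Second, infinite entropy does not follow from the weights alone. The quantity $\int -\log Q\,d\widetilde\mu$ depends on how $\widetilde\mu$ distributes mass, which is not the vector $(q_k)$ you prescribe. Backward orbits must return often enough for tightness: in the paper $\sum_{i\ge 3}(1+k_2+\dots+k_{i-1})\alpha_i=1$, and its tails bound the mass outside $\overline{D_d}$. Yet they must also spend a fraction $c_d$ of time in degree-$d$ regions with $\sum_d c_d\log d=\infty$. The paper resolves this tension by allowing outward jumps (to $A_d$, with weight $\alpha_d$) only from $C_0$. In class (1) it still needs the lower bound $\mu(\overline{C_0})\ge 1/2$ of Proposition~\ref{lemma: measure of C_0}, plus a count of $k_d-4$ critical-value-free annuli each of mass at least $m_0c'_d/k_d$.

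For class (3), i.i.d.\ weights $p_s$ indexed by fundamental domains have two problems.
\begin{enumerate}
\item They jump across $\delta$ and its preimages, so $P\phi$ is not even continuous without interpolation.
\item Gibbs' inequality against $\pi_s\propto 1/((s+2)\log^2(s+2))$ shows that $-\sum_s p_s\log p_s=\infty$ forces $\mathbb{E}[\log(x_0+2)]=\infty$. By Borel--Cantelli, almost every itinerary then has $x_n>e^n-2$ infinitely often, so it lies outside $\mathcal{Q}$. That is the only set on which the paper proves itineraries are realized by unique points, which your generating-partition claim needs. This is exactly why the paper uses a locally finite chain with $P'_{st}\le 2^{-2(s+t)}$ for long jumps.
\end{enumerate}

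Finally, in your abstract criterion a subsequential limit $h$ of $P^n\phi$ need not satisfy $Ph=h$ (think of a period-two chain), and your irreducibility condition contains no aperiodicity. The paper argues instead via stable functions, with irreducibility built on a repelling fixed point $p$ with $0<Q(p)<1$, or two cycles of coprime periods.
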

For a discussion on which $w\in \C$ are suitable in the three cases we refer  to the corresponding sections.
Just as for rational functions, we prove that such measures  can also arise via equidistribution of periodic points instead of preimages:

\begin{theorem}\label{Theorem: Main2}
For any of the maps in class (2) or (3) let  $\widetilde\mu$ denote an invariant measure constructed in Theorem \ref{Theorem: Main1}.
 Then $\widetilde \mu$    can also be obtained via weighted equidistribution of periodic cycles,
     that is, the sequence of measures
     $$\sum_{z:f^n(z)=z}Q^n(z,z)\delta_z$$ converges  weakly to $\widetilde \mu$.
\end{theorem}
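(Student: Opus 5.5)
The plan is to compare the periodic-point measures with the preimage measures of Theorem~\ref{Theorem: Main1}, using the same symbolic coding or transfer-operator structure that produced $\widetilde\mu$. For both class (2) and class (3), I expect the construction behind Theorem~\ref{Theorem: Main1} to yield the following. There is an invariant region $D$ (for disjoint-type maps, a neighbourhood of the Julia set avoiding the postsingular set; for Baker--Bishop maps, a union of annuli or discs away from the rapidly escaping critical orbits). On $D$ all inverse branches $g$ of $f^n$ are univalent, map $D$ into itself, and are uniformly contracting in a hyperbolic metric. The weights $Q$ should be (or can be chosen) so that $Q(w,z)$ depends only on the branch, i.e.\ on the symbol, up to a factor that is uniformly Hölder in $w$ along the branch. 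Concretely, I would use the property that $\log Q(f(g(w)),g(w))$ varies by $O(\theta^k)$ along cylinders of depth $k$ (bounded distortion of weights).

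\textbf{Step 1.} Put periodic points in bijection with inverse branches. For each inverse branch $g$ of $f^n$ defined on $D$, contraction gives, by the Banach fixed point theorem (or the Denjoy--Wolff/Schwarz lemma), a unique fixed point $z_g=g(z_g)\in D$. Conversely, every periodic point of period dividing $n$ in the support region arises this way. We therefore have the two sums
\[
\sum_{z:f^n(z)=z}Q^n(z,z)\delta_z=\sum_g Q^n(z_g,z_g)\delta_{z_g}, \qquad Q^n_w=\sum_g Q^n(w,g(w))\delta_{g(w)}.
\]
Periodic points outside $D$ must be excluded or shown to carry negligible weight. For disjoint type all periodic points lie in $J(f)\subset D$. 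For Baker--Bishop maps I would check that the weights vanish, or sum to $o(1)$, on branches not landing in the coded set.

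\textbf{Step 2.} Compare the two sums term by term for a test function $\varphi$ that is Lipschitz with compact support. The points $z_g$ and $g(w)$ both lie in $g(D)$, which has hyperbolic diameter $\le C\theta^n$. For the weights, write $Q^n$ as a product over the orbit. The $j$-th factors at $z_g$ and at $g(w)$ are evaluated at points of $g_j(D)$, where $g_j$ is the corresponding tail branch of depth $n-j$. Bounded distortion then gives
\[
\Bigl|\log \tfrac{Q^n(z_g,z_g)}{Q^n(w,g(w))}\Bigr|\le \sum_j C\theta^{n-j}\le C'.
\]
This only gives comparability, so I refine it. Split the orbit at $n-k$. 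For the last $k$ steps the two points are close (distance $O(\theta^{n-k})$ on the final factors), while the first factors are compared through the mixing/irreducibility property. Letting $n\to\infty$ and then $k\to\infty$ shows that the two measures differ weakly by $o(1)$. Their masses also converge to $1$, since $\sum_g Q^n(w,g(w))=1$.

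\textbf{Step 3.} Conclude using Theorem~\ref{Theorem: Main1}: $Q^n_w\to\widetilde\mu$, and the difference tends to $0$ on a dense class of test functions. Tightness, which is available from the construction of the weights, upgrades this to weak convergence.

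The main obstacle is Step 2 in the noncompact setting. There are infinitely many branches and near $\infty$ the hyperbolic contraction does not translate into Euclidean closeness uniformly. My first attempt would be to use tightness: truncate to the branches whose orbits stay in a compact set $K_R$ with total weight $\ge 1-\epsilon$, control the mass near infinity uniformly in $n$, and run the distortion argument only on $K_R$, where Euclidean and hyperbolic metrics are comparable.
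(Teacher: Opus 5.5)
There is a genuine gap, and it lies where the paper does its real work: controlling the total mass, and hence the tightness, of the periodic-point measures.

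\textbf{The mass of the periodic-point measures.} You assert that their masses converge to $1$ ``since $\sum_g Q^n(w,g(w))=1$''. That normalization of the backward weights at a fixed base point says nothing about periodic points. The weights here are symbolic, $Q(w,z)=\frac{\mu(z_0)}{\mu(w_0)}P_{z_0w_0}$. Around a cycle with itinerary $y$ the ratios $\mu(y_j)/\mu(y_{j+1})$ telescope, so the periodic point gets weight $\prod_{i=0}^{n-1}P_{y_iy_{i+1}}$. Hence the total mass of $\sum_{f^n(z)=z}Q^n(z,z)\delta_z$ is $\sum_{s\in S}(P^n)_{ss}$, a sum of forward $n$-step return probabilities over infinitely many states. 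Each term tends to $\mu(s)$, but for a general positive recurrent chain the sum need not be bounded, let alone tend to $1$. Likewise, tightness of the backward measures $(Q^n_w)$ does not transfer to these measures. Your truncation ``to branches whose orbits stay in $K_R$'' therefore uses the wrong, backward, tightness.

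The paper supplies the missing domination from the structure of the model chain. Any loop at $s\in\mathscr{C}_q$ with $q\geq 1$ must be in $S_0$ exactly $q$ steps before it returns. Stationarity gives $\mu(s)=\frac14((P^q)_{(0,1)s}+(P^q)_{(0,2)s})$. Hence $\sum_{y\in{\rm Per}_n(\sigma),\,y_0=s}\nu_n(y)\le 4\mu(s)$ for $n$ large, and $(4+\epsilon)\mu(s)$ after the small modifications. Together with $(P^n)_{ss}\to\mu(s)$ and dominated convergence, this gives mass $\to1$ and tightness. Nothing in your plan replaces this input.

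\textbf{The branch bookkeeping and the distortion step.} Steps 1--2 do not work as set up. Periodic points are not indexed by the inverse branches of $f^n$ at a single base point $w$. A branch defined on the cell (or fundamental domain) with symbol $s$ maps it into the cell with symbol $z_0$. It yields a fixed point in the coded set only when $z_0=s$. So you must decompose by starting cell, with one base point per cell, and summing over infinitely many cells again requires the domination above.

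The weights are also not H\"older along branches in your sense. $Q$ is locally constant on cells but jumps, with unbounded ratios $\mu(z_0)/\mu(w_0)$, across cell boundaries. Comparing $z_g$ with $g(w)$ when $g(D)$ meets many cells therefore gives no bounded distortion; for disjoint-type maps in logarithmic coordinates, $g(D)$ is a whole tract. Once you localize to cylinders, distortion disappears entirely: for a branch returning to cell $s$ one has $Q^n(w,g(w))=Q^n(z_g,z_g)$ exactly.

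At that point your argument reduces to the paper's. The cylinder masses satisfy $\nu_n(C_{z_0,\dots,z_m})=\prod_{i<m}P_{z_iz_{i+1}}\,(P^{n-m})_{z_mz_0}\to\mathbb{P}^\mu(C_{z_0,\dots,z_m})$ by aperiodic ergodicity. Add the mass bound, then push forward by the coding map. For disjoint-type maps that map is discontinuous, so one works on the closed sets $\mathcal{Q}_B$, where it is continuous, exactly as in the proof for preimages.
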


The dynamics and the arising ergodic measures  will be described most explicitly for the Baker--Bishop maps in class (2). For these maps the measures  can be constructed via a relatively simple symbolic dynamical system, analogous to the construction of symbolic dynamical systems for many hyperbolic polynomials and rational functions. The main difference is that for rational functions the symbolic dynamical systems are given by shift maps on finitely many symbols, while  in our setting the measures arise via shift maps acting on a space with countably many symbols.
The construction and description of these symbolic dynamical systems will play a central role in this paper. The symbolic dynamical systems provide natural weights for the equidistribution of preimages and of periodic points. It turns out that the same class of symbolic dynamical systems can be used to obtain invariant measures for any of the disjoint-type maps in class (3).

For maps in the larger class (1) we do not obtain a symbolic dynamical system, but instead construct the invariant measure directly via weighted equidistribution of preimages, using a transfer operator similarly as in \cite{Lyubich83}, defined in terms of the weights  $Q(w,\cdot)$.  An important difference with the setting of rational functions, due to the non-compactness of the complex plane, is that the convergence under applications of the pullback operator can only be uniform on compact subsets, hence does not correspond to convergence in a Banach space. The convergence in \cite{Lyubich83} relies on the almost periodicity of the transfer operator and the simplicity of its unitary spectrum. In the locally uniform setting, we require different conditions ---normality, tightness and irriducibility--- on the weights  $Q(w,\cdot)$ in order to obtain convergence, see Theorem \ref{thm: non-uniform Lyubich} for the precise statement.


 We choose weights for the equidistribution of preimages which are identical to those used for similar maps in class (2). Thus the symbolic dynamical system, while not embedded in the dynamics of the actual transcendental map,  still plays an important role for maps in class (1),  as inspiration for choosing weights for which 
$Q^n_w\to\tilde \mu$, the entropy of $\tilde\mu$ is infinite and the support of $\tilde \mu$ is the Julia set.
 In particular the choice of the weights implies that mass does not escape to infinity and that for all bounded continuous function $\varphi $ on $\C$ the sequence $(A^n(\varphi))$ is equicontinuous, where $A$ is the transfer operator associated to the weights $Q(w,\cdot)$.
It is worth noticing that, lacking the symbolic representation, the equidistribution of periodic points appears to be more challenging in case (1), and is left as an open problem.

\subsection{Entropy of rational and transcendental functions}

Let us briefly discuss known results regarding the entropy of rational and transcendental maps. We start with two classical results from the 1970's.

\begin{theorem}[Przytycki--Misiurewicz, Gromov, 1977]
    Let $f:\mathbb C \rightarrow \mathbb C$ be a rational function of degree $d \ge 2$. Then the topological entropy of $f$ equals $\log(d)$.
\end{theorem}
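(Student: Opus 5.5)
The plan is to prove the two inequalities $h_{top}(f)\ge \log d$ and $h_{top}(f)\le \log d$ separately, working on the compact space $\widehat{\mathbb C}$ with the spherical metric, on which $f$ extends to a continuous map.

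\emph{Lower bound} (Misiurewicz--Przytycki). I would use the fact that a $C^1$ map $f$ of a compact oriented manifold satisfies $h_{top}(f)\ge \log|\deg f|$. For a rational map the topological degree is $d$, and every regular value has exactly $d$ preimages. Concretely:
\begin{itemize}
\item[(i)] Choose a point $w$ whose backward orbit avoids the finitely many critical values. Then $f^{-n}(w)$ consists of exactly $d^n$ points.
\item[(ii)] Since $f$ is Lipschitz on the sphere, say with constant $L$, I would show that preimages at level $n$ can be grouped into $(n,\epsilon)$-separated families whose size grows like $d^n$. The mechanism is the following. If two preimages of $w$ under $f^n$ are $(n,\epsilon)$-close, then their forward orbits stay $\epsilon$-close until they land at the same point. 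Near a non-critical point the map is locally injective, with a uniform injectivity radius away from the critical points. Hence closeness at a common image forces the points to coincide.
\item[(iii)] Critical points complicate step (ii). The cleanest route avoids them by working in the repelling hyperbolic part: pick a compact set where $f$ is expanding. A convenient alternative is Lyubich's measure, whose entropy is at least $\log d$ by a Jacobian/Rokhlin argument, and then apply the variational principle to conclude $h_{top}\ge\log d$.
\end{itemize}

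\emph{Upper bound} (Gromov). I would consider the graph of the $n$-orbit,
\[
\Gamma_n=\{(z,f(z),\dots,f^{n-1}(z))\}\subset(\widehat{\mathbb C})^n,
\]
and argue as follows.
\begin{itemize}
\item[(i)] An $(n,\epsilon)$-separated set in $\widehat{\mathbb C}$ gives an $\epsilon$-separated set on $\Gamma_n$ for the product max-metric.
\item[(ii)] By Lelong's monotonicity for complex curves, each $\epsilon$-ball centred on $\Gamma_n$ carries area at least $c\epsilon^2$, with $c$ independent of $n$. So the number of separated points is at most a constant times $\mathrm{Area}(\Gamma_n)/\epsilon^2$.
\item[(iii)] The area equals
\[
\sum_{j<n}\int (f^j)^*\omega = \sum_{j<n} d^j\!\int\omega \le C d^n,
\]
using $\deg f^j=d^j$ and the Fubini--Study form $\omega$.
\item[(iv)] Taking $\frac1n\log$ and letting $n\to\infty$ gives $h_{top}\le \log d$.
\end{itemize}

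The main obstacle is the uniform lower area bound in step (ii), which must hold independently of $n$. This requires the monotonicity formula for holomorphic curves in the product of Kähler manifolds $(\widehat{\mathbb C})^n$. The key point is that the product of Fubini--Study metrics is still Kähler, so Lelong's estimate applies with a constant independent of the dimension; I would quote it in that form.
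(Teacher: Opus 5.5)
Your route is the one behind the two references the paper cites (the paper gives no proof of its own). The upper bound is Gromov's graph-area argument, and it is sound as you sketch it.

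One correction to how you justify its key step: being K\"ahler is not what makes the Lelong constant independent of $n$; the product structure is. Around any $p\in(\widehat{\mathbb C})^n$, use in each factor an affine coordinate $\zeta_j$ centred at $p_j$ (after a rotation). Then the max-metric ball of radius $r$ contains a Euclidean ball of radius $r/C$ in $\mathbb C^n$. On that ball, $\sum_j\pi_j^*\omega\ge c\sum_j\frac{i}{2}d\zeta_j\wedge d\bar\zeta_j$, with $C,c$ independent of $n$. So flat Lelong monotonicity gives $\mathrm{Area}(\Gamma_n\cap B_{\max}(p,r))\ge c'r^2$ uniformly in $n$: a complex curve through the centre of a ball of radius $\rho$ in any $\mathbb C^N$ has area at least $\pi\rho^2$ in it. Also use radius $\epsilon/2$, so that the balls around an $\epsilon$-separated set are disjoint.

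The lower bound is where your sketch has a real hole, and only one of your two repairs works.

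\emph{Why (ii) fails.} Points of $f^{-n}(w)$ are never critical, but they can lie arbitrarily close to critical points. So there is no uniform injectivity radius along the backward tree, and a Bowen ball can pick up several points of $f^{-n}(w)$ at every passage near a critical point. To reach $\log d$ you must show that most backward branches rarely pass near the critical set, which is an equidistribution statement.

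\emph{Why the expanding-set repair does not work.} Step (ii) does go through on a compact invariant set bounded away from the critical points. But for a general rational map, producing such sets with entropy close to $\log d$ is a Katok-type approximation theorem, and it presupposes an invariant measure of entropy $\log d$.

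\emph{Why your second alternative works.} It is short once the balanced measure $\mu$ of Brolin--Lyubich--Freire--Lopes--Ma\~n\'e is available, and that construction does not use entropy. The measure $\mu$ has no atoms and satisfies $\mu(f(A))=d\,\mu(A)$ whenever $f|_A$ is injective, so $J_\mu\equiv d$. Take a finite Borel partition $\eta$ into pieces on which $f$ is injective, so that $\eta\vee f^{-1}\mathcal E=\mathcal E$ for the partition $\mathcal E$ into points. Then
\[
h_\mu(f)\ge h_\mu(f,\eta)=H_\mu(\eta\mid \bigvee_{i\ge 1}f^{-i}\eta)\ge H_\mu(\eta\mid f^{-1}\mathcal E)=H_\mu(\mathcal E\mid f^{-1}\mathcal E)=\int\log J_\mu\,d\mu=\log d,
\]
and the variational principle gives $h_{\mathrm{top}}(f)\ge\log d$. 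Alternatively, quote Misiurewicz--Przytycki for this direction, as the paper does.
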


The estimate from below $h_\mathrm{top}(f) \ge \log(d)$ was proved by Przytycki and Misiurewicz~\cite{MP1977} and the estimate from above was proved shortly after by Gromov~\cite{Gromov}. Both results are valid in a more general context: the estimate from below holds well outside of the holomorphic category, and the estimate from above holds for holomorphic endomorphisms of projective space in any dimension, where $d$ then refers to the topological degree of the map. The fact that the topological entropy of a complex H\'enon map of algebraic degree $d$ is also given by $\log(d)$ was proved by Smillie~\cite{Smillie90}.

As discussed before, the existence of the unique measure of maximal entropy $\log(d)$ was proved by Lyubich~\cite{Lyubich83} and by Freire--Lopez--Ma\~n\'e~\cite{FLM}.
The construction of the measure of maximal entropy has since been generalized to several different contexts in higher dimensions, including to holomorphic endomorphisms of projective space by Forn{\ae}ss and Sibony~\cite{FS94}, and to complex H\'enon maps by Bedford--Lyubich--Smillie~\cite{BLS93}.

The topological entropy of transcendental functions was settled in the PhD thesis of Wendt~\cite{Wendt,WendtMan}, and independently in the recent papers~\cite{BFP1, BFP2}.

\begin{theorem}[Wendt, Benini--Forn{\ae}ss--Peters]\label{inftop}
    Any transcendental function has infinite topological entropy.
\end{theorem}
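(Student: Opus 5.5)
The plan is to show that for every $N\in\N$ the map $f$ has a compact invariant set on which some iterate $f^m$ is semiconjugate to, or contains, the full one-sided shift on $N$ symbols. Then $h_{\mathrm{top}}(f^m)\ge \log N$, hence $h_{\mathrm{top}}(f)\ge \frac1m\log N$. If $m$ can be chosen independently of $N$, or at least with $\frac1m \log N\to\infty$ along a suitable sequence, then letting $N\to\infty$ gives $h_{\mathrm{top}}(f)=\infty$.

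The horseshoe comes from an Ahlfors-islands or Picard-type covering argument. First I pick two disjoint closed Jordan domains $D_1,D_2$ meeting the Julia set, chosen away from the (at most one) omitted value. The Ahlfors five islands theorem, applied on large annuli $\{R<|z|<2R\}$ of the transcendental function $f$, can be used with five such domains. It yields, for every large $R$, a domain $U\subset\{R<|z|<2R\}$ that $f$ maps conformally onto one of the $D_i$. Since there are infinitely many disjoint annuli, one index $i$ recurs infinitely often, giving infinitely many pairwise disjoint univalent islands $U_1,U_2,\dots$ over a fixed $D:=D_i$.

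Next these islands must be brought back into $D$. Because $D$ meets the Julia set, Montel's theorem gives an iterate $f^k$ and a subdomain $V\Subset D$ such that $f^k(V)$ covers a large disc containing $U_1,\dots,U_N$. To keep $k$ uniform, I would instead blow up a fixed neighbourhood: using the expanding property near a repelling periodic point $p\in D$, some fixed iterate $f^{k}$ maps a small disc $V$ univalently onto a disc containing $D$. Then I apply the islands argument directly to the annuli, targeting a disc $D'$ around $p$ with $\overline{V}\subset D'$.

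The cleanest route appears to be the following. Take a repelling periodic point $p$, with period $q$, and a small disc $D'$ around it. By Ahlfors, infinitely many disjoint domains $U_j$ are mapped by $f$ univalently onto $D'$. By blow-up at $p$, some $f^{k}$, with $k$ a multiple of $q$, maps a subdisc $W\subset D'$ univalently onto a region containing $\overline{U_1\cup\dots\cup U_N}$. The pullbacks $W_j=(f^k|_W)^{-1}(U_j)$ are then $N$ disjoint compact subsets of $D'$, and $f^{k+1}$ maps each $W_j$ univalently onto $D'\supset \bigcup_j W_j$. The standard nested-intersection argument then produces a compact invariant set for $f^{k+1}$ that is semiconjugate onto the full $N$-shift. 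Nonuniqueness of itineraries is not an issue, since semiconjugacy onto the shift already bounds entropy from below, because entropy does not increase under factors.

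The main obstacle is uniformity of $k$ in $N$. The $U_j$ escape to infinity, so covering $U_1,\dots,U_N$ requires $f^{k}(W)$ to contain a disc of radius of order $\max_j|U_j|$. That forces $k$ to grow with $N$, and then $\frac{\log N}{k+1}$ need not diverge. I would handle this by exploiting transcendental growth: compare $N$, the number of islands inside $\{|z|<R\}$, against the number of iterates needed to reach radius $R$. The first grows faster than any power of $\log R$, since $\log M(r)/\log r\to\infty$ and the islands counting function behaves like the Nevanlinna characteristic. The second is $O(\log R)$ by linear expansion at $p$, with multiplier $|\lambda|>1$. This gives $\frac{\log N(R)}{C\log R}\to\infty$ as $R\to\infty$. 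Making the counting estimate for islands precise, via Ahlfors' theory of covering surfaces, is the step I expect to need the most care.
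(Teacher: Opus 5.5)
The paper gives no proof of this theorem; it is quoted from Wendt and from Benini--Forn{\ae}ss--Peters. Your proposal therefore has to stand on its own, and its decisive step fails. The horseshoe mechanism is fine, but the counting meant to make $\log N/(k+1)$ unbounded is off by an exponential. From ``$N(R)$ grows faster than any power of $\log R$'' you only get $\log N(R)/\log\log R\to\infty$. To conclude $\log N(R)/(C\log R)\to\infty$ you would need $N(R)$ to grow faster than every power of $R$.

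That is false for every function of finite order $\rho$. The islands $U_j\subset\{|z|<R\}$ are disjoint, and each contains exactly one preimage of a fixed $w\in D'$, so Jensen's formula gives $N(R)\le n(R,w)=O(R^{\rho+\varepsilon})$. For $e^z$ or $\sin z$ one has $N(R)=O(R)$. So even granting $k+1\le C\log R$, the lower bound your argument produces, $\log N(R)/(C\log R)$, stays bounded. The growth claim is also false in general: if $\log M(r)\asymp(\log r)^2$, then $n(R,w)=O(\log R)$ for every $w$. In that case $\log N(R)\lesssim\log\log R$, and you would need $k+1=o(\log\log R)$.

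The covering-time bound is also not delivered by the mechanism you invoke. Linearization at $p$ controls the iterates only until the image reaches the fixed size of the linearizing neighbourhood. How many further iterates are needed before $f^k(W)\supset\overline{U_1\cup\dots\cup U_N}$ is a global question. It is governed by the growth of $f$ and by the holes that images of discs can have, not by the multiplier. Nor can you expect $f^k|_W$ to be univalent onto such a region once critical values intervene; you would need proper pullbacks, i.e.\ $f^k(\partial W)\cap\overline{U_j}=\emptyset$.

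Since the islands over a fixed target $D'$ escape to infinity while $f^m(\overline{D'})$ is compact for each fixed $m$, any scheme with a fixed target forces $k\to\infty$. You then need one of two things:
\begin{itemize}
\item a genuine estimate $k=o(\log N)$ extracted from the superpolynomial growth of $f$ rather than from $|\lambda|$;
\item or a target domain that moves out to infinity with $N$, so that a bounded iterate already has $N$ disjoint proper preimage components inside it.
\end{itemize}
Neither is supplied, and this is exactly where the difficulty of the theorem lies.

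A minor point: Ahlfors' theorem gives infinitely many islands over one of five prescribed domains, not an island in every annulus $\{R<|z|<2R\}$ nor over a prescribed disc. You therefore need five repelling periodic points to guarantee islands over a disc around one of them.
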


The methods used by Wendt were adapted by the authors in~\cite{henon3} to show that transcendental H\'enon maps also have infinite topological entropy.

\medskip

It remains an open question whether there always exists an ergodic measure of infinite entropy, a question that motivates the current research. We recall the following related result due to Christensen and Fisher~\cite{FiCh}:

\begin{theorem}
    For any transcendental function $f: \mathbb C \rightarrow \mathbb C$ there exists an ergodic measure $\mu$ of positive entropy.
\end{theorem}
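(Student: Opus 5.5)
The idea is to reduce to a subsystem with a positive-entropy invariant measure, in the spirit of the Przytycki--Misiurewicz lower bound. First I would find a bounded region on which $f$ behaves like a map of degree at least $2$. Take a Jordan domain $D$ and pairwise disjoint closed topological disks $U_1,\dots,U_k\subset D$ with $k\ge 2$ such that $f\colon U_i\to \overline D$ is a homeomorphism for every $i$; this gives a horseshoe. Such domains exist for every transcendental entire function. By the great Picard theorem, or more concretely by the Ahlfors five islands theorem, there is a disk $D$, which can be taken to meet the Julia set away from singular values, that has at least two univalent preimage components compactly contained in $D$ itself. Univalence follows once $D$ avoids the critical values and asymptotic values in a suitable sense. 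The Ahlfors islands theorem applied to five disjoint disks near a repelling-type region gives islands, that is univalent preimages, compactly contained inside one of the disks. Iterating and using that the sets of singular values are at most countable lets one choose $D$ so that the preimage branches are conformal. This is the step I expect to be the main obstacle: we need genuine univalent islands inside the same domain, with no assumption of finite singular values.

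Given the horseshoe, define $K=\{z: f^n(z)\in \bigcup_i U_i \text{ for all } n\ge0\}$. Since the inverse branches $g_i=(f|_{U_i})^{-1}\colon \overline D\to U_i$ are holomorphic maps of $D$ into relatively compact subsets, they contract the hyperbolic metric of $D$ uniformly. It follows that $K$ is a Cantor set and the coding map $\pi\colon\{1,\dots,k\}^{\N}\to K$, $\pi(\omega)=\bigcap_n g_{\omega_0}\circ\dots\circ g_{\omega_{n-1}}(\overline D)$, is a homeomorphism conjugating the shift $\sigma$ to $f|_K$.

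Finally, push forward the uniform Bernoulli measure $\nu$ on $\{1,\dots,k\}^{\N}$ via $\pi$ to get $\mu=\pi_*\nu$. The conjugacy makes $\mu$ $f$-invariant and ergodic, because Bernoulli measures are mixing. Its entropy is $h_\mu(f)=h_\nu(\sigma)=\log k>0$, since an isomorphism preserves entropy. As a shortcut, if the hyperbolic contraction argument is delicate, one can instead take any ergodic measure of maximal entropy for $f|_K$. Because $K$ is compact and $f|_K$ is semiconjugate onto the full shift with positive topological entropy, the variational principle, together with upper semicontinuity for expansive maps, produces an ergodic measure of entropy at least $\log k$.
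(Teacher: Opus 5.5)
There is a genuine gap: the horseshoe you ask for, built from $f$ itself, does not exist for every transcendental entire function, so the argument fails at its first step.

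Suppose $D$ is a Jordan domain and $U_1\subset D$ is a closed disk with $f\colon U_1\to\overline D$ a homeomorphism. Then $(f|_{U_1})^{-1}$ maps $\overline D$ continuously into $U_1\subset\overline D$, so Brouwer's theorem gives a fixed point of $f$ in $U_1$. But $f(z)=z+e^z$ has no fixed points at all, so for this $f$ no such $D$ exists. This is why the paper recalls that a transcendental function need not have repelling fixed points, only repelling cycles of every period $\ge 2$. It is also why Christensen--Fisher, as summarized in the paper, produce a disk $D$ and an iterate $n$ such that two univalent branches of $f^{-n}$ map $D$ into disjoint compact subsets of $D$.

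Your sketch of the existence step would not yield this either. Ahlfors' theorem applied to $f$ gives islands over one of the chosen disks somewhere in the plane, but nothing places them inside that disk. To get islands inside prescribed domains, you must use Ahlfors' theorem as a normality criterion for the non-normal family $\{f^n\}$ on domains meeting $J(f)$; this is exactly where iterates enter. You then compose island branches along a cycle of domains. If the two branches come from different iterates $N_1,N_2$, composing them gives two branches of $f^{-N_1N_2}$.

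Your appeal to countability of singular values is both false and unnecessary. Prescribing $f(z_n)=w_n$, $f'(z_n)=0$ with $(w_n)$ dense (Weierstrass interpolation) gives $S(f)=\mathbb C$, and univalence is already part of the definition of an island.

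Once you have a horseshoe for $f^N$, your coding and Bernoulli argument works unchanged for $f^N$. It gives an $f^N$-invariant, $f^N$-ergodic measure $\nu$ with $h_\nu(f^N)=\log k$. What is still missing is the passage back to $f$. Put $\mu=\frac1N\sum_{j=0}^{N-1}f^j_*\nu$.
\begin{itemize}
\item $\mu$ is $f$-invariant.
\item $\mu$ is $f$-ergodic. An $f$-invariant set $A$ is $f^N$-invariant, and $f^j_*\nu(A)=\nu(f^{-j}A)=\nu(A)$, so $\mu(A)=\nu(A)\in\{0,1\}$.
\item $f^j$ and $f^{N-j}$ exhibit $(f^j_*\nu,f^N)$ and $(\nu,f^N)$ as factors of each other, so they have the same entropy. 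Affinity of entropy then gives $h_\mu(f)=\frac1N h_\mu(f^N)=\frac{\log k}{N}>0$.
\end{itemize}
This is the $\log(2)/n$ in the paper's account of the result.
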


The proof of this result is non-constructive and cannot be used to obtain any positive estimate from below on the entropy of $\mu$. The authors show that for any transcendental function $f$ there exists a topological disk $D \subset \mathbb C$ and an integer $n \in \mathbb N$ such that $f^{-n}(D)$ contains two univalent inverse branches $D_1$ and $D_2$, both properly contained in $D$. As a result one obtains a symbolic dynamical system on the Cantor set
$$
\bigcap_{k \in \mathbb N} f^{-k\cdot n}\left(D_1 \cup D_2\right),
$$
and the restriction of $f^n$ to this Cantor set induces a shift map on two symbols. The unique measure of maximal entropy for this restricted dynamical system therefore has entropy $\log(2)$, and one obtains an ergodic measure for $f$ of entropy $\log(2)/n$.

\subsection{Embedding of a symbolic dynamical system arising from a Markov process}
 We will construct a symbolic dynamical system given by a countable state space $S$, the shift map $\sigma: S^{\mathbb N} \rightarrow S^{\mathbb N}$, and an ergodic $\sigma$-invariant probability measure $\mathbb{P}^\mu$ on $S^{\mathbb N}$ with infinite entropy. The measure is induced by an ergodic Markov process on $S$.
Let $\Theta$ denote the support of the measure $\mathbb{P}^\mu$.
Now suppose that $f: \mathbb C \rightarrow \mathbb C$ is a transcendental function,
 that  $\Sigma \subset \Theta$ is a  Borel set of full measure which is completely invariant for the map $\sigma|_\Theta$, and that $\varphi : \Sigma \rightarrow \mathbb C$ is a homeomorphism 
 with its image
that induces a commutative diagram:

\[ \begin{tikzcd}
\Sigma \arrow{r}{\sigma} \arrow[swap]{d}{\varphi} & \Sigma \arrow{d}{\varphi} \\%
\mathbb C \arrow{r}{f}& \mathbb C
\end{tikzcd}
\]

When this is the case we say that the symbolic dynamical system  $(S^\N, \sigma)$ is \emph{embedded} in  the complex dynamical system $(\mathbb C,f)$. Whenever the symbolic dynamical system can be embedded, the measure $\mathbb{P}^\mu$ can be pushed forward to an ergodic $f$-invariant probability measure $\varphi_*\mathbb{P}^\mu$ on $\mathbb C$  whose entropy equals the entropy of $\mathbb{P}^\mu$, which by construction is infinite.  Notice that since $\Sigma\subset \Theta$ and $\P^\mu(\Sigma)=1$ we have that the support of the measure $\P^\mu|_\Sigma$  is $\Sigma$ itself, and thus
\begin{equation}\label{supportpushforward}
{\rm Supp}\,\varphi_*\mathbb{P}^\mu=\overline{\varphi(\Sigma)}.
\end{equation}

 We will use this approach for Baker--Bishop maps.
  For disjoint-type maps the same approach works with some technical adjustments which are needed since the map $\varphi$ is not  continuous in that case.
 In both cases the weights of the equidistribution 
are given by the transition probabilities of the time-reversed Markov process.
 For  strongly polynomial-like maps, as explained previously, we will use a different approach. 

\subsection{Different classes of transcendental maps}
\subsubsection{Baker--Bishop maps}

 In \cite{Bak63},\cite{Bak76} Baker constructed  an entire function 
 of the form
$$
f(z) = \prod_{j=1}^\infty \left( 1- \frac{z}{b_j}\right)
$$
where the norms of the roots $b_j$ grow sufficiently fast such that the product converges on all of $\mathbb C$, and such that it creates a sequence of annuli centered at increasing radii whose image is compactly contained in the following annulus. This creates multiply connected Fatou components  on which the family of iterates converges uniformly to infinity, and by showing that such components never intersect, he provided 
the first examples of wandering domains in transcendental iteration.

Later in~\cite{Bis18}, Bishop  used a similar map to  prove the existence of transcendental functions whose Julia set has Hausdorff dimension 1. In these examples the Julia set is given by the closure of  a countable union of smooth circles. While the smoothness of the circles and the Hausdorff dimension of the Julia set are not relevant for our purpose, the maps we will cover have the same dynamical behavior and the Julia set has the same topological structure. 

For our purposes we find it more convenient to work with power series expansions whose coefficients decrease sufficiently rapidly. After conjugating with a suitable translation we can eliminate the linear coefficient, and we are left with maps of the form
$$
f(z) = a_0 + a_2 z^2 + \cdots.
$$
For further convenience we will assume that $a_j>0$ for all $j \in \{0, 2,3, \ldots\}$. We note that in this form the roots of $f$ are approximately given by $\pm \frac{\sqrt{a_0}}{a_2}$ and $- \frac{a_j}{a_{j+1}}$ for $j \ge 2$.

The maps are normalized such that $x_1=0$ is a critical point. The other critical points $x_d$, for $d \ge 2$ are approximately given by
$$
x_d \sim - \frac{d}{d+1} \cdot \frac{a_{d}}{a_{d+1}}.
$$
By choosing the norms of the coefficients carefully we can make sure that the orbits of every critical point $x_j$ escapes rapidly to infinity.

\begin{defn}
    Let $f(z) = a_0 + a_2 z^2 + \cdots$ be a transcendental entire function. We say that $f$ is a \emph{Baker--Bishop map} when there exists a sequence of disjoint smoothly bounded topological annuli $B_i$ whose union contains all critical values, such that $f(B_i)\Subset B_{i+1}$ and $\inf_{z\in B_i }|z|\ra\infty$ as $i\ra\infty$.
\end{defn}

We will work with a specific subclass of Baker--Bishop maps, for which the sets $B_i$ can be chosen as round annuli that each contain at most a single critical value. In Section \ref{sec: Bishop maps} we prove  Theorems \ref{Theorem: Main1} and \ref{Theorem: Main2} for such maps.
 The goal of our treatment of these  maps is not to present the most general statement, but to provide a test case which can be used  as inspiration for the disjoint-type maps and the strongly polynomial-like  maps that are treated in later sections.

\subsubsection{Disjoint-type maps}
Disjoint-type maps are (from a functional perspective)  entirely different from the Baker--Bishop maps. However we will show that   the symbolic dynamical system arising from an appropriate modification of the ergodic Markov process embeds also in this case.

\begin{definition}
    A transcendental function $f\colon \C\to \C$ is said to be of \emph{bounded-type} if the set of singular values is bounded.    A bounded-type function $f$ is of \emph{disjoint type} if there exist a holomorphic disc $D\Subset \C$ with real-analytic boundary,  containing    the set $S(f)$ of singular values, whose  image $f(D)$ is relatively compact in $D$.
\end{definition}

Bounded-type maps were introduced by Eremenko  and Lyubich in \cite{EL92} and are among most studied classes in transcendental dynamics (see e.g. \cite{RRRS}, and the survey \cite{SixsmithB}). Indeed, their dynamics bears some similarities with polynomial dynamics, and from the functional theoretical point of view, they can be subdivided into (uncountably many) natural parameter spaces parametrized by complex manifolds.
Disjoint-type maps were introduced in \cite{BK07}. They are simple maps from the dynamical point of view: there exists a unique attracting fixed point whose basin contains all singular values, and the map is expanding on the complement of $D$ in a precise sense (see Section~\ref{sec:class B}). On the other hand, for any map $f$ of bounded type, the map $\lambda f$ is of disjoint type for all $\lambda$ sufficiently small, while still belonging to the same parameter space as $f$; so in some sense, from the function theoretical perspective disjoint-type maps are universal in the class of bounded-type maps.

\begin{defn}
Let $f$ be a transcendental function, and for all radii $r>0$ define $M_f(r):=\sup_{|z|=r}|f|$. The function   $f$ is of \emph{finite order} if
$$
\operatorname{limsup}_{r\ra\infty}\frac{\log\log M_f(r)}{\log r}<\infty.
$$
\end{defn}
Essentially, a function is of finite order if its modulus $M_f(r)$ grows at most like the exponential of a polynomial in $r$. The composition of maps of finite order may not be of finite order, consider for example $e^{e^z}$.

In Section \ref{sec:class B} we prove Theorems \ref{Theorem: Main1} and \ref{Theorem: Main2} for a transcendental function $f$ which is  a finite composition of transcendental functions  of finite order,  is of disjoint type, and has finitely many tracts (see Section~\ref{sec:class B} for a definition of tracts).
The assumption of finiteness of the numbers of tracts simplifies the presentation. We also expect that neither the finite order nor the disjoint-type assumption are necessary to obtain ergodic measures of infinite entropy for functions of bounded-type. 
Injectivity may be lost when one considers maps that are merely of bounded-type. While one can still push forward the ergodic measure under a continuous map that is not necessarily injective, the entropy of the push forward measure may be reduced. In order to prove that the entropy is still infinite one needs to have some description of how many points can be mapped down to a single point; a worthwhile future endeavor.

\subsubsection{Strongly polynomial-like maps}\label{sec:sigma proper intro} Let us recall the notion of a polynomial-like map:

\begin{definition}
    Let $U, V \subset \mathbb C$ be bounded topological disks, with $U$ relatively compact in $V$. A \emph{polynomial-like map} of degree $d$ is a proper holomorphic map $f: U \rightarrow V$ of degree $d$.
\end{definition}

Polynomial-like maps have been introduced in \cite{DH85}. The notion of polynomial-like maps has played a crucial role in the field, for example in describing the structure of the Mandelbrot set and for defining the renormalization of polynomials. 

\begin{definition}
    A transcendental function $f: \mathbb C \rightarrow \mathbb C$ is called \emph{strongly polynomial-like} if there exists an increasing union of  bounded topological disks $(U_n)_{n \in \mathbb N}$ with smooth boundaries  such that $\bigcup_n U_n=\C$, and such that $f_n:=f|_{U_n}:U_n\ra f(U_n)$  is polynomial-like for any $n$. 
\end{definition}

Since $f(U_n)\Supset U_n $ for any polynomial like map, it follows that  $f$ is surjective. Since for a transcendental  function a generic point has infinitely many preimages, the degree of the maps $f_n$ tends to infinity. 

\begin{remark}
Osborne \cite[Theorem 1.7]{Osborne}  lists several   sufficient conditions for a map to be  strongly polynomial-like.
Also notice that strongly polynomial-like maps are $\sigma$-proper in the sense of McMullen \cite[p. 131]{McmullenRenormalization}. 
 \end{remark}

Entire transcendental functions of the form $f(z) = a_0 + a_1 z+a_2z^2+ \ldots$ are strongly polynomial-like provided $|a_n|\ra0$ sufficiently fast. 
 Indeed, for each $d \ge 2$, the polynomial $p_d(z) = a_0 + a_1 z + \cdots + a_d z^d$ acts as a polynomial-like map of degree $d$ on sets $U, V$ where $V= \mathbb{D}_R$ for $R> 0$ sufficiently large, and $U = p_d^{-1}(V)$. Having fixed $R = R_d$, we can make sure that $f: U \rightarrow V$ is a polynomial-like map of degree $d$ from $U = f^{-1}(V) \cap V$ to $V = \D_R$ by letting the norms of the coefficients $a_{d+1}, a_{d+2}, \ldots$ be sufficiently small. Thus, by recursively making sure that each consecutive coefficient is sufficiently small, we can guarantee that for each $d \ge 2$ there exists a restriction of $f$ to a sufficiently large topological disk that is polynomial-like of degree $d$.

The class of maps (1) considered in this paper is a subset of the class of strongly polynomial-like maps, referred to as \emph {elementary strongly polynomial-like maps},
for which $|a_n|\ra0$ sufficiently fast as to satisfy several additional requirements specified in Section \ref{sec: strongly polynomial-like}, where we prove  Theorems \ref{Theorem: Main1} and \ref{Theorem: Main2} for such maps.

\subsection{Conformal measures.}

A related area of research is the construction of \emph{conformal measures}. We refer the reader to the book~\cite{MaUrBook} on thermodynamic formalism by Mayer and Urba\'nski, and its citations to the extensive literature on the subject. Conformal measures can be seen as an analogue of the probability measures we obtain via weighted equidistribution, with the difference that the weights are largely determined by the derivatives at the inverse images, see for example Definition 14 in \cite{MaUrSurvey}:

\begin{defn}
Let $t>0$ be a parameter. Let $\tau\geq 0$, and denote by $|\cdot|_\tau$ the metric $\frac{|dz|}{1+|z|^\tau}$. Let $f
: \mathbb C \rightarrow \mathbb C$ be a meromorphic function. A Borel probability measure $m_t$
on $J(f)$ is called $\lambda|f^\prime|^t_\tau$-\emph{conformal} if
\begin{equation}\label{eq:conformal measure}
m_t(f(E)) = \int_E \lambda|f^\prime(z)|^t_\tau \, dm_t
\end{equation}
for every Borel $E \subset J(f)$ such that the restriction $f_{\mid E}$ is injective. The scalar $\lambda$ is called the
conformal factor and, if $\lambda = 1$, then $m_t$ is called a $t$–conformal measure.
\end{defn}

In \cite{MaUrBook} more general integrands in equation~\ref{eq:conformal measure} are considered: functions of the form $e^{-\phi}$ where
$$
\phi = -t \log |f^\prime|_\tau + h,
$$
is a potential on $J(f)$ with $h$ a bounded H\"older continuous function, thus providing even more freedom in constructing measures $\mu_\phi$.

Conformal measures are clearly not intended to be invariant under $f$, however there often exists a unique invariant measure $\mu_t$ that is absolutely continuous with respect to $m_t$, a so-called \emph{Gibbs state}, see for example Theorem 25 in \cite{MaUrSurvey} which states the existence of conformal measures and Gibbs states for dynamically semi-regular meromorphic functions.

The notion of conformal measures originated in the work of Patterson~\cite{Pat76, Pat87}, and was adapted to the setting of rational functions by Sullivan~\cite{Su79, Su82, Su84}. The initial goal for the construction of conformal measures for rational functions was to describe different types of dimensions of the Julia set. While conformal measures were not constructed in order to obtain a description of the entropy of complex dynamical systems, and this aspect has not received as much attention in the literature, we point the reader to Theorem 6.25 in~\cite{MaUrBook}, an analogue of the Variational Principle.

\begin{remark}
    The potentials $\phi$ can of course only be defined when $f^\prime$ does not vanish on the Julia set. As a consequence, conformal measures were successfully constructed and exploited only under the assumption that the meromorphic function satisfies at least some hyperbolicity assumption: the postcritical set does not intersect the Julia set and is either bounded (hyperbolic) or has some positive distance to the Julia set (E-hyperbolic).  The disjoint-type maps that we consider are hyperbolic, while the Baker--Bishop maps are E-hyperbolic. In fact, conformal measures of finite order disjoint-type maps are explicitly considered in \cite{MaUrSurvey}.

We note however that the elementary strongly polynomial-like maps that we consider here may be neither hyperbolic nor E-hyperbolic, as critical points may well lie on the Julia set and can play a crucial role in the behavior of the maps.
\end{remark}

\subsection{Organisation of the paper} In section 2 we cover necessary background on entropy for measure-preserving dynamical systems, including those dynamical systems induced by a Markov process. In section 3 we construct the Markov processes of infinite entropy, in section 4 we embed these Markov process for Baker--Bishop maps, and in section 5 for disjoint-type maps, obtaining ergodic measures of infinite entropy. In section 6 we use the Markov processes as inspiration for the construction of an ergodic measure of infinite entropy for elementary strongly polynomial-like maps.

\subsection{Notation}

Throughout the paper we will use $D(a,r)$ for the disk of radius $r$ centered at $a$. When $a =0$ we may write $\DD_r$ instead of $D(0,r)$. We will write $\mathbb N = \{0,1,\ldots\}$.
\subsection{Acknowledgements} We are thankful to Walter Bergweiler and Lasse Rempe for their precious suggestions.
 \section{Preliminaries}

\subsection{Measure Theoretic Entropy}

Let $X$ be a compact metric space, $f: X \rightarrow X$ a continuous map.
The topological entropy of $f$ can be defined in a number of ways (see e.g.\cite{AKM65,Bow71, Din70}); all equivalent for a continuous self-map of a compact metric space. The definition given by Bowen and Dinaburg is the following.
Given $\epsilon>0$ and $n \in \mathbb N$ we say that a finite set $E \subset X$ is $(\epsilon, n)$-separated if for any $(x,y)\in E\times E$ there exists $0 \le j \le n-1$ such that $d(f^j(x), f^j(y)) > \epsilon$. We denote by $N(\epsilon, n)$ the maximum cardinality of an $(\epsilon, n)$-separated set, which is necessarily finite by compactness of $X$ and continuity of $f$. The \emph{topological entropy} of $f$ is defined by
$$
h_{\mathrm{top}}(f) = \lim_{\epsilon \rightarrow 0} \limsup_{n \rightarrow \infty} \frac{\log(N(\epsilon, n))}{n}.
$$
We note that this definition of topological entropy is independent of the metric inducing the topology, and indeed can be defined directly in terms of the topology.

Let now   $\mu$  be a Borel probability measure on $X$ that is invariant under $f$.
The measure theoretic entropy, or metric entropy, as it is often called, of the measure preserving dynamical system $(X,f,\mu)$ was defined by Kolmogorov and Sinai as follows (see \cite{Kol58,Kol59,Sin59}). Let $\xi$ be a finite measurable partition of $X$, and for $n \in \mathbb N$ define the refinement
\begin{equation}\label{refinement}
\xi_n = \xi \vee f^{-1}(\xi) \vee \cdots \vee f^{-(n-1)}(\xi).
\end{equation}
Then one defines
$$
h_\mu(f,\xi) = \lim_{n \rightarrow \infty} -\frac{1}{n}\sum_i \mu(C_i)\log (\mu(C_i)),
$$
where the sum runs over all subsets $C_i$ in the partition $\xi_n$ with $\mu(C_i)>0$. The \emph{measure theoretic entropy} $h_\mu(f)$ is obtained by taking the supremum over all finite measurable partitions $\xi$.

There is a clear relationship between topological and measure theoretic entropy:

\begin{theorem}[Variational Principle]
    For $X$ a compact metric space and $f: X \rightarrow X$ continuous, one has
    $$
    h_{\mathrm{top}}(f) = \sup_{\mu\in \mathcal{M}'_f(X)} h_\mu(f),
    $$
    where $\mathcal{M}'_f(X)$ denotes the set of all ergodic Borel probability measures.
\end{theorem}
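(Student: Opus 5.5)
The plan is to prove the two inequalities $\sup_{\mu} h_\mu(f) \le h_{\mathrm{top}}(f)$ and $h_{\mathrm{top}}(f) \le \sup_\mu h_\mu(f)$ separately. In both directions I first reduce to invariant measures and then pass to ergodic ones. For the reduction I use the ergodic decomposition together with the affinity of $\mu \mapsto h_\mu(f)$ (Jacobs' theorem). Together these give that the supremum of $h_\mu(f)$ over all invariant measures equals the supremum over ergodic ones.

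\emph{Upper bound (Goodwyn).} Fix an invariant $\mu$ and a finite partition $\xi=\{A_1,\dots,A_k\}$. By regularity, choose compact sets $B_i\subset A_i$ with $\mu(A_i\setminus B_i)$ small. Set $B_0 = X\setminus\bigcup B_i$ and $\eta = \{B_0,\dots,B_k\}$; then the conditional entropy $H_\mu(\xi\mid\eta)$ is small, hence $h_\mu(f,\xi)\le h_\mu(f,\eta)+1$, say, after choosing the approximation well. Let $\delta$ be the minimal distance between distinct $B_i$, $i\ge1$. Each element of $\eta_n$ is then determined by an $(n,\delta/2)$-cover, and each $\delta/2$-ball meets at most two sets of $\eta$. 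This gives
$$H_\mu(\eta_n)\le \log\#\eta_n \le \log\big(2^n N(\delta/2,n)\big).$$
Dividing by $n$ yields $h_\mu(f)\le h_{\mathrm{top}}(f)+\log2+1$. Applying this to $f^m$ and using $h_\mu(f^m)=m h_\mu(f)$ and $h_{\mathrm{top}}(f^m)=m h_{\mathrm{top}}(f)$, then dividing by $m$ and letting $m\to\infty$, gives $h_\mu(f)\le h_{\mathrm{top}}(f)$.

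\emph{Lower bound (Misiurewicz).} Fix $\epsilon>0$ and let $E_n$ be maximal $(\epsilon,n)$-separated sets. Define
$$\sigma_n=\frac{1}{\#E_n}\sum_{x\in E_n}\delta_x, \qquad \mu_n=\frac1n\sum_{j<n}f^j_*\sigma_n.$$
By compactness of $X$, a subsequence along which $\frac1n\log\#E_n\to\limsup$ converges weak-$*$ to some $\mu$, and $\mu$ is $f$-invariant by the usual telescoping argument. Choose a finite partition $\xi$ whose elements have diameter $<\epsilon$ and $\mu(\partial\xi)=0$; this is possible since $\mu(\partial B(x,r))=0$ for all but countably many $r$. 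Then each element of $\xi_n$ contains at most one point of $E_n$, so $H_{\sigma_n}(\xi_n)=\log\#E_n$. Fix $q<n$ and split $\{0,\dots,n-1\}$ into blocks of length $q$ with offset $j$, $0\le j<q$, plus at most $2q$ leftover indices. Subadditivity of $H$ gives
$$\log\#E_n\le \sum_{r}H_{f^{j+rq}_*\sigma_n}(\xi_q)+2q\log\#\xi.$$
Summing over $j$, dividing by $n$, and using concavity of $H$ in the measure, I get
$$\frac{q}{n}\log\#E_n\le H_{\mu_n}(\xi_q)+\frac{2q^2}{n}\log\#\xi.$$
Since $\mu(\partial\xi_q)=0$, passing to the limit along the subsequence yields $q\limsup\frac1n\log\#E_n\le H_\mu(\xi_q)$. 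Dividing by $q$ and letting $q\to\infty$ gives $h_\mu(f)\ge \limsup\frac1n\log N(\epsilon,n)$. Finally, letting $\epsilon\to0$ gives $h_{\mathrm{top}}(f) \le \sup_\mu h_\mu(f)$.

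The main obstacle is this lower bound: the block-decomposition bookkeeping and the boundary-zero partition are needed so that $H_{\mu_n}(\xi_q)\to H_\mu(\xi_q)$. The upper bound and the ergodic reduction are standard. Since the result is classical, in the paper it suffices to cite Walters' textbook for these details.
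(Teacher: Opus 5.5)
Your proposal is correct. It is the standard Goodwyn--Misiurewicz argument from Walters' textbook, with the passage to ergodic measures going through the integral (ergodic-decomposition) form of Jacobs' theorem rather than mere finite affinity. The paper gives no proof of its own: it quotes the Variational Principle only as classical background, so citing it as you suggest is exactly what the paper does.

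There are two cosmetic points. First, the paper's definition of separated sets uses $d>\epsilon$, so a maximal $(\delta/2,n)$-separated set yields a cover by \emph{closed} Bowen balls of radius $\delta/2$. Such a ball can meet three elements of $\eta$ when $\delta$ is exactly the minimal distance between the $B_i$; either use radius $\delta/3$, or note that any bound of the form $c^n N(\cdot,n)$ is absorbed by your $f^m$ trick. Second, in the lower bound $E_n$ must be taken of maximal cardinality $N(\epsilon,n)$, not merely maximal under inclusion.
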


This leaves open the question of whether there exists an ergodic probability measure $\mu$ for which $h_{\mathrm{top}}(f) = h_\mu(f)$, and if it exists, whether such measure of maximal entropy is unique. In general neither of these properties needs to hold unless additional properties are assumed, for example that the map is expanding or expansive. While neither assumption holds for typical rational functions,  in this setting the uniqueness of the measure of maximal entropy still holds by the results of Lyubich~\cite{Lyubich83} and Freire--Lopes--Ma\~n\'e~\cite{FLM}, as previously discussed.

The situation is more complicated for a transcendental function $f\colon \C\to \C$, since $\C$ is not compact. 
For continuous functions on non-compact spaces there are several notions of topological entropy; notions that are in general not equivalent.  We refer to \cite{BFP1,henon3} for discussions  on the definition of topological entropy  in the non-compact case. 
We define  $h_{\mathrm{top}}(f) $ as the supremum of the topological entropy of $f|_K$ as $K$ varies among all $f$-invariant compact subsets. As stated in Theorem \ref{inftop} we have  $h_{\mathrm{top}}(f) =\infty$, and by the variational principle applied to the invariant compact subsets it immediately follows that 
 $$
    \sup_{\mu\in \mathcal{M}'_f(X)} h_\mu(f)=\infty.
    $$
     In the current paper we are only considering measure theoretic entropy, so the non-compactness of the space is less of an issue.
     Notice  that  the support of an $f$-invariant measure with infinite entropy is necessarily non-compact, since on an $f$-invariant compact subset $K\subset \C$ the topological entropy can be bounded from above using the lower box dimension of $K$ and the Lipschitz constant of $f|_K$.

Computing the entropy of a measurable dynamical system may in general be very difficult. There is a relatively simple formula available when the dynamical system is induced by a Markov process, a setting which we will discuss in the next subsection.

\subsection{Markov processes}
We review some basic notions on Markov processes.
We consider Markov processes which depend on the following data: a finite or countable state space $S$, and for each pair $(s,t) \in S^2$ a transition probability $P_{st}\geq 0$, where for each fixed $s$ we have that $\sum_tP_{st}=1$. We think of $P_{st}$ as the probability of passing from state $s$ to state $t$; a probability that is independent of the time at which the system is in state $s$.  Notice that the probability of passing from state $s$ to state $t$ after $n$ steps is given by $(P^n)_{st}$, where $P^n$ denotes the $n$-th power of the (possibly infinite) matrix $(P_{st})$.
Let $\nu$ be an initial probability measure on $S$, that is, we interpret $\nu(t)$ as the probability of being in the state $t$ at time 0.
Then if $n\geq 1$, the probability  of being in the state $t$ at time $n$ is given by 
$$\nu^{(n)}(t)= \sum_{s \in S} \nu(s) \cdot (P^n)_{st}.$$
We say that a probability measure $\mu$ on $S$ is \emph{stationary} if
$\mu^{(1)}=\mu$ (and thus $\mu^{(n)}=\mu$ for all $n\geq 1$), that is
 for all $t\in S$,
\begin{equation}\label{eq:stationary}
\mu(t) = \sum_{s \in S} \mu(s) \cdot P_{st}.
\end{equation}

A  Markov process with a probability measure $\mu$ on $S$ induces a probability space, called the \emph{canonical realization} of the process, as follows:
consider the Borel $\sigma$-algebra on the space $S^{\mathbb N}$ (where as topology we consider the product topology w.r.t. the discrete topology on $S$).  A \emph{cylinder set}   is a subset of $S^{\mathbb{N}}$ of the form
$$
C^{m_1,\dots ,m_k}_{t_1,\dots, t_k} = \{(x_0, x_1, \ldots)\in S^{\mathbb{N}} : x_{m_i} = t_{i} \; \mathrm{for} \; 1\le i \le k\},
$$
where $m_j$ are in $\N$. 
In case $m_1=0, m_2=1,\dots, m_k={k-1}$ we denote $C^{0,\dots ,k-1}_{t_1,\dots, t_{k}}$ simply by  $C_{t_1,\dots, t_{k}}$.
The probability measure $\mathbb{P}^\mu$ on $S^{\mathbb N}$ is defined by
\begin{equation}\label{eq:cylinder measure}
\mathbb{P}^\mu (C_{s_0,\dots ,s_{k}}) = \mu(s_0) \cdot \prod_{j=0}^{k-1} P_{s_j s_{j+1}}.
\end{equation}
One can prove that such a measure exists and is unique, and it is easy to see that the measure $\mathbb{P}^\mu$ is invariant with respect to the left shift $\sigma\colon S^{\mathbb N}\to S^{\mathbb N}$ if and only if $\mu$ is stationary for the Markov process.
Given a random variable $T$ on $S^\N$, we  denote by $\mathbb{E}^\mu[T]:=\int_{S^\N}Td\mathbb{P}^\mu$
 its expected value with respect to the probability measure $\mathbb{P}^\mu$.
Given a state $s\in S$, let $T_s\colon S^\N\to \N$ be the random variable $$T_s(x):=\min\{n\geq 1:x_n=s\}.$$
For all $s,t\in S$ we denote $m_{st}:=\mathbb{E}^{\delta_s}[T_t],$ thus    $m_{ss}$ is the expected  return time to $s$, and if $s\neq t$, $m_{st}$ is the expected first passage time from $s$ to $t$.
 We will need  the following lemma giving upper estimates for the expected first passage  times to  a base state that we denote by 0.
\begin{lemma} \label{alternativeFoster}
Fix a base-state $0\in S$.
Let  $(y_s)\in (\R_{\geq 0})^{S\setminus\{0\}}$ be such that
\begin{equation}\label{Fosterlemma}
\sum_{t\in S\setminus\{0\}} P_{st} y_t \leq y_s-1  \quad \mathrm{for} \; \; s \neq 0.
\end{equation}
Then for every $s\neq 0$ we have $ m_{s0} \leq y_s.$
\end{lemma}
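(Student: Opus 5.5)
We use a supermartingale argument, made rigorous through truncation. Fix $s\neq 0$ and work under $\mathbb{P}^{\delta_s}$. Write $T=T_0$ for the first passage time to $0$, and put $y_0:=0$, so that $y$ is defined on all of $S$. The hypothesis \eqref{Fosterlemma} then reads
$$\sum_{t\in S}P_{ut}\,y_t\le y_u-1 \qquad \text{for every } u\neq 0,$$
since the term $t=0$ contributes nothing. We aim to show that the process
$$Z_n:=y_{x_{n\wedge T}}+(n\wedge T)$$
satisfies $\mathbb{E}^{\delta_s}[Z_n]\le y_s$ for all $n$. Once this is known, we conclude with monotone convergence: $n\wedge T\uparrow T$ and $y\ge 0$, so
$$m_{s0}=\mathbb{E}^{\delta_s}[T]=\lim_n \mathbb{E}^{\delta_s}[n\wedge T]\le \limsup_n \mathbb{E}^{\delta_s}[Z_n]\le y_s.$$
In particular this also shows that $T<\infty$ almost surely.

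The key step is the one-step inequality $\mathbb{E}^{\delta_s}[Z_{n+1}]\le \mathbb{E}^{\delta_s}[Z_n]$, which we verify by splitting into two events.

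On the event $\{T\le n\}$ we have $Z_{n+1}=Z_n$, since the process is already stopped.

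On the event $\{T>n\}$ we have $x_n\neq 0$, and by the Markov property (using the cylinder formula \eqref{eq:cylinder measure}), conditioning on the path $(x_0,\dots,x_n)$ gives
$$\mathbb{E}\big[y_{x_{n+1}}+n+1 \,\big|\, x_0,\dots,x_n\big]=\sum_t P_{x_n t}\,y_t+n+1\le y_{x_n}+n=Z_n.$$
Here we use the rewritten hypothesis at $u=x_n$. Note that if $x_{n+1}=0$, then $T=n+1$ and $Z_{n+1}=y_0+n+1=n+1$, which is consistent with the convention $y_0=0$.

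To make this rigorous without using conditional expectations abstractly, we sum over cylinders $C_{s,s_1,\dots,s_n}$ with all $s_i\neq 0$. All terms are nonnegative, so the infinite sums over $t$ can be rearranged freely by Tonelli.

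The main obstacle is integrability: $\mathbb{E}[Z_n]$ could a priori be infinite, because $y$ is unbounded and $S$ is countable. We handle this by induction. We have $\mathbb{E}[Z_0]=y_s<\infty$, and the one-step inequality, computed as a sum of nonnegative terms, shows that $\mathbb{E}[Z_{n+1}]\le \mathbb{E}[Z_n]$ holds as an inequality in $[0,\infty]$. Hence every $\mathbb{E}[Z_n]$ is finite and bounded by $y_s$, which completes the argument.
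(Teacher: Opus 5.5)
Your proof is correct and is essentially the paper's argument in stopped-supermartingale language. Your quantity $\mathbb{E}^{\delta_s}[Z_k]$ equals exactly the expression $\sum_{n=1}^k n\,\mathbb{P}^{\delta_s}(T_0=n)+\sum_{s_1,\dots,s_k\neq 0}P_{ss_1}\cdots P_{s_{k-1}s_k}(k+y_{s_k})$, which the paper shows by induction to be at most $y_s$, and your one-step inequality is precisely that induction step; your monotone-convergence ending also makes explicit that $T_0<\infty$ almost surely, which the paper leaves implicit when it writes $m_{s0}=\sum_n n\,\mathbb{P}^{\delta_s}(T_0=n)$.
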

\begin{proof}
We have $$m_{s0}=\mathbb{E}^{\delta_s}[T_0]=\sum_{n\geq 1}n\P^{\delta_s}(T_0=n),$$ and 
$$\P^{\delta_s}(T_0=n)=\sum_{{s_1,\dots,s_{n-1}\neq 0}} P_{ss_1}\dots P_{s_{n-1}0}.$$
We show by induction that for all $k\geq 1$ $$y_s\geq \sum_{n=1}^kn\P^{\delta_s}(T_0=n)+ \sum_{s_1,\dots,s_k\neq 0} P_{ss_1}\dots P_{s_{k-1}s_k}(k+y_{s_k}).$$ Indeed by \eqref{Fosterlemma}
$$k+y_{s_k}\geq k+1+\sum_{s_{k+1}\neq 0}P_{s_ks_{k+1}}y_{s_{k+1}},
$$
 and thus, using 
 $\sum_{s_{k+1}\in S}P_{s_ks_{k+1}}=1$,
$$ 
\sum_{s_1,\dots,s_k\neq 0} P_{ss_1}\dots P_{s_{k-1}s_k}(k+y_{s_k}) \geq (k+1)\P^{\delta_s}(T_0=k+1)+\sum_{s_1,\dots,s_{k+1}\neq 0} P_{ss_1}\dots P_{s_{k}s_{k+1}}(k+1+y_{s_{k+1}}).$$

\end{proof}

 A Markov  process is \emph{irreducible} if for every $s,t\in S$ there exists $n\geq 1$ such that
 $(P^n)_{st}>0$.
If the Markov process is irreducible, then  there exists at most one stationary measure on $S$.
When the state space is finite, irreducibility is sufficient to guarantee that the unique stationary measure exists. However, when the state space is countably infinite, a second condition enters into play:
an irreducible Markov process is  \emph{positive recurrent} if the expected first return time $m_{ss}$  to state $s$  is finite (this condition does not depend on the state $s$). 
This implies that the expected first-passage time $m_{st}$  is finite for all states $s\neq t\in S$.
Moreover, if we fix a base-state $0\in S$, then for all $s\in S$ we have
\begin{equation}\label{Fosternew}m_{s0}=1+\sum_{t\in S\setminus\{0\}} P_{st} m_{t0}.
\end{equation}

 An irreducible Markov process admits a unique stationary  measure $\mu$ if and only if it is positive recurrent, and for all $s\in S$,
 $$\mu(s)=\frac{1}{m_{ss}}>0.$$

 A useful sufficient condition for an irreducible Markov process to be positively recurrent is given by Foster's Criterium:
\begin{theorem}[Theorem 2 in \cite{Foster}] \label{thm:Foster}
An irreducible Markov process on a countable state space $S$ with a base state $0\in S$ is positive recurrent if there exists a non-negative vector $y=(y_s)$ for which
\begin{equation}\label{eq:Foster1}
\sum_{t\in S\setminus\{0\}} P_{st} y_t \le y_s -1 \quad \mathrm{for} \; \; s \neq 0,
\end{equation}
and such that
\begin{equation}\label{eq:Foster2}
\sum_{t \in S \setminus\{0\}} P_{0t} y_t < \infty.
\end{equation}
\end{theorem}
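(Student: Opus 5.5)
We need to prove Foster's criterion: an irreducible chain with a nonnegative vector $y$ satisfying \eqref{eq:Foster1} and \eqref{eq:Foster2} is positive recurrent. The plan is to bound the expected return time $m_{00}$ by combining Lemma \ref{alternativeFoster} with a first-step decomposition at the base state.

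First, apply Lemma \ref{alternativeFoster}. Condition \eqref{eq:Foster1} is exactly hypothesis \eqref{Fosterlemma}, restricted to the coordinates $y_s$ with $s\neq 0$, which are nonnegative. Hence $m_{s0}\le y_s$ for every $s\neq 0$.

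Next, decompose the return time to $0$ according to the first step. Starting from $0$, either the chain moves to $0$ immediately, so $T_0=1$, or it moves to some $t\neq 0$. In the second case, by the Markov property the remaining time to reach $0$ has the law of $T_0$ under $\P^{\delta_t}$. This gives
$$m_{00}=\mathbb{E}^{\delta_0}[T_0]=P_{00}\cdot 1+\sum_{t\neq 0}P_{0t}\,(1+m_{t0}) = 1+\sum_{t\neq0}P_{0t}m_{t0}.$$
To justify this rigorously, I will write $\P^{\delta_0}(T_0=n)$ as a sum over paths $0,s_1,\dots,s_{n-1},0$ with all $s_i\neq 0$, exactly as in the proof of Lemma \ref{alternativeFoster}. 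Splitting off the first transition $P_{0s_1}$ and rearranging the nonnegative series (Tonelli) then yields the identity, with values in $[0,\infty]$. Inserting the bound from the first step gives
$$m_{00}\le 1+\sum_{t\neq 0}P_{0t}y_t<\infty$$
by \eqref{eq:Foster2}.

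Finally, since $m_{00}<\infty$, state $0$ is positive recurrent. The preliminaries state that positive recurrence does not depend on the chosen state for an irreducible process, so the whole process is positive recurrent, and by the stated facts the stationary measure exists and is unique.

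The only delicate point is the first-step identity. We must make sure it holds as an identity between possibly infinite nonnegative quantities, so that we do not presuppose finiteness of $m_{00}$. Writing everything as sums of products of nonnegative transition probabilities avoids this issue entirely. One more subtlety: Lemma \ref{alternativeFoster} also implicitly gives $\P^{\delta_s}(T_0<\infty)=1$. Here that follows anyway from $m_{00}<\infty$, which forces $T_0<\infty$ almost surely.
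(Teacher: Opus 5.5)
Your proposal is correct and follows essentially the same argument as the paper's remark right after the statement. There, Lemma~\ref{alternativeFoster} gives $m_{s0}\le y_s$ for $s\neq 0$, and the first-step identity $m_{00}-1=\sum_{t\neq 0}P_{0t}m_{t0}$ then yields $m_{00}\le 1+\sum_{t\neq 0}P_{0t}y_t<\infty$. Your extra care about reading the identity in $[0,\infty]$ and about $\P^{\delta_s}(T_0<\infty)=1$ (which follows from the bound $y_s\ge k\,\P^{\delta_s}(T_0>k)$ implicit in the lemma's induction) makes explicit what the paper leaves tacit.
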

Notice that Lemma \ref{alternativeFoster} gives a short alternative proof of Foster's Criterium, since 
$$m_{00}-1=\sum_{t \in S \setminus\{0\}} P_{0t} m_{t0}\leq \sum_{t \in S \setminus\{0\}} P_{0t} y_t<\infty.$$
We remark that condition (\ref{eq:Foster2}) is empty if there are only finitely many non-zero transition probabilities $P_{0t}$.


\begin{remark} 
Assume that the Markov process is irreducible and positive recurrent, and let $\mu$ be its unique stationary measure. Then
the  measure-preserving dynamical system $(S^\mathbb{N},\sigma,\mathbb{P}^\mu)$ is ergodic.
\end{remark}

\begin{defn}
Given an irreducible Markov process, a state $s$ is called  \emph{aperiodic} if $(P^n)_{ss}>0$ for all sufficiently large $n$.
One can show that if a state is aperiodic, then every state is, and then the Markov process is called  \emph{aperiodic}.
A  Markov process which is  aperiodic, irreducible and positive recurrent is called \emph{ergodic}. This terminology is widely adopted although it may create some  confusion, indeed as the previous remark shows aperiodicity is not necessary to obtain the ergodicity of the associated dynamical system.

\end{defn}

We will be interested in ergodic Markov processes mainly due to the following  result.
\begin{theorem}\label{theoremergodic}
Assume that the Markov process is ergodic and let $\mu$ be its unique stationary measure. Let $\nu$ be a probability measure on $S$. Then for all $t\in S$,
\begin{equation}\label{limitmeasure}\mu(t)=\lim_{n\to+\infty}\nu^{(n)}(t)=\lim_{n\to+\infty}\sum_{s\in S}\nu(s)(P^n)_{st}.\end{equation} 
In particular for all $s\in S$ one has
$\mu(t)=\lim_{n\to+\infty}(P^n)_{st}.$
Moreover the  measure-preserving dynamical system $(S^\mathbb{N},\sigma,\mathbb{P}^\mu)$ is \emph{mixing}.
\end{theorem}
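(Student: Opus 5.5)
The plan is to prove the convergence statement \eqref{limitmeasure} by a coupling argument, and then deduce mixing from convergence on cylinder sets. First I reduce to the case $\nu=\delta_s$. Indeed,
$$\nu^{(n)}(t)=\sum_s \nu(s)(P^n)_{st},$$
and the terms are bounded by $\nu(s)$, which is summable. So dominated convergence gives the general case once we know $(P^n)_{st}\to\mu(t)$ for every $s$.

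For the pointwise limit I would use the classical Doeblin coupling.
\begin{itemize}
\item Run two independent copies $(X_n)$ and $(Y_n)$ of the chain, with $X_0=s$ and $Y_0\sim\mu$.
\item The product chain on $S\times S$ has transition probabilities $P_{ab}P_{a'b'}$. It is irreducible: this is the one place aperiodicity enters. For any states, $(P^n)_{ab}>0$ and $(P^n)_{a'b'}>0$ hold simultaneously for all large $n$, by irreducibility plus aperiodicity of a single state.
\item The product chain has the stationary measure $\mu\otimes\mu$. By the characterization recalled above (existence of a stationary measure for an irreducible chain is equivalent to positive recurrence), it is positive recurrent. In particular it hits the diagonal point $(t,t)$ almost surely, so the meeting time $\tau=\min\{n: X_n=Y_n\}$ is almost surely finite.
\item Define $Z_n=X_n$ for $n<\tau$ and $Z_n=Y_n$ for $n\ge\tau$. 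By the strong Markov property, $Z$ has the law of the chain started at $s$.
\end{itemize}
It follows that
$$|(P^n)_{st}-\mu(t)|=|\P(Z_n=t)-\P(Y_n=t)|\le \P(\tau>n)\to 0.$$

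For mixing, it suffices to check
$$\P^\mu(A\cap\sigma^{-n}B)\to\P^\mu(A)\P^\mu(B)$$
for cylinders $A=C_{s_0,\dots,s_k}$ and $B=C_{t_0,\dots,t_l}$. Cylinders generate the $\sigma$-algebra and form a semi-algebra, so the general case follows by the standard approximation argument: finite disjoint unions, then approximation in measure of arbitrary Borel sets. For $n>k$, formula \eqref{eq:cylinder measure} and the Markov property give
$$\P^\mu(A\cap\sigma^{-n}B)=\mu(s_0)\prod_j P_{s_js_{j+1}}\cdot (P^{n-k})_{s_kt_0}\cdot\prod_i P_{t_it_{i+1}}.$$
By the first part $(P^{n-k})_{s_kt_0}\to\mu(t_0)$, so the right-hand side tends to $\P^\mu(A)\P^\mu(B)$.

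The main obstacle is the coupling step: showing that the product chain is irreducible and recurrent, so that $\tau<\infty$ almost surely. Irreducibility needs the lemma that aperiodicity gives $(P^n)_{ab}>0$ for all large $n$. This follows from the return set $\{n:(P^n)_{aa}>0\}$ containing all large $n$, combined with a fixed path from $a$ to $b$. Recurrence relies on the stationary-measure characterization of positive recurrence stated earlier.
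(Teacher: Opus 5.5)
Your proof is correct. It is the standard textbook argument: Doeblin coupling gives $(P^n)_{st}\to\mu(t)$, with aperiodicity used to make the product chain irreducible and the stationary measure $\mu\otimes\mu$ used to make it positive recurrent, and a cylinder computation then gives mixing.

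The paper gives no proof of this theorem; it quotes it as the classical convergence theorem for ergodic chains on a countable state space. So there is no argument of the paper's to compare against. Your mixing computation follows the same pattern the paper uses later in Proposition~\ref{equiprop} and Theorem~\ref{lido}: reduce to cylinders and apply $(P^{n-k})_{s_k t_0}\to\mu(t_0)$. With the paper's definition of aperiodicity ($(P^n)_{ss}>0$ for all large $n$), the lemma you flag as the main obstacle is immediate from the definition plus a fixed path.

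One small inaccuracy needs fixing. When $S$ is infinite, the point-cylinders $C_{s_0,\dots,s_k}$ do not form a semi-algebra, because the complement of $C_{s_0}$ is an infinite disjoint union of cylinders. Your conclusion still holds by either of two routes:
\begin{itemize}
\item Pass to the rectangles $\{x_0\in E_0,\dots,x_k\in E_k\}$, which do form a semi-algebra. Get there from point-cylinders by countable additivity and dominated convergence, one union at a time: first dominate by $\P^\mu(B_j)$, using invariance, then by $\P^\mu(A_i)$.
\item Approximate every Borel set in $\P^\mu$-measure by finite disjoint unions of point-cylinders, and use $|\P^\mu(A\cap\sigma^{-n}B)-\P^\mu(A'\cap\sigma^{-n}B')|\le\P^\mu(A\triangle A')+\P^\mu(B\triangle B')$. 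This inequality relies on the $\sigma$-invariance of $\P^\mu$.
\end{itemize}
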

It is easy to see that \eqref{limitmeasure} fails  without assuming aperiodicity: consider for example a two-state  Markov process with transition matrix
$P=\begin{bmatrix}
0&1\\
1&0
\end{bmatrix}.$
Then for all $n\geq 0$ we have $P^{2n}=I$ and $P^{2n+1}=P$.
\begin{remark}
As a consequence of Theorem \ref{theoremergodic}, we will show in Section \ref{sectionequidistribution} that, if the Markov process is ergodic, the measure-preserving dynamical system $(S^\mathbb{N},\sigma,\mathbb{P}^\mu)$  satisfies equidistribution of preimages and periodic points. For this reason in what follows we will work with ergodic Markov processes.
\end{remark}

We will construct ergodic Markov processes by explicitly constructing transition probabilities that provide a desired stationary measure. Then we will use Foster's Criterium to argue that small perturbations of the transition probabilities still give ergodic Markov processes with a unique stationary measure.
The measure theoretic entropy $h_{\mathbb{P}^\mu}(\sigma)$ of the dynamical system  $(S^{\mathbb N}, \sigma, \mathbb{P}^\mu)$ equals the \emph{entropy rate} of the Markov process, that is
\begin{equation}\label{eqn:entropy rate}
 h_{\mathbb{P}^\mu}(\sigma)=- \sum_{s,t} \mu(s) P_{st} \log(P_{st}),
\end{equation}
where the sum runs over all pairs $(s,t)$ for which $P_{st} > 0$, see for example formula (4.4.1) on page 175 of \cite{KatokHasselblatt}.
This simple formula will be used to determine that the entropy of the measures that we construct for the Baker--Bishop maps and the disjoint-type maps is infinite.

\section{Markov processes with infinite entropy rate}\label{sec:Markov}

In this section we will define explicit ergodic Markov processes with infinite entropy rate. The challenge lies not with defining such processes, but in constructing processes that can later be embedded into the desired complex dynamical systems.

We will first construct a simple model Markov process, for which we can easily compute the stationary measure and the entropy rate and which can easily be embedded into different complex dynamical systems. We will then observe that   the support of the measure when it is pushed forward to the complex plane is not the entire Julia set. Finally, we observe that by perturbing the Markov process in such a way  that many more transition probabilities become small but non-zero, we obtain new ergodic measures on $S^\mathbb{N}$ that still  can be pushed forward to the complex plane, and whose support is in fact equal to the entire Julia set.

\subsection{The model Markov process.}\label{sec:model markov process}
In this subsection we introduce the model Markov process with infinite entropy which we will  modify in the next subsection.
Let $k_2, k_3, \ldots$ be an increasing sequence of natural numbers, write $q_2 = 1$, and for $d \ge 2$ we recursively define $q_{d+1} = q_{d} + k_d$.

We define the state space $S$ by
$$
S := S_0\cup \bigcup_{d=2}^{\infty} S_d,
$$
where the sets $S_d$ are finite subsets of $\mathbb \N^2$ given by $S_0 = \{(0,1), (0,2)\}$ and for $d \ge 2$:

$$
S_d := \{(i,j)\; : \; q_{d} \le i < q_{d+1}, \quad 1 \le j \le d\}.
$$

We will define transition probabilities $P_{st}$ between states $s= (i_1, j_1)$ and $t = (i_2, j_2)$ satisfying the following three axioms:
\begin{itemize}
    \item[(A1)] $P_{st} > 0$ if and only if one of the following holds:
    \begin{enumerate}
        \item  $i_2 = i_1+1$,
        \item  $i_1 = i_2 = 0$, or
        \item  $(i_1, j_1) = (q_d, 1)$ for some $d \ge 3$ and $i_2 = 0$.
    \end{enumerate}
    \item[(A2)] Unless $i_2 = q_d$ for some $d \ge 3$, the transition probability $P_{st}$ is independent of $j_2$. Moreover, when $i_2 = q_d$ for some $d \ge 3$ the transition probabilities to states $(i_2, 2), \ldots, (i_2, d)$ are equal.
    \item[(A3)] Unless $i_1 = q_d$ for some $d \ge 3$, the transition probability $P_{st}$ is independent of $j_1$. Moreover, when $i_1 = q_d$ for some $d \ge 3$ the transition probabilities from states $(q_d,2), \ldots, (q_d, d)$ to a state $(q_d+1, j_2)$ are equal.
\end{itemize}

For all $ q\in \N $ define the \emph{column}    $$\mathscr{C}_q:=\{(q,j)\colon j=1,\dots, d\}.$$ 
For all $d\geq 3$ we call both the column $\mathscr{C}_{q_d}$  and the state $(q_d,1)$  \emph{special}. Notice that the column $\mathscr{C}_{q_2}=\mathscr{C}_{1}$ and the state  $(q_2,1)=(1,1)$ are not special.

\begin{figure}[h]
  \begin{tikzpicture}[
    dot/.style={circle, fill=black, inner sep=1.5pt},
    circdot/.style={circle, draw=black, inner sep=3pt},
    arrow/.style={->, thick, shorten >=3pt, shorten <=3pt}
]

\node[dot] (a1) at (0,0) {};
\node[dot] (a2) at (0,-1) {};

\node[dot] (b1) at (1,0) {};
\node[dot] (b2) at (1,-1) {};

\node at (2,-0.5) {$\cdots$};

\node[dot] (c1) at (3,0) {};
\node[dot] (c2) at (3,-1) {};

\node[dot] (a1) at (4,0.5) {};
\node[dot] (a2) at (4,-0.5) {};
\node[dot] (a2) at (4,-1.5) {};

\node[dot] (a1) at (5,0.5) {};
\node[dot] (a2) at (5,-0.5) {};
\node[dot] (a2) at (5,-1.5) {};

\node at (6,-0.5) {$\cdots$};

\node[dot] (a1) at (7,0.5) {};
\node[dot] (a2) at (7,-0.5) {};
\node[dot] (a2) at (7,-1.5) {};

\node[dot] (a1) at (8, 1) {};
\node[dot] (a2) at (8, 0) {};
\node[dot] (a2) at (8,-1) {};
\node[dot] (a2) at (8,-2) {};

\node[dot] (a1) at (9, 1) {};
\node[dot] (a2) at (9, 0) {};
\node[dot] (a2) at (9,-1) {};
\node[dot] (a2) at (9,-2) {};

\node at (10,-0.5) {$\cdots$};


\node[circdot] at (4,-1.5) {};
\node[circdot] at (8,-2) {};


\node at (0,2) {$0$};

\node at (1,2) {$q_2=1$};

\node at (4,2) {$q_3$};

\node at (8,2) {$q_4$};


\draw[arrow] (0.25,-0.5) -- (0.75,-0.5);
\draw[arrow] (1.25,-0.5) -- (1.75,-0.5);
\draw[arrow] (2.25,-0.5) -- (2.75,-0.5);
\draw[arrow] (3.25,-0.5) -- (3.75,-0.5);
\draw[arrow] (4.25,-0.5) -- (4.75,-0.5);
\draw[arrow] (5.25,-0.5) -- (5.75,-0.5);
\draw[arrow] (6.25,-0.5) -- (6.75,-0.5);
\draw[arrow] (7.25,-0.5) -- (7.75,-0.5);
\draw[arrow] (8.25,-0.5) -- (8.75,-0.5);
\draw[arrow] (9.25,-0.5) -- (9.75,-0.5);


\node at (0,-3.5) {$S_0$};

\draw[decorate,decoration={brace,mirror,amplitude=6pt}]
    (0.8,-3) -- (3.2,-3);
\node at (2,-3.5) {$S_2$};

\draw[decorate,decoration={brace,mirror,amplitude=6pt}]
    (3.8,-3) -- (7.2,-3);
\node at (5.5,-3.5) {$S_3$};

\draw[decorate,decoration={brace,mirror,amplitude=6pt}]
    (7.8,-3) -- (11.2,-3);
\node at (9.5,-3.5) {$S_4$};

\end{tikzpicture}

\caption{An illustration of the model Markov Chain. The black dots indicate the states $(q,j)$, where some of the first coordinates are indicated above the columns. The encircled dots correspond to the special states $(q_d,1)$ for $d \ge 3$. The arrows signify the positive transition probabilities from all states in a column to all states in the next column. Note that the positive transition probabilities from each of the special states $(q_d,1)$ to the states in $S_0$ are not visible in the illustration.}
\label{fig:modelmarkov}
\end{figure}

\begin{remark}\label{remarkgraph}
Note that axiom (A1) sets the underlying directed graph, and  that every  Markov process with this underlying directed graph is irreducible.
Notice also that the directed graph  is locally finite: from each state $s$ there are only finitely many states $t$ for which $P_{st} > 0$.  Since the directed graph contains loops from states in $S_0$ to itself, the Markov process is also aperiodic. Hence if we show that it has a stationary measure, then it is necessarily positive recurrent and thus ergodic.
\end{remark}

\begin{remark}
If $\nu$ is a stationary measure for a Markov process satisfying the axioms (A1)-(A3), then for every $d\geq 2$ every state of $S_d$ not lying in a special column must have the same mass, which we denote by $m_d>0$. Analogously, if $d\geq 3$, the non-special states in a special column must have the same mass, which we denote $m_d'>0$.

\end{remark}

Instead of first defining the transition probabilities and then determining the corresponding stationary measure, we will take the opposite route:

\begin{prop} [Construction of the model Markov process]\label{prop:existence}
Let $c_2 , c_3, \ldots$ be a sequence of strictly positive real numbers satisfying
$$
\sum_{d\geq 2} c_d = \frac{1}{2}.
$$ 
and let $k_2, k_3, \ldots$ be  a sequence of positive integers increasing sufficiently fast.
 Then there exists a unique probability measure $\nu$ on the set $S$ 
 which is a stationary measure for an ergodic Markov process whose transition probabilities satisfy axioms (A1) - (A3), and
 such that 
\begin{itemize}
\item $\nu(0,j) = 1/4$ for $j = 1,2$;  
\item $\nu(S_d) = c_d$ for each $d\geq 2$;
\item for all $d\geq 3$ every non-special state has the same mass (thus $m_d=m'_d$).
 \end{itemize}
 Furthermore the Markov process is uniquely determined.
\end{prop}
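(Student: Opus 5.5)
The approach is to reverse-engineer the chain: prescribe the stationary measure $\nu$, read off the probability \emph{flows} $F(s,t):=\nu(s)P_{st}$ forced by the directed graph of (A1) and the symmetries (A2)--(A3), and set $P_{st}=F(s,t)/\nu(s)$. Stationarity \eqref{eq:stationary} says exactly that the total inflow into each state equals its mass. Row-stochasticity says that the total outflow from each state equals its mass. Once such a $\nu$ exists, Remark~\ref{remarkgraph} gives irreducibility and aperiodicity, and the existence of the stationary probability measure gives positive recurrence, so the process is ergodic.

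\emph{Step 1: the masses.} By the preceding remark and the third bullet, every non-special state of $S_d$ has mass $m_d$. Let $x_d$ be the mass of the special state $(q_d,1)$ for $d\ge 3$. Since $S_2$ has no special column, $\nu(S_2)=c_2$ forces $m_2=c_2/(2k_2)$. The only edges into a column $\mathscr{C}_q$ with $q\ge 1$ come from $\mathscr{C}_{q-1}$, and every state of $\mathscr{C}_{q-1}$ except a special one sends all its mass to $\mathscr{C}_q$. Hence the column masses are constant along non-special stretches. At the special column $\mathscr{C}_{q_{d+1}}$, the inflow equals the mass $d\,m_d$ of the last column of $S_d$, which is non-special if $k_d\ge 2$. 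This gives
$$x_{d+1}+d\,m_{d+1}=d\,m_d .$$
The outflow of $\mathscr{C}_{q_{d+1}}$ to $\mathscr{C}_{q_{d+1}+1}$ must equal $(d+1)m_{d+1}$. The rest, namely $x_{d+1}-m_{d+1}$, must flow from the special state to $S_0$.

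Combining these with $\nu(S_{d+1})=c_{d+1}=((d+1)k_{d+1}-1)m_{d+1}+x_{d+1}$ gives the recursion
$$m_{d+1}=\frac{c_{d+1}-d\,m_d}{(d+1)k_{d+1}-d-1},\qquad x_{d+1}=d\,m_d-d\,m_{d+1}.$$
Thus all masses are uniquely determined.

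\emph{Step 2: positivity, where $k_d$ growing fast is used.} We need three inequalities:
\begin{enumerate}
\item $m_{d+1}>0$, i.e.\ $c_{d+1}>d\,m_d$;
\item $x_{d+1}>0$;
\item the flow $x_{d+1}-m_{d+1}$ to $S_0$ is positive, i.e.\ $d\,m_d>(d+1)m_{d+1}$, which also implies (2).
\end{enumerate}
Since $m_d\approx c_d/(d k_d)$, condition (1) holds once $k_d>c_d/c_{d+1}$ (roughly). Condition (3) holds once $k_{d+1}$ is large compared with $c_{d+1}/(d\,m_d)$. Both are achieved by choosing $k_{d+1}$ inductively after $m_d$ is known. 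This bookkeeping, i.e.\ checking that the inductive choice of $k_{d+1}$ keeps all flows strictly positive as (A1) demands, is the only delicate point, and I expect it to be the main obstacle, though it is elementary.

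\emph{Step 3: the transition probabilities and $S_0$.} Within a non-special column every state has equal mass. By (A2)--(A3) each state splits its outflow equally among the states of the next column, so $P_{st}=1/(\#\mathscr{C}_{q+1})$. The non-special states of a special column, and the forward part of the special state, are handled similarly. The values are forced by requiring each state of $\mathscr{C}_{q_{d+1}+1}$ to receive mass $m_{d+1}$, together with the equality constraints of (A3). For $S_0$, the total inflow from the special states is $\sum_{d\ge3}(x_d-m_d)$. Conservation of total mass ($\nu(S_0)=1/2$, $\sum c_d=1/2$) shows that this equals $2m_2$ plus the internal $S_0$ flow balance. By (A2)--(A3) the flows within $S_0$ and from $S_0$ to $\mathscr{C}_1$ are independent of the indices. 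Therefore each $(0,j)$ sends $m_2$ to $\mathscr{C}_1$ and $1/4-m_2$ to $S_0$, split equally, and the special states split their excess equally between $(0,1)$ and $(0,2)$. This keeps both masses equal to $1/4$ and requires $m_2<1/4$, which holds since $c_2<1/2$.

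\emph{Step 4: verification and uniqueness.} Every choice above was forced by (A1)--(A3), the prescribed masses and flow conservation, so the Markov process and $\nu$ are unique. To finish, one checks that $\sum_t P_{st}=1$ and that \eqref{eq:stationary} holds at each state, which is immediate from the flow bookkeeping.
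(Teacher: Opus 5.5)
Your proposal is correct and is essentially the paper's proof: your recursion for $m_d$ is the paper's definition of $c'_d$ rewritten via $d\,m_d=c'_d/k_d$, your positivity conditions (1) and (3) are exactly $c'_{d+1}>0$ and \eqref{ensurepositive}, and your balance at $S_0$ replaces the paper's telescoping identity $\sum_{d\ge 3}\bigl(\frac{c'_{d-1}}{k_{d-1}}-\frac{c'_d}{k_d}\bigr)=\frac{c'_2}{k_2}$ with a global mass-conservation argument. One small slip in Step 3: the last column of $S_d$ does not split its outflow equally into the special column $\mathscr{C}_{q_{d+1}}$; (A2)--(A3) and the prescribed masses force $P_{st}=x_{d+1}/(d\,m_d)$ for $t=(q_{d+1},1)$ and $P_{st}=m_{d+1}/(d\,m_d)$ for the other states of that column, which is what your flow bookkeeping actually requires.
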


\begin{proof}
Set $c'_2:=c_2$, and define inductively $c_3',c_4'\ldots$ such that 
\begin{equation}\label{defc'}
c_d= c_d'+\frac{c_{d-1}'}{k_{d-1}}-\frac{c_d'}{k_d},
\end{equation}
that is $c_d'(1-1/k_d)=c_d-c_{d-1}'/k_{d-1}$.
Notice that every  $c_d'$ is strictly positive if the sequence   $(k_d)$ increases  sufficiently fast, and that we can moreover ensure that for all $d\geq 3$,
\begin{equation}\label{ensurepositive}\frac{c_{d-1}'}{k_{d-1}}-\frac{c_d'}{k_d}>0
\end{equation} and thus $c'_d<c_d$.
We will construct a  measure $\nu$  on $S$ 
 which is a stationary measure for a Markov process whose transition probabilities satisfy axioms (A1) - (A3), and
 such that 
\begin{itemize}
\item $\nu(0,j) = 1/4$ for $j = 1,2$;  
\item  for all $d\geq 2$, every non-special state of $S_d$ has mass $\frac{c'_d}{dk_d}$.
 \end{itemize}
Notice that the only masses to be determined are the masses of the special states. The mass of  the state $(q_d,1)$, where $d\geq 3$, is easy to determine using stationarity of $\nu$ on the special column $\mathscr{C}_{q_d}$. Indeed we have that necessarily
$$
\sum_i\nu(q_d-1,i)= \frac{c_{d-1}'}{ k_{d-1}} =\sum_i\nu(q_d,i)=(d-1)\frac{c_d'}{d k_d} +\nu(q_d,1),
$$
and thus 
$$\nu(q_d,1) =\frac{c_{d-1}'}{ k_{d-1}}-(d-1)\frac{c_d'}{d k_d},$$
which is positive thanks to \eqref{ensurepositive}. 
Stationarity also allows us to compute the mass $x_d$ that goes from the special state $(q_d,1)$, where $d\geq 3$, to $S_0$. Indeed $x_d$ is equal to the mass of the special column $\mathscr{C}_{q_d}$ minus the mass of the  column $\mathscr{C}_{q_d+1}$, that is
$$x_d=\frac{c'_{d-1}}{k_{d-1}}-\frac{c'_d}{k_d},$$
which again  is positive thanks to  \eqref{ensurepositive}.   Hence the total amount of mass which enters $S_0$ from  the special states $(q_d,1)_{d\geq 3}$ is
$$\sum_{d\geq 3} \frac{c'_{d-1}}{k_{d-1}}-\frac{c'_d}{k_d}=\frac{c'_2}{k_2}.$$
Notice that these data determine uniquely the measure $\nu$ and the transition probabilities of the Markov process. It follows from \eqref{defc'} that $\nu(S_d)=c_d$ for all $d\geq 2$, and thus $\nu$ is a probability measure.
By Remark \ref{remarkgraph}, the Markov process is ergodic.
 \end{proof}

\begin{prop}[Infinite Entropy]\label{prop:entropy}
Suppose that the  sequence $(c_d)_{d\geq 2}$ further satisfies
$$
\sum_{d\geq 2} c_d \log d = \infty.
$$
 Then if $(k_d)$ increases sufficiently fast the entropy  of the shift $\sigma\colon S^\N\to S^\N$ w.r.t. the measure $\mathbb{P}^\nu$ is infinite.
\end{prop}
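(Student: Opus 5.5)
We compute the entropy with the entropy rate formula \eqref{eqn:entropy rate},
$$h_{\mathbb{P}^\nu}(\sigma)=-\sum_{s,t}\nu(s)P_{st}\log P_{st}.$$
Every summand is non-negative. It therefore suffices to isolate a family of transitions whose contribution is bounded below by a constant multiple of $\sum_d c_d\log d$, up to a finite error.

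\textbf{Which transitions to keep.} Fix $d\geq 3$. Take a non-special column $\mathscr{C}_q\subset S_d$ such that $\mathscr{C}_{q+1}$ is also a column of $S_d$, i.e.\ $q_d<q<q_{d+1}-1$; this excludes the last column of $S_d$. Also include $q=q_d$, but only with source states $(q_d,j)$, $j\ge 2$. For such a source state $s=(q,j)$, axiom (A1) says $P_{st}>0$ only for $t\in\mathscr{C}_{q+1}$, since $s$ is not special. Axiom (A2) says $P_{st}$ does not depend on $j_2$, because $q+1$ is not of the form $q_{d'}$. Hence $P_{st}=1/d$ for each of the $d$ states $t$ of $\mathscr{C}_{q+1}$. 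The contribution of $s$ is then
$$-\nu(s)\sum_t P_{st}\log P_{st}=\nu(s)\log d.$$

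\textbf{Summing over $S_d$.} By Proposition \ref{prop:existence}, each non-special state of $S_d$ has mass $\frac{c_d'}{dk_d}$. The retained states make up $k_d-1$ full columns of $d$ states, together with $d-1$ states in the special column. We drop the last column, and drop the special state $(q_d,1)$, whose transitions go partly to $S_0$. The retained states of $S_d$ therefore carry total mass at least $c_d'(1-2/k_d)$. Their contribution to the entropy is at least
$$c_d'\Bigl(1-\tfrac{2}{k_d}\Bigr)\log d.$$

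\textbf{Comparing $c_d'$ with $c_d$.} This is the one point that uses "$k_d$ increasing sufficiently fast", and the main thing to check. From \eqref{defc'} we have $c_d'(1-1/k_d)=c_d-c'_{d-1}/k_{d-1}$, and $c'_{d-1}<c_{d-1}\le 1/2$. If $k_{d-1}$ is chosen so large that $c'_{d-1}/k_{d-1}\le c_d/2$, then $c_d'\ge c_d/2$. This choice is possible inductively, since $c'_{d-1}$ is already determined before $k_{d-1}$ is fixed. It is compatible with the positivity requirement \eqref{ensurepositive}, because both conditions only ask $k_{d-1}$ to be large. With $k_d\ge 4$, the contribution of $S_d$ is then at least $\frac14 c_d\log d$. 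Summing over $d\ge3$ gives
$$h_{\mathbb{P}^\nu}(\sigma)\ge \frac14\sum_{d\ge 3}c_d\log d=\infty,$$
by the hypothesis $\sum_d c_d\log d=\infty$.
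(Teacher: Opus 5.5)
Your proof is correct and follows essentially the paper's argument: restrict the entropy-rate sum to the interior columns of each $S_d$, where every state has mass $c'_d/(dk_d)$ and $d$ transitions of probability $1/d$, which gives $h\ge\sum_d(1-2/k_d)\,c'_d\log d$; then note that $c_d-c'_d\le c'_{d-1}/k_{d-1}$ is negligible when $(k_d)$ grows fast (the paper phrases this as asymptotic equivalence of $c_d$ and $c'_d$, you as $c'_d\ge c_d/2$). One small inaccuracy: $c'_{d-1}$ is not fixed before $k_{d-1}$, since it generally depends on $k_{d-1}$ through \eqref{defc'}, but $c'_{d-1}<c_{d-1}$ holds anyway by \eqref{ensurepositive}, so your condition is guaranteed by $k_{d-1}\ge 2c_{d-1}/c_d$ and nothing breaks.
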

\begin{proof}
  We compute the entropy rate of the Markov process  (see (\ref{eqn:entropy rate})).  Summing over the states of the form $s = (i,j)$ with $q_{d-1} < i < q_d-1$ gives $d \cdot (k_d-2)$ states, each having  mass $c'_d/(d k_d)$ and having $d$ transition probabilities all equal to $1/d$ gives the estimate
    $$
    h_{\mathbb{P}^\nu}(\sigma)  \ge 
      \sum_{d \ge 2} \frac{k_d-2}{k_d} c'_d \cdot \log d.
    $$
    For all $d\geq 2$
we have that $$c_d-c_d'\leq \frac{c_{d-1}'}{k_{d-1}}\leq \frac{c_{d-1}}{k_{d-1}}.$$
Hence,   if the sequence $(k_d)$ increases sufficiently fast, then the sequences $(c_d)$ and $(c'_d)$ are asymptotically equivalent.
    Since $\sum c'_d \log d$   diverges, we conclude that $    h_{\mathbb{P}^\nu}(\sigma) =\infty$.
\end{proof}

We note that for a suitably chosen constant $c>0$ the sequence
$$
c_d = \frac{c}{d (\log d)^2}
$$
satisfies the hypotheses.

\begin{remark}[Escaping sequences have measure zero]\label{remark: full measure}
   Consider a positively recurrent Markov process with any probability distribution $\mu$ on $S$. Then the subset of  \emph{escaping} sequences, that is, the sequences that eventually exit every given finite subset of $S$,   has measure zero  w.r.t. $\mathbb{P}^\mu$.
 Since there are only countably many states it is sufficient to prove that the set of escaping sequences that start at some fixed state  has measure zero. 
    Suppose for the purpose of a contradiction that the set of escaping sequences from some fixed state $s$ has positive measure. Then it follows that
     the set of sequences starting at state $s$ that never return to  $s$ has positive measure.
     But this implies that the expected return time from $s$ to itself is infinite, which contradicts positive recurrence.
\end{remark}
	
\subsection{Modified  Markov processes}\label{sec:modified Markov process}

To obtain our main result for Baker--Bishop maps and for disjoint-type maps, we will need to modify the model Markov process  in such a way that:
\begin{itemize}
\item The modified Markov process remains ergodic with infinite  entropy rate. (Denote by $\mu$ its stationary measure.)
\item The symbolic dynamical system associated with the modified Markov process embeds in the given complex dynamical system $(\C,f)$ in such a way that the Julia set of $f$ coincides with $\supp\varphi_*\mathbb{P}^\mu=\overline{\varphi(\Sigma)}$.
\end{itemize}

\begin{definition}
Consider a positive recurrent Markov process over a countable set of states $S$.
We say that a sequence $x=(x_0,x_1\ldots)\in S^\N$ is \emph{allowable} if  $P_{x_ix_{i+1}}>0$ for all $i\in\N$. 
\end{definition}
\begin{remark}\label{allowablesupport}
If $\mu$ denotes the stationary measure,  the support of the measure $\mathbb{P}^\mu$ coincides with the subset of allowable sequences in $S^\N$.
\end{remark}
In order to be able to obtain that $\supp \phi_*\mathbb{P}^\mu=J$, we will need to modify the process so that 
more transition probabilities become strictly positive,   enlarging the subset of allowable sequences, and thus the support of $\mathbb{P}^\mu$.

\begin{prop}[Perturbing the model Markov process]\label{prop:extension of the support} Denote by $(P_{st})$ the transition probabilities of the model Markov process.  Let  $E\subset S^2$ be the set of edges $(s,t)$ for which $P_{s t} > 0$. Let $ E\subset E'\subset S^2$.
Then there exists an ergodic Markov process on $S$ with infinite entropy rate whose transition probabilities $(P_{s t}')$ are 
strictly positive if and only if $(s,t) \in E'$.
\end{prop}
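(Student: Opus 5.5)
Let $P$ be the model process from Proposition \ref{prop:existence}, with stationary measure $\nu$. The plan is to write the new process as a small perturbation of $P$, namely
$$P'_{st}=(1-\epsilon_s)P_{st}+\epsilon_s R_{st}.$$
Here $R$ is a stochastic matrix supported on the extra edges $E'\setminus E$ out of $s$. If $s$ has no extra edges we set $\epsilon_s=0$. Otherwise we fix an arbitrary strictly positive probability vector $R_{s\cdot}$ on $\{t:(s,t)\in E'\setminus E\}$, which may be countably infinite, and a small $\epsilon_s\in(0,1/2]$. Then $P'_{st}>0$ if and only if $(s,t)\in E'$.

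Irreducibility holds because $E'$ contains $E$, and the graph of $E$ is already irreducible (Remark \ref{remarkgraph}). Aperiodicity holds because the loops at $S_0$ survive. So ergodicity reduces to positive recurrence, which we get from Foster's Criterium (Theorem \ref{thm:Foster}) with base state $(0,1)$.

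For the Foster function we start from the model process. It is positive recurrent, so $y_s:=m_{s0}$ is finite and satisfies the identity \eqref{Fosternew}, that is, \eqref{eq:Foster1} with equality. We use instead $y_s:=2m_{s0}$, which satisfies $\sum_{t\neq0}P_{st}y_t=y_s-2$. For $P'$ we then need
$$(1-\epsilon_s)(y_s-2)+\epsilon_s\sum_{t\ne 0}R_{st}y_t\le y_s-1 .$$
This holds once $\epsilon_s\big(\sum_t R_{st}y_t\big)\le 1$, since $\epsilon_s(y_s-2)\ge0$ can be dropped (note $m_{s0}\ge1$).

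The main obstacle is ensuring $\sum_t R_{st}y_t<\infty$ when $s$ has infinitely many new out-edges. We handle it by choosing $R_{s\cdot}$ with sufficiently fast decaying weights, for instance $R_{st}\propto 2^{-n(t)}/(1+y_t)$ for an enumeration $n$ of the targets. With that choice we set $\epsilon_s=\min\big(1/2,\,1/(1+\sum_tR_{st}y_t)\big)$. Condition \eqref{eq:Foster2} at the base state follows the same way. Hence $P'$ is positive recurrent, therefore ergodic, and has a unique stationary measure $\mu'$.

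Infinite entropy requires a lower bound on $\mu'$ on the bulk states. We make all $\epsilon_s$ summably small, and also require $\epsilon_s\le\delta$ for a small $\delta$. The rows of $P'$ on the bulk states of $S_d$ (those with $q_{d-1}<i<q_d-1$, as in Proposition \ref{prop:entropy}) contain the entries $(1-\epsilon_s)/d$. So the entropy rate is at least
$$\sum_s\mu'(s)(1-\epsilon_s)\log\frac{d}{1-\epsilon_s}.$$
It therefore suffices to show that $\mu'(s)\ge c\,\nu(s)$ for these states, for some $c>0$.

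To get this, we use the representation of the stationary measure through excursions from the base state, $\mu'(s)=\mathbb E'[\#\text{visits to }s\text{ before }T_0]/m'_{00}$. First, $m'_{00}$ is bounded by the Foster bound, roughly $2m_{00}+1$. Second, with probability $\prod(1-\epsilon)$ along a path, the $P'$-excursion agrees with the $P$-excursion. The agreement probability along a path from $0$ to a bulk state in $S_d$ is at least $1-\sum_s\epsilon_s\ge1/2$ if $\sum_s\epsilon_s\le1/2$. This gives $\mu'(s)\ge \nu(s)\,m_{00}/(2m'_{00})$, hence a uniform constant $c$. By the computation in Proposition \ref{prop:entropy}, the entropy rate of $P'$ is then infinite.
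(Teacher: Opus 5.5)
Your Foster step is fine: it is the paper's argument with $c=2$ in place of a constant slightly above $1$. The reduction of infinite entropy to a uniform bound $\mu'(s)\ge c\,\nu(s)$ on the bulk states is also correct. The gap is in how you prove that bound. You claim that along a path from $0=(0,1)$ to a bulk state the retention factor $\prod_i(1-\epsilon_{x_i})$ is at least $1-\sum_s\epsilon_s$. That inequality holds only if the path visits each state at most once, and excursions of the model chain from $(0,1)$ do revisit states. They can loop at $(0,2)$ any number of times. They can also climb to a special state $(q_d,1)$, drop to $(0,2)$, and traverse $\mathscr{C}_1,\mathscr{C}_2,\dots$ again before reaching $s$. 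Each revisit contributes another factor $(1-\epsilon_x)$. So, writing $N_s$ for the number of visits to $s$ before $T_0$, the estimate $\mathbb{E}'_0[N_s]\ge\frac12\mathbb{E}_0[N_s]$ is not justified as written. Consequently neither is $\mu'(s)\ge\nu(s)m_{00}/(2m'_{00})$.

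The repair is easy. For $s\in\mathscr{C}_q$, keep only the direct paths $(0,1)\to\mathscr{C}_1\to\cdots\to\mathscr{C}_q$. These use only edges of $E$ and pass through distinct states, so under $P'$ each has probability at least $(1-\sum_s\epsilon_s)\ge\frac12$ times its $P$-probability. Their total $P$-probability is $P^q_{(0,1)s}=2\nu(s)$. This follows because $\nu(s)=\frac14\big(P^q_{(0,1)s}+P^q_{(0,2)s}\big)$ and the rows of $(0,1)$ and $(0,2)$ coincide by (A3). Hence $\mathbb{E}'_0[N_s]\ge\nu(s)$, and $\mu'(s)\ge\nu(s)/m'_{00}$, which is uniform by your bound on $m'_{00}$.

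With this fix your argument is essentially the paper's in different form. The paper iterates stationarity, $\mu'(t)\ge\sum_{s\in\mathscr{C}_{q-1}}\mu'(s)P'_{st}$, column by column. It starts from $\mu'(0,j)\ge a_0/4$, which comes from its Foster bound on $m'_{00}$, and controls the loss by $P_{st}/P'_{st}\le a_{q-1}/a_q$; this is exactly the direct-path computation. Your convex-combination form $(1-\epsilon_s)P+\epsilon_sR$ makes that ratio control automatic, and the cycle formula is a clean way to organize the estimate. The paper's choice of $c$ near $1$ buys the sharper bound $\mu'\ge\nu/2$ on every state, which infinite entropy does not need.
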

Recall  that in the model Markov process, for each $s$ there are only finitely many $t$ such that $P_{st}>0$.
\begin{proof}
First of all, notice that irreducibility and aperiodicity are properties of a Markov process which depend only on the associated directed graph with  vertex set $S$ and with an edge from $s$ to $t$ iff $P_{st}>0$. Irreducibility and aperiodicity are preserved by increasing the set of non-zero transition probabilities.
For all $s\in S$ let   $\TT(s)\subset S$ be the set of states $t\in S$ such that   $(s,t)\in  E$, and let $\TT'(s)\subset S$ be the set of $t\in S$ such that   $(s,t)\in E'\setminus E$. 
Let $s\in S$.
\begin{enumerate}
\item We set  $P'_{st}=0$ for all $t\in S\setminus (\TT(s)\cup \TT'(s)) $.
\item If  $\TT'(s)$ is finite, we set  $P'_{st}=\epsilon^{(s)}$  for all $t\in\TT'(s)$, where $\epsilon^{(s)}>0$ is small enough (to be determined later).
If $\TT'(s)$ is not finite, then  it is a countable set $(t_j)_{j\geq 1}$, and we set $P'_{st_j}=\epsilon^{(s)}_j$ for all $j\geq 1$, where $\epsilon_j^{(s)}\searrow 0$  fast enough (to be determined later).
\item  We set $0<P'_{st}\leq P_{st}$  for all $t\in \TT(s)$, in such a way that $\sum_{t\in S}P'_{st}=1.$
\end{enumerate}
The modified Markov process is irreducible and aperiodic.

 Let $\frac{1}{2}<a_0<1.$
We now claim that if we choose all the  $\epsilon^{(s)}$   small enough (or the $\epsilon_j^{(s)}\searrow 0$ fast enough),  the modified Markov process satisfies   Foster's Criterium,
 and, if  $\mu$ denotes the stationary measure of the modified Markov process,
 the mass $\mu(0,1)$ is larger than $\frac{a_0}{4}$. Choose $(0,1)$ as the base-state 0.
 Let $m'_{00}$ (resp. $m_{00}$) denote the expected  first return time to state 0  for the modified (resp. model) Markov process.
 Let $1<c<\frac{1}{3}(\frac{4}{a_0}-1)$.
 For every state $s\in S\setminus\{0\}$, let $m'_{s0}$ (resp, $m_{s0}$) denote the expected first passage time for the modified (resp. model) Markov process from  state $s$ to the base state $0$. 
Since the model process is positive recurrent, the vector $(m_{s0})$ satisfies  \eqref{Fosternew}.
Define  $y_s=cm_{s0}$ for all $s\neq 0$.
Then  for every state $s\neq 0$ we obtain a strict inequality
$$
\sum_{t\in S\setminus\{0\}} P_{s t} y_{t} < y_{s} -1.
$$
We can easily make sure that this inequality remains verified for the new transition probabilities $(P'_{st})$. Indeed,
if $\TT'(s)$ is  finite, choose $\epsilon^{(s)}$  small enough so that the inequality remains satisfied (notice that decreasing  transition probabilities with $t\in \TT(s)$ to obtain $\sum_{t\in S}P'_{st}=1$ will only improve the inequality). If $\TT'(s)$ is  countable, choose $(\epsilon^{(s)}_j)$  decreasing fast  enough. 

We also need to modify the transition probabilities when $s=0$.  Notice that
 $$3=m_{00}-1=\sum_{t\in S\setminus\{0\}} P_{0t}m_{t0},$$
 and  choose $\epsilon^{(0)}$ (or $(\epsilon^{(0)}_j)$) in such a way that  
 \begin{equation}\label{basestate}\sum_{t\in S\setminus\{0\}} P'_{0t}m_{t0}\leq \frac{1}{c}\left(\frac{4}{a_0}-1\right).
 \end{equation}
This proves that the modified Markov process is  positive recurrent. By Lemma \ref{alternativeFoster} we have 
$m'_{s0}\leq cm_{s0}$  for all $s\neq 0$, and thus $$m'_{00}=1+\sum_{t\in S\setminus\{0\}} P'_{0 t} m'_{t0}\leq 1+c\sum_{t\in S\setminus\{0\}} P'_{0t}m_{t0}\leq \frac{4}{a_0},$$
where we used  \eqref{basestate}.
 The proof for the  state $(0,2)$ is analogous.

Recall that for all $q\geq 0$, the column   $\mathscr{C}_q$ is defined as  $\{(q,j)\colon j=1,\dots, d\}\subset S.$
For all $q\geq 0$ define $$\delta_q:= \max_{\substack{ s\in \mathscr{C}_q\\ t\in  \mathscr{C}_{q+1}}} \frac{P_{st}}{P'_{st}}.$$
Notice that $\delta_q\geq 1$. Let $(a_q)_{q\geq 1}$ be a strictly decreasing sequence of positive numbers, with $a_1<a_0$ and $a_q\searrow \frac{1}{2}$.
We now prove that if 
the modified Markov process is such that for all $q\geq 0$
 $$\delta_q\leq \frac{a_{q}}{a_{q+1}},$$  then  for all $s\in S$ we have 
\begin{equation}\label{nodrop}
\mu(s)\geq\nu(s)a_q\geq  \frac{\nu(s)}{2}. 
\end{equation}
Notice that this  implies that the entropy rate of the modified Markov process is also infinite. 
We prove \eqref{nodrop} by induction. Indeed, the case $q=0$ immediately follows from the above.
 Fix $q\geq 1$, and assume that for all $s\in  \mathscr{C}_{q-1}$ we have that $\mu(s)\geq \nu(s)a_{q-1}.$
Let $t\in \mathscr{C}_q$. Since for the model Markov process the only mass flowing to the column $\mathscr{C}_q$ comes from the column  $\mathscr{C}_{q-1}$ we have that 
$$\nu(t)=\sum_{s\in  \mathscr{C}_{q-1}}\nu(s)P_{st}.$$

Therefore
$$\mu(t)\geq \sum_{s\in  \mathscr{C}_{q-1}}\mu(s)P'_{st}\geq \sum_{s\in  \mathscr{C}_{q-1}}a_{q-1}\nu(s)P_{st}\frac{a_q}{a_{q-1}}=\nu(t)a_q.$$

\end{proof}

\begin{remark}\label{modificationbishop}
In the case of Baker--Bishop maps (Section \ref{sec: Bishop maps}) we will modify the Markov process by setting 
$$
E'=E\cup \{(s,t) \text{ such that $s=(q_d,1)$ for some $d\geq2$ and $t=(i,j)$ with $i\leq q_d$}\}.
$$
\end{remark}
We now discuss the modification of the model Markov process in the case of disjoint-type maps (Section \ref{sec:class B}).
We  identify the countable state space $S$ with $\mathbb N$  by ordering $S$ lexicographically, that is
$$(i_1,j_1)\leq (i_2,j_2) \iff i_1< i_2\,\,\mbox{or}\, \,(i_1= i_2\,\,\mbox{and}\, \,j_1\leq j_2).$$
 \begin{definition}\label{def:quadratic growth}
Let $\QQ\subset \N^\N$ denote the subset of sequences that grow at most quadratically, that is the set of sequences $x= (x_0, x_1, \ldots) \ldots\in\N^\N$ for which there exists $a,b>0$ such that
$$
x_n \leq (a+bn)^2 \text{\ \ \ for all $n\in\N$}.
$$
\end{definition}
\begin{remark}
Identify $S$ with $\N$ as above. Then, if the sequence $(k_n)$ increases sufficiently fast,  the transition probabilities $P_{s t}$ for the model Markov process satisfy 
 
 \begin{equation}\label{eq: vanishing probabilities}
    P_{st}=0 \text{\ \ \ for  $t\geq (\sqrt{s}+1)^2$.} 
 \end{equation}

\end{remark}

\begin{prop}[Modification for disjoint-type maps]\label{prop:modify process to prescribe support}
Consider the model Markov process  from Section~\ref{sec:model markov process} and let   $E\subset \N^2$ be the set of edges $(s,t)$ for which $P_{s t} > 0$.  Consider a set $E'\subset \N^2$ such that  $ E'\supset E$. 
Then there exists a Markov process on $\N$ which is  ergodic and with infinite entropy rate, whose transition probabilities $(P_{s t}')$ are 
strictly positive if and only if $(s,t) \in E'$, and such that $\mathbb{P}^\mu(\mathcal{Q})=1$, where $\mu$ denotes its unique stationary measure.

\end{prop}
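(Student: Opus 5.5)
We use the construction in the proof of Proposition \ref{prop:extension of the support} and add one more smallness condition on the perturbations, chosen so that the stationary process stays in $\QQ$ almost surely. Define $P'_{st}$ as in that proof. For each $s$, the new edges $\TT'(s)$ receive probabilities $\epsilon^{(s)}$ (finite case) or $\epsilon^{(s)}_j\searrow0$ (countable case). These are chosen small enough that three things hold: Foster's Criterium (Theorem \ref{thm:Foster}) is satisfied, the bound \eqref{basestate} holds, and $\delta_q\le a_q/a_{q+1}$. Then the modified process is ergodic, and its stationary measure satisfies $\mu\ge \nu/2$ on every state, which gives infinite entropy rate exactly as before. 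It remains to impose a further decay condition on the $\epsilon$'s and to prove $\mathbb{P}^\mu(\QQ)=1$.

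Call a transition $x_n\to x_{n+1}$ \emph{bad} if $x_{n+1}\ge(\sqrt{x_n}+1)^2$. By \eqref{eq: vanishing probabilities}, every bad transition uses an edge of $E'\setminus E$. The additional requirement is that
$$
\beta_s:=\sum_{t\ge(\sqrt s+1)^2}P'_{st}\le \eta_s ,
$$
where $(\eta_s)$ is a summable sequence chosen in advance, for example $\eta_s=2^{-s}$. This is compatible with the other constraints, since each of them only asks the $\epsilon$'s to be small. With this in place we have
$$
\sum_n\mathbb{P}^\mu(x_n\to x_{n+1}\text{ bad})=\sum_n\sum_s\mu(s)\beta_s ,
$$
and this sum diverges because of the factor $n$. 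So Borel--Cantelli cannot be applied directly with stationarity, and this is the main obstacle.

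We handle it by comparing the number of bad jumps with the return times to the base state $0=(0,1)$. A bad jump from state $s$ takes the process to $t\ge(\sqrt s+1)^2$, after which the process must travel until it returns. Instead of bounding the number of bad jumps, we bound their effect. A good step raises $\sqrt{x}$ by at most $1$. Along an excursion of length $L$ from $0$ that contains no bad jump, the state therefore stays below $L^2$. By Remark \ref{remark: full measure} and ergodicity, almost every sequence visits $0$ infinitely often. The excursion lengths $L_k$ are i.i.d. with finite mean $m'_{00}$, so $L_k/k\to0$ is not automatic, but $\max_{k\le N}L_k=o(N)$ holds almost surely since the mean is finite, and the $N$-th return happens at time $\asymp N$. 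Hence $\sqrt{x_n}\le$ (length of the current excursion) $+\,O(1)\le o(n)+C$, provided excursions contain no bad jumps. This gives $\sqrt{x_n}\le a+bn$, which is the defining condition of $\QQ$.

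Bad jumps are controlled as follows. The probability that a given excursion contains a bad jump is at most $\mathbb{E}^{\delta_0}\bigl[\sum_{n<T_0}\beta_{x_n}\bigr]=m'_{00}\sum_s\mu(s)\beta_s$, using the standard occupation-measure formula. By choosing the $\eta_s$ appropriately this quantity is finite but need not be summable over excursions, so we allow bad jumps and bound them directly. Take $\eta_s$ so small that $\sum_t P'_{st}\sqrt t\le C(\sqrt s+1)$ on the bad part; for example, require $\sum_{t\in\TT'(s)}P'_{st}\sqrt t\le 1$, which is possible by letting $\epsilon_j^{(s)}\searrow0$ fast. Then $\mathbb{E}[\sqrt{x_{n+1}}-\sqrt{x_n}\mid x_n]\le 2$, so the increment of $\sqrt{x}$ over an excursion has finite expectation. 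The maximum of $\sqrt x$ over the $k$-th excursion is then an i.i.d. sequence $M_k$ with finite mean, and the strong law argument above gives $M_k=o(k)$ almost surely. Therefore $\sqrt{x_n}=O(n)$ almost surely, that is $\mathbb{P}^{\delta_0}(\QQ)=1$. Since $\QQ$ is shift-invariant up to changing $a,b$ and every state leads to $0$, this extends to $\mathbb{P}^\mu(\QQ)=1$. The delicate step is verifying that $M_k$ has finite expectation uniformly under these moment constraints; we would prove this by a Foster-type supermartingale argument analogous to Lemma \ref{alternativeFoster}.
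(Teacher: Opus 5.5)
Your proof is correct and takes a genuinely different route from the paper's.

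The paper imposes the strong tail bound $P'_{st}\le 2^{-2(s+t)}$ for $t\ge(\sqrt s+1)^2$. It then proves by induction on $n$, splitting according to whether $x_n$ is already large, the explicit estimate $\mathbb{P}^{\delta_s}\bigl(x_n\ge(\sqrt{x_0}+n+n_0)^2\bigr)\le 2^{-(2n_0-n)}$ for $n_0\ge n$. A union bound over $n$ with $n_0=kn$ then gives $\mathbb{P}^{\delta_s}(\N^\N\setminus\QQ)\le 2^{-k}/(1-2^{-k})\to0$. That argument uses neither recurrence nor stationarity. A bad jump to $t$ has probability at most $2^{-2t}$ whatever the source state, so the paper controls the \emph{size} of bad jumps pathwise and gets a quantitative bound uniform in the initial state.

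You instead require only the first-moment condition $\sum_{t\in\TT'(s)}P'_{st}\sqrt t\le 1$, which permits polynomially decaying tails on the new edges. You pay for this with positive recurrence: bounded expected upward drift of $\sqrt{x}$ per step, finite mean excursion length, i.i.d.\ excursion maxima via the strong Markov property, and Borel--Cantelli. You also correctly see that counting bad jumps cannot work, since they occur with positive frequency under $\mathbb{P}^\mu$.

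Some tidying is needed, none of it serious.
\begin{itemize}
\item Over an excursion from $0$ the net increment of $\sqrt{x}$ is zero. What you must bound is the positive variation $V=\sum_{n<T_0}(\sqrt{x_{n+1}}-\sqrt{x_n})^+\ge M_1$. Your condition gives $\mathbb{E}[(\sqrt{x_{n+1}}-\sqrt{x_n})^+\mid x_n]\le 2$: old edges raise $\sqrt x$ by less than $1$, and new edges contribute at most $1$ in total.
\item Since $\{n<T_0\}$ is determined by $x_1,\dots,x_n$, this yields $\mathbb{E}^{\delta_0}[M_1]\le 2\,\mathbb{E}^{\delta_0}[T_0]=2m'_{00}<\infty$ at once. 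So the step you call delicate needs no Foster-type argument.
\item $L_k/k\to0$ a.s.\ \emph{is} automatic for i.i.d.\ variables with finite mean. It is the same Borel--Cantelli estimate $\sum_k\mathbb{P}(L_1>\epsilon k)\le\mathbb{E}[L_1]/\epsilon$ that you then apply to $M_k$.
\item The condition $\beta_s\le\eta_s$ and the ``no bad jumps'' paragraph are never used and can be deleted.
\end{itemize}

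You could also skip excursions altogether. Under $\mathbb{P}^\mu$ the sequence $(\sqrt{x_{n+1}}-\sqrt{x_n})^+$ is stationary with mean at most $2$. Birkhoff's ergodic theorem then gives $\sqrt{x_n}\le\sqrt{x_0}+O(n)$ almost surely, that is, $\mathbb{P}^\mu(\QQ)=1$ directly.
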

\begin{proof}
  Define the  transition probabilities $P'_{st}$ in such a way    that Proposition~\ref{prop:extension of the support} holds and moreover
   \begin{equation}\label{eq:quadratic assumption}
       P'_{st}\leq 2^{-2(s+t)} \text{\ \ \ \ for $t \geq (\sqrt{s}+1)^2$}.
   \end{equation}
   The last condition can be imposed because of (\ref{eq: vanishing probabilities}). Then  the modified Markov process remains  ergodic and the entropy remains infinite by Proposition~\ref{prop:extension of the support}. It remains to show that $\mathbb{P}^\mu (\QQ)=1$.

Fix an initial state  $s\in \N$.
We  will show by induction on $n\geq1$   that for any $n_0\geq n\in\N$,
 \begin{equation}\label{eq: hypothesis}
\mathbb{P}^{\delta_{s}}(x_n\geq (\sqrt{x_0}+n+n_0)^2)\leq \frac{1}{2^{2n_0-n}}\leq \frac{1}{2^{n_0}}.
\end{equation}
By \eqref{eq:cylinder measure}, we have for all $n,t\in \N$,
$$
\mathbb{P}^{\delta_{s}}(x_n=t)= \sum_{\substack{x\in S^{n+1} \\ x_0=s\\ x_n=t}}  \prod_{i=0}^{n-1} P'_{x_i x_{i+1}}.
$$

For $n=1$ and for all $n_0\geq1$,  using (\ref{eq:quadratic assumption}) we have that
$$
\mathbb{P}^{\delta_{s}}(x_1\geq (\sqrt{x_0}+1+n_0)^2)=\sum_{t\geq (\sqrt{s}+1+n_0)^2}P'_{st}\leq\sum_{t\geq (\sqrt{s}+1+n_0)^2}2^{-2(s+t)}\leq
\frac{4}{3}\frac{1}{2^{2(\sqrt{s}+1+n_0)^2+2s}}\leq \frac{1}{2^{2n_0-1}},
$$
hence (\ref{eq: hypothesis}) is satisfied for $n=1$.

Now assume by induction  that~\eqref{eq: hypothesis} holds for a given $n\ge 1$ and all $n_0\geq n$, and let us  prove that it  holds for $n+1$ and all $n_0\ge n+1$. 
 We have
$$
 \mathbb{P}^{\delta_s}(x_{n+1}  \ge ( \sqrt{x_0} + n+ n_0+ 2)^2 )=\sum_{k\in\N}\mathbb{P}^{\delta_s}(\{x_{n+1}\geq ( \sqrt{x_0} + n+ n_0+2)^2 \}\cap\{ x_n=k\}).$$
We split the sum over all $k \in \mathbb N$ into two sums, depending on whether $k$ is smaller than or  equal to $(\sqrt{s} + n + n_0 + 1)^2$. We have that
\begin{align*}
&\sum_{k \ge (\sqrt{s} + n + n_0 + 1)^2}\mathbb{P}^{\delta_s}(\{x_{n+1}\geq ( \sqrt{x_0} + n+ n_0+2)^2 \}\cap\{ x_n=k\}) \leq \sum_{k \ge (\sqrt{s} + n + n_0 + 1)^2}\mathbb{P}^{\delta_s}(x_n=k)\\
&=\mathbb{P}^{\delta_s}(x_n\geq (\sqrt{x_0} + n + n_0 + 1)^2 )\le \frac{1}{2^{2(n_0+1)-n}},
\end{align*}
where we  used \eqref{eq: hypothesis}.

On the other hand, assumption \eqref{eq:quadratic assumption} implies that 
\begin{align*}
&\sum_{k < (\sqrt{s} + n + n_0 + 1)^2}\mathbb{P}^{\delta_s}(\{x_{n+1}\geq ( \sqrt{x_0} + n+ n_0+2)^2 \}\cap\{ x_n=k\}) =  \sum_{\substack{k< (\sqrt{s} + n + n_0 + 1)^2\\  t\geq(\sqrt{s} + n + n_0 + 2)^2}}\mathbb{P}^{\delta_s}(x_n=k)P'_{kt} \\
 &\leq   \left(\sum_{k< (\sqrt{s} + n + n_0 + 1)^2}\mathbb{P}^{\delta_s}(x_n=k) \right)\left(\sum_{t\geq(\sqrt{s} + n + n_0 + 2)^2}\frac{1}{2^{2t}}\right)\leq  \frac{1}{2^{2(\sqrt{s} + n + n_0 + 2)^2}}\frac{4}{3}\leq \frac{1}{2^{2n_0}},
\end{align*}
where we used that any sum over probabilities $\P^{\delta_s}(x_n = k)$ for fixed $n$ but different values of $k$ equals at most $1$. Adding the two inequalities obtained for the two subsums implies equation \eqref{eq: hypothesis}.

Let $k\geq 1$. 
Notice that
$$\N^\N\setminus\QQ\subset  \bigcup_{n\geq 1}\{x_n \geq (\sqrt{x_0}+(1+k)n)^2\}.$$
By equation  \eqref{eq: hypothesis} we obtain
$$
\mathbb{P}^{\delta_s}(\N^\N\setminus\QQ)\leq \mathbb{P}^{\delta_s}(\cup_{n\geq 1}\{x_n \geq (\sqrt{x_0}+(1+k)n)^2\} ) \leq \sum_{n=1}^{\infty}\frac{1}{2^{kn}}=\frac{1/2^k}{1-1/2^k}\ra0 \text{\ as $k\ra\infty$},
$$
and thus  $\mathbb{P}^{\delta_s}(\N^\N\setminus\QQ)=0$. The result follows since $$\mathbb{P}^{\mu}(\QQ)=\sum_{s\in \N}\mu(s)\mathbb{P}^{\delta_s}(\QQ)=1.$$
\end{proof}

\begin{remark}
Notice that we can make sure that $\QQ$ is contained in the support of $\mathbb{P}^\mu$ by taking $E'=\N^2$. Indeed, this implies that the support of $\mathbb{P}^\mu$ is $\N^\N$ by Remark~\ref{allowablesupport}.

\end{remark}

\subsection{Equidistribution of preimages and periodic points}\label{sectionequidistribution}

The purpose of this subsection is to show that for any ergodic Markov process on a countable state space $S$, it follows that the ergodic measure on $S^\N$ can also be obtained by weighted equidistribution of either preimages or periodic cycles. It follows that the embedded measure in the complex plane can similarly be obtained via weighted equidistribution.

 Recall that, if $X$ is a Polish space and  $(\mu_n)$ is a sequence of finite  measures on $X$,  the sequence $(\mu_n)$  converges {\sl weakly} to a finite measure $\mu_\infty$ if for all continuous bounded functions $f\colon X\to \R$ one has that $\int fd\mu_n\to\int fd\mu_\infty$.
\subsubsection{Time-reversed Markov process} 
 Consider a positive recurrent Markov process $(P_{st})$ on a countable state space $S$, and let $\mu$ be its unique stationary measure. Then, given $n\geq 1$ and $s,t\in S$, we have by the stationarity of $\mu$,
 $$
 \mathbb{P}^\mu(x_{n-1}=s|x_{n}=t)=\frac{\mu(s)}{\mu(t)}P_{st},
 $$
independently on $n$, and we have 
$$
\sum_{s\in S}\mathbb{P}^\mu(x_{n-1}=s|x_{n}=t)=1.
$$ 
Therefore 
$$
Q_{ts}:=\mathbb{P}^\mu(x_{n-1}=s|x_{n}=t)
$$
are transition probabilities of a new Markov process, called \emph{time-reversed}.
We will use the transition probabilities of the time-reversed Markov process to define the equidistribution weights. 

For future reference  we now compute the transition probabilities of the time-reversed Markov process $(Q_{ts})$ associated with the model Markov process $(P_{st})$ defined in Proposition \ref{prop:existence}.
\begin{lemma}\label{computationtrans}
\,
\begin{enumerate}
\item  $Q_{(q+1,j)(q,i)}=1/d$ for all $d\geq 2$, and  $q_d\leq q< q_{d+1}$;
\item $Q_{(1,j)(0,i)}=\frac{1}{2}$ for all $i,j=1,2$;
\item $Q_{(0,j)(0,i)}=\frac{1}{2}-\frac{c_2}{k_2}$ for all $i,j=1,2$;
\item $Q_{(0,j)(q_d,1)}=\frac{2c'_{d-1}}{k_{d-1}}-\frac{2c'_d}{k_d}$ for all $j=1,2$ and $d\geq 3$.

\end{enumerate}
\end{lemma}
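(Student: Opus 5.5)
The plan is to compute everything directly from the definition $Q_{ts}=\mu(s)P_{st}/\mu(t)$, with $\mu=\nu$ the stationary measure of Proposition \ref{prop:existence}. Its masses are known explicitly: $\nu(0,j)=1/4$; every non-special state of $S_d$ has mass $c'_d/(dk_d)$, so a non-special column of $S_d$ has mass $c'_d/k_d$; and for $d\ge 3$ the special state has mass $\nu(q_d,1)=\frac{c'_{d-1}}{k_{d-1}}-(d-1)\frac{c'_d}{dk_d}$. In each case I identify the predecessors of $t$ using axiom (A1). I then compute the flows $\mu(s)P_{st}$ from stationarity, using the symmetries (A2)–(A3) to split each flow. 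No transition probability needs to be computed explicitly, only flows between states.

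For (1), let $t=(q+1,j)$ with $q_d\le q<q_{d+1}$. By (A1) the only predecessors of $t$ are the $d$ states of $\mathscr{C}_q$, so it suffices to show that the flows $\mu(s)P_{st}$, $s\in\mathscr{C}_q$, are all equal.
\begin{itemize}
\item If $q$ is not special, the states of $\mathscr{C}_q$ have equal mass, and by (A3) $P_{st}$ does not depend on $s$. This covers in particular the case where $\mathscr{C}_{q+1}=\mathscr{C}_{q_{d+1}}$ is special, since then $q=q_{d+1}-1$ is not special.
\item If $q=q_d$ with $d\ge3$, the special state $(q_d,1)$ sends mass $x_d=\frac{c'_{d-1}}{k_{d-1}}-\frac{c'_d}{k_d}$ to $S_0$. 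Its total flow into $\mathscr{C}_{q_d+1}$ is therefore $\nu(q_d,1)-x_d=c'_d/(dk_d)$, exactly the mass of each non-special state of $\mathscr{C}_{q_d}$. Each non-special state sends all of its mass into $\mathscr{C}_{q_d+1}$. Since $\mathscr{C}_{q_d+1}$ is not special, (A2) says each of these flows is split equally among its $d$ states, so all $d$ contributions to $t$ coincide.
\end{itemize}
In both cases $Q_{ts}=1/d$.

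For (2), $t=(1,j)$ has only the predecessors $(0,1),(0,2)$. Both have mass $1/4$, and by (A3) $P_{(0,i)(1,j)}$ does not depend on $i$, so $Q_{ts}=1/2$.

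For (3) and (4), the predecessors of $t=(0,j)$ are $(0,1),(0,2)$ and the special states $(q_d,1)$, $d\ge3$.
\begin{itemize}
\item By (A2) the flow $x_d$ from $(q_d,1)$ into $S_0$ splits equally between $(0,1)$ and $(0,2)$, because $i_2=0$ is not of the form $q_d$. Hence $Q_{(0,j)(q_d,1)}=(x_d/2)/(1/4)=2x_d$, which is (4).
\item Column $\mathscr{C}_1$ has mass $c'_2/k_2=c_2/k_2$ and is fed only by $S_0$, equally by each $(0,i)$ by (A3). So each $(0,i)$ sends $1/4-c_2/(2k_2)$ into $S_0$, split equally between its two states by (A2). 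Dividing the resulting flow $1/8-c_2/(4k_2)$ by $1/4$ gives (3).
\end{itemize}
As a consistency check, $2(\tfrac12-\tfrac{c_2}{k_2})+\sum_{d\ge3}2x_d=1$, using the telescoping $\sum_{d\ge 3} x_d=c'_2/k_2$ from the proof of Proposition \ref{prop:existence}.

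The only delicate step is case (1) at a special column $q=q_d$. There the masses in $\mathscr{C}_{q_d}$ are unequal, and equality of the flows relies on the leakage $x_d$ exactly compensating the excess mass of $(q_d,1)$.
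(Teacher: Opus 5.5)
Your proof is correct and follows essentially the same route as the paper: both compute $Q_{ts}=\mu(s)P_{st}/\mu(t)$ from the explicit stationary masses of Proposition~\ref{prop:existence}, using the symmetries (A2)--(A3) and the normalization $\sum_s Q_{ts}=1$, and at the special column $\mathscr{C}_{q_d}$ both use that the flow from $(q_d,1)$ into $\mathscr{C}_{q_d+1}$ equals $\nu(q_d,1)-x_d=c'_d/(dk_d)$, the mass of a non-special state. Your organization by ``equal inflows from the $d$ predecessors'' simply unifies the paper's three cases for (1), and you write out the computations for (2)--(4) that the paper leaves as ``simple computation.''
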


\begin{proof}
We prove (1). Let  $q_d\leq q<q_{d+1}-1$ and assume that $(q,i)$ is not a special state. Then $\mu(q,i)=\mu(q+1,j)$, and thus $Q_{(q+1,j),(q,i)}=P_{(q,i)(q+1,j)}=\frac{1}{d}$.
Assume now that $(q,i)$ is   special, that is $q=q_d$, $i=1$ and $d\geq 3.$
The mass of each state $(q+1,j)$ and each state $(q, i)$ for $i \neq 1$  is equal to a number $m_d$.

By the stationarity of the measure we have
$$
P_{(q_d,1) (q_d+1,j)} =\frac{ \frac{m_d}{d} }{\mu(q_d,1)}, 
$$
and thus
$$
Q_{(q_d+1,j),(q_d,1)} = P_{(q_d,1) (q_d+1,j)} \cdot \frac{\mu(q_d,1)}{\mu(q_d+1,j)}= \frac{1}{d}.
$$
We now consider the case $q=q_{d+1}-1$. Let $1\leq j\leq d+1$ and $1\leq i\leq d$.
Then $$Q_{(q_{d+1},j)(q,i)}=P_{(q,i)(q_{d+1},j)}\frac{\mu(q,i)}{\mu(q_{d+1},j)},$$
which does not depend on $i$ thanks to the axiom A3 in  Section \ref{sec:model markov process}.
Since $\sum_{1\leq i\leq d}Q_{(q_{d+1},j)(q,i)}=1$, this implies that $Q_{(q_{d+1},j)(q,i)}=\frac{1}{d}$ for all $i$.

(2),(3),(4) are simple computation.
\end{proof}
\begin{definition}
For all $d\geq 3$ we denote by $\alpha_d$ the transition probability $$Q_{(0,1)(q_d,1)}=Q_{(0,2)(q_d,1)}=\frac{2c'_{d-1}}{k_{d-1}}-\frac{2c'_d}{k_d}.$$

\end{definition}

\subsubsection{Weighted equidistribution of preimages} \label{sec:Equidistribution preimages Markov process}

 Consider an ergodic Markov process $(P_{st})$ on a countable state space $S$, so that there exists a unique stationary measure $\mu$. Let $\mathbb{P}^\mu$ be  the probability measure induced on $S^{\mathbb N}$. Denote by $\Theta:={\rm supp}(\mathbb{P}^\mu)$  the subset of allowable sequences.
 Fix an allowable sequence $w = (w_0, w_1, \ldots) \in S^\mathbb N$, and let  $z = (z_0, z_1, \ldots) \in \sigma^{-1}(w)$. We define the weight $Q(w,z)$ as  
$$Q(w,z)=Q_{w_0z_0}=\frac{\mu(z_0)}{\mu(w_0)}P_{z_0w_0}.$$
Clearly $Q(w,z)\neq 0$ if and only if $z$ is allowable. 
We have that $$\sum_{z\in \sigma^{-1}(w)}Q(w,z)=\sum_{z_0\in S}Q_{w_0,z_0}=1,$$ and thus 
$Q_w:=\sum_{z\in \sigma^{-1}(w)}Q(w,z)\delta_z$ is a probability measure on $S^\N$ with 
${\rm supp}(Q_w)=\sigma^{-1}(w)\cap \Theta.$
We can use the weights $Q(w,z)$ to define a sequence of probability measures $(Q^n_w)$ on $S^\N$ as follows. For $n \ge 1$ and $z\in \sigma^{-n}(w)$ set
$$Q^n(w,z):= \prod_{j=0}^{n-1} Q(\sigma^{j+1}(z), \sigma^j(z))=\frac{\mu(z_0)}{\mu(w_0)}\prod_{i=0}^{n-1}P_{z_iz_{i+1}}=\mathbb{P}^\mu(x_0=z_0,\dots, x_{n-1}=z_{n-1}|x_n=w_0),$$ where we recall that $z_n=w_0$. 
We have that $\sum_{z\in \sigma^{-n}(w)}Q^n(w,z)=1$, and thus 
$Q^n_w:=\sum_{z\in \sigma^{-n}(w)}Q^n(w,z)\delta_z$ is a probability measure on $S^\N$ with 
${\rm supp}(Q^n_w)=\sigma^{-n}(w)\cap\Theta.$

%
%

\begin{prop}[Weighted equidistribution of preimages]\label{equiprop}
Let $(P_{st})$ be an ergodic Markov process on a countable state space $S$. Let $w\in \Theta$. Then
  the measures $Q^n_w$ converge weakly to the measure  $\mathbb{P}^\mu$.
\end{prop}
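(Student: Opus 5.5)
Since $S^\N$ carries the product topology of discrete spaces, it suffices to check convergence on cylinder sets. Each cylinder set is clopen, so its indicator is bounded and continuous. Finite unions of cylinders form a $\pi$-system generating the Borel $\sigma$-algebra, and every open set is a countable union of cylinders. With these facts, the portmanteau theorem shows that
$$Q^n_w(C)\to \mathbb{P}^\mu(C)\quad\text{for all cylinders } C=C_{t_0,\dots,t_k}$$
implies weak convergence, since both sides are probability measures. I would argue this last step concretely. Given an open set $U=\bigcup_j C^{(j)}$ written as a disjoint countable union of cylinders, convergence on finite subunions gives $\liminf_n Q^n_w(U)\geq \mathbb{P}^\mu(U)$. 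Tightness is automatic here because the limit is a probability measure and the total mass is preserved.

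The main computation is the following. Fix a cylinder $C_{t_0,\dots,t_k}$ and take $n>k$. A point $z\in\sigma^{-n}(w)$ is determined by $(z_0,\dots,z_{n-1})$, with $z_{n+i}=w_i$. From the formula for $Q^n(w,z)$ established just before the statement,
$$Q^n_w(C_{t_0,\dots,t_k})=\sum_{z_{k+1},\dots,z_{n-1}}\frac{\mu(t_0)}{\mu(w_0)}\prod_{i=0}^{k-1}P_{t_it_{i+1}}\,P_{t_k z_{k+1}}\cdots P_{z_{n-1}w_0}=\frac{\mu(t_0)\prod_{i<k}P_{t_it_{i+1}}}{\mu(w_0)}\,(P^{n-k})_{t_k w_0}.$$
By Theorem~\ref{theoremergodic}, $(P^{n-k})_{t_kw_0}\to\mu(w_0)$ as $n\to\infty$. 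Since $w\in\Theta$ is allowable, we have $\mu(w_0)>0$. Hence the right-hand side converges to
$$\mu(t_0)\prod_{i<k}P_{t_it_{i+1}}=\mathbb{P}^\mu(C_{t_0,\dots,t_k}),$$
by \eqref{eq:cylinder measure}.

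I expect the main obstacle to be the passage from cylinder convergence to weak convergence on a non-compact, non-locally-compact space. For this I would invoke the standard fact that in a product of countable discrete spaces the cylinders form a countable basis closed under finite intersection, and that convergence on such a basis implies weak convergence (Billingsley, Theorem 2.2). Alternatively, one can approximate a bounded continuous $f$ uniformly on a large compact set $K=\prod_i F_i$, with the $F_i\subset S$ finite, by functions depending on finitely many coordinates. The masses $Q^n_w(K^c)$ can be made uniformly small because $Q^n_w$ and $\mathbb{P}^\mu$ agree asymptotically on each cylinder $\{x_i\in F_i\}$.
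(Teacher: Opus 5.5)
Your proposal is correct and follows the paper's proof: the same cylinder computation reducing $Q^n_w(C)$ to $\frac{\mu(t_0)}{\mu(w_0)}\prod_{i<k}P_{t_it_{i+1}}\,(P^{n-k})_{t_kw_0}$, then Theorem~\ref{theoremergodic}, then Billingsley's Theorem 2.2 to pass from cylinder convergence to weak convergence. Your aside on tightness is not needed, and your alternative compact-approximation sketch would require tail bounds uniform in $n$ that you assert without proof; the main argument stands without either.
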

\begin{proof}
Let $\mathcal{C}$ be the family of all cylinder sets  $C_{z_0,\dots, z_m}$ where $m\geq 1$ and $z_i\in S$. Then $\mathcal{C}$ is a countable basis for the product topology on $S^\N$, and it is closed w.r.t. finite intersections. It follows from \cite[Theorem 2.2]{Bil99} that to prove weak convergence it is enough to  
 show that for every cylinder set $C=C_{z_0,\dots, z_m}\in \mathcal{C}$ we have that 
$$Q^n_w(C)\stackrel{n\to\infty}\longrightarrow \mathbb{P}^\mu(C).$$
We can assume that $n>m$. Then 
$$Q^n_w(C)=\frac{\mu(z_0)}{\mu(w_0)}\left(\prod_{i=0}^{m-1}P_{z_iz_{i+1}}\right) (P^{n-m})_{z_mw_0}.$$
Since the Markov process is ergodic, it follows from Theorem \ref{theoremergodic}
that $(P^{n-m})_{z_mw_0}\stackrel{n\to\infty}\longrightarrow \mu(w_0)$.
\end{proof}

\subsubsection{The backward dynamics of the shift as a Markov process on $\Theta$}
Notice that the subset of allowable sequences $\Theta:={\rm supp}(\mathbb{P}^\mu)$ is forward invariant for $\sigma$.
The family of probability measures $(Q_w)_{w\in \Theta}$ can be thought of as the transition kernel of  a Markov process on the state space $\Theta$, that is,  
 $Q_w$  is the probability measure representing position  at time 1 if we start at  time 0 with an initial  probability measure $\delta_w$.
Recalling that the space of probability measures on $\Theta$ is equipped with the topology of weak convergence, where test functions are bounded continuous functions, it follows that the map $w\to Q_w$ is continuous from $\Theta$ to the space of probability measures on $\Theta$, hence the Markov process is Feller.

If we start at time 0 with  an  initial probability measure $\nu$ on $\Theta$, then the probability measure representing position  at time $n$ is given by
$$\nu^{(n)}(B):=\int_\Theta Q^n_w(B)d\nu(\omega),$$ where $B\subset \Theta$ is any measurable subset.

A probability measure $\nu$ on $\Theta$ is {\sl stationary} if
$\nu^{(1)}=\nu$, that is, for all measurable $B\subset \Theta$ we have
\begin{equation}\label{eq:measure of preimages integral}
\nu(B)=\int_\Theta Q_w(B)d\nu(w)=\int_\Theta\left( \sum_{z\in \sigma^{-1}(w)\cap B} Q(w,z)\right) d\nu(w).
\end{equation}
If $\nu$ is stationary, and if $g\colon B\to \Theta$ is a continuous branch of $\sigma^{-1}$ defined on a Borel set $B\subset \Theta$, then 
\begin{equation}\label{endsummer}
\nu(g(B))=\int_BQ(w,g(w))d\nu(w).
\end{equation}

Proposition \ref{equiprop} immediately yields the following corollary.
\begin{corollary}\label{Pmustationary}
The probability measure $\mathbb{P}^\mu$ is stationary for the Markov process $(Q_w)$ and for all probability measure $\nu$ on $\Theta$, the sequence
$(\nu^{(n)})$ converges weakly to $\mathbb{P}^\mu$.
\end{corollary}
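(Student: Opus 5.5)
We need to prove two claims: that $\mathbb{P}^\mu$ is stationary for the kernel $(Q_w)$, and that $\nu^{(n)}\to\mathbb{P}^\mu$ weakly for every probability measure $\nu$ on $\Theta$. Both follow from Proposition \ref{equiprop} by integrating against test functions and applying dominated convergence.

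\emph{Convergence of $\nu^{(n)}$.} Fix a bounded continuous function $\phi$ on $\Theta$. By definition of $\nu^{(n)}$ (first for indicator functions, then for simple and bounded measurable functions),
$$\int\phi\,d\nu^{(n)}=\int_\Theta\Big(\int\phi\,dQ^n_w\Big)d\nu(w).$$
For each fixed $w\in\Theta$, Proposition \ref{equiprop} gives $\int\phi\,dQ^n_w\to\int\phi\,d\mathbb{P}^\mu$. The inner integrals are bounded by $\sup|\phi|$, and $\nu$ is a probability measure. Dominated convergence therefore yields $\int\phi\,d\nu^{(n)}\to\int\phi\,d\mathbb{P}^\mu$, which is the claimed weak convergence.

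We must also check that $w\mapsto\int\phi\,dQ^n_w$ is measurable. This follows from the explicit formula: $Q^n_w$ is a countable sum of point masses whose weights depend only on the coordinate $w_0$, located at points $(z_0,\dots,z_{n-1},w)$ that depend continuously on $w$. Alternatively it follows from the Feller property noted above.

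\emph{Stationarity of $\mathbb{P}^\mu$.} First note that $\nu^{(n+1)}=(\nu^{(n)})^{(1)}$, by the Chapman--Kolmogorov identity for the kernel. This identity amounts to $Q^{n+1}(w,z)=Q^n(w,\sigma z)\,Q(\sigma z,z)$, which is immediate from the product definition of $Q^n$.

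Now take $\nu=\mathbb{P}^\mu$ and set $\lambda=\mathbb{P}^\mu$. The operator $T$ sending a probability measure $\rho$ to $\rho^{(1)}$ is weakly continuous. Indeed, $\int\phi\,d\rho^{(1)}=\int(A\phi)\,d\rho$, where $A\phi(w)=\sum_z Q(w,z)\phi(z)$ is bounded and continuous by the Feller property. Applying $T$ to both sides of $\lambda^{(n)}\to\lambda$ gives $\lambda^{(n+1)}\to T\lambda$. Since also $\lambda^{(n+1)}\to\lambda$, uniqueness of weak limits gives $T\lambda=\lambda$.

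Alternatively, stationarity can be checked directly on cylinder sets. For $C=C_{z_0,\dots,z_m}$,
$$\int Q_w(C)\,d\mathbb{P}^\mu(w)=\mu(z_0)\prod_{i}P_{z_iz_{i+1}}=\mathbb{P}^\mu(C),$$
using $Q_{z_1z_0}=\mu(z_0)P_{z_0z_1}/\mu(z_1)$ and $\mathbb{P}^\mu(C_{z_1,\dots,z_m})$. Since cylinder sets form a $\pi$-system generating the Borel $\sigma$-algebra, the equality extends to all Borel sets.

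\emph{Main obstacle.} The only delicate point is the continuity of $A\phi$, since the sum over preimages is countably infinite. Fix $w$. The sum splits over $z_0\in S$, with weights $Q_{w_0z_0}$ summing to $1$ and constant on the open cylinder $\{w'_0=w_0\}$. For $w'$ in that cylinder, each term $\phi(z_0w')$ tends to $\phi(z_0w)$ as $w'\to w$. Dominated convergence over $z_0$ then gives continuity of $A\phi$ at $w$.
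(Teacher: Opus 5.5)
Your proof is correct and follows the paper's route. The paper only remarks that the corollary follows immediately from Proposition~\ref{equiprop}, and your argument supplies that deduction in full: integrating over $\nu$ with dominated convergence gives the convergence, and stationarity of $\mathbb{P}^\mu$ follows either from the Feller property plus uniqueness of weak limits or from the direct cylinder computation.
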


\subsubsection{Equidistribution of periodic cycles}\label{equiperiod}
 Consider a positive recurrent Markov process on a countable state space $S$, and let $\mu$ be its unique stationary measure. 
Our aim  is to introduce for all $n\geq 1$ a measure $\nu_n$ on $S^{\mathbb N}$,  with support contained on the subset ${\rm Per}_n(\sigma)=\{z\in S^\N\colon \sigma^n(z)=z\}$ of periodic points of period $n$, in such a way that  $\nu_n\to \mathbb{P}^\mu$ weakly as $n \rightarrow \infty$. 
Let $y=(y_0,y_1,\dots)\in {\rm Per}_n(\sigma),$ that is, $y_n=y_0$.
 We define the mass of $y$  as 
 $$
 Q^n(y,y)=\prod_{i=0}^{n-1}P_{y_iy_{i+1}}=\mathbb{P}^\mu(x_0=y_0,\dots, x_{n-1}=y_{n-1}|x_n=y_0).
 $$
We thus define $$\nu_n=\sum_{y\in {\rm Per}_n(\sigma)}\left( \prod_{i=0}^{n-1}P_{y_iy_{i+1}} \right)\delta_y.$$
In contrast with what happens for the measure $Q^n_w$, the measures $\nu_n$ need not be probability measures, the total mass $\nu_n(S^\N)$ could be any finite number including $0$, and could be infinite.
On the other hand, every measure $\nu_n$ is clearly $\sigma$-invariant, and
since the mass of a periodic point $y$  is  non-zero only when all consecutive transition probabilities are strictly positive, the support of the measure $\nu_n$ is the subset $ {\rm Per}_n(\sigma)\cap {\rm Supp}(\mathbb{P}^\mu)$.

\begin{lemma}\label{lemma: return chance}
For each state $s \in S$ one has
$$
\sum_{\substack{ y\in {\rm Per}_n(\sigma)\\ y_0=s}} \nu_n(y) \rightarrow \mu(s)
$$
as $n \rightarrow \infty$.
\end{lemma}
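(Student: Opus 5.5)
The plan is to identify the sum with an entry of the $n$-th power of the transition matrix, and then apply Theorem \ref{theoremergodic}.

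A periodic point $y\in {\rm Per}_n(\sigma)$ is determined by its first $n$ coordinates $(y_0,\dots,y_{n-1})$, since $y_{k+n}=y_k$. Conversely, every word $(y_0,\dots,y_{n-1})\in S^n$ extends uniquely to a point of ${\rm Per}_n(\sigma)$. The mass of $y$ is
\[
\nu_n(\{y\})=\prod_{i=0}^{n-1}P_{y_iy_{i+1}},\qquad y_n=y_0 .
\]
Fixing $y_0=s$ and summing over all words $(s,y_1,\dots,y_{n-1})$ gives
\[
\sum_{\substack{ y\in {\rm Per}_n(\sigma)\\ y_0=s}} \nu_n(y)=\sum_{y_1,\dots,y_{n-1}\in S}P_{sy_1}P_{y_1y_2}\cdots P_{y_{n-1}s}=(P^n)_{ss}.
\]
This is the definition of the matrix power, and the sum converges because all terms are nonnegative and bounded by the row sums, which equal $1$. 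There is one subtlety in this identification: distinct words must correspond to distinct periodic points. A point of minimal period dividing $n$ is still counted exactly once, since we sum over points $y$ rather than over orbits. So the identification is exact.

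By Theorem \ref{theoremergodic}, applied with initial state $s$ and target state $t=s$, ergodicity of the Markov process gives $(P^n)_{ss}\to\mu(s)$ as $n\to\infty$. This is the required statement.

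The only real point to address is that the lemma needs aperiodicity, which the standing hypothesis (ergodic Markov process) supplies. Under positive recurrence alone the statement fails, for example for the two-state periodic chain. I would note this explicitly and assume the process is ergodic, as in the surrounding subsection. Beyond that, the argument involves no obstacle other than the bookkeeping bijection between periodic points and closed words of length $n$.
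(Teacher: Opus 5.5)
Your proof is correct and follows the paper's own argument: you identify the sum with $(P^n)_{ss}$ and apply Theorem \ref{theoremergodic} with $t=s$. You are also right that this uses the ergodic (aperiodic) hypothesis rather than positive recurrence alone, which the paper's proof likewise invokes.
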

\begin{proof}
Since the Markov process is ergodic, by Theorem \ref{theoremergodic} for each pair of states $(s,t)$ the probability that a path starting at $s$ ends in $t$ after $n$ steps converges to $\mu(t)$ as $n \rightarrow \infty$, and thus
\begin{equation}\label{eq:convergence to stationary measure}
\mu(t)=\lim_{n\to+\infty} (P^n)_{st}= \lim_{n\to+\infty} \sum_{\substack{z\in S^{n+1} \\ z_0=s \\ z_n=t}}\prod_{j=0}^{n-1} P_{z_j z_{j+1}}.
\end{equation}
The claim then follows  taking $s=t$.
\end{proof}

If the total  masses of the measures $\nu_n$ are eventually finite and uniformly bounded, then Lemma \ref{lemma: return chance} implies that the total mass of the measures $\nu_n$ converges to $1$. However this  need not be the case for a general
positive recurrent Markov process on a countable state space $S$.
We will therefore consider the specifics of the model Markov process defined in Proposition \ref{prop:existence}.

\begin{lemma}\label{lemma: convergence of mass}
Consider the model Markov process  on the countable state space $S$, 
and denote by $\mu$  its unique stationary measure.  
Then there exists $n_0\geq 0$ such that for all $n\geq n_0$ and  for all $s\in S$ we have
\[
\sum_{\substack{ y\in {\rm Per}_n(\sigma)\\ y_0=s}} \nu_n(y) \le 4 \mu(s),
\] and in particular $\nu_n(S^\N)\leq 4$ for all $n\geq n_0$.
\end{lemma}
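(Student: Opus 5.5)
The plan is to bound the periodic mass above $s$ directly through the structure of the model chain. For $y\in{\rm Per}_n(\sigma)$ with $y_0=s$ we have $\nu_n(y)=\prod_{i=0}^{n-1}P_{y_iy_{i+1}}$, and $y$ is determined by $(y_0,\dots,y_{n-1})$ with $y_n=s$. Summing over all such $y$ therefore gives
\[
\sum_{\substack{ y\in {\rm Per}_n(\sigma)\\ y_0=s}} \nu_n(y)=(P^n)_{ss}.
\]
So it suffices to show that $(P^n)_{ss}\le 4\mu(s)$ for all $s\in S$ and all $n\ge 1$; we can then take $n_0=1$. The bound $\nu_n(S^\N)\le 4$ follows by summing over $s$, since $\sum_s\mu(s)=1$.

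If $s\in S_0$, then $\mu(s)=1/4$, so $4\mu(s)=1\ge (P^n)_{ss}$ and there is nothing to prove. Now let $s=(q,j)$ with $q\ge 1$. By axiom (A1), every positive transition either raises the first coordinate by exactly one, stays inside $S_0$, or jumps from a special state $(q_d,1)$ into $S_0$. Hence any allowable path of length $n$ from $s$ back to $s$ must visit $S_0$: otherwise the first coordinate would strictly increase along the whole path. Consider the last time at which the path is in $S_0$. From that time on the first coordinate rises by one at each step, from $0$ to $q$. So this last visit happens at exactly time $n-q$, at some state $u\in S_0$; in particular this forces $n\ge q$, and otherwise $(P^n)_{ss}=0$. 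Splitting each path at this time gives
\[
(P^n)_{ss}\le \sum_{u\in S_0}(P^{n-q})_{su}\,(P^{q})_{us}\le \Big(\max_{u\in S_0}(P^q)_{us}\Big)\sum_{u\in S_0}(P^{n-q})_{su}\le \max_{u\in S_0}(P^q)_{us}.
\]

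The remaining step, which I expect to be the only real point, is to bound $(P^q)_{us}$ using stationarity, or equivalently the time-reversed chain. From $\mu(u)(P^q)_{us}=\mu(s)(Q^q)_{su}\le\mu(s)$ we get
\[
(P^q)_{us}\le \frac{\mu(s)}{\mu(u)}=4\mu(s),
\]
because $\mu(u)=1/4$ for $u\in S_0$ by Proposition~\ref{prop:existence}. This is also elementary to see directly: $\mu(s)\ge \mu(u)(P^q)_{us}$ follows from $\mu=\mu P^q$ together with nonnegativity of all terms. Combining this with the previous display yields $(P^n)_{ss}\le 4\mu(s)$, as required.

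The only care needed is the combinatorial claim that a return path has its last visit to $S_0$ at exactly time $n-q$. This rests on (A1) alone: the only transitions leaving $S_0$ go from column $0$ to column $1$, and outside $S_0$ the only moves are one column forward or a jump back into $S_0$.
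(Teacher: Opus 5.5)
Your proof is correct and follows the paper's argument. In both, a return path from $s\in\mathscr{C}_q$ with $q\ge 1$ must sit in $S_0$ exactly at time $n-q$, and stationarity with $\mu(u)=1/4$ on $S_0$ bounds the last $q$ steps by $4\mu(s)$. The paper uses the exact identity $\mu(s)=\frac14\sum_{u\in S_0}(P^q)_{us}$ and sums over $u$, where you take a maximum and the inequality $\mu(u)(P^q)_{us}\le\mu(s)$. The one real difference is for $s\in S_0$: the paper invokes Lemma~\ref{lemma: return chance} to obtain some $n_0$, whereas your trivial bound $(P^n)_{ss}\le 1=4\mu(s)$ gives $n_0=1$ directly.
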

\begin{proof}

Consider a state $s\in \mathscr{C}_q$ with $q\geq 1$.
 Note that a path  starting at $s$  can return to $s$ at time $n\geq 1$ only if it visits  $S_0$ at time $n-q$.
  In particular $n$ needs to be at least as large as $q$.

We have
\[
\sum_{\substack{y\in S^{n-q+1} \\ y_0=s }} \prod_{i=0}^{n-q-1} P_{y_i, y_{i+1}} = 1,
\]
and thus
\[
\sum_{\substack{y\in S^{n-q+1} \\ y_0=s\\ y_{n-q}\in S_0 }} \prod_{i=0}^{n-q-1} P_{y_i, y_{i+1}} \le 1.
\]
It follows that
\begin{equation}\label{4measure}
\begin{aligned}
\sum_{\substack{ y\in {\rm Per}_n(\sigma)\\ y_0=s}} \nu_n(y) & =
\sum_{\substack{y\in S^{n+1} \\ y_0=y_n=s}}\prod_{i=0}^{n-1} P_{y_i y_{i+1}}
& \le \sum_{\substack{(y_{n-q},\dots, y_n)\in S^{q+1} \\ y_{n-q}\in S_0\\y_n=s}} \prod_{i=n-q}^{n-1} P_{y_i,y_{i+1}}.
\end{aligned}
\end{equation}
Notice that the only states $t\in S$ for which $P_{ts}^q$ is different from 0 are the two  states $(0,1),(0,2)$ in  $S_0$. Hence by 
stationarity of the measure $\mu$ we have that 
$\mu(s)=\frac{1}{4}(P^q_{(0,1)s}+P^q_{(0,2)s}),$ which, together with \eqref{4measure}, implies that for all $n\geq 1$,
$$\sum_{\substack{ y\in {\rm Per}_n(\sigma)\\ y_0=s}} \nu_n(y)\leq 4\mu(s).$$
It remains to consider the case $s\in S_0$. By the previous lemma, there exists $n_0\geq 1$ such that if $s\in S_0$, then for all $n\geq n_0$ we have $$\sum_{\substack{ y\in {\rm Per}_n(\sigma)\\ y_0=s}} \nu_n(y) \leq 4 \mu(s),$$
which yields the result.
\end{proof}

\begin{remark}
The proof of the above lemma uses in an essential way that any periodic orbit from a state $s \in \mathscr{C}_q$ for $q \geq 1$ must pass through $S_0$ before returning to $s$, a fact that no longer holds for the modified Markov processes  described in Section~\ref{sec:modified Markov process}. One can instead compute that given any $\epsilon>0$, the modifications to the model Markov process can be made sufficiently small such that 
\[
\sum_{\substack{ y\in {\rm Per}_n(\sigma)\\ y_0=s}} \nu_n(y) \le (4+\epsilon) \mu(s),
\]
which is sufficient for our next result.
\end{remark}

\begin{thm}\label{lido}
For both the model Markov process and the modified Markov process,  the measures $\nu_n$ converge  weakly to the measure  $\mathbb{P}^\mu$.
\end{thm}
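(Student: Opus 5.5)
The strategy mirrors the proof of Proposition~\ref{equiprop}. By \cite[Theorem 2.2]{Bil99}, used with the countable basis $\mathcal{C}$ of cylinder sets $C_{z_0,\dots,z_m}$, which is closed under finite intersections, convergence on each $C\in\mathcal{C}$ suffices. One caveat: that theorem is stated for probability measures, while the $\nu_n$ need not have total mass $1$. So I will first prove that $\nu_n(S^\N)\to 1$, then normalize, or equivalently check convergence on all of $S^\N$ together with convergence on cylinders.

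\textbf{Step 1: convergence on cylinders.} Fix $C=C_{z_0,\dots,z_m}$ and take $n>m$. A periodic point $y\in{\rm Per}_n(\sigma)\cap C$ is determined by the word $(y_0,\dots,y_{n-1})$, with $y_i=z_i$ for $i\le m$ and $y_n=z_0$. Hence
$$
\nu_n(C)=\Big(\prod_{i=0}^{m-1}P_{z_iz_{i+1}}\Big)\,(P^{n-m})_{z_mz_0}.
$$
By Theorem~\ref{theoremergodic}, $(P^{n-m})_{z_mz_0}\to\mu(z_0)$, so $\nu_n(C)\to\mathbb{P}^\mu(C)$ by \eqref{eq:cylinder measure}. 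This only uses that the process is ergodic, so it holds for both the model and the modified process.

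\textbf{Step 2: no loss of mass at infinity (the main obstacle).} Write $\nu_n(S^\N)=\sum_{s\in S} a_n(s)$, where $a_n(s):=\sum_{y\in{\rm Per}_n(\sigma),\,y_0=s}\nu_n(y)$. Lemma~\ref{lemma: return chance} gives $a_n(s)\to\mu(s)$ for each fixed $s$. Lemma~\ref{lemma: convergence of mass} gives the domination $a_n(s)\le 4\mu(s)$ for all $n\ge n_0$, and in the modified case the preceding remark gives $a_n(s)\le(4+\epsilon)\mu(s)$. Since $\sum_s\mu(s)=1<\infty$, dominated convergence on the countable set $S$ yields $\nu_n(S^\N)\to 1$.

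Tightness follows the same way. Given $\eta>0$, choose a finite $F\subset S$ with $\mu(S\setminus F)<\eta/5$. Then $\nu_n(\{y:y_0\notin F\})\le(4+\epsilon)\,\eta/5<\eta$ for all large $n$. Since the sets $\{y_0\notin F\}$ are clopen, this controls the mass escaping through the first coordinate. For a general bounded continuous test function, I would combine this bound with Step 1 applied to $\sigma$-invariance of $\nu_n$: translating by $\sigma^k$ controls the other coordinates in the same way, which gives uniform tightness on $S^\N$ coordinatewise.

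\textbf{Step 3: conclusion.} For $n$ large, the normalized measures $\tilde\nu_n=\nu_n/\nu_n(S^\N)$ are probability measures, and by Steps 1 and 2 they converge on every $C\in\mathcal{C}$ to $\mathbb{P}^\mu(C)$. The Billingsley criterion then gives $\tilde\nu_n\to\mathbb{P}^\mu$ weakly, and multiplying by $\nu_n(S^\N)\to1$ yields $\nu_n\to\mathbb{P}^\mu$ weakly.

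The only genuinely delicate input is the uniform domination in Step 2 for the modified process, namely the $(4+\epsilon)\mu(s)$ bound stated in the remark. If needed, I would prove it by splitting each return loop at its last visit to $S_0$ and controlling the added small transition probabilities using the smallness constraints $\delta_q\le a_q/a_{q+1}$ from Proposition~\ref{prop:extension of the support}.
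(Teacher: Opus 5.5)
Your proposal is correct and follows the paper's proof. Cylinder convergence comes from $(P^{n-m})_{z_mz_0}\to\mu(z_0)$, the total mass $\nu_n(S^\N)\to 1$ comes from the per-state domination of Lemma~\ref{lemma: convergence of mass} (and the subsequent remark in the modified case), and \cite[Theorem 2.2]{Bil99} then finishes. Your explicit dominated-convergence step is a cleaner justification of the paper's one-line mass claim, and the tightness digression in Step 2 is unnecessary, since the $\pi$-system criterion needs no tightness once the total masses converge.
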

\begin{proof}
By Lemma \ref{lemma: convergence of mass} the mass of the measures $\nu_n$ converges to $1$ as $n \rightarrow \infty$. It is therefore sufficient to prove that for every cylinder set $C=C_{z_0,\dots, z_m}$ we have that 
\begin{equation}\label{eq: cylinders converge}
\nu_n(C)\stackrel{n\to\infty}\longrightarrow \mathbb{P}^\mu(C).
\end{equation}

We can assume that $n>m$. Thus
$$
\nu_n(C) = \prod_{i=0}^{m-1} P_{z_i z_{i+1}} \cdot (P^{n-m})_{z_mz_0}.
$$
Since the Markov process is ergodic, it follows from Theorem \ref{theoremergodic}
that $(P^{n-m})_{z_mz_0}\stackrel{n\to\infty}\longrightarrow \mu(z_0)$, hence we obtain the desired equation \eqref{eq: cylinders converge}.
\end{proof}

\section{Invariant measures for Baker--Bishop maps} \label{sec: Bishop maps}

In this section we construct a subclass of Baker--Bishop maps and prove Theorems \ref{Theorem: Main1} and \ref{Theorem: Main2}.
 We start by constructing the maps (Theorem~\ref{thm:Construction Theorem 1D}),
 then embed an appropriate symbolic dynamical system, and use the results from Section~\ref{sec:Markov} to push-forward an ergodic measure with infinite entropy (Section~\ref{sec:Bishop measure}). 

\begin{definition}\label{defn:annuli}
  Let $4<M<6$. Given an increasing  sequence of radii $(r_q)_{q\geq 1}$ in $\mathbb R_+$ we define the sequences of annuli $C_q$ and $\bC_q$ by
    $$
    C_q:=\{r_q/10<|z|<10r_q\}, \quad     \bC_q:=\{r_q/(10M)<|z|<10M r_q\}.
    $$
If the sequence $(r_q)$ is such that the annuli $\bC_q$ have pairwise disjoint closures, we can define two sequences of {\sl gaps} between the annuli:
    $$
    G_q:=\{10r_q<|z|<r_{q+1}/10\},\quad \bG_q:=\{10Mr_q<|z|<r_{q+1}/(10M)\}.
    $$
\end{definition}

\begin{definition}\label{defn:covering}
Let $A$  be a bounded  domain  in $\C$ with positively oriented $C^1$ boundary.  Let $B\subset \C$ be a bounded domain.  Let $f: \C \rightarrow \C$ be a holomorphic map. We say that
$f|_A$ {\sl covers $n$ times} the domain $B$ if 
\begin{enumerate}
\item $f(\partial A)\cap  B = \emptyset$, and
\item the curve $f(\partial A)$ has winding number $n$ with respect to any point in $B$, or equivalently  every point in $B$ has  $n$  preimages  in $A$ (counted with multiplicities).
\end{enumerate}
\end{definition}

\begin{remark}\label{rem:stabilityofcoverings}
If in addition $f(\partial A)\cap {\overline B} = \emptyset$, then for any holomorphic function $g\colon \C\to \C$ sufficiently close to $f$ on $\overline A$ the map $g|_A$ also covers  $n$ times the domain $B$.
\end{remark}
We refer to a proper simply connected domain in $\C$ as a \emph{holomorphic disk}.

In the following theorem we construct the class of Baker--Bishop maps that we are interested in.

\begin{theorem}\label{thm:Construction Theorem 1D}
There exist
\begin{itemize}
\item a transcendental entire  function  of the form
\begin{equation}\label{eq:f}
f(z) = a_0 + z^2 + a_3 z^3 + \ldots,
\end{equation}
where all coefficients (except $a_1=0$) are positive real, and where the sequence $(a_d)_{d\geq 2}$ is strictly decreasing;
\item  an unbounded  strictly increasing sequence of positive radii $(r_q)_{\geq 1}$ defining two sequences of  annuli $(C_q)_{q\geq 1}$ and $(\bC_q)_{q\geq 1}$ as in Definition \ref{defn:annuli} with pairwise disjoint closures;
\item a set $C_0\Subset \DD_{r_1/10}$ consisting of two holomorphic disks $C_0^+$ and $C^-_0$ whose boundaries are real analytic and whose closures are disjoint;
\item  a strictly  increasing sequence of integers $(q_d)_{d\geq 2}$ with $q_2=1$;
\end{itemize}
such that the following hold:
\begin{itemize}
\item[(i)]  the maps $f|_{C_0^+}$ and $f|_{C_0^-}$ both cover once the disc $\DD_{10Mr_1}$ whose boundary is 
the outer boundary of the annulus $\bC_1$;
\item[(ii)] for all $d \ge 2$ and all $q_d \le q < q_{d+1}$, the map $f|_{C_q}$ covers $d$ times the annulus $\bC_{q+1}$;
\item[(iii)]  for all $d \ge 3$ the map $f|_{C_{q_d}}$ also covers once the disc $\DD_{10Mr_{q_d}}$
whose boundary is 
the outer boundary of the annulus $\bC_{q_d}$;
\item[(iv)] the domain $G_0:=\DD_{\frac{r_1}{10}}\setminus \overline C_0$ is mapped inside  the gap ${\bf G}_1$ and for all $q\geq 1$ the gap  $G_q$ is mapped  inside the gap ${\bf G}_{q+{1}}$, hence the complement of the set $\bigcup_{q\geq 0}{\overline C_q}$ is forward invariant and contained in the escaping set;
\item[(v)] all critical points of $f$ are contained in the forward invariant set $(\bigcup_{q\geq 0}{\overline C_q})^\complement$, and thus escape to infinity.
\end{itemize}
\end{theorem}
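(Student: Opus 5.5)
We construct $f$ inductively, one coefficient block at a time, together with the radii $r_q$ and the integers $q_d$. The stability of coverings under small perturbations (Remark~\ref{rem:stabilityofcoverings}) will let all earlier properties survive later choices. For $d\ge 2$ let $p_d(z)=a_0+z^2+a_3z^3+\dots+a_dz^d$ be the partial sums. Each stage fixes $a_{d+1}$ and the radii $r_q$ for $q_d<q\le q_{d+1}$. Every property we verify will be an open condition on a compact set: strict inclusions of images of closed annuli or closed gaps, boundaries mapped off closures, and critical points in open sets. So it persists if the tail $\sum_{j>d+1}a_jz^j$ is small enough on a fixed large disk. Choosing $a_{j}$ decreasing super-exponentially fast ($a_j\le \epsilon_j R_j^{-j}$ with $R_j\to\infty$ and $\sum\epsilon_j$ small) makes the tails uniformly negligible on each $\DD_{R_j}$ and makes $f$ entire and transcendental.

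\emph{Base step.} Take $a_0$ large and positive, so that $z^2+a_0$ has its critical point $0$ escaping and $J(z^2+a_0)$ is a Cantor set contained in two small disks around $\pm i\sqrt{a_0}$. Let $C_0^\pm$ be the two preimages under $z^2+a_0$ of a disk $\DD_{10Mr_1}$ that contains them, with $r_1\approx a_0$ (slightly larger). These are disjoint disks with real-analytic boundary, each mapped univalently onto $\DD_{10Mr_1}$, which gives (i). Their complement in $\DD_{r_1/10}$ contains $0$; it maps into $\{|w|>10Mr_1\}$, and if $r_1/10$ is chosen appropriately its image lies in $\bG_1$, which gives (iv) for $G_0$ and places the critical point $0$ in $G_0$.

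\emph{Inductive step.} In the range $q_d\le q<q_{d+1}$ the dominant term of $f$ is $a_dz^d$: on $C_q$ one has $|a_dz^d|\gg|p_{d-1}(z)|+|a_{d+1}z^{d+1}|$, provided $|z|\in[r_q/(10M)\cdot\text{const},\,10Mr_q]$ lies well between the transition radii $\rho_{d}\approx a_{d-1}/a_d$ and $\rho_{d+1}\approx a_d/a_{d+1}$. We therefore define $r_{q+1}:=a_dr_q^d$, up to a fixed factor. Then $f\approx a_dz^d$ maps $C_q=\{r_q/10<|z|<10r_q\}$ onto $\{r_{q+1}10^{-d}<|w|<10^dr_{q+1}\}\supset\overline{\bC_{q+1}}$ for $d\ge2$ (indeed $10^{d}>10M$ because $M<6$), covering it $d$ times with boundary images outside; this is (ii). Similarly the gap $G_q$ maps into $\bG_{q+1}$, since $|a_dz^d|$ is monotone in $|z|$ and the error is relatively small; this is (iv). We stay in degree $d$ for $k_d$ steps.

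The delicate part, which I expect to be the main obstacle, is the transition at $q=q_{d+1}$, where $d$ increases to $d+1$. We choose $a_{d+1}$ so that the new transition radius $\rho_{d+1}=a_d/a_{d+1}$ falls inside the gap $G_{q_{d+1}-1}$, just inside the annulus $C_{q_{d+1}}$. The precise placement is that $\rho_{d+1}$ lies between $r_{q_{d+1}}/10$ and $r_{q_{d+1}}$, with $r_{q_{d+1}}$ adjusted accordingly. In the model $a_dz^d(1+z/\rho_{d+1})$ the annulus $C_{q_{d+1}}$ contains the $d$-root circle of $z^d(1+z/\rho)$ near $|z|=\rho$. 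Its outer boundary is mapped with winding number $d+1$ around $\overline{\DD_{10Mr_{q_{d+1}+1}}}$, and its inner boundary with winding number $d$. Hence $C_{q_{d+1}}$ covers $\bC_{q_{d+1}+1}$ exactly $d+1$ times, by the argument principle on the two boundary circles. To obtain (iii) we must also show that the small disk of $C_{q_{d+1}}$ around the new zero $z\approx-\rho_{d+1}$ covers $\DD_{10Mr_{q_{d+1}}}$ once. Near $-\rho$ the map is locally $\approx a_d(-\rho)^{d}(z+\rho)$; its derivative there is of size $a_d\rho^{d}\gg r_{q_{d+1}}$, so a small round disk around the zero covers $\DD_{10Mr_{q_{d+1}}}$ univalently. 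This requires verifying that $|f|$ restricted to $C_{q_{d+1}}$ is smaller than $r_{q_{d+1}}$ only on that small disk, so the required scale inequality is $a_d\rho^d\gg\rho$, true for large $\rho$. The new critical point $x_{d+1}\approx-\tfrac{d}{d+1}\rho_{d+1}$ must lie outside $\overline{C_{q_{d+1}}}$ or in a region mapped into a gap $\bG$. This forces the careful placement of $\rho_{d+1}$ relative to $C_{q_{d+1}}$ and the possible use of a thinner nonround domain. I would therefore first check (v) numerically in the model $z^d(1+z)$, locating the critical value $-\frac{d^d}{(d+1)^{d+1}}\rho^{d+1}$ relative to the annuli and choosing $r_{q_{d+1}}$ so that $|x_{d+1}|$ falls in a gap.

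Finally, the tail condition on $a_{d+2},a_{d+3},\dots$ preserves all finitely many closed conditions established so far, and an induction on $d$ yields all of (i)--(v).
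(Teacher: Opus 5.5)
Your architecture (partial sums $p_d$, radii $r_{q+1}=a_dr_q^d$ in the degree-$d$ range, Rouch\'e on the boundary circles, stability under small tails) matches the paper's. The gap is at the step you yourself flag as the main obstacle: it is left unresolved, and it is exactly where (v) is decided.

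\textbf{The transition window is too wide.} You propose $a_d/a_{d+1}\in(r_{q_{d+1}}/10,\,r_{q_{d+1}})$. Take $a_d/a_{d+1}=r_{q_{d+1}}/2$: the new critical point $x_{d+1}\approx-\frac{d}{d+1}\frac{a_d}{a_{d+1}}$ then has modulus about $\frac{d}{2(d+1)}r_{q_{d+1}}>r_{q_{d+1}}/10$. So it lies inside $C_{q_{d+1}}$ and (v) fails. Your fallbacks do not rescue this. A critical point in $C_{q_{d+1}}$ mapped into a gap contradicts (v) as stated. Non-round domains contradict the theorem, which uses the round annuli of Definition~\ref{defn:annuli}.

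\textbf{What is missing.} The inner radius $R:=r_{q_{d+1}}/10$ must sit in the narrow window $\frac{d}{d+1}\frac{a_d}{a_{d+1}}<R<\frac{a_d}{a_{d+1}}$, with a quantitative margin. The paper takes $R=\frac{2d}{2d+1}\frac{a_d}{a_{d+1}}$. On $|z|=R$ the two top terms are then comparable, $|a_{d+1}z^{d+1}|=\frac{2d}{2d+1}|a_dz^d|$. So your dominant-term heuristic is unavailable there, and two genuine estimates are needed:
\begin{enumerate}
\item[(a)] Show $|p_{d+1}-a_dz^d|<\frac{4d+1}{4d+2}\,a_d|z|^d$ on $|z|=R$. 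Together with $a_dR^{d-1}\ge 100(4d+2)M$, Rouch\'e still gives winding number $d$ around $\overline{\DD}_{10Mr_{q_{d+1}}}$, and image inside $\DD_{r_{q_{d+1}+1}/(10M)}$.
\item[(b)] Apply Rouch\'e to $p_{d+1}'$ against $g(z)=da_dz^{d-1}+(d+1)a_{d+1}z^d$ on the disk centred at $-\frac{da_d}{(d+1)a_{d+1}}$ and internally tangent to $\{|z|=R\}$. There $|g|\ge c_d\,a_dR^{d-1}$ beats the $O(R^{d-2})$ error. This places $x_{d+1}$ in $G_{q_{d+1}-1}$, using $r_{q_{d+1}-1}\ll R$.
\end{enumerate}

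\textbf{Consequence for (iv).} Your argument for (iv) (``$|a_dz^d|$ monotone, error relatively small'') breaks down on $G_{q_{d+1}-1}$, which is precisely the gap containing $x_{d+1}$: near its outer edge the error is not small. Instead, use that both boundary circles of this gap map into $\bG_{q_{d+1}}$ with the same winding number $d$. By the argument principle, $f(G_{q_{d+1}-1})$ then meets neither complementary component of $\bG_{q_{d+1}}$.

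\textbf{Remaining bookkeeping for (v).} You also need to show that earlier critical points remain in their gaps in the limit. Your tail condition can handle this, but only if it is chosen with their distances to the gap boundaries in mind. You must also show that $f$ has no other critical points (Hurwitz).

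\textbf{Base step.} The scales are reversed. If $r_1$ is slightly larger than $a_0$, then $10Mr_1>a_0=p_2(0)$. The critical value then lies in $\DD_{10Mr_1}$, and its preimage is a single disk containing $0$, mapped two-to-one. This contradicts (i) and your claim that $0\in G_0$. You need $10Mr_1<a_0$ and $\sqrt{a_0+10Mr_1}<r_1/10$, for instance $a_0=20Mr_1$ with $r_1$ large, as in the paper. This choice also gives $p_2(G_0)\subset\bG_1$ with $r_2=r_1^2$.

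\textbf{Minor.} (iii) follows directly from the boundary winding numbers $d+1$ and $d$, so the linearisation at the zero is unnecessary. The derivative there is of order $a_d\rho^{d-1}$, not $a_d\rho^{d}$.
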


\begin{remark}

Let $\Gamma^\pm_0=\partial C_0^\pm$. 
 For all $q\geq 1$, let $\gamma_q, \Gamma_q$ denote respectively the interior and exterior components of the  boundary of the annulus $C_q$.
Then properties (i)-(iv) above follow from the following conditions on the images of $\gamma_q,\Gamma_q$:
\begin{enumerate}
\item the curves $f(\Gamma^\pm_0)$ are contained in the gap ${\bf G}_1$  and each has winding number $1$ around any point in the bounded component of ${\bf G}_1^\complement$;
\item the curve  $f(\gamma_1)$ is contained in the gap ${\bf G}_1$ and has winding number $2$ around any point in the bounded component of ${\bf G}_q^\complement$;
\item for all $d\geq 2$ and all $q_d< q\leq q_{d+1}$ the curve  $f(\gamma_q)$ is contained in the gap ${\bf G}_q$ and has winding number $d$ around any point in the bounded component  of ${\bf G}_q^\complement$;
\item for all $d\geq 2$  and all $q_d\leq  q< q_{d+1}$ the curve 
$f(\Gamma_q)$ is contained in the gap ${\bf G}_{q+1}$ and has winding number  $d$ around any point in the bounded component  of ${\bf G}_{q+1}^\complement$;

\end{enumerate}
\end{remark}

\begin{definition}\label{defn:special annuli} 
 We call  $(q_d)_{d\geq 3}$ \emph{special integers}. Notice that $q_2=1$ is not a special integer (cf.  (iii) of Theorem \ref{thm:Construction Theorem 1D}, where $d\geq 3$).
For all $d\geq 3$ we   set $\rho_d := r_{q_d}$ and we define the \emph{special annulus}
$$
A_d := C_{q_d}=\{\rho_{d}/10<|z|<10\rho_{d}\}.
$$
We will later choose the sequence $(\rho_d)_{d\geq 3}$ in such a way that for all $d\geq 2$,
$$
\rho_{d+1}=\frac{10\cdot2d}{2d+1}\frac{a_d}{a_{d+1}}.
$$

\end{definition}

\begin{rem}\label{importantremark}
\,
\begin{enumerate}
\item For any sufficiently fast increasing sequence of integers $(q_d)_{d\ge 2}$  one can construct a function $f$ as in Theorem~\ref{thm:Construction Theorem 1D}.
\item     It will be clear from the proof that the moduli of the sequence $(a_d)_{d \ge 3}$ can be guaranteed to decrease arbitrarily fast. 
 \item Working with positive coefficients may not be essential, but significantly simplifies the construction.
 \item The radii $\rho_d$ are chosen such that for $|z| \sim \rho_{d+1}$ the two terms $a_d z^d$ and $a_{d+1}z^{d+1}$ are comparable in modulus, and dominate all other terms of the power series of $f$.
\end{enumerate}
\end{rem}

\subsection{Four preparatory lemmas}
We state and prove several lemmas that are needed in the proof of Theorem~\ref{thm:Construction Theorem 1D}.
\begin{remark}\label{trivialestimate}
Let $p_d$ be a degree $d$ polynomial  and let $M$ be the maximum modulus of its coefficients.
Then if $|z|\geq 2$ we have
$$|p_d(z)|\leq \frac{M(|z|^{d+1}-1 )}{|z|-1}< 2M|z|^d.$$

\end{remark}
\begin{lem}\label{lem:basic estimates} Let $d\geq 2$ and let
$$
p_d(z):=a_0 + a_2 z^2 + a_3 z^3 + \ldots +a_d z^d
$$
with $a_i\in\R_+$ having strictly decreasing moduli.
Fix  $0<\delta<1$. Then for all $|z|\geq 2a_0/(\delta a_d)$ we have
 \begin{equation}\label{monomialestimate2}
 |p_d(z)-a_dz^d|=|\sum_{j=0}^{d-1}a_jz^j|< \delta a_d |z|^d.
 \end{equation}
and thus
\begin{equation}\label{eq:monomial estimate}
(1-\delta)a_d |z|^d< |p_d(z)|<  (1+\delta)a_d |z|^d.
\end{equation}
\end{lem}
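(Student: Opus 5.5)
The plan is to bound the lower-order terms $\sum_{j=0}^{d-1}a_j z^j$ directly, using that the coefficients are positive and strictly decreasing, so that $a_j\le a_0$ for every $j$ (note $a_1=0$). Since $0<\delta<1$ and $a_d<a_0$, the hypothesis gives $|z|\ge 2a_0/(\delta a_d)>2$, so we may use the elementary estimate of Remark~\ref{trivialestimate}.

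Apply Remark~\ref{trivialestimate} to the polynomial $\sum_{j=0}^{d-1}a_jz^j$, which has degree at most $d-1$ and maximal coefficient modulus at most $a_0$. For $|z|\ge 2$ this gives
$$
\Big|\sum_{j=0}^{d-1}a_jz^j\Big|\le \frac{a_0(|z|^{d}-1)}{|z|-1}<\frac{a_0|z|^d}{|z|-1}\le 2a_0|z|^{d-1},
$$
where the last inequality uses $|z|/(|z|-1)\le 2$ for $|z|\ge 2$. Now the hypothesis $|z|\ge 2a_0/(\delta a_d)$ is exactly $2a_0\le \delta a_d|z|$, so $2a_0|z|^{d-1}\le \delta a_d|z|^d$. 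Chaining these yields \eqref{monomialestimate2}, with the strict inequality inherited from the strict step.

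Finally, \eqref{eq:monomial estimate} follows from \eqref{monomialestimate2} by the triangle inequality applied to $p_d(z)=a_dz^d+(p_d(z)-a_dz^d)$, giving $a_d|z|^d-\delta a_d|z|^d<|p_d(z)|<a_d|z|^d+\delta a_d|z|^d$. There is no real obstacle. The only point needing care is confirming $|z|\ge 2$ so that the remark applies, which holds because $\delta<1$ and $a_d<a_0$.
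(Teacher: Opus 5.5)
Your proposal is correct and is essentially the paper's proof: both observe that $|z|\ge 2$, apply Remark~\ref{trivialestimate} to the lower-order part to get the bound $2a_0|z|^{d-1}$, and then use that the hypothesis is equivalent to $2a_0\le \delta a_d|z|$. The two-sided estimate then follows by the triangle inequality, which the paper leaves implicit.
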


\begin{proof}
If $|z|\geq 2a_0/(\delta a_d)$, then also $|z|\geq 2$, and by the previous remark
$$
|\sum_{j=0}^{d-1}a_jz^j|<  2a_0|z|^{d-1}\leq \delta a_d |z|^d.
$$
\end{proof}

\begin{lem}\label{lem:basic estimates new}
Let $d\geq 2$, let $p_d$ as in Lemma~\ref{lem:basic estimates}. Given $0<a_{d+1}<a_d$, define  $p_{d+1}:=p_d+a_{d+1}z^{d+1}$ and $\rho_{d+1}:=10\frac{2d}{2d+1}\frac{a_d}{a_{d+1}}$.
Then if $a_{d+1}$ is small enough,
\begin{itemize}
\item  for all $|z|\geq \rho_{d+1}$,
\begin{equation}\label{stimacruciale}
|p_{d+1}(z)-a_{d+1}z^{d+1}|=|p_d(z)|< \frac{1}{5}a_{d+1}|z|^{d+1}.
\end{equation}
\item for all $|z|\geq \frac{\rho_{d+1}}{10}$,
\begin{equation}\label{stimacruciale2}
|p_{d+1}(z)-a_{d+1}z^{d+1}|=|p_d(z)|< 2a_{d+1}|z|^{d+1}.
\end{equation}
\item for all $\frac{(4d+2)2a_0}{a_d}\leq |z|\leq \frac{\rho_{d+1}}{10}$,
\begin{equation}\label{stimacruciale3}
|p_{d+1}(z)-a_d z^d|< \frac{4d+1}{4d+2}a_d|z|^d.
\end{equation}
\item for all $\frac{(4d+2)2a_0}{a_d}\leq |z|\leq \frac{\rho_{d+1}}{100}$,
\begin{equation}\label{stimacruciale4}
|p_{d+1}(z)-a_d z^d|< \frac{1}{5}a_d |z|^d.
\end{equation}
\end{itemize}
\end{lem}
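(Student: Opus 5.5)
The plan is to treat each of the four estimates as a comparison between $|p_d(z)|$ (or $|a_{d+1}z^{d+1}|$) and a single monomial, using only Lemma~\ref{lem:basic estimates} for $p_d$ and the explicit formula for $\rho_{d+1}$. Since $p_d$ is fixed before $a_{d+1}$ is chosen, the threshold $2a_0/(\delta a_d)$ from Lemma~\ref{lem:basic estimates} does not depend on $a_{d+1}$. Taking $a_{d+1}$ small makes $\rho_{d+1}$ as large as we like, so every fixed threshold of the form $C a_0/a_d$ lies far below $\rho_{d+1}/100$.

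For \eqref{stimacruciale} and \eqref{stimacruciale2}, fix a small $\delta$ (say $\delta=1/100$) and apply Lemma~\ref{lem:basic estimates}. For $|z|\ge\rho_{d+1}/10\ge 2a_0/(\delta a_d)$ this gives $|p_d(z)|<(1+\delta)a_d|z|^d$. Hence
$$\frac{|p_d(z)|}{a_{d+1}|z|^{d+1}}<\frac{(1+\delta)a_d}{a_{d+1}|z|}.$$
Substituting $a_d/a_{d+1}=\frac{2d+1}{20d}\rho_{d+1}$, this is at most $(1+\delta)\frac{2d+1}{20d}\cdot\frac{\rho_{d+1}}{|z|}$.
\begin{itemize}
\item For $|z|\ge\rho_{d+1}$ the bound is $(1+\delta)\frac{2d+1}{20d}\le(1+\delta)\frac{5}{40}<\frac15$, which gives \eqref{stimacruciale}.
\item For $|z|\ge\rho_{d+1}/10$ the bound is $(1+\delta)\frac{2d+1}{2d}\le(1+\delta)\frac54<2$, which gives \eqref{stimacruciale2}.
\end{itemize}

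For \eqref{stimacruciale3} and \eqref{stimacruciale4}, split
$$p_{d+1}(z)-a_dz^d=(p_d(z)-a_dz^d)+a_{d+1}z^{d+1}.$$
The first term is handled by \eqref{monomialestimate2} with $\delta=\frac{1}{4d+2}$, whose threshold $2a_0(4d+2)/a_d$ is exactly the lower bound on $|z|$ in the statement. This gives $|p_d(z)-a_dz^d|<\frac{1}{4d+2}a_d|z|^d$. For the second term, the upper bound on $|z|$ gives
$$a_{d+1}|z|^{d+1}=a_d|z|^d\cdot\frac{a_{d+1}}{a_d}|z|\le a_d|z|^d\cdot\frac{2d}{2d+1}\cdot\frac{|z|}{\rho_{d+1}/10}.$$
\begin{itemize}
\item On $|z|\le\rho_{d+1}/10$ the second term is at most $\frac{4d}{4d+2}a_d|z|^d$. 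Summing with the first term gives \eqref{stimacruciale3}, with the bound $\frac{4d+1}{4d+2}$ (strict because the first estimate is strict).
\item On $|z|\le\rho_{d+1}/100$ the second term is at most $\frac{2d}{10(2d+1)}<\frac1{10}$ times $a_d|z|^d$, and the first is at most $\frac1{10}$ since $d\ge2$. Together these give \eqref{stimacruciale4}.
\end{itemize}

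The only real point to check is where smallness of $a_{d+1}$ enters. It must ensure $\rho_{d+1}/100\ge(4d+2)2a_0/a_d$, so that the last two ranges are nonempty, and $\rho_{d+1}/10\ge 200a_0/a_d$, so that the first step applies. Both hold once $a_{d+1}$ is small enough, since $\rho_{d+1}\to\infty$ as $a_{d+1}\to0$. The remaining work is the arithmetic on the constants, which I expect to be the only mildly delicate step.
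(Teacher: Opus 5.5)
Your proposal is correct and follows the paper's proof. For the first two bounds you apply Lemma~\ref{lem:basic estimates} to $p_d$ together with the explicit formula for $\rho_{d+1}$; the paper takes $\delta=\frac12$ rather than your $\delta=\frac1{100}$, which makes no difference. For the last two you use the split $p_{d+1}-a_dz^d=(p_d-a_dz^d)+a_{d+1}z^{d+1}$ with $\delta=\frac{1}{4d+2}$, exactly as the paper does, and your explicit constant check for \eqref{stimacruciale4}, which the paper only calls ``similar'', is also correct.
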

\begin{proof}
To prove \eqref{stimacruciale}, we use the previous lemma with $\delta=\frac{1}{2}$.
If $a_{d+1}$ is small enough to have that $\rho_{d+1}\geq \frac{4a_0}{a_d}$, then if $|z|\geq \rho_{d+1}$,
  $$
\frac{|p_d(z)|}{a_{d+1}|z|^{d+1}}< \frac{3}{2}\frac{a_{d}}{a_{d+1}}\frac{1}{|z|}\leq  \frac{3}{2}\frac{a_{d}}{a_{d+1}}\frac{1}{\rho_{d+1}}=\frac{3}{2}\frac{2d+1}{10\cdot 2d}< \frac{1}{5},
$$
which proves \eqref{stimacruciale}. The proof of~\eqref{stimacruciale2} follows similarly.

We now prove \eqref{stimacruciale3}. Assume that $a_{d+1}$ is small enough to have $\frac{\rho_{d+1}}{10}\geq \frac{(4d+2)2a_0}{a_d}$.
We have
$$
|p_{d+1}(z)-a_d z^d|\leq a_{d_1} |z|^{d+1}+|\sum_{j=0}^{d-1}a_jz^j|.
$$
Since $|z|\leq \frac{\rho_{d+1}}{10}$,
it follows that
$$
\frac{a_{d+1}|z|^{d+1}}{a_d |z|^d}=\frac{a_{d+1}}{a_d}|z|\leq \frac{a_{d+1}}{a_d}\frac{\rho_{d+1}}{10}=\frac{2d}{2d+1}.
$$
Equation \eqref{stimacruciale3} now follows using the previous lemma with $\delta=\frac{1}{4d+2}$, using the fact that
$|z|\geq (4d+2)2a_0/a_d$.

The proof of \eqref{stimacruciale4} is again similar.
\end{proof}

\begin{lem}\label{lem:covering properties middle piece}
Let $d\geq 2$, let $p_d$ as in Lemma~\ref{lem:basic estimates}. Given $0<a_{d+1}<a_d$, define  $p_{d+1}:=p_d+a_{d+1}z^{d+1}$ and $\rho_{d+1}:=10\frac{2d}{2d+1}\frac{a_d}{a_{d+1}}$.
Let $x_0:=\sqrt[d-1]{2\cdot 10^{d+1}M/a_d}.$
For $x>x_0$ denote the annuli
$$
C(x):=\{\frac{x}{10}<|z|<10x\},  \quad \bC(x):=\{\frac{x}{10M}<|z|<10Mx\},
$$
the radii
$$
x':=a_dx^d,\quad x'':=a_dx'^d=a_d^{d+1}x^{d^2},
$$
and the gaps
$$
G(x):=\{10x<|z|<\frac{x'}{10}\},\quad \bG(x):=\{10Mx<|z|<\frac{x'}{10M}\}.
$$
Denote $\gamma(x), \Gamma(x)$  respectively  the inner and outer boundary  of the annulus
$C(x)$.
Assume that $a_{d+1}$ is small enough such that
$$
\frac{\rho_{d+1}}{10^3}\geq \max\left\{x_0, \frac{(4d+2)10\cdot 2a_0}{a_d}\right\}:=\bar x.
$$
Then, for all $\bar x\leq x\leq \frac{\rho_{d+1}}{10^3}$,
\begin{itemize}
\item[(i)] the image of $\Gamma(x)$ via $p_{d+1}$ is contained in the unbounded component of $\overline{\bC(x')}^\complement$ and in the bounded component of $\overline{\bC(x'')}^\complement$, and has winding number $d$ around any point of the disk of radius $10Mx'$;
\item[(ii)] the image of $\gamma(x)$ via $p_{d+1}$ is contained in the unbounded component of $\overline{\bC(x)}^\complement$ and in the bounded component of $\overline{\bC(x')}^\complement$, and has winding number $d$ around any point of the disk of radius $10Mx$.
\end{itemize}
\end{lem}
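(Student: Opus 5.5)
The plan is to compare $p_{d+1}$ with the monomial $a_dz^d$ via Rouché-type estimates. Both circles $\Gamma(x)$ (radius $10x$) and $\gamma(x)$ (radius $x/10$) lie in the range where \eqref{stimacruciale4} applies. Indeed, since $\bar x\le x\le \rho_{d+1}/10^3$, we have
$$10x\le \rho_{d+1}/100 \quad\text{and}\quad x/10\ge \bar x/10\ge \frac{(4d+2)2a_0}{a_d}.$$
Hence on both circles $|p_{d+1}(z)-a_dz^d|<\frac15 a_d|z|^d$. Two consequences follow. First, the winding numbers of $p_{d+1}(\Gamma(x))$ and $p_{d+1}(\gamma(x))$ around any point $w$ with $|w|\le \frac45 a_d|z|^d$ on the relevant circle equal that of $a_dz^d$, namely $d$: the straight-line homotopy $a_dz^d+t(p_{d+1}-a_dz^d)$ avoids $w$. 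Second, the image curves lie in the round annulus $\frac45 a_d|z|^d<|w|<\frac65 a_d|z|^d$.

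The remaining work is to check that these annuli sit in the correct gaps. We have $\frac45\ge \frac1M\cdot$const and $\frac65\le M\cdot$const for the constants below, and $4<M<6$.

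For $\Gamma(x)$, where $|z|=10x$, the image modulus is comparable to $a_d10^dx^d=10^dx'$. We need
$$10^dx'\cdot\tfrac45>10Mx' \quad\text{and}\quad 10^dx'\cdot\tfrac65<\frac{x''}{10M}.$$
The first holds since $d\ge2$ gives $\frac45\cdot 100>60$. The second reduces to $\frac{12M}{1}\,10^{d}x'<a_dx'^d$, i.e. $a_dx'^{d-1}>12M\,10^d$. This should follow from $x\ge x_0$: we have $x'\ge x$ once $a_dx^{d-1}\ge 1$, and $a_dx_0^{d-1}=2\cdot10^{d+1}M$ exceeds $12M\,10^d$. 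The winding-number claim then covers the disk of radius $10Mx'<\frac45 10^dx'$.

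For $\gamma(x)$, where $|z|=x/10$, the image modulus is comparable to $10^{-d}x'$. We need
$$\tfrac45\cdot10^{-d}x'>10Mx \quad\text{and}\quad \tfrac65\cdot 10^{-d}x'<\frac{x'}{10M}.$$
The second is trivial since $12M<10^{d+1}$ for $d\ge2$. The first is exactly $a_dx^{d-1}>\frac{10^{d+1}M}{0.8}=12.5\cdot10^{d}M$, which the definition of $x_0$ (giving $2\cdot 10^{d+1}M$) ensures. Winding number $d$ around the disk of radius $10Mx$ follows as before.

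The only delicate point is the bookkeeping of constants in these gap inclusions, particularly the lower bound for $\gamma(x)$, which is exactly what dictates the choice of $x_0$. One must also verify that $\bar x$ is large enough for the hypothesis of \eqref{stimacruciale4} on the inner circle; the factor $10$ in $\bar x$ is there precisely for that.
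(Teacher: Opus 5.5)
Your proof is correct and is essentially the paper's: both compare $p_{d+1}$ with $a_dz^d$ on $\Gamma(x)$ and $\gamma(x)$ via \eqref{stimacruciale4} and conclude with Rouch\'e, and your annulus inclusions reduce to the same inequalities as the paper's check that the error is smaller than the distance from the image circle of $a_dz^d$ to the relevant annuli. One minor arithmetic slip: the outer bound for $\gamma(x)$ reduces to $12M<10^{d}$, not $12M<10^{d+1}$, and this still holds since $M<6$ and $d\ge 2$.
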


We note that the gaps $G(x)$ and $\bG(x)$ are well-defined since $x>x_0\geq \sqrt[d-1]{10^2M^2/a_d}$.

\begin{proof}
Clearly the image of $\Gamma$ via $a_{d}z^{d}$ is a circle $C$ of radius $a_d x^d 10^d$.
The circle $C$ is contained in the gap $\bG(x')$ and
$$
d(C,\bC(x'))=a_d x^d (10^d-10M), \quad d(C,\bC(x''))=a_d x^d (a_d^d x^{d(d-1)}/10M-10^d).
$$
By \eqref{stimacruciale4},
$$
|p_{d+1}(z)-a_d z^d|< \frac{1}{5}a_d|z|^d= \frac{10^d}{5}a_d x^d< \min\{d(C,\bC(x')), d(C,\bC(x''))\}.
$$
Point (i) thus follows from Rouch\'e's theorem.

The image of $\gamma$ via $a_{d}z^{d}$ is a circle $c$ of radius $\frac{a_dx^d}{10^d}$.
The circle $c$ is contained in the gap $\bG(x)$ and
$$d(c,\bC(x'))=a_d x^d\frac{10^{d-1}-M}{10^nM},\quad d(c,\bC(x))= \frac{a_d x^d}{10^d}-10Mx.$$
By \eqref{stimacruciale4},
$$
|p_{d+1}(z)-a_d z^d|< \frac{1}{5}a_d |z|^d= \frac{1}{5\cdot 10^d}a_dx^d< \min\{d(c,\bC(x')), d(c,\bC(x))\}.
$$
Point (ii) therefore also follows from Rouch\'e's theorem.
\end{proof}

As an immediate corollary we also obtain:
\begin{corollary} Under the assumptions of Lemma~\ref{lem:covering properties middle piece} one  has that
 the map
$p_{d+1}|_{C(x)}$ covers $d$ times  $\bC(x')$   and that 
$p_{d+1}(G(x))\Subset  \bG(x').$
\end{corollary}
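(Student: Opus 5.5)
The corollary follows from Lemma~\ref{lem:covering properties middle piece} via the argument-principle formulation of covering in Definition~\ref{defn:covering}. I treat the two assertions separately.

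\emph{Covering of $\bC(x')$.} The boundary of $C(x)$ is $\Gamma(x)\cup\gamma(x)$, positively oriented so that $\Gamma(x)$ runs counterclockwise and $\gamma(x)$ clockwise. Part (i) of the lemma puts $p_{d+1}(\Gamma(x))$ in the unbounded component of $\overline{\bC(x')}^\complement$. Part (ii) puts $p_{d+1}(\gamma(x))$ in the bounded component of $\overline{\bC(x')}^\complement$. Hence $p_{d+1}(\partial C(x))\cap\overline{\bC(x')}=\emptyset$, which gives condition (1) of Definition~\ref{defn:covering}.

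For condition (2), take $w\in\bC(x')$, so $|w|<10Mx'$. By (i), the curve $p_{d+1}(\Gamma(x))$ winds $d$ times around $w$. By (ii), the image of the positively oriented $\gamma(x)$ lies inside the disk bounded by the inner circle of $\bC(x')$ while $w$ lies outside that disk, so it has winding number $0$ about $w$. The total winding number of $p_{d+1}(\partial C(x))$ around $w$ is therefore $d$. By the argument principle, $w$ has exactly $d$ preimages in $C(x)$, counted with multiplicity.

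\emph{Image of the gap.} I will show $p_{d+1}(G(x))\Subset\bG(x')$. The closure $\overline{G(x)}$ is the closed annulus $\{10x\le|z|\le x'/10\}$, with boundary circles $\{|z|=10x\}=\Gamma(x)$ and $\{|z|=x'/10\}=\gamma(x')$. Since $p_{d+1}$ is holomorphic, the maximum principle gives
\[
\max_{\overline{G(x)}}|p_{d+1}|\le\max_{\gamma(x')}|p_{d+1}|.
\]
To bound the right-hand side I want to apply (ii) at the point $x'$, which places $p_{d+1}(\gamma(x'))$ in the bounded component of $\overline{\bC(x'')}^\complement$, so that $|p_{d+1}|<x''/(10M)$ there. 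This uses $x'\le\rho_{d+1}/10^3$, which is the main obstacle: it does not follow from the hypothesis on $x$ alone. If it fails, I would instead bound $|p_{d+1}|$ on $\{|z|\le x'/10\}$ directly by $|a_dz^d|(1+1/5)$ using \eqref{stimacruciale4}, valid where $|z|\le\rho_{d+1}/100$. This gives $|p_{d+1}|\le \tfrac65 a_d(x'/10)^d$, which is much smaller than $x''/(10M)=a_dx'^d/(10M)$ because $10^d>12M$ for $d\ge2$. In the corollary's later use ($f(G_q)\subset\bG_{q+1}$) I expect $x'$ to be the next radius, still within the regime of \eqref{stimacruciale4}, so I would state this requirement explicitly.

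For the lower bound, the minimum principle does not apply because $p_{d+1}$ may have zeros. I would use Rouché's theorem, comparing $p_{d+1}$ with $a_dz^d$ on $\overline{G(x)}$ via \eqref{stimacruciale4}:
\[
|p_{d+1}(z)|\ge\tfrac45 a_d|z|^d\ge\tfrac45 a_d(10x)^d.
\]
This exceeds $10Mx'=10Ma_dx^d$ since $\tfrac45 10^d>10M$ for $d\ge2$ and $M<6$. Together with the upper bound, this places the closure of the image compactly inside $\bG(x')$.
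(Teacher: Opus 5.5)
Your covering argument is correct and is what the paper means by ``immediate'': (i) and (ii) at $x$ put $p_{d+1}(\Gamma(x))$ and $p_{d+1}(\gamma(x))$ on opposite sides of $\overline{\bC(x')}$, with winding numbers $d$ and $0$ about every $w\in\bC(x')$.

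For the gap, the intended derivation is the symmetric one; this is what the clause about $\bC(x'')$ in (i) and the clause about $\bC(x)$ in (ii) are for. One has $\partial G(x)=\Gamma(x)\cup\gamma(x')$. Then (i) at $x$ together with (ii) at $x'$ put both image curves in $\bG(x')$, each winding $d$ times around every point of $\overline{\D}_{10Mx'}$. With boundary orientation the total winding number there is $d-d=0$, which gives the lower bound cleanly (no Rouch\'e needed). The maximum principle gives the upper bound. As you noticed, this requires $x'\le\rho_{d+1}/10^3$, and the hypotheses on $x$ do not give that.

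You should say more firmly that this is a defect of the statement, not of your proof: the inclusion $p_{d+1}(G(x))\Subset\bG(x')$ is false in general. Take the admissible value $x=\rho_{d+1}/10^3$. Then $x'/\rho_{d+1}=a_d\rho_{d+1}^{d-1}10^{-3d}\to\infty$ as $a_{d+1}\to 0$. So for small $a_{d+1}$ the outer circle $|z|=x'/10$ of $\overline{G(x)}$ lies in $\{|z|\ge\rho_{d+1}\}$, where \eqref{stimacruciale} gives $|p_{d+1}(z)|>\tfrac45 a_{d+1}(x'/10)^{d+1}$. The ratio of this bound to $x''/(10M)=a_dx'^d/(10M)$ is $\tfrac{8M}{10^{d+1}}\cdot\tfrac{20d}{2d+1}\cdot\tfrac{x'}{\rho_{d+1}}$, which exceeds $1$ once $a_{d+1}$ is small. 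Hence an extra restriction on $x'$ is genuinely necessary.

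Your fallback via \eqref{stimacruciale4} is correct under $x'\le\rho_{d+1}/10$. That is weaker than the $x'\le\rho_{d+1}/10^3$ the lemma-based route needs, and it gives both bounds pointwise on $\overline{G(x)}$. The lower bound is just the reverse triangle inequality; neither Rouch\'e nor the maximum principle is needed.

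One correction to your closing remark about the later use. There $x=r_q$ and $x'=r_{q+1}$, and for $q=q_{d+1}-1$ one has $x'=r_{q_{d+1}}=\rho_{d+1}$. So $G_{q_{d+1}-1}$ reaches $|z|=\rho_{d+1}/10$, outside the range of \eqref{stimacruciale4}, and neither your restriction nor the paper's tacit one holds. That last gap needs a separate estimate, e.g.\ via \eqref{stimacruciale3} on $\rho_{d+1}/100\le|z|\le\rho_{d+1}/10$.
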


\begin{lem}[New critical point]\label{lem:new critical points}Let $d\geq 2$, and let $p_d$ as in Lemma~\ref{lem:basic estimates}. Given $0<a_{d+1}<a_d$, define  $p_{d+1}:=p_d+a_{d+1}z^{d+1}$ and $\rho_{d+1}:=10\frac{2d}{2d+1}\frac{a_d}{a_{d+1}}$.
 Then if $a_{d+1}$ is sufficiently small, there is a critical point of $p_{d+1}$ in the disk
 $$
 D\left(\frac{-d a_d}{(d+1)a_{d+1}},\frac{1}{2(d+1)}\frac{\rho_{d+1}}{10} \right)\subset \DD_\frac{\rho_{d+1}}{10}.
 $$
 \end{lem}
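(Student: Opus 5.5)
We will locate the critical point by applying Rouch\'e's theorem to the derivative. Write
$$
p_{d+1}'(z)=d a_d z^{d-1}+(d+1)a_{d+1}z^d + p_{d-1}'(z)= z^{d-1}\bigl(d a_d+(d+1)a_{d+1}z\bigr)+p_{d-1}'(z),
$$
where $p_{d-1}'(z)=\sum_{j=2}^{d-1} j a_j z^{j-1}$ has degree at most $d-2$. Set $g(z):=z^{d-1}(d a_d+(d+1)a_{d+1}z)$. The only zero of $g$ in the disk $D:=D(z_*,r)$ is the simple zero $z_*=-\frac{d a_d}{(d+1)a_{d+1}}$, where $r=\frac{1}{2(d+1)}\frac{\rho_{d+1}}{10}$. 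We will show that $|p_{d-1}'|<|g|$ on $\partial D$. Then $p_{d+1}'$ has exactly one zero in $D$.

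First we check the inclusion $D\subset\DD_{\rho_{d+1}/10}$. Since $\rho_{d+1}/10=\frac{2d}{2d+1}\frac{a_d}{a_{d+1}}$, we have $|z_*|=\frac{d}{d+1}\frac{a_d}{a_{d+1}}$. We also have $\frac{d}{d+1}+\frac{1}{2(d+1)}\cdot\frac{2d}{2d+1}<\frac{2d}{2d+1}$. Clearing denominators, this becomes $d(2d+1)+d<2d(d+1)$, that is $2d^2+2d<2d^2+2d$, which is an equality rather than a strict inequality. So the closed disk touches the boundary circle, and the open disk is still contained in $\DD_{\rho_{d+1}/10}$, which is what is claimed. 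This inclusion is exact, and we only need it for the open disk.

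Next comes the lower bound for $|g|$ on $\partial D$. For $z\in\partial D$ we have $|d a_d+(d+1)a_{d+1}z|=(d+1)a_{d+1}r=\frac{a_{d+1}}{2}\cdot\frac{\rho_{d+1}}{10}=\frac{d}{2d+1}a_d$, and this is independent of $a_{d+1}$. Moreover $|z|\ge|z_*|-r\ge c_d\,\frac{a_d}{a_{d+1}}$ for a constant $c_d>0$ depending only on $d$. Hence
$$
|g(z)|\ge \frac{d}{2d+1}a_d\, c_d^{d-1}\Bigl(\frac{a_d}{a_{d+1}}\Bigr)^{d-1}.
$$

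For the upper bound on the perturbation, we use the estimate of Remark~\ref{trivialestimate} applied to $p_{d-1}'$, whose coefficients are bounded by $d a_2$. For $|z|\ge2$ this gives $|p_{d-1}'(z)|\le 2d a_2|z|^{d-2}$. On $\partial D$ we have $|z|\le\frac{a_d}{a_{d+1}}$, so $|p_{d-1}'(z)|\le 2da_2(a_d/a_{d+1})^{d-2}$. The ratio of the two bounds is then $O_d\bigl(a_2/(a_d\cdot a_d/a_{d+1})\bigr)$, which tends to $0$ as $a_{d+1}\to0$ with $p_d$ fixed. Thus for $a_{d+1}$ small enough Rouch\'e's theorem applies, and $p_{d+1}'$ has a zero in $D$. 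This zero is a critical point of $p_{d+1}$.

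The main point requiring care is to keep track of which quantities are fixed. The polynomial $p_d$, and hence $a_2,\dots,a_d$, is fixed, and only $a_{d+1}\to0$. The scale $|z|\sim a_d/a_{d+1}$ then grows without bound. The linear factor of $g$ stays of size comparable to $a_d$, while the degree gap of one between $z^{d-1}$ and the highest power $z^{d-2}$ of the perturbation provides the decay needed for Rouch\'e's theorem.
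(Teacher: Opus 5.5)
Your proof is correct and is essentially the paper's argument: Rouch\'e's theorem comparing $p_{d+1}'$ with $g(z)=da_dz^{d-1}+(d+1)a_{d+1}z^d$ on the boundary of the disk, with $|g|$ minimized at the boundary point nearest the origin, and the lower-order terms bounded via Remark~\ref{trivialestimate} by a constant times $|z|^{d-2}$ (the paper uses the constant $\max_{2\le j\le d-1} j a_j$ where you use $d a_2$). Two small fixes: in the inclusion step you first assert a strict inequality and then correctly find it is an equality (the disk is internally tangent to $\partial\DD_{\rho_{d+1}/10}$, as the paper also notes), so state it as an equality from the start; and say explicitly that $|z|\ge 2$ on $\partial D$ once $a_{d+1}$ is small, since Remark~\ref{trivialestimate} requires this.
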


\begin{proof} Critical points of $p_{d+1}$ are solutions of the equation
$$
0=p_{d+1}'(z)=2a_2z+3a_3z^2+\ldots+(d+1)a_{d+1}z^{d}.
$$
For $|z|$ large enough, the two leading terms in the equation  are $d a_d z^{d-1}$ and $(d+1)a_{d+1}z^{d}$, so we expect the new critical point to be close to the nonzero solution of the equation $0=d a_d z^{d-1}+(d+1)a_{d+1}z^{d}$, which is
$$
x_0:=\frac{-da_d}{(d+1)a_{d+1}}.
$$
To make this more precise we invoke Rouche's Theorem again. Let  $g(z)=d a_d z^{d-1}+(d+1)a_{d+1}z^{d}$, which has a zero at $x_0$. Notice that $|x_0|<\rho_{d+1}/10$.

 Notice that the distance between $x_0$ and $\{|z|=\rho_{d+1}/10\}$ is computed as
 $$
 \frac{a_d}{a_{d+1}}\frac{d}{(d+1)(2d+1)}=\frac{\rho_{d+1}}{10}\frac{1}{2(d+1)},
 $$
 and that the two real points on the boundary of the disk $U$ centered at $x_0$ and tangent to the circle $\{|z|=\rho_{d+1}/10\}$
 are
 $$
 x_+:=-\frac{\rho_{d+1}}{10},\quad x_-:=-\frac{2d^2 a_d}{a_{d+1}(2d+1)(d+1)}=-\frac{d}{d+1}\frac{\rho_{d+1}}{10}.
 $$
Since $g(z)=z^{d-1}(z-x_0)a_{d+1}(d+1)$ it follows that   for all $z\in \partial U$ we have
$$
|g(z)|\geq |g(x_-)|=a_d\frac{d}{2d+1}\left(\frac{d}{d+1}\frac{\rho_{d+1}}{10}\right)^{d-1}.
$$

Let $z\in \partial U$. If $\rho_{d+1}$ is sufficiently large  (that is: $a_{d+1}$ is sufficiently small) we have by Remark \ref{trivialestimate}
$$
|(p_{d+1}'-g)(z)|\leq 2M|z|^{d-2}\leq 2M|x_+|^{d-2}=2M\left(\frac{\rho_{d+1}}{10}\right)^{d-2},
$$
where $M={\max_{2\leq j\leq d-1}}ja_j$.
 Hence $|p_{d+1}'-g|<|g|$ on $\partial U$ if  $a_{d+1}$ sufficiently small, and the claim follows by Rouche's Theorem.
\end{proof}

\subsection{Proof of  Theorem~\ref{thm:Construction Theorem 1D}}

Theorem~\ref{thm:Construction Theorem 1D} will be proved by induction. We start by defining a polynomial $p_2$ of degree 2 that will be used as the seed of the induction.
Given any $r_1>1$ define a sequence  $(r_q)_{q\geq 1}$ setting $r_{q+1}=r_q^2$ for all $q\geq 1$, that is $r_q=(r_1)^{2^{q-1}}$.  Notice that if $r_1>100M^2$ then the annuli $(\bC_q)_{q\geq 1}$ have pairwise disjoint closures. Note that some of the radii $r_q$, and hence also the corresponding annuli $C_{r_q}$ and $\bC_{r_q}$, may change every time a later coefficient $a_d$ is determined.

 \begin{lem}[Construction of $p_2$]\label{lem:seed}
 If $r_1>1$ is sufficiently large, then  we can choose  $a_0>0$ so that the polynomial $p_2:=a_0+z^2$ has the following properties:
\begin{itemize}
    \item[(i)] there exists  a  set $C_0\Subset \DD_{\frac{r_1}{10}}$ consisting of two holomorphic disks $C_0^{\pm}$ whose closures are disjoint and whose boundaries are real analytic, such that
    the two maps $p_2|_{C_0^{\pm}}\colon C_0^{\pm}\to \DD_{15M r_1}$ are biholomorphisms;
     \item[(ii)]  for all $q\geq 1$, the map
$p_2|_{C_q}$ covers $\bC_{q+1}$  two times;
    \item[(iii)] $G_0:=\DD_{\frac{r_1}{10}}\setminus\overline C_0$ is mapped compactly inside $\bG_1$ and for all $q\geq 1 $ the gap $G_q$ is mapped compactly inside $\bG_{q+1}$.
\end{itemize}
\end{lem}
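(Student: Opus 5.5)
The idea is to choose $a_0$ comparable to the target disk radius and then read off all three properties from explicit square roots. Fix the radii $r_q=r_1^{2^{q-1}}$ with $r_1>100M^2$ large, and set $a_0:=R^2$ with $R$ of order $4\sqrt{M r_1}$. Then $p_2(z)=z^2+a_0$ has its unique critical point at $0$, with critical value $a_0$.

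For (i), let $\Omega:=\DD_{15Mr_1}$ and ask that $a_0>15Mr_1$ with some margin, say $a_0=16Mr_1$. Then $\Omega$ does not contain the critical value $a_0$. The translated disk $\Omega-a_0$ does not contain $0$, so it has two univalent square-root branches. Their images $C_0^\pm$ are disjoint holomorphic disks with real-analytic boundaries, each mapped biholomorphically onto $\Omega$ by $p_2$. The closures are disjoint because $\overline{\Omega}-a_0$ also avoids $0$. These disks lie in $\DD_{\sqrt{a_0+15Mr_1}}=\DD_{\sqrt{31Mr_1}}$, and $\sqrt{31Mr_1}<r_1/10$ as soon as $r_1>3100M$. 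This gives $C_0\Subset\DD_{r_1/10}$.

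For (ii) and the second half of (iii), compare $p_2$ with $z^2$ on $|z|\ge r_1/10$. There $|p_2(z)-z^2|=a_0=16Mr_1$, which is tiny compared with $|z|^2\ge r_1^2/100$ once $r_1$ is large. Since $r_{q+1}=r_q^2$, the map $z^2$ sends $\gamma_q$ to the circle of radius $r_{q+1}/100$, sends $\Gamma_q$ to radius $100r_{q+1}$, and sends $G_q$ onto $\{100r_q^2<|w|<r_{q+1}^2/100\}$. These circles sit at distance comparable to $r_{q+1}$ (respectively $r_{q+2}$) from $\bC_{q+1}$ and $\bC_{q+2}$, because $10M<100$ since $M<6$. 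Rouché's theorem then gives winding number $2$ and confinement of $p_2(\gamma_q)$ and $p_2(\Gamma_q)$ in the appropriate gaps. By Definition \ref{defn:covering}, $p_2|_{C_q}$ covers $\bC_{q+1}$ exactly twice. By the maximum and minimum modulus principle applied to the annulus $G_q$ (which contains no zero of $p_2$), $p_2(G_q)\Subset\bG_{q+1}$.

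The first half of (iii), on $G_0=\DD_{r_1/10}\setminus\overline{C_0}$, is the delicate step. Every point of $G_0$ has image outside $\Omega=\DD_{15Mr_1}$: if $p_2(z)\in\Omega$ then $z$ is one of the two preimages, which lie in $C_0$. On $\partial C_0$ the image lies on $|w|=15Mr_1>10Mr_1$. On $|z|=r_1/10$ we have $|p_2(z)|\le r_1^2/100+16Mr_1<r_2/(10M)$, using $r_2=r_1^2$ and $r_1$ large. The upper bound $|p_2|<r_2/(10M)$ holds on all of $\overline{G_0}$ by the maximum principle. Together with $|p_2|\ge 15Mr_1$ on $\overline{G_0}$, this gives $p_2(G_0)\Subset\bG_1$.

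The main obstacle is fixing the constants so that all these inequalities hold simultaneously. The requirements are $a_0>15Mr_1$ and $\sqrt{a_0+15Mr_1}<r_1/10$, both satisfied by $a_0=16Mr_1$ with $r_1$ large.
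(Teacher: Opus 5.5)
Your proposal is correct and follows essentially the same route as the paper. You take $a_0$ slightly larger than $15Mr_1$ (you use $16Mr_1$, the paper $20Mr_1$) so that $\DD_{15Mr_1}$ avoids the critical value and pulls back to two disks via the square-root branches (the paper places them in the two lobes of the figure-eight preimage of the circle through $a_0$). You bound their size by $\sqrt{a_0+15Mr_1}$ (the paper uses $\sqrt{2a_0}$) and get (ii)--(iii) by comparison with $z^2$ for $r_1$ large, supplying the Rouch\'e and maximum/minimum modulus details that the paper leaves as ``easily obtained''.
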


\begin{proof} Let  $r_1$ sufficiently large such that
\begin{equation}    \label{eq:r1 large}
\frac{r_1}{10}>2\sqrt{10Mr_1},
\end{equation}
and set $a_0:=20Mr_1$.  Let  $\Sigma_1$ be the circle centered at the origin passing through the unique critical value $p_2(0)=a_0$, and let
    $\Sigma_0$ be the preimage of $\Sigma_1$. Then $\Sigma_0$ is a vertical figure eight  passing through 0.
 Since $p_2(0)=a_0=20Mr_1>15Mr_1$, the preimage of $\DD_{15Mr_1}$ consists of two holomorphic disks $C_0^{+}, C_0^{-} $,  one in each bounded connected component of the complement of  $\Sigma_0$.  By construction the closures of the two holomorphic disks are disjoint, and their boundaries are real analytic. Each of the holomorphic disks is mapped biholomorphically to $\DD_{15Mr_1}$.
    A direct computation shows that
\begin{equation}\label{eq:fig8}
|w|\leq\sqrt{{2a_0}}=2\sqrt{10Mr_1}  \text{\ \ \  for $w\in\Sigma_0$}.
\end{equation} It follows from \eqref{eq:r1 large} that $\Sigma_0\subset \DD_{r_1/10}$, and this proves (i).
Notice that, by  choosing $r_1>200M^2$, we have that  the circle $\partial \DD_{15Mr_1}$ and the critical value $a_0$ are  contained in the gap $\bG_1$. Points (ii) and (iii) are easily obtained by choosing $r_1$ large enough.
\end{proof}

\begin{proof}[Proof of Theorem~\ref{thm:Construction Theorem 1D}]
Let $a_0$  be given by the previous lemma.  Let $\delta_0$ be the distance between 0 and $\partial G_0$.
We will construct inductively
\begin{enumerate}
\item a sequence $(a_d)_{d\geq 2}$ of positive real numbers decreasing to 0 arbitrarily fast  with $a_2=1$;
\item a strictly  increasing sequence of integers $(q_d)_{d\geq 2}$ with $q_2=1$;
\item  for all $d\geq 2$, a sequence of radii $(r_q^{(d)})_{q\geq 1}$ strictly increasing to $+\infty$ defining two sequences of annuli $(C_q^{(d)})_{q\geq 1}$ and  $(\bC_q^{(d)})_{q\geq 1}$ with pairwise disjoint closures, where  $(r_q^{(2)})=(r_q)$  in the previous lemma;
\end{enumerate}
such that for all $d\geq 3$ we have that
$$
r_q^{(d)}=r_q^{(d-1)}\quad \forall q\leq q_d,
$$
and
$$
r_{q+1}^{(d)}=a_d\left(r_q^{(d)}\right)^d,\quad \forall q\geq q_d,
$$
and such that  for all $d\geq 3$ the polynomial
$$
p_d(z)=a_0+z^2+a_3z^3+\dots +a_dz^d
$$
satisfies the following properties, denoted $P(d)$:
\begin{itemize}
    \item[(i)] the maps $p_d|_{C_0^+}$ and $p_d|_{C_0^-}$ both cover $1$ time the disk $\DD_{10Mr_1}$, and  $p_d$ maps $G_0=\DD_{r_1/10}\setminus\overline C_0$ compactly inside $\bG_1^{(d)}$;
    \item[(ii)] for  all $j=2,\ldots, d-1$  and for   $q_{j}\leq  q< q_{j+1}$ the map  $p_d|_{C^{(d)}_{q}}$ covers $j$ times the annulus $\bC^{(d)}_{q+1}$;
 \item[(iii)]  for $q\geq q_{d}$  the map  $p_d|_{C^{(d)}_{q}}$ covers $d$ times the annulus $\bC^{(d)}_{q+1}$;
 \item[(iv)] for all $j=3,\dots, d$
 the map  $p_d|_{A_j}$  covers once the disk $\DD_{10Mr_{q_j}}$;
  \item[(v)] for all $q\geq 1$, the polynomial $p_d$  maps the gap
 $G^{(d)}_q$ compactly inside the gap $\bG^{(d)}_{q+1};$
 \item[(vi)] $p_d$ has $d-1$ simple critical points, each contained in a gap:
 $$
 x_1^{(d)}=0\in G^{(d)}_{0},x_2^{(d)}\in G^{(d)}_{q_3-1},\dots, x_{d-1}^{(d)}\in G^{(d)}_{q_{d}-1},
 $$
 and for all $2\leq j\leq d-2$,
 \begin{equation}\label{criticmoves}
 |x_j^{(d)}-x_{j}^{(d-1)}|\leq 2^{-d}{\rm dist}(x_j^{(j+1)},\partial G^{(d)}_{q_{j+1}-1}).
 \end{equation}
\end{itemize}

Assume we constructed  $a_d$, $q_d$, and  $(r_q^{(d)})_q$ up to an integer $d\geq 3$ (the case from $2$ to $3$ needs to be treated separately but  is analogous). Notice that we only have to choose $a_{d+1}$ and $q_{d+1}$, as the sequence $(r_q^{(d+1)})_{q\geq 1}$ will be then uniquely determined.
Set
\begin{equation}\label{eq:relation}
\rho_{d+1}:=\frac{10\cdot 2d}{2d+1}\frac{a_d}{a_{d+1}}
\end{equation}
and we think of it as a quantity depending on $a_{d+1}$ that has yet to be determined.
Denote
$$
A_{d+1}:=\{\rho_{d+1}/10<|z|<10\rho_{d+1}\}.
$$

Let $\bar q\geq q_d$ be such  that  $r^{(d)}_{\bar q}\geq \bar x$, where $\bar x$ is the constant given by Lemma \ref{lem:covering properties middle piece}.
By Remark \ref{rem:stabilityofcoverings} there exists $\varepsilon>0$ such that for all $a_{d+1}<\varepsilon$ the following properties $P(d+1)$  hold:
\begin{itemize}
\item  (i) is satisfied;
\item (ii) is satisfied up to $\bar q-1$;
\item (iv) is satisfied  for $j=2,\dots,d$;
\item (v) is satisfied  up to $\bar q-1$;
\end{itemize}
By the Implicit Function Theorem, since the critical points $(x_i^{(d)})$ of $p_d$ are simple, for small $a_{d+1}$  the critical points  $(x_i^{(d+1)})_{2\leq i\leq d-1}$ of $p_{d+1}$  can be written as analytic functions of the parameter $a_{d+1}$,  tending to the points $(x_i^{(d)})$ as $a_{d+1}\ra0$. Thus up to taking a smaller $\epsilon$, \eqref{criticmoves} is easily satisfied, and by Lemma \ref{lem:new critical points}  the new critical point
$x_d^{(d+1)}$ is contained in $G^{(d+1)}_{q_{d+1}-1}$, and thus (vi) is satisfied.

Up to taking a smaller $\epsilon$, we can also assume that $\epsilon\leq (d+1)^{-d-1}$,
\begin{equation}\label{disjointannuli}
\varepsilon^{d-1}10^2M^2<\left(\frac{a_d 10\cdot 2d}{2d+1}   \right)^d,
\end{equation}
and that
for all $a_{d+1}<\epsilon$ we have
\begin{equation}\label{nthroot}
\frac{\rho_{d+1}}{10^3}\geq \sqrt[d]{\frac{\rho_{d+1}}{a_{d}}},
\end{equation}
\begin{equation}\label{ratiolarge}
\frac{a_d\rho_{d+1}^{d-1}}{M10^{d+1}}\geq4d+2.
\end{equation}

Let $q_{d+1}> \bar q $ be such that we have
$$
\rho_{d+1}:=r^{(d)}_{q_{d+1}}\geq \frac{10\cdot 2d}{2d+1}\frac{a_d}{\varepsilon}.$$
Then define
$$
a_{d+1}= \frac{10\cdot 2d}{2d+1}\frac{a_d}{\rho_{d+1}}\leq \varepsilon.
$$

Now that $q_{d+1}$ is defined we can prove the remaining properties $P(d+1)$.
First of all, notice that by \eqref{disjointannuli} the sequence of  new radii $(r_q^{(d+1)})_{q\geq q_{d+1}}$ is strictly increasing and the new annuli $(\bC^{(d+1)}_q)_{q\geq q_{d+1}}$ have  pairwise disjoint closures.

 Properties (ii) and (v) for all $\bar q\leq q\leq q_{d+1}$  follow immediately from Lemma \ref{lem:covering properties middle piece} using \eqref{nthroot}, which is telling us that for every $q<q_{d+1}$ we have
$$
r_{q}\leq \frac{\rho_{d+1}}{10^3}.
$$
We proceed to prove (iii), and note that (v) for $q\ge q_{d+1}$ follows directly. To simplify notations, we now write $r_q:=r_{q}^{(d+1)}.$
 Fix $q\geq q_{d+1}$ and write $\gamma$ and $\Gamma$  for respectively  the inner and outer boundary  of the annulus
$C_{q}^{(d+1)}$.
We will show that the curve $p_{d+1}\circ \Gamma$ is contained in the complement of the disk of radius $10 M r_{q+1}$ and has winding number $d+1$ around any point in such disk.
Clearly the image of $\Gamma$  via $a_{d+1}z^{d+1}$ is a circle of radius $a_{d+1}10^{d+1}r_q^{d+1}$.
The ratio
$$
\frac{10 M r_{q+1}}{a_{d+1}10^{d+1}r_q^{d+1}}=\frac{M}{10^d}
$$
is small, thus the claim follows from \eqref{stimacruciale} and Rouch\'e's Theorem.
To obtain (iii), it will now be enough to show that the curve $p_{d+1}\circ \gamma$ is contained in
the disk of radius $\frac{a_{d+1}r_q^{d+1}}{10M}$, which is
 the bounded connected  component of the complement of the closure of $\bC^{(d+1)}_{q+1}$.
Clearly the image of $\gamma$ via $a_{d+1}z^{d+1}$ is a circle of radius $a_{d+1}\frac{r_q^{d+1}}{10^{d+1}}$. The ratio
$$
\left(\frac{a_{d+1}r_q^{d+1}}{10M}\right)/\left(a_{d+1}\frac{r_q^{d+1}}{10^{d+1}}\right) = \frac{10^d}{M}
$$
is large, thus the claim
 follows from \eqref{stimacruciale2}. This proves (iii).

We now show  (iv) for $j=d+1$. Write again $\gamma$ and $\Gamma$  respectively for the inner and outer boundary of the special annulus
$A_{d+1}$. It follows from the above  that the curve $p_{d+1}\circ \Gamma$ is contained in the complement of the disk $\overline{\D}_{10M\rho_{d+1}}$ and has winding number $d+1$ around any point in such disk.
We now show that the curve $p_{d+1}\circ \gamma$ is also contained in the complement of the disk $\overline{\D}_{10M\rho_{n+1}}$, but has winding number $d$ around any point in such disk.
Clearly the image of $\gamma$ via $a_{d}z^{d}$ is a circle of radius $a_{d} \rho_{d+1}^{d}/10^{d}$.
By \eqref{ratiolarge} we have
$$
\frac{a_{d}\frac{\rho_{d+1}^{d}}{10^{d}}}{10M\rho_{d+1}}\geq 4d+2,
$$
hence the claim follows from \eqref{stimacruciale3} and Rouch\'e's Theorem.

This ends the inductive construction. To conclude the proof of Theorem \ref{thm:Construction Theorem 1D} we define
$$
f(z) = a_0 + z^2 + a_3 z^3 + \ldots.
$$
Since for all $d\geq 2$ the inductive construction implies that $a_d<d^{-d}$, the function $f$ is entire. For all fixed $q\geq 1$ the sequence $(r_q^{(d)})_{d\geq 2}$ is eventually constant and defines the radius $r_q$.
 Points (i) to (iv) of Theorem \ref{thm:Construction Theorem 1D} follow from points (i) to (v) of the induction passing to the limit up to choosing a smaller $4<M<6$. Thanks to \eqref{criticmoves}, for all $j\geq 1$ the sequence $(x_j^{(d)})$ converges to a point in the gap $G_{q_{j+1}-1},$ which is a critical point for $f$. By Hurwitz's Theorem the map $f$ has no other critical point.
\end{proof}

The following property  can be ensured during the previous construction, and will be needed in order to embed the Markov process to $\C$.

\begin{lemma}[Expansion on annuli]\label{lem:large derivative on annuli}
    The coefficients $(a_d)$ can be chosen such that for all $q \geq 0$, the  derivative of $f$ has modulus at least $2$ on all of $C_q$.
\end{lemma}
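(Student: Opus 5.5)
The plan is to verify the derivative bound at each stage of the inductive construction in the proof of Theorem~\ref{thm:Construction Theorem 1D}, using the same dominant-term estimates as before, now applied to $p_{d+1}'$ instead of $p_{d+1}$. We then pass to the limit as we did for the covering properties. The annuli $C_q$ fall into three types, according to which monomial dominates there: $C_0$; the \emph{middle} annuli with $q_d\le q<q_{d+1}$ that are not special; and the special annuli $A_{d+1}=C_{q_{d+1}}$.

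\textbf{Step 1 ($C_0$).} Each $p_2|_{C_0^\pm}$ is a biholomorphism onto $\DD_{15Mr_1}$, where $C_0^\pm\subset\DD_{r_1/10}$. We have $p_2'(z)=2z$, and $|z|\ge \sqrt{a_0-15Mr_1}=\sqrt{5Mr_1}$ on $C_0^\pm$, since $|z^2|=|w-a_0|\geq a_0-15Mr_1$. Hence $|p_2'|\ge 2\sqrt{5Mr_1}\ge 4$ once $r_1$ is large, so the bound holds with room to spare. All later perturbations by small $a_{d}$ preserve it, because $\overline{C_0}$ is compact.

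\textbf{Step 2 (middle annuli).} For $|z|$ in the range of Lemma~\ref{lem:covering properties middle piece}, we run the argument of Lemma~\ref{lem:basic estimates new} on the derivative. There $p_{d+1}'(z)$ is within a factor, say $(1\pm 1/2)$, of $d a_d z^{d-1}$. This follows from Remark~\ref{trivialestimate} applied to the lower-order terms, together with the bound $(d+1)a_{d+1}|z|^d\le \frac{d+1}{10^3}\cdot 10\frac{2d}{2d+1}a_d|z|^{d-1}$, valid since $|z|\le 10\rho_{d+1}/10^3$. This gives $|p_{d+1}'(z)|\ge \tfrac12 d a_d |z|^{d-1}\ge \tfrac12 d a_d (\bar x/10)^{d-1}$. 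This is large because $\bar x\ge x_0$ and $a_d x_0^{d-1}=2\cdot 10^{d+1}M$. For the annuli beyond $q_{d+1}$, the term $(d+1)a_{d+1}z^d$ dominates instead, and $(d+1)a_{d+1}r_q^d/10^d$ is huge because $r_{q+1}=a_{d+1}r_q^{d+1}\gg r_q$.

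\textbf{Step 3 (special annuli).} This is the main obstacle. On $A_{d+1}$ the two terms $d a_d z^{d-1}$ and $(d+1)a_{d+1}z^d$ are comparable, and they cancel near the new critical point $-\frac{d a_d}{(d+1)a_{d+1}}$. Lemma~\ref{lem:new critical points} places this point at modulus about $\frac{d}{d+1}\cdot\frac{2d}{2d+1}\rho_{d+1}<\rho_{d+1}$. However, it is not obviously below $\rho_{d+1}/10$ in a way that keeps it outside $A_{d+1}$ with a margin. I would therefore factor
\[
p_{d+1}'(z)\approx a_{d+1}(d+1)z^{d-1}(z-x_0),
\]
where $x_0$ now denotes the new critical point $-\frac{d a_d}{(d+1)a_{d+1}}$ (not the radius $x_0$ of Step 2). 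By Lemma~\ref{lem:new critical points} and property (vi), the critical point lies in the gap $G_{q_{d+1}-1}$, which is disjoint from $A_{d+1}$. So on $A_{d+1}$ we have $|z-x_0|\geq \mathrm{dist}(x_0,\partial\DD_{\rho_{d+1}/10})=\frac{\rho_{d+1}}{20(d+1)}$. This yields
\[
|p_{d+1}'|\gtrsim a_{d+1}(\rho_{d+1}/10)^{d-1}\frac{\rho_{d+1}}{20}\sim a_d\rho_{d+1}^{d-1}/10^{d+1},
\]
which is at least of order $4d+2$ times $M$ by \eqref{ratiolarge}. The error from the lower-order terms is controlled as in the proof of Lemma~\ref{lem:new critical points}.

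\textbf{Step 4 (limit).} Once the construction is fixed at stage $d+1$, every remaining annulus only sees later coefficients, and these can be taken arbitrarily small (Remark~\ref{importantremark}). Requiring $|p_{d'}'-p_{d'-1}'|$ to be tiny on each already-built $\overline{C_q}$, with errors summing to less than the available slack, lets each lower bound pass to $f$ as $|f'|\ge 2$.
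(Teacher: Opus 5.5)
Your proof is correct, but it is organized differently from the paper's. You bound $|p_{d+1}'|$ from below directly on every annulus, splitting into regimes according to which monomials of the derivative dominate. On the middle annuli in the range of Lemma~\ref{lem:covering properties middle piece} you use the term $d a_d z^{d-1}$. Beyond $q_{d+1}$ you use the term $(d+1)a_{d+1}z^{d}$. On $A_{d+1}$ you use the exact factorization $g(z)=(d+1)a_{d+1}z^{d-1}(z-x_0)$. The remaining annuli are handled by the same $\bar q$-stability bookkeeping used in the construction.

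The paper avoids all of this interior casework. It observes that $p_{d+1}'$ has no zeros on the closed regions $\{\rho_d/10\le |z|\le 10r_{q_{d+1}-1}\}$ and $\{|z|\ge \rho_{d+1}/10\}$. By the minimum modulus principle (on the unbounded piece, $1/p_{d+1}'\to 0$ at infinity), it then suffices to check $|p_{d+1}'|>2$ on three circles. These are $\gamma_{q_d}$, where $p_{d+1}$ is a small perturbation of $p_d$, and $\Gamma_{q_{d+1}-1}$ and $\gamma_{q_{d+1}}$, both handled by the $g$-estimate from Lemma~\ref{lem:new critical points}. The paper's route is shorter and gives the bound on whole critical-point-free regions, gaps included, but it relies on knowing the location of every critical point of $p_{d+1}$. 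Yours needs more cases, but it gives explicit quantitative lower bounds on each annulus and only uses the position of the zero of the two-term model $g$.

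\textbf{Step~3.} The zero of $g$ has modulus $\frac{d a_d}{(d+1)a_{d+1}}=\frac{2d+1}{2d+2}\cdot\frac{\rho_{d+1}}{10}$, not $\frac{d}{d+1}\cdot\frac{2d}{2d+1}\rho_{d+1}$. That second value would lie inside $A_{d+1}$ and make the lemma false. With the correct value, the margin $\frac{\rho_{d+1}}{20(d+1)}$ is a direct computation, so you need neither property (vi) nor the hedging. Moreover, $\frac{2a_0|z|^{d-2}}{(d+1)a_{d+1}|z|^{d-1}(|z|-|x_0|)}$ is decreasing in $|z|$, so checking the lower-order error on the inner circle of $A_{d+1}$ suffices for the whole annulus.

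\textbf{Step~4.} You can only demand small perturbations on finitely many compact sets, namely the final annuli $\overline{C_0},\dots,\overline{C_{q_{d+1}}}$. The annuli with $q_{d+1}<q<\bar q$ at the next stage inherit their bound from your ``beyond $q_{d+1}$'' estimate by compactness. This is exactly how the covering properties are propagated in the proof of Theorem~\ref{thm:Construction Theorem 1D}.
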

    \begin{proof}
    For all $q\geq 1$ denote $\gamma_q$ and $\Gamma_q$ for the inner and outer boundary of $C_q$ respectively.
    We show by induction that for all $d\geq 3$ we have that  $|p_d'(z)|>2$ for all $$z\in \overline{C_0}\cup\dots\cup\overline{C_{q_{d}-1}}\cup \{|z|\geq \rho_d/10\}.$$
     Assume that this holds for $d\geq 3$.
    Notice that there are no critical points of $p_{d+1}$ in the set 
    $$\{\rho_d/10\leq |z|\leq10  r_{{q_{d+1}}-1}\}\cup \{|z|\geq \rho_{d+1}/10\}.$$
    Hence by the Minimum principle the result follows if we show that, up to choosing $a_{d+1}$ small enough, we can ensure that
     $|p_{d+1}'|>2$ on the circles $\gamma_{q_d}, \Gamma_{q_{d+1}-1}, \gamma_{q_{d+1}}$.
    This is obvious for $\gamma_{q_d}$ since $p_{d+1}$ is an arbitrarily small perturbation of $p_d$ on the compact set $\gamma_{q_d}$.
    We use the same notations as in Lemma \ref{lem:new critical points}. 
For all $z\in \gamma_{q_{d+1}}$ we have 
$$
|g(z)|\geq |g(x_+)|=(\rho_{d+1}/10)^{d-1}\frac{da_d}{2d+1},
$$ 
and
$$
|(p_{d+1}'-g)(z)|\leq 2 a_0|z|^{d-2}\leq 2 a_0(\rho_{d+1}/10)^{d-2},
$$
where  we used the fact that  ${\max_{2\leq j\leq d-1}}ja_j<a_0$ provided $a_j<\frac{1}{j}$. Hence if $a_{d+1}$ is small enough
$$|p'_{d+1}(z)|{\violet\geq }|g(z)|-|(p_{d+1}'-g)(z)|>2.$$

Analogously,  for all $z\in \Gamma_{q_{d+1}-1}$, denoting $s:=r_{{q_{d+1}}-1}=\sqrt[d]{\frac{10\cdot 2d}{(2d+1)a_{d+1}}}$,
$$|g(z)|\geq (d+1)a_{d+1}s^{d-1}(-s-x_0)\simeq (a_{d+1})^{\frac{1-d}{d}},$$  and 
$$
|(p_{d+1}'-g)(z)|\leq 2{ a_0}|z|^{d-2}\leq 2{ a_0}s^{d-2}\simeq (a_{d+1})^{\frac{2-d}{d}}.
$$

 The  case $d=3$ needs to be treated separately (with a similar proof), noticing that   the polynomial    $p_2$ defined in  Lemma~\ref{lem:seed} satisfies
 $$|p_2'(z)|\geq 2\sqrt{5Mr_1},\quad \forall \,z \in \overline C_0\cup \{|z|\geq r_1/10\},$$ which can be made arbitrarily large if $r_1$ is chosen large enough.
 \end{proof}

\subsection{Partitioning of annuli into cells}\label{sec:Bishop cuts}
In order to show that the  modified Markov process described in Remark \ref{modificationbishop} embeds in the dynamical system $(\C,f)$, we will now subdivide each annulus $C_q$ into finitely many cells. The set of all cells is naturally identified with the state space $S$ of the  Markov process, and given two cells $C$ and $C'$, the map $f|_C$ covers once $C'$ if and only if the transition probability between the corresponding states $s$ and $s'$ is nonzero. 

Assume that the coefficients  $a_n$ have all been determined so that  $f$ is an entire function satisfying Theorem~\ref{thm:Construction Theorem 1D} and Lemma~\ref{lem:large derivative on annuli}.
Recall that for $d \ge 2$ and $q_d \leq  q < q_{d+1}$ the restriction of the map $f$ to $C_q$ covers the next annulus $C_{q+1}$ exactly $d$ times. 

\begin{definition}
By a \emph{cut} in $\overline C_q$ we mean a 1-dimensional embedded submanifold with boundary $\sigma$ contained in $\overline C_q$, diffeomorphic to $[0,1]$, such that 
$$
\sigma\cap \partial C_q=\partial \sigma.
$$
  The \emph{main cut} $\Sigma_q \subset \overline C_q$ is defined as $\overline C_q \cap \mathbb R_+$.
\end{definition}

Our goal in this section is to remove from each annulus $C_q$ with $q_d \le q < q_{d+s}$ exactly $d$ disjoint cuts, each connecting the inner and outer boundaries of $C_q$, obtaining $d$ open subsets of $C_q$ which we will call \emph{cells}, and which will be denoted by  $\mathrm{Cell}_q(i)$ for $i = 1, \ldots d$. This will be done  in such a way  that the restriction of $f$ to each cell $\mathrm{Cell}_q(i)$ covers each cell $\mathrm{Cell}_{q+1}(j)$ exactly once.
We first define the cells for non-special $q$.

\begin{remark}
We note that $\Sigma_{q+1} \subset f(\Sigma_q)$ since $f$ has positive coefficients. This is the main motivation for working with positive coefficients.
\end{remark}
Let $q$ be non-special, that is  either $q=1$ (and in this case we set $d=2$)   or 
   $q_d < q < q_{d+1}$ where $d \ge 2$.
    Let $z \in  {\bf C}_{q+1}\cap \R_+$.  Since there are no critical points in $C_q$, the point $z$ has exactly $d$ preimages in $C_q$, denoted $z_1, \ldots, z_d$. 
   For all $j=1\dots  d$, let $\sigma_j$ be the connected component containing $z_j$ of the preimage $\overline C_q \cap f^{-1}(\mathbb R_+)$. Notice that one of these connected components is the main cut $\Sigma_q$.
    Since $\overline C_q$ does not contain any critical points, $f|_{\overline C_q}$ is a local biholomorphism, and thus  $\sigma_j$ is a  cut in $ \overline C_q$.

   For all $j=1\dots  d$, the restriction of $f$ to $\sigma_j$ must be injective, hence there are exactly $d$ such curves.   Since the inner component of the boundary of $C_q$ is mapped to points of norm smaller than $|z|$, and the outer boundary of $C_q$ is mapped to points of larger norm, it follows that $\sigma_j$ must connect the inner and  outer boundaries of $C_q$.

\begin{definition}
The connected components of the complement in $C_q$ of the $d$ cuts will be denoted by $\mathrm{Cell}_q(1), \ldots, \mathrm{Cell}_q(d)$, i.e.
$$
C_q \setminus \bigcup_{i=1}^d \sigma_i = \bigcup_{i=1}^d \mathrm{Cell}_q(i).
$$
\end{definition}

\begin{lemma}
Let $q\geq 1$ be non-special  and let $1\leq i \le d$. The restriction of $f$ to $\mathrm{Cell}_q(i)$ covers the domain ${\bf C}_{q+1} \setminus \R_+$ once. In particular every   $\mathrm{Cell}_{q+1}(j)$ with $1\leq j\leq d$ is covered once.
\end{lemma}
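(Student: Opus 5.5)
The plan is to use a covering-space argument for $f$ restricted to a cell, with the slit target ${\bf C}_{q+1}\setminus\R_+$ as base.

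\emph{Step 1 (no boundary mass).} Write $U=\mathrm{Cell}_q(i)$. Its boundary consists of two cuts $\sigma_j,\sigma_{j'}$ and two arcs of $\partial C_q$, one from the inner circle and one from the outer circle. By construction $f(\sigma_j)\subset\R_+$, so the cuts are sent off the slit domain. By Theorem~\ref{thm:Construction Theorem 1D}(ii) and Definition~\ref{defn:covering}, $f(\partial C_q)\cap{\bf C}_{q+1}=\emptyset$. Hence $f(\partial U)\cap({\bf C}_{q+1}\setminus\R_+)=\emptyset$.

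\emph{Step 2 (covering map).} It follows that $f\colon U\cap f^{-1}({\bf C}_{q+1}\setminus\R_+)\to {\bf C}_{q+1}\setminus\R_+$ is proper. Since $\overline C_q$ contains no critical points (Theorem~\ref{thm:Construction Theorem 1D}(v)), this map is a local homeomorphism. A proper local homeomorphism is a finite covering. The base ${\bf C}_{q+1}\setminus\R_+$ is simply connected, being an annulus slit along a radius. So each connected component of the preimage maps biholomorphically onto the base, and the number of preimages in $U$ of a point $w$ is constant in $w$; call it $n_i$.

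\emph{Step 3 (counting).} The cells are disjoint, and the cuts contribute no preimages of points off $\R_+$. By (ii), each $w\in{\bf C}_{q+1}\setminus\R_+$ has exactly $d$ preimages in $C_q$, so $\sum_{i=1}^d n_i=d$. It remains to show $n_i\ge1$ for every $i$, which then forces $n_i=1$.

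To show $n_i\ge 1$, pick a point $w$ just above the positive real axis, close to $z\in{\bf C}_{q+1}\cap\R_+$. Its preimages lie near the $d$ points $z_1,\dots,z_d$, on one fixed side of each cut $\sigma_k$. I expect this to be the main obstacle. The $d$ cuts are disjoint arcs joining the inner and outer circles, so they are cyclically ordered around the annulus, and each cell has exactly two cuts on its boundary. Because $f$ is orientation-preserving and locally injective near $\sigma_k$, the side of $\sigma_k$ mapped to the upper half-plane is, say, the counterclockwise side, and this choice is consistent for every $k$ by continuity of the argument. Therefore each cell is adjacent on that side to exactly one cut, and so contains one preimage of $w$. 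This gives $n_i\ge1$, hence $n_i=1$.

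Finally, each $\mathrm{Cell}_{q+1}(j)$ is contained in ${\bf C}_{q+1}\setminus\R_+$. This uses $C_{q+1}\subset{\bf C}_{q+1}$ together with the fact that the main cut $\Sigma_{q+1}$ is one of the removed cuts. So every $\mathrm{Cell}_{q+1}(j)$ is covered once.
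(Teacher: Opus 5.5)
Your argument is correct and is essentially the paper's. The paper also counts preimages in each cell, via the winding number $m_i$ of $f(\partial\,\mathrm{Cell}_q(i))$ around ${\bf C}_{q+1}\setminus\R_+$ (equivalent to your covering-space constant $n_i$), uses $\sum_i m_i=d$, and reduces to $m_i\ge 1$.

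The step you flag as the main obstacle is handled there in one line. Since the count is constant on the connected slit annulus, it suffices that some point of the cell near a point $z_k$ of a boundary cut lands in ${\bf C}_{q+1}\setminus\R_+$, which holds because $f$ is open and locally injective at $z_k$. Your orientation and cyclic-ordering argument is therefore valid but unnecessary. If you keep it, the consistency comes from every cut being mapped increasingly onto its image when oriented from the inner to the outer circle, not from continuity of the argument.
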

\begin{proof}
Let $m_i$ be the winding number of the curve $f(\partial \mathrm{Cell}_q(i))$ around the domain ${\bf C}_{q+1} \setminus \R_+$. 
Since every  point $w \in {\bf C}_{q+1}$ has exactly $d$ preimages in $C_q$, it is enough to show that no $m_i$ can be 0, that is, at least a point in $ \mathrm{Cell}_q(i)$ is mapped in ${\bf C}_{q+1} \setminus \R_+$.
But this is clear since every cut contains a point whose image is in  ${\bf C}_{q+1}\cap \R_+$ and $f$ is an open map.

\end{proof}
Let us now deal with the special annuli $A_d = C_{q_d}$, $d\geq 3$. 
\begin{remark}
Recall that   $f|_{A_d}$ covers once the disk $D_d$ whose boundary is given by the outer boundary of the annulus  ${\bf C}_{q_d}$.
 Let us write $\Delta_d$ for the inverse image of $D_d$ in the annulus $A_d$. Notice that $\Delta_d$ necessarily intersects $\R_{-}$. Indeed, by the Argument principle we know that $A_d$ contains a unique zero $w_d$, and since the Taylor coefficients of $f$ are real nonnegative, we have $w_d\in \R_{-}$.
 \end{remark}

While the situation is somewhat similar to the non-special case, it is now necessary to distinguish between even and odd values of $d$. The difference between the odd and even degrees is illustrated in  Figure \ref{fig: annuli}. 

Consider an arbitrary point $z \in {\bf C}_{{q_d}+1}\cap \R_+$. Then $z$ has exactly $d$ preimages $z_1, \ldots, z_d$ in $A_d$, each lying on a different connected component  $\sigma_j$ of $\overline A_d \cap f^{-1}(\mathbb R_+)$. 
Notice that $f$ restricted to $\overline{A_d}\cap \R_-$ is real valued and strictly monotone, hence $\overline{A_d}\cap \R_-$ contains at most one preimage of the point $z$.
Since one preimage is necessarily contained in $\overline{A_d}\cap \R_+$, and the non-real preimages appear in conjugate pairs, it follows that if $d$ is odd there is no preimage of $z$ in  $\overline{A_d}\cap \R_-$, while if $d$ is even there is one, let us say $z_1$.
As an immediate consequence we see that if $d$ is odd then $f$ restricted to $\overline{A_d}\cap \R_-$ is strictly increasing, while if $d$ is even it is strictly decreasing. Notice that if $d$ is even the connected component  $\sigma_1$ containing $z_1$ is  the interval $[-10\rho_d, w_1]$.

\begin{lemma}
Each connected component $\sigma_j$ (except for $\sigma_1$ in the even case) is a cut connecting the inner and outer boundaries of $A_d$ which does not intersect  the sets $ \R_-$ and $ \overline{\Delta_d}$.
\end{lemma}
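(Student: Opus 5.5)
Since $\overline{A_d}$ contains no critical points of $f$ (by (v) of Theorem~\ref{thm:Construction Theorem 1D} the critical points lie in the gaps), $f$ is a local biholomorphism on a neighbourhood of $\overline{A_d}$. Hence $\overline{A_d}\cap f^{-1}(\R_+)$ is locally a smooth arc near every point where $f$ takes values in $\R_+$. First I check that $f(\partial A_d)$ never meets the closed interval $\overline{\bC_{q_d+1}}\cap\R_+$. The outer boundary $\Gamma$ is mapped outside $\overline{\D}_{10Mr_{q_d+1}}$. The inner boundary $\gamma$ is mapped into the gap $\bG_{q_d}$, so inside the bounded complementary component of $\overline{\bC_{q_d+1}}$. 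Consequently each component $\sigma_j$ through a preimage $z_j$ of the chosen point $z$ is a smooth arc. Along it the value $f$ moves monotonically along $\R_+$, since $f$ is a local diffeomorphism and the arc is a level curve of $\arg f$. The arc can only terminate on $\partial A_d$. By continuation in $|f|$, as in the non-special case, one end reaches $\Gamma$, where $|f|>10Mr_{q_d+1}$, and the other reaches $\gamma$, where $|f|<r_{q_d+1}/(10M)$. So $\sigma_j$ is a cut joining the two boundary components. It cannot close up, because $f|_{\sigma_j}$ is injective, being strictly monotone in $|f|$.

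Next I show that $\sigma_j$ misses $\R_-$ (for $j\neq1$ when $d$ is even). On $\overline{A_d}\cap\R_-$ the function $f$ is real and strictly monotone, as already observed. If $d$ is odd it is increasing, and $z$ has no preimage on $\R_-$. I will show that in this case $f<0$ on $\overline{A_d}\cap\R_-$ except possibly near one end; then no point of $\R_-$ lies on $f^{-1}(\R_+)$ within the relevant range. More precisely, suppose $\sigma_j$ hits $\R_-$ at a point $x$. Then $f(x)\in\R_+$. Real symmetry of $f$ means $\overline{\sigma_j}$ is also a component of $f^{-1}(\R_+)$ through $x$. Since $f$ is a local diffeomorphism at $x$, the preimage of $\R_+$ near $x$ is a single arc, and that arc is $\R_-$ itself because $f(\R_-)\subset\R$. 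So $\sigma_j$ would be contained in $\R_-$ near $x$, and by analytic continuation of the level arc it is exactly the real segment. Therefore $\sigma_j=\overline{A_d}\cap\R_-\cap f^{-1}(\R_+)$. Since $\sigma_j$ connects both boundaries, this forces the whole segment $[-10\rho_d,-\rho_d/10]$ to map into $\R_+$. That segment then contains a preimage of $z$. By the parity argument this happens only when $d$ is even, and then $\sigma_j=\sigma_1$.

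Finally, $\sigma_j$ misses $\overline{\Delta_d}$. Points of $\overline{\Delta_d}$ map into $\overline{D_d}=\overline{\D}_{10Mr_{q_d}}$, so $|f|\le 10Mr_{q_d}$ there. On the other hand $\sigma_j$ is a connected preimage arc of $\R_+\cap\{|w|\ge\cdots\}$ running from $\gamma$ to $\Gamma$, and values of small modulus occur on it near $\gamma$. So comparing moduli alone is not enough; this is the main obstacle. My plan here is to use that $f|_{A_d}$ has degree $d+1$ over $D_d$ in total: $d$ preimages lie near the outer part and exactly one lies in $\Delta_d$. This sheet structure shows that $\Delta_d$ is a topological disk containing the zero $w_d\in\R_-$, mapped conformally onto $D_d$, and symmetric under conjugation. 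The preimage in $\Delta_d$ of $D_d\cap\R_+$ is then a single arc through the real axis, namely a subset of $\R_-$ or of $\R_+$. Since $f(w_d)=0$ and $f$ is monotone on $\R_-$, it lies in $\R_-$. Any $\sigma_j$ entering $\overline{\Delta_d}$ would therefore share this real arc and coincide with the real component, which has already been excluded for $j\neq1$. The delicate points are verifying that $\partial\Delta_d$ is crossed by $f^{-1}(\R_+)$ only on $\R_-$, and, in the odd case, that $f^{-1}(\R_+)\cap\Delta_d$ is a real segment with image positive values. I expect to settle these via the sign of $f$ on $\R_-$ and the symmetry $f(\bar z)=\overline{f(z)}$.
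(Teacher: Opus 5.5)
Your idea for the $\overline{\Delta_d}$ claim is the paper's: conjugation symmetry plus injectivity of $f$ on $\overline{\Delta_d}$. However, the first two parts contain a real error, because you never account for the zero $w_d$ of $f$ in $A_d\cap\R_-$.

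\textbf{The zero $w_d$.} Follow a component of $\overline{A_d}\cap f^{-1}(\R_+)$ in the direction of decreasing $|f|$. It can end on $\gamma$, \emph{or} it can accumulate at $w_d$. So ``the arc can only terminate on $\partial A_d$'' is false. In the even case, $\sigma_1$ is the real segment from $-10\rho_d$ to $w_d$, which is exactly such an arc; that is why the lemma excludes it. For the same reason, your step ``this forces the whole segment $[-10\rho_d,-\rho_d/10]$ to map into $\R_+$'' is false, since that segment contains $w_d$.

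The repair is to reverse the order of your first two parts:
\begin{enumerate}
\item Your local argument at a real point already shows that a component meeting $\R_-$ is entirely real. It therefore contains a negative real preimage of $z$. Such a preimage exists only for $d$ even, and then it is $z_1$. Hence $\sigma_j\cap\R_-=\emptyset$ for all other $j$, with no claim about the whole segment needed.
\item Since $f$ is real on $\R$ with $f'(w_d)\neq 0$, near $w_d$ the set $f^{-1}(\R_+)$ is a real half-segment. So a non-real $\sigma_j$ cannot accumulate at $w_d$.
\item Monotonicity of $|f|$ then makes such a $\sigma_j$ run from $\gamma$ to $\Gamma$.
\end{enumerate}

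\textbf{The $\overline{\Delta_d}$ step.} The ``delicate point'' you leave open is the entire content of the paper's proof. It needs neither parity nor the sign of $f$ on $\R_-$. The map $f$ sends $\overline{\Delta_d}$ homeomorphically onto $\overline{D_d}$, and $\overline{\Delta_d}$ is invariant under conjugation. So if $u\in\sigma_j\cap\overline{\Delta_d}$, then $f(\bar u)=\overline{f(u)}=f(u)$ forces $u=\bar u$. The real points of $\overline{\Delta_d}$ lie in $\R_-$: they form the preimage of the real diameter of $\overline{D_d}$, which is a connected real arc through $w_d$. This contradicts the previous step. The paper applies the argument only on $\partial\Delta_d$. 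That suffices because $\sigma_j$ is connected and its endpoints lie on $\partial A_d$, outside $\overline{\Delta_d}$.

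\textbf{Degree count.} Your count ``degree $d+1$ over $D_d$'' is wrong. The curves $f(\Gamma)$ and $f(\gamma)$ wind $d$ and $d-1$ times around points of $D_d$. So each such point has exactly one preimage in $A_d$, which is precisely why $f|_{\Delta_d}$ is conformal.
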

\begin{proof}
We only show that $\sigma_j\cap \overline{\Delta_d}=\emptyset$ as the other claims are straightforward. If $\sigma_j$ intersects $\partial \Delta_j$ in a point $u$, then  $f(u)\in \R$, and thus $f(\bar u)=f(u)$. Since $f|{\partial \Delta_j}$ is injective, it follows that $u\in \R_-$, which is impossible.
\end{proof}

\begin{definition}[Odd case]
    For $d$ odd the complement
    $$
    A_d \setminus \bigcup_{i=1}^{d} \sigma_i
    $$
    consists of $d$ connected components, denoted by $\mathrm{Cell}_{q_d}(1), \ldots, \mathrm{Cell}_{q_d}(d)$. We let $\mathrm{Cell}_{q_d}(1)$ be the unique component  intersecting $\R_-\cup\overline{\Delta_d}$.
\end{definition}

 \begin{figure}[h]
 \begin{subfigure}[h]{0.48\linewidth}
 \includegraphics[width=\linewidth]{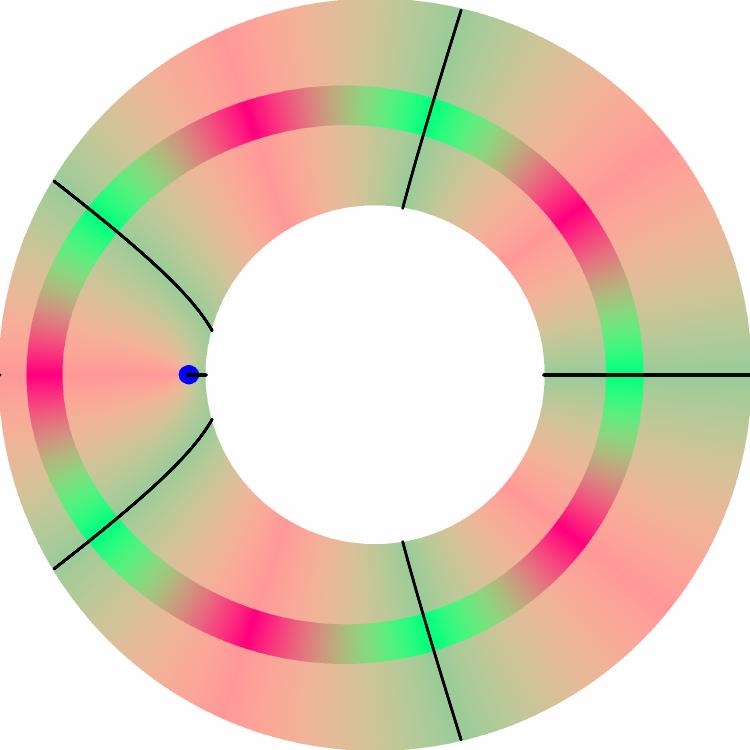}
 \caption{The annulus $A_5$}
 \end{subfigure}
 \hfill
 \begin{subfigure}[h]{0.48\linewidth}
 \includegraphics[width=\linewidth]{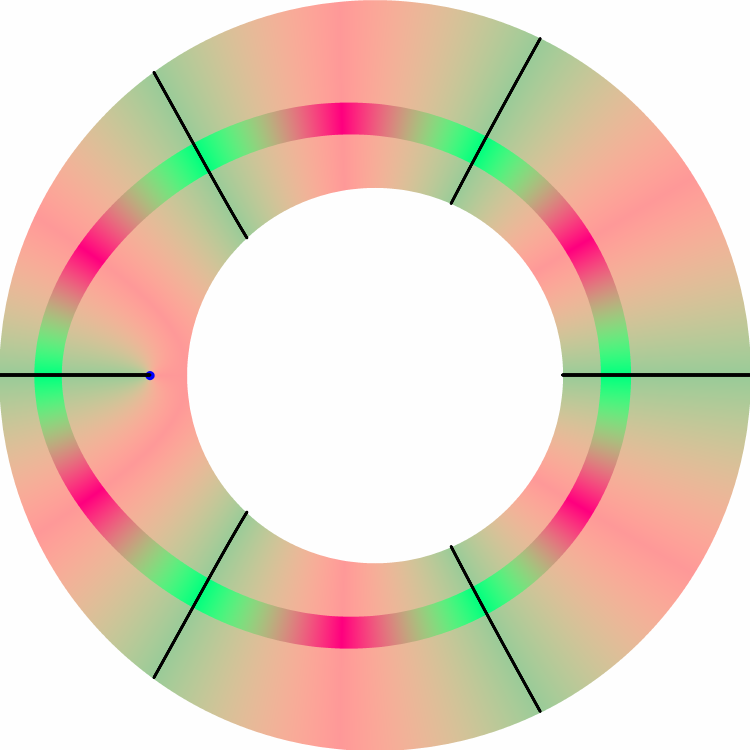}
 \caption{The annulus $A_6$}
 \end{subfigure}
 \caption{Illustration of the difference between the cuts in the annulus $A_n$ for $n$ even or odd. Working with the actual maps constructed in this section is not realistic, as the coefficients decrease too quickly and the annuli grow too quickly for an accurate computation to be feasible. The figure on the left is drawn by computing the image under the map $p_5:z\mapsto z^4+z^5/10$ of the annulus $A(0,9,20)$. The black curves are the preimages in the annulus of the positive real axis. The small blue disk that intersects the negative real axis is given by points for which $|p_5(z)|< 500$. For other points the green and red hues indicates the argument of the image. Points with clear colors are those whose images land inside the annulus $A(0,50000,100000)$. The illustration on the right is drawn similarly, by computing the images of points in the annulus $A(10,20)$ for the map $p_6:z\mapsto z^5 + z^6/12$. Points with clear colors are those whose images land in the annulus $A(0,500000, 1000000)$.}
\label{fig: annuli}
\end{figure}

When $d$ is even we  replace the connected component $\sigma_1$ (which is not a cut), with a cut $\widetilde \sigma_1$ which consists of three pieces. In order to define this cut, observe that $\mathbb R_-$ intersects $\partial \Delta_d$ in exactly two points, say $a < b < 0$. The cut $\widetilde \sigma_1$ consists of the closed interval on the negative real axis that connects the outer boundary of $A_d$ to $a$, a curve contained in the boundary of $\Delta_d$ that connects $a$ to $b$, and a closed interval in $\mathbb R_-$ that connects $b$ to the inner boundary of $A_d$.

\begin{definition}[Even case]
 For $d$ even the complement
    $$
    A_d \setminus \left(\widetilde \sigma_1 \cup \bigcup_{i=2}^{d} \sigma_i \right)
    $$
    consists of $d$ connected components, denoted by $\mathrm{Cell}_{q_d}(1), \ldots, \mathrm{Cell}_{q_d}(d)$. 
    We let $\mathrm{Cell}_{q_d}(1)$ be the component that contains $\Delta_d$.
\end{definition}

For both even and odd $d$ we obtain the following conclusion:

\begin{lemma}
    For all $d \ge 3$ the restriction of $f$ to $\mathrm{Cell}_{q_d}(i)$  covers the domain ${\bf C}_{q_d+1} \setminus \Gamma_{q_d+1}$ once.
     In particular every  $\mathrm{Cell}_{q_d+1}(j)$ with $1\leq j\leq d$ is covered once.
     Moreover, the  restriction of $f$ to $\mathrm{Cell}_{q_d}(1)$ covers every  $\mathrm{Cell}_{q}(j)$ with $q\leq q_d$ 
     once.
\end{lemma}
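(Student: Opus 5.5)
Recall the setting. $f|_{A_d}$ covers $\bC_{q_d+1}$ exactly $d$ times, since $f|_{C_{q_d}}$ covers $d$ times $\bC_{q_d+1}$ by Theorem \ref{thm:Construction Theorem 1D}(ii). It also covers once the disk $D_d=\DD_{10M\rho_d}$, by (iii). The two images are disjoint: $D_d$ lies in the bounded component of the complement of $\bC_{q_d+1}$, because the closures of the annuli $\bC_q$ are disjoint.

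The plan is to argue by winding numbers, exactly as in the non-special lemma, treating $\bC_{q_d+1}\setminus\Gamma_{q_d+1}$ and $D_d$ separately.

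\emph{First claim: each cell covers $\bC_{q_d+1}\setminus \Gamma_{q_d+1}$ once.} Here $\Gamma_{q_d+1}$ is understood as the slit $\bC_{q_d+1}\cap\R_+$, i.e.\ the image of the cuts. Each boundary curve $f(\partial\mathrm{Cell}_{q_d}(i))$ consists of three kinds of pieces:
\begin{itemize}
\item images of arcs of $\gamma_{q_d}$ and $\Gamma_{q_d}$, which lie outside $\overline{\bC_{q_d+1}}$;
\item images of cuts $\sigma_j$, which lie in $\R_+$;
\item in the even case, images of the pieces of $\widetilde\sigma_1$, which lie in $\R_-$ or on $\partial D_d$.
\end{itemize}
None of these meets the slit domain $\bC_{q_d+1}\setminus\R_+$, so the winding number $m_i$ of $f(\partial \mathrm{Cell}_{q_d}(i))$ is constant on it.

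The cells together with the cuts partition $A_d$. Points of $\bC_{q_d+1}\setminus\R_+$ have no preimages on the cuts $\sigma_j$. They also have none on $\widetilde\sigma_1$: its real pieces map into $\R$, and the arc of $\partial\Delta_d$ maps into $\partial D_d$. Therefore $\sum_i m_i = d$.

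It remains to show $m_i\ge 1$ for each $i$, since then $m_i=1$. For this, each cell is adjacent to some cut $\sigma_j$ carrying a point mapped into $\bC_{q_d+1}\cap\R_+$; in the even case, the cell bordering $\widetilde\sigma_1$ also borders some $\sigma_j$. By openness of $f$, nearby points of the cell map into $\bC_{q_d+1}\setminus\R_+$. Counting preimages with the argument principle is what gives $m_i\ge 0$ and hence the positivity.

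\emph{Second claim: $\mathrm{Cell}_{q_d}(1)$ covers $D_d$ once.} The preimage $\Delta_d$ of $D_d$ in $A_d$ is connected, since $f$ maps it injectively onto the disk. It is contained in $\mathrm{Cell}_{q_d}(1)$. In the odd case this follows from the preceding lemma: $\Delta_d$ is disjoint from all cuts, meets $\R_-$, and $\mathrm{Cell}_{q_d}(1)$ is the cell meeting $\R_-\cup\overline{\Delta_d}$. In the even case it holds by definition.

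So every point of $D_d$ has exactly one preimage in $\mathrm{Cell}_{q_d}(1)$. To express this as a covering in the sense of Definition \ref{defn:covering} one needs $f(\partial\mathrm{Cell}_{q_d}(1))\cap D_d=\emptyset$. This is the main obstacle in the even case, because $\partial\Delta_d\subset\widetilde\sigma_1$ maps onto $\partial D_d$. I would handle it with the preimage-count formulation: "covers once" means each point has one preimage, and boundary points map only to $\partial D_d$ or outside $D_d$.

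\emph{Conclusion.} For every $q\le q_d$, every cell $\mathrm{Cell}_q(j)$ is contained in $C_q\subset \DD_{10r_q}\subset D_d$; for $q=0$, $C_0\subset D_d$ as well. Hence each such cell is covered once by $\mathrm{Cell}_{q_d}(1)$. Likewise, each $\mathrm{Cell}_{q_d+1}(j)$ is a subset of $\bC_{q_d+1}\setminus\R_+$, so it is covered once by every cell of $A_d$.
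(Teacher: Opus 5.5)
Your strategy is the intended one. The paper states this lemma without a separate proof, relying on the preimage-count/winding-number argument of the non-special lemma together with property (iii). Reading $\Gamma_{q_d+1}$ as the slit along $\R_+$ is also the right interpretation. The odd case, the positivity $m_i\ge 1$, the treatment of $D_d$ and the final inclusions are fine.

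\textbf{The gap.} It is in the even case, at exactly the point that the modified cut $\widetilde\sigma_1$ was introduced to handle. You make two claims here:
\begin{itemize}
\item $f(\widetilde\sigma_1)$ misses the slit domain, because the real pieces ``lie in $\R_-$'';
\item no point of $\bC_{q_d+1}\setminus\R_+$ has a preimage on $\widetilde\sigma_1$, because the real pieces ``map into $\R$''.
\end{itemize}
Neither justification works: the slit domain $\bC_{q_d+1}\setminus\R_+$ contains the whole segment $\bC_{q_d+1}\cap\R_-$.

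Your description of the images is also inaccurate. For $d$ even, $f$ is strictly decreasing on $\overline{A_d}\cap\R_-$, with $f(a)=10M\rho_d$ and $f(b)=-10M\rho_d$. So the outer piece $[-10\rho_d,a]$ maps into $\R_+$, which is harmless. Only the inner piece $[b,-\rho_d/10]$ maps into $\R_-$, and you need an argument that its image does not reach $\bC_{q_d+1}$.

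\textbf{The fix.} Either of the following works.
\begin{itemize}
\item \emph{Monotonicity.} $f([b,-\rho_d/10])$ is the segment between $-10M\rho_d$ and $f(-\rho_d/10)$. Since $f(-\rho_d/10)\in f(\gamma_{q_d})\subset\bG_{q_d}$, this segment lies in the bounded component of the complement of $\bC_{q_d+1}$.
\item \emph{Parity.} A point $w\in\bC_{q_d+1}\cap\R_-$ has $d$ simple preimages in $A_d$. None lies on $\R_+$, since the coefficients are positive. The non-real ones come in conjugate pairs, and at most one lies on $\R_-$ by monotonicity. Since $d$ is even, there is none on $\R_-$.
\end{itemize}
Either argument establishes both of your claims (boundary avoidance and no preimages on $\widetilde\sigma_1$), and then $\sum_i m_i=d$ goes through.

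\textbf{A minor point.} The ``main obstacle'' you identify in the second claim is not an obstacle. $D_d$ is open and $f(\partial\Delta_d)=\partial D_d$, so this arc does not meet $D_d$. More directly: $\Delta_d\subset\mathrm{Cell}_{q_d}(1)$ and the cell is open, so $\partial\mathrm{Cell}_{q_d}(1)\cap\Delta_d=\emptyset$. Every boundary point lying in $A_d$ therefore maps outside $D_d$, because $\Delta_d$ is the full preimage of $D_d$ in $A_d$. Boundary points on $\partial A_d$ map into the gaps. Hence condition (1) of Definition \ref{defn:covering} holds directly, and no reformulation is needed.
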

 Finally, define $\mathrm{Cell}_{0}(1)=C_0^-$ and $\mathrm{Cell}_{0}(2)=C_0^+$.

\subsection{Embedding of the symbolic dynamical system,  and proof of Theorems~\ref{Theorem: Main1} and \ref{Theorem: Main2} for Baker--Bishop maps.}\label{sec:Bishop measure}
In this subsection we will prove the following result.
\begin{theorem}\label{thm:invariant measures BB}
    There exist infinitely many distinct invariant ergodic measures supported on $J(f)$, and for which the entropy of $f$ equals infinity.
\end{theorem}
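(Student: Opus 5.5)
The plan is to embed the symbolic dynamical system of the modified Markov process of Remark~\ref{modificationbishop} into $(\C,f)$ via the cells, and push forward $\mathbb{P}^\mu$.

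\textbf{Coding map.} Identify the state $(q,j)\in S$ with the cell $\mathrm{Cell}_q(j)$, for $q\geq 1$ and $1\le j\le d$ when $q_d\le q<q_{d+1}$, and with $C_0^\mp$ for $q=0$. By the covering lemmas, $f|_{\mathrm{Cell}_s}$ covers $\mathrm{Cell}_t$ once, giving a univalent inverse branch $g_{st}\colon \mathrm{Cell}_t\to \mathrm{Cell}_s$, whenever $(s,t)\in E'$. This covers:
\begin{itemize}
\item consecutive columns;
\item $C_0^\pm$ onto $\DD_{10Mr_1}$, which contains $C_0$ and $C_1$;
\item the special cell $\mathrm{Cell}_{q_d}(1)$ onto $\DD_{10Mr_{q_d}}$, which contains all cells of index $\le q_d$.
\end{itemize}
For an allowable sequence $x=(x_0,x_1,\dots)$, set
$$
K_n(x)=g_{x_0x_1}\circ\cdots\circ g_{x_{n-1}x_n}(\mathrm{Cell}_{x_n}).
$$
This is a decreasing sequence of nonempty sets. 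I would take $\varphi(x)$ to be the point of $\bigcap_n \overline{K_n(x)}$, so that $f(\varphi(x))=\varphi(\sigma x)$ holds by construction.

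\textbf{Main obstacle: shrinking of $K_n(x)$.} The main difficulty is showing that $\diam K_n(x)\to 0$, since the cells are unbounded in aggregate and no compactness is available. I would use Lemma~\ref{lem:large derivative on annuli}: $|f'|\ge 2$ on every $C_q$, so each inverse branch contracts by a factor $1/2$ pointwise. However, the cells are not convex, so this alone does not bound diameters. I would instead use hyperbolic metrics. Each cell sits compactly inside a slightly larger domain on which the same branches are defined, because the coverings are stable and the images land in $\bC_{q+1}$ and in discs of radius $10Mr$. The branches are then strict contractions of hyperbolic metrics by the Schwarz--Pick lemma.

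The subtlety is that the contraction is not uniform along escaping itineraries. Here I would restrict to the set $\Sigma$ of allowable sequences that visit $S_0$ infinitely often. By Remark~\ref{remark: full measure} and ergodicity, $\Sigma$ has full $\mathbb{P}^\mu$ measure, and it is completely invariant under $\sigma|_\Theta$. Each return to $S_0$ corresponds to a definite compact pair of domains, giving a uniform contraction factor for the branch $C_0^\pm\leftarrow\DD_{10Mr_1}$, whose image is compactly contained. Infinitely many such returns force $\diam K_n(x)\to0$. Combined with the factor-$2$ Euclidean contraction between returns, the same estimate should give continuity of $\varphi$ on $\Sigma$ (nearby sequences share long initial words). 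Injectivity follows because the cells are disjoint and $f$ is injective on each cell. The inverse map is continuous because the cells are open: the point $\varphi(x)$ lies in the interior of the cell, so it determines $x_0$, and iterating determines every coordinate.

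\textbf{Conclusion.} Push forward to get $\varphi_*\mathbb{P}^\mu$, which is $f$-invariant and ergodic (in fact mixing, by Theorem~\ref{theoremergodic}). Its entropy is infinite: $\varphi$ is a measurable isomorphism onto its image, so the entropy equals the infinite entropy rate given by Proposition~\ref{prop:extension of the support}.

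For the support, \eqref{supportpushforward} gives support $\overline{\varphi(\Sigma)}$. Points of $\varphi(\Sigma)$ have bounded-recurrent orbits through the expanding cells, so they are not in the escaping Fatou set described in Theorem~\ref{thm:Construction Theorem 1D}(iv); hence the support lies in $J(f)$. Density in $J(f)$ would follow from the enlarged edge set $E'$. Every point of $J$ lies in $\bigcap_n f^{-n}(\bigcup_q \overline{C_q})$, and its neighbourhoods contain pulled-back cells $K_n$. Since $E'$ allows returns from special cells to any earlier column, these neighbourhoods meet $\varphi(\Sigma)$.

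\textbf{Infinitely many measures.} Varying the sequence $(c_d)$, or the perturbation parameters $\epsilon^{(s)}$, changes the stationary measure $\mu$, for instance the masses of the cells. Since $\varphi$ is injective, the pushed-forward measures then assign different masses to the cells, so they are distinct.
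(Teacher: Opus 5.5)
Your architecture is the paper's: cells correspond to states of the $E'$-modified chain of Remark~\ref{modificationbishop}, points are coded by itineraries, $\mathbb{P}^\mu$ is pushed forward, and the transition probabilities are varied to get infinitely many measures. The shrinking step is done differently. You restrict to the full-measure, completely invariant set $\Sigma$ of sequences visiting $S_0$ infinitely often, and use Schwarz--Pick for the hyperbolic metrics of the cells. This gives a uniform strict contraction on the finitely many branches leaving $S_0$, whose images are compact in $C_0^\pm$ because $\overline{\mathrm{Cell}_t}\subset\overline{\DD_{10r_1}}\subset\DD_{10Mr_1}$. The paper instead works on all non-escaping itineraries, gets non-emptiness from relative compactness at the transitions (i)/(iii) of Remark~\ref{allowableitineraries}, and gets shrinking from $|f'|\ge 2$. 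Your version is sound, and it handles the non-convexity of cells. You should say explicitly that it also shows $\varphi(x)$ lies in the \emph{open} cell $\mathrm{Cell}_{x_0}$: after the first visit to $S_0$, the sets $K_n(x)$ have bounded hyperbolic diameter in $\mathrm{Cell}_{x_0}$. You need this because you defined $\varphi$ through closures and later use that $\varphi(x)$ is interior.

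The genuine gap is the inclusion $J(f)\subset\overline{\varphi(\Sigma)}$. Your argument needs every neighbourhood of every $z\in J(f)$ to contain a small pulled-back cell. Your contraction estimate only works along itineraries in $\Sigma$. But $J(f)$, which equals $\Omega$ by Proposition~\ref{prop:Julia}, contains points whose orbits stay in the annuli and escape, for instance on $\mathbb R_+\cap C_q$. These points lie on cuts and in no cell, and you note yourself that contraction is not uniform along escaping itineraries. So ``its neighbourhoods contain pulled-back cells'' is unproved exactly where it is needed. (The paper's proof of Proposition~\ref{prop:Julia} is also brief here: it cites Lemma~\ref{lemma:shrinking}, which is stated only for non-escaping itineraries.)

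The fix is to avoid shrinking at arbitrary Julia points and use backward invariance instead.
\begin{enumerate}
\item Suppose $f(z)=\varphi(x)$ with $x\in\Sigma$. Then $z$ is non-escaping.
\item The gaps, the curves $\partial C_q$ and $\partial C_0^\pm$, and the cuts are all mapped into gaps or into $\mathbb R_+$, so their points escape. Hence $z$ lies in some $\mathrm{Cell}_s$.
\item The covering lemmas then give $(s,x_0)\in E'$. This is precisely what the extra edges of $E'$ encode, since $\Delta_d\subset\mathrm{Cell}_{q_d}(1)$ is mapped onto $D_d$.
\item Hence $(s,x_0,x_1,\dots)\in\Sigma$. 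Any point of $\mathrm{Cell}_{y_0}$ following $y$ lies in every $K_n(y)$, so $z=\varphi(s,x_0,x_1,\dots)$.
\end{enumerate}
Thus $f^{-1}(\varphi(\Sigma))\subset\varphi(\Sigma)$. Density in $J(f)$ of the backward orbit of any non-exceptional point of $\varphi(\Sigma)$ (Montel) then gives $J(f)\subset\overline{\varphi(\Sigma)}$.

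A smaller point concerns $\varphi(\Sigma)\subset J(f)$. Do not appeal to ``the escaping Fatou set of (iv)'': nothing tells you a priori that every Fatou component lies there. Use instead that $|(f^n)'(z)|\ge 2^n$ while $f^{n_k}(z)\in C_0$ for infinitely many $n_k$. Near a Fatou point, some subsequence of $f^{n_k}$ would converge to a finite holomorphic limit, forcing $(f^{n_k})'(z)$ to stay bounded along it, which is a contradiction.
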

 We then discuss equidistribution of preimages and periodic points in order to conclude the proof of Theorems~\ref{Theorem: Main1} and \ref{Theorem: Main2} for Baker--Bishop maps.
Let $\QQ$ be the set of  special integers $(q_d)_{d\geq 3}$ given by Theorem~\ref{thm:Construction Theorem 1D}.
Let $S$ be the state space of the model Markov process described in Section \ref{sec:Markov}. Modify the Markov process as shown  in Remark \ref{modificationbishop}, that is, set
$$
E'=E\cup \{(s,t) \text{ such that $s=(q_d,1)$ for some $d\geq2$ and $t=(i,j)$ with $i\leq q_d$}\}.
$$
 Let 
$\mu$ be the unique stationary measure on $S$ and let $\mathbb{P}^\mu$ be the associated $\sigma$-invariant probability measure on $S^\N$.
We call an element $(\theta_n)$ of $S^\N$ an \emph{itinerary}.
\begin{remark}\label{allowableitineraries}
Recall that the support of the measure $\mathbb{P}^\mu$ coincides with the subset $\Theta$ of allowable itineraries. 
It is easy to see that  $(\theta_n)=(i_n,j_n)\in \Theta$ if and only if for all $n\geq 0$  one of the following holds:
    \begin{itemize}
        \item[(i)] both $i_n = i_{n+1} = 0$;
        \item[(ii)] $i_{n+1} =  i_n +1$;
        \item[(iii)] $i_n \in \QQ$, $j_n = 1$ and $i_{n+1} \le i_n$.
    \end{itemize}
    \end{remark}
    We denote by $\Theta_0$ the subset of  non-escaping allowable itineraries. Notice that $\Theta_0$  is completely invariant for the map $\sigma|_\Theta$, and that by Remark \ref{remark: full measure} we have $\mathbb{P}^\mu(\Theta_0)=1$.
    \begin{definition}
    Given  an itinerary $(\theta_n)=(i_n,j_n)\in \Theta_0$, 
     we say that  $z \in \C$ \emph{follows the itinerary} $(\theta_n)$ if  $$f^n(z)\in \mathrm{Cell}_{i_n}(j_n)$$ for all $n\in\N$.
    \end{definition}

\begin{lemma}\label{lemma:shrinking}
    Let $(\theta_n) = (i_n, j_n) \in \Theta_0$. Then there exists a unique point $z \in \mathbb C$ that follows the itinerary $(\theta_n)$.
\end{lemma}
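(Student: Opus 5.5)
Existence will come from a nested intersection of compact sets, and uniqueness from expansion along the non-escaping itinerary.

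\emph{Inverse branches.} For each allowable transition $\theta_n\to\theta_{n+1}$, the lemmas of Section~\ref{sec:Bishop cuts} say that $f|_{\mathrm{Cell}_{i_n}(j_n)}$ covers $\mathrm{Cell}_{i_{n+1}}(j_{n+1})$ exactly once. The cases are: non-special cell to next column; special cell to next column; special cell $\mathrm{Cell}_{q_d}(1)$ to any cell with $i\le q_d$; and $C_0^\pm$ to $C_0^\pm$ or to column $1$, using (i) of Theorem~\ref{thm:Construction Theorem 1D}. In each case there is therefore a well-defined univalent inverse branch
\[
g_n\colon \mathrm{Cell}_{i_{n+1}}(j_{n+1})\to \mathrm{Cell}_{i_n}(j_n).
\]
Moreover $f$ has no critical points on the closures of the annuli $C_q$ (item (v)), and the coverings extend slightly to a neighbourhood (e.g. into the larger annuli $\bC_q$). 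Hence I expect $g_n$ to extend to a univalent map on a slightly larger simply connected neighbourhood, with image compactly inside a neighbourhood of $\overline{\mathrm{Cell}_{i_n}(j_n)}$. Define
\[
K_n := g_0\circ\cdots\circ g_{n-1}\bigl(\overline{\mathrm{Cell}_{i_n}(j_n)}\bigr).
\]
The sets $K_n$ form a nested sequence of nonempty compact sets, so $\bigcap_n K_n\neq\emptyset$. Any point in the intersection follows the itinerary, modulo boundary issues (closure versus open cell) that I handle by working with the slightly enlarged domains, or by noting that the image of a cut lies on $\R_+$ or on cut boundaries.

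\emph{Uniqueness via expansion.} Lemma~\ref{lem:large derivative on annuli} gives $|f'|\ge 2$ on each $C_q$, so each $g_n$ contracts the Euclidean metric by a factor at least $1/2$. The cells themselves are not convex, so I will bound $\diam K_n$ through path lengths. Concretely, I estimate the diameter of a cell in the intrinsic (path) metric of a slightly larger domain on which $|f'|\ge 2$ still holds; this keeps the inequality robust. Then $\diam K_n \le 2^{-n}\,\diam_{\mathrm{int}}\bigl(\mathrm{Cell}_{i_n}(j_n)\bigr)$.

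\emph{Main obstacle.} The intrinsic diameter of $\mathrm{Cell}_{i_n}(j_n)$ is of order $r_{i_n}$, which is unbounded, so the factor $2^{-n}$ alone does not force $\diam K_n\to0$. This is where I use $(\theta_n)\in\Theta_0$: a non-escaping itinerary visits some fixed state $s$ (some fixed cell) infinitely often, say at times $n_k\to\infty$. Along this subsequence
\[
\diam K_{n_k}\le 2^{-n_k}\,\diam_{\mathrm{int}}(\mathrm{Cell}_s)\to 0,
\]
and since the $K_n$ are nested, the intersection is a single point. Two technical points need care. First, the cells are only bounded domains and not compactly contained where needed, so I pass to $\bC$-enlargements to keep the univalent extensions and the derivative bound valid. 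Second, in the special even case the cut $\widetilde\sigma_1$ runs along $\partial\Delta_d$, where the covering of $\mathrm{Cell}_{q_d}(1)$ onto earlier cells must be checked to remain univalent up to the boundary. I expect this bookkeeping near $\Delta_d$ and the cuts to be the most delicate part; the contraction argument itself is routine.
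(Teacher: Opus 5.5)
\textbf{Gap in the existence half.} Your shrinking argument is sound. It is the same expansion mechanism the paper uses ($|f'|\ge 2$ on the closed cells, Lemma~\ref{lem:large derivative on annuli}). Evaluating $\diam K_n\le 2^{-n}\diam_{\mathrm{int}}(\mathrm{Cell}_{i_n}(j_n))$ along the return times to a fixed cell handles explicitly the unbounded cell sizes, which the paper's one-line appeal to $|f'|>2$ leaves implicit.

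The gap is in existence. Your nested compacta $K_n$ only give a point $z$ with $f^n(z)\in\overline{\mathrm{Cell}_{i_n}(j_n)}$ for all $n$. Following the itinerary requires the \emph{open} cells, from which the cuts and boundary arcs were deliberately removed. This is not bookkeeping. Take a transition $(q,i)\to(q+1,j)$ where $\overline{\mathrm{Cell}_{q+1}(j)}$ meets the main cut $\Sigma_{q+1}\subset\R_+$. The two cuts bounding $\mathrm{Cell}_q(i)$ are mapped onto $\R_+$, so $g_n(\overline{\mathrm{Cell}_{q+1}(j)})$ genuinely touches $\partial\mathrm{Cell}_q(i)$. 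A priori, then, the single point of $\bigcap_n K_n$ could sit on a cut. Enlarging the domains cannot fix this, since it only enlarges the sets being intersected. The remark that ``the image of a cut lies on $\R_+$'' is not yet an argument.

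\textbf{How to close it.} You need one more idea, and either available one uses the non-escaping hypothesis a second time.

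\emph{The paper's route.} Between two consecutive visits to the fixed cell, the itinerary must make a transition of type (i) or (iii) in Remark~\ref{allowableitineraries}. That is, it goes from $C_0^\pm$, or from $\mathrm{Cell}_{q_d}(1)\supset\Delta_d$, into a cell lying compactly inside $\D_{10Mr_1}$, resp.\ $\D_{10Mr_{q_d}}$, a disc covered once by $f$ from $C_0^\pm$, resp.\ $\Delta_d$. At such $n$ one has $f(\partial\mathrm{Cell}_{i_n}(j_n))\cap\overline{\mathrm{Cell}_{i_{n+1}}(j_{n+1})}=\emptyset$. Hence the set of points following each return block is relatively compact in the open cell. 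The open sets $U_m=\bigcap_{n\le m}f^{-n}(\mathrm{Cell}_{i_n}(j_n))$ then satisfy $\overline{U_{n_{s+1}}}\subset U_{n_s}$, and $\bigcap_m U_m\neq\emptyset$.

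\emph{A route within your framework.} Every point of $\partial\mathrm{Cell}_{i_n}(j_n)$ is mapped by $f$ either into $\R_+$ or into the closure of a gap $\bG_q$. The closures of the gaps meet no closed cell. The gap case covers the boundary circles, $\partial C_0^\pm$, the arc of $\partial\Delta_d$, and the inner $\R_-$-segment of $\widetilde\sigma_1$ for even $d$. So if $f^n(z)\in\partial\mathrm{Cell}_{i_n}(j_n)$, one of two things happens:
\begin{enumerate}
\item $f^{n+1}(z)$ lies in the closure of a gap, contradicting $z\in K_{n+1}$;
\item $f^{n+1}(z)\in\R_+$, whose orbit tends to $\infty$ because the coefficients of $f$ are positive. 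This contradicts $f^{n_k}(z)$ lying in the fixed bounded cell for infinitely many $k$.
\end{enumerate}
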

    \begin{proof}
        We consider the nested sequence of open sets $U_0 \supset U_1 \supset \cdots$ given by
        $$
        U_m = \bigcap_{n=0}^m f^{-n} (\mathrm{Cell}_{i_n}(j_n)),
        $$
        for all $m\geq 0$.
      We now show that $\cap_{m\geq 0}U_m$ is not empty.
        Since the itinerary $(\theta_n)$ is non-escaping, it must return to a given cell infinitely often. Without loss of generality we may assume that this 
        cell is  $\mathrm{Cell}_{i_0}(j_0)$. Let thus $(n_s)$ be an increasing sequence of integers such that
        $$\mathrm{Cell}_{i_{n_s}}(j_{n_s})=\mathrm{Cell}_{i_0}(j_0),\quad \forall s\geq 0.$$
          It follows that for every $s\geq 0$, at least one of item (i) or (iii) in Remark \ref{allowableitineraries} occurs at a time $n_s\leq n< n_{s+1}$.
Then 
$$ f(\partial \mathrm{Cell}_{i_n}(j_n))\cap \overline{\mathrm{Cell}_{i_{n+1}}(j_{n+1}))}=\varnothing,$$
           which implies that 
        $$
        \bigcap_{\ell=n_s}^{n_{s+1}} f^{n_s - \ell} (\mathrm{Cell}_{i_{\ell}}(j_\ell))
        $$
        is relatively compact in $\mathrm{Cell}_{i_{n_s}}(j_{n_s})$, hence $\cap_{m\geq 0}U_m\neq\emptyset$.
        By Lemma~\ref{lem:large derivative on annuli} we know that $|f^\prime|>2$ on each cell, which implies that 
        the  diameter of $U_m$ goes to 0, and thus $\cap_{m\geq 0}U_m$ is a point.
    \end{proof}
   
\begin{definition}
Set $$
\Omega = \{z_0 \in \mathbb C :f^n(z_0) \in \bigcup_{q} C_q \quad \forall n \in \mathbb N \},
$$
and let  $\Omega_0$ denote the subset of  points in $\Omega$ which are non-escaping.
Let $\varphi\colon \Theta_0\to \C$ be the map sending  $(\theta_n)\in \Theta_0$ to the unique point $z\in \C$ which follows the itinerary $(\theta_n)$.
\end{definition}

\begin{prop}\label{cor:homeo}
   The map  $\phi$ is a  homeomorphism between $\Theta_0$ and  $\Omega_0$.
\end{prop}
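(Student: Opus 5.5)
We show in turn that $\varphi$ maps $\Theta_0$ into $\Omega_0$, that it is injective, that it is surjective onto $\Omega_0$, and that both $\varphi$ and $\varphi^{-1}$ are continuous.

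\emph{Well-definedness, injectivity, surjectivity.} Let $(\theta_n)\in\Theta_0$. By Lemma \ref{lemma:shrinking} the point $z=\varphi((\theta_n))$ satisfies $f^n(z)\in \mathrm{Cell}_{i_n}(j_n)\subset C_{i_n}$ for all $n$, so $z\in\Omega$. It is non-escaping: $(\theta_n)$ is non-escaping, hence $i_n$ returns infinitely often to some fixed value $i$. Along those times $f^n(z)$ lies in the bounded set $C_i$.

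Injectivity follows because the cells are pairwise disjoint open sets. If $\varphi(\theta)=\varphi(\theta')=z$, then $f^n(z)$ lies in both $\mathrm{Cell}_{i_n}(j_n)$ and $\mathrm{Cell}_{i'_n}(j'_n)$, forcing $\theta_n=\theta'_n$ for every $n$.

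For surjectivity take $z\in\Omega_0$. Each $f^n(z)$ lies in some $C_{i_n}$. The first point to settle is that it lies in the open union of the cells and not on a cut. The cuts of $C_q$ are mapped by $f$ into $\R_+$, or, in the even special case, partly into $\R_-\cup\partial D_d$. I would handle this by showing that a point on a cut cannot keep all its forward images in $\bigcup_q C_q$ with the correct covering structure. If this fails, I would expect the definition of the cells (or of $\Omega$) to be meant up to boundaries, and would treat points on cuts by assigning them to an adjacent cell via a one-sided convention.

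Assuming $f^n(z)\in\mathrm{Cell}_{i_n}(j_n)$, the itinerary is allowable. Indeed $f(\mathrm{Cell}_{i_n}(j_n))\cap \mathrm{Cell}_{i_{n+1}}(j_{n+1})\neq\emptyset$ forces the transition to be one of (i)--(iii) of Remark \ref{allowableitineraries}. This uses (ii)--(iv) of Theorem \ref{thm:Construction Theorem 1D}: a cell of $C_q$ maps into ${\bf C}_{q+1}$ except for $\mathrm{Cell}_{q_d}(1)$, which also maps over the disc $\DD_{10M r_{q_d}}$. It also uses (i), which handles $C_0$. The resulting itinerary is non-escaping because $z$ is non-escaping. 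By uniqueness in Lemma \ref{lemma:shrinking}, $\varphi$ of this itinerary equals $z$.

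\emph{Continuity of $\varphi$.} Let $\theta^{(k)}\to\theta$ in $\Theta_0$. Then for each $m$ the first $m+1$ symbols eventually agree, so $\varphi(\theta^{(k)})\in U_m(\theta)$ for large $k$. It therefore suffices to know that $\operatorname{diam} U_m(\theta)\to0$, and this is exactly what the proof of Lemma \ref{lemma:shrinking} gives. On $U_m$ the branch of $f^{-m}$ from $\mathrm{Cell}_{i_m}(j_m)$ contracts by the factor $2^{-m}$ (Lemma \ref{lem:large derivative on annuli}), relative to the cell diameter. The obstacle here is that cells are unbounded in the index, so the contraction has to be applied along the returns to the recurrent cell. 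Concretely, I would choose $m=n_s$ with $\theta_{n_s}=\theta_0$ and use the relative compactness established in that proof to control the diameter by $2^{-n_s}\operatorname{diam}\mathrm{Cell}_{i_0}(j_0)$.

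\emph{Continuity of $\varphi^{-1}$.} This is the key step. Let $z_k\to z$ in $\Omega_0$. Since $z$ and each $f^n(z)$ lie in open cells, continuity of $f^n$ gives $f^n(z_k)\in\mathrm{Cell}_{i_n}(j_n)$ for $k$ large, for each fixed $n$. Hence the itineraries converge in the product topology.

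The main obstacle is the boundary and cut issue in the surjectivity step, that is, excluding points of $\Omega_0$ whose orbits hit cuts. I would attack it with the real-axis structure: positivity of the coefficients together with the covering statements of the preceding lemmas.
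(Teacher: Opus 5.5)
Your decomposition is the same as the paper's. Injectivity comes from disjointness of cells, and surjectivity from showing that every $f^n(z)$, for $z\in\Omega_0$, lies in an open cell and then applying Lemma~\ref{lemma:shrinking}. Continuity of $\varphi$ follows from $\diam U_m\to 0$, and continuity of $\varphi^{-1}$ from openness of the cells. Your explicit allowability check is fine and is more explicit than the paper.

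\textbf{The gap.} The one step with real content is the one you flag and leave open: showing that orbits of points of $\Omega_0$ never meet a cut. The route you suggest would fail, because points on cuts \emph{can} keep their whole forward orbit in $\bigcup_q C_q$. Since $\Sigma_{q+1}\subset f(\Sigma_q)$ and $f$ sends the endpoints of $\Sigma_q$ into the gaps ${\bf G}_q$ and ${\bf G}_{q+1}$, a nested-interval argument gives points $x\in\Sigma_1$ with $f^n(x)\in C_{n+1}\cap\R_+$ for all $n$. These points lie in $\Omega$ and on cuts. They are excluded from $\Omega_0$ only because they escape, so the non-escaping hypothesis has to be used here, and your plan never uses it at this step. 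Your fallback of assigning cut points to an adjacent cell does not work either. Such a point follows no itinerary, since the cells are open, and $\varphi^{-1}$ could not be continuous there.

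\textbf{The missing observation.} Since all coefficients are positive, $f(x)\ge a_0+x^2>x$ on $\R_+$, so every point of $\R_+$ escapes. Every cut other than $\widetilde\sigma_1$ (in the even special case) is by construction a component of $f^{-1}(\R_+)$. The three pieces of $\widetilde\sigma_1$ are handled as follows.
\begin{itemize}
\item The segment $[-10\rho_d,a]$ is mapped into $\R_+$.
\item The arc of $\partial\Delta_d$ is mapped onto $\partial D_d$, the outer boundary of $\bC_{q_d}$.
\item The segment $[b,-\rho_d/10]$ is mapped onto real points between $-10Mr_{q_d}$ and $f(-\rho_d/10)\in{\bf G}_{q_d}$.
\end{itemize}
The last two targets lie in the gap $G_{q_d}$, which belongs to the escaping set by (iv) of Theorem~\ref{thm:Construction Theorem 1D}. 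Hence a non-escaping orbit never meets a cut. This is exactly what surjectivity needs, and also what your continuity argument for $\varphi^{-1}$ relies on.

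\textbf{A minor point.} In bounding $\diam U_{n_s}$, use the intrinsic (path) diameter of $\mathrm{Cell}_{i_0}(j_0)$ rather than its Euclidean diameter. The cells are not convex, and what the univalent branch of $f^{-n_s}$ contracts by $2^{-n_s}$ is path length.
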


\begin{proof}
It is clear that the map $\varphi$ is injective. 
We now show that its image is $\Omega_0$. Let $z_0\in \Omega_0$.
Since the orbit of every point on the positive real axis diverges, it follows that the orbit of a non-escaping point can never land on the positive real axis.
If a point in a cut is not mapped to the positive real axis (which can happen only for cuts in special annuli $A_d$ with $d$ even), then it is mapped to the outer boundary of the annulus ${\bf C}_{q_d}$, which is contained in the gap $G_{q_d}$.
Hence for all $n\geq 0$ the point  $z_n:=f^n(z_0)$ belongs to a cell, which we denote
$\mathrm{Cell}_{i_{n}}(j_n)$, and thus $z_0\in \varphi(\Theta_0)$.
  The map $\varphi$ is thus  a bijection between  $\Theta_0$ and $\Omega_0$. It remains to be checked that both $\varphi$ and $\varphi^{-1}$ are continuous.

 Let  $(\theta^{(k)}_n)_k=(i^{(k)}_n,j^{(k)}_n)$ be a sequence of itineraries  converging as $k\to \infty$ to an itinerary $(\theta_n)=(i_n,j_n)$ in the product topology. This means that the itineraries stabilize:
 for every $N\geq 0$ there exists $K\geq 0$ such that if $k\geq K$ then  $\theta^{(k)}_n=\theta_k$ for all $n\leq N$.
 As discussed in the proof of Lemma \ref{lemma:shrinking}, the nested sequence of finite pullbacks of cells has diameter converging to 0. It follows that, as $k\to\infty$,  we have  $\varphi(\theta^{(k)}_n)\to \varphi(\theta_n)$, proving that $\varphi$ is continuous.

    On the other hand, suppose that $z_0^{(k)}$ is a sequence of points in $\Omega_0$ converging as $k\to\infty$ to a  point $z_0 \in \Omega_0$.  Let  $(\theta_n)=(i_n, j_n)\in \Theta_0$ be the itinerary followed by $z_0$,  and for fixed $m$ consider the open neighborhood of $z_0$
    $$
    U_m = \bigcap_{n=0}^m f^{-n} (\mathrm{Cell}_{i_n}(j_n)).
    $$
    For large  $k$ the point $z_0^{(k)}$ must also lie in $U_m$. It follows that the  itineraries followed by the  points $z_0^{(k)}$ stabilize, and thus converge to $(\theta_n)$ in the product topology. 
    \end{proof}

As a consequence, the symbolic dynamical system  $(S^\N, \sigma)$ embeds in  the complex dynamical system $(\mathbb C,f)$ via the map $\varphi$.
Hence the measure $\mathbb{P}^\mu$ can be pushed forward to an ergodic invariant measure $\widetilde \mu:=\varphi_*\mathbb{P}^\mu$ on $\mathbb C$  with infinite entropy, whose support equals $\overline{\Omega_0}$.

\begin{prop}\label{prop:Julia}
    The Julia set $J(f)$ equals the set of points whose forward images lie in the union of the cells. In particular
    $$
    J(f) = \Omega = \overline{\Omega_0}.
    $$
\end{prop}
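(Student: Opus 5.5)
We will prove the chain of inclusions $J(f)\subset \Omega\subset \overline{\Omega_0}\subset J(f)$. Here $\Omega$ is understood as the set of points whose whole forward orbit stays in the union of the cells, equivalently in $\bigcup_q C_q$ minus the cuts.

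\emph{Step 1: $J(f)\subset\Omega$.} By Theorem~\ref{thm:Construction Theorem 1D}(iv), the open set $U:=\big(\bigcup_{q\ge0}\overline{C_q}\big)^\complement$ is forward invariant and contained in the escaping set. On each gap $G_q$ the iterates map $G_q$ into $\bG_{q+1}\subset G_{q+1}$, so $f^n\to\infty$ locally uniformly on $U$. Hence $U\subset F(f)$, and so is every preimage of $U$. The points of the cuts that lie on $f^{-1}(\R_+)$ land on $\R_+\cap C_{q+1}$, whose orbit tends to infinity. We plan to check that these points also map eventually into gaps or into $\overline{\bC_{q+1}}\setminus C_{q+1}$, hence into $U$. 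Concretely, $f(\R_+\cap \overline{C_q})$ reaches points of modulus greater than $10r_{q+1}$, and by monotonicity points of $\R_+$ leave the annuli. The remaining points of the even-case cuts map to the outer boundary of $\bC_{q_d}$, which lies in a gap. Boundary circles of the $C_q$ map into gaps by the Remark after Theorem~\ref{thm:Construction Theorem 1D}. It follows that a point of $J(f)$ never meets $U$, the cuts, or $\partial C_q$, so its orbit stays in the open cells, i.e.\ it lies in $\Omega$. The delicate part is the real positive axis: $\R_+\cap C_{q+1}$ is itself inside an annulus. We will argue instead that $\R_+\subset I(f)$ consists of points whose orbit leaves $\bigcup_q\overline{C_q}$ after finitely many steps, using $f(x)>x^2/2$ large on $\R_+$ together with the disjointness of the annuli.

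\emph{Step 2: $\Omega\subset\overline{\Omega_0}$.} Take $z\in\Omega$ with itinerary $(\theta_n)$. This itinerary is allowable, possibly escaping. For each $N$, the cylinder $U_N=\bigcap_{n\le N}f^{-n}(\mathrm{Cell}_{i_n}(j_n))$ has diameter at most $2^{-N}\cdot\diam$ by Lemma~\ref{lem:large derivative on annuli}. Here we use that the cells are simply connected and that inverse branches are univalent with derivative bounded by $1/2$; we control diameters via the bounded geometry of the cells, as in Lemma~\ref{lemma:shrinking}. Using the modified transitions of Remark~\ref{modificationbishop}, from $\theta_N$ one can move forward to a special state $(q_d,1)$ and then jump back to $S_0$. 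Continuing with any periodic (hence non-escaping) allowable tail gives a non-escaping itinerary agreeing with $(\theta_n)$ up to time $N$. By Lemma~\ref{lemma:shrinking} it determines a point of $\Omega_0$ in $U_N$. Letting $N\to\infty$ gives $z\in\overline{\Omega_0}$.

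\emph{Step 3: $\overline{\Omega_0}\subset J(f)$.} It suffices to show $\Omega_0\subset J(f)$, since $J(f)$ is closed. Let $z\in\Omega_0$. If $z$ were in a Fatou component $V$, then the $U_N$ shrink to $z$ while $f^N(U_N)$ is a full cell. Since $|(f^N)'(z)|\ge 2^N\to\infty$ while the orbit returns infinitely often to a fixed compact cell, the family $\{f^n\}$ cannot be normal at $z$: a limit function would have unbounded derivative at $z$ along a subsequence with $f^{n_k}(z)$ bounded, which is a contradiction. Finally, $J(f)=\Omega$ follows from the chain of inclusions. We expect the main obstacle to be Step 1, namely excluding Fatou points whose orbits stay in $\bigcup_q C_q$ while meeting the cuts or real axis, and making the escaping behavior of cut points rigorous.
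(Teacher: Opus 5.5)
\textbf{Step 1 tries to prove a false statement.} The paper defines $\Omega=\{z: f^n(z)\in\bigcup_q C_q\ \forall n\}$, i.e.\ orbits stay in the open annuli and the cuts are included. You replace this by orbits in the open cells, and then argue that $J(f)$ never meets the cuts. This rests on the claim that every point of $\R_+$ leaves $\bigcup_q\overline{C_q}$ after finitely many steps, and that claim is wrong.

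Here is why. Since $f$ has positive coefficients it is increasing on $\R_+$. By (ii) of Theorem~\ref{thm:Construction Theorem 1D} and the remark following it, the endpoints of $\Sigma_q=\overline{C_q}\cap\R_+$ are sent to opposite sides of $\overline{\bC_{q+1}}$. Hence $f(\Sigma_q)\supset\overline{\bC_{q+1}}\cap\R_+\supset\Sigma_{q+1}$; the paper itself notes $\Sigma_{q+1}\subset f(\Sigma_q)$. A nested-interval argument then gives $x\in\R_+$ with $f^n(x)\in C_{q+n}$ for all $n$.

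Such an $x$ escapes, but it escapes \emph{inside} the annuli (recall $\inf_{C_q}|z|\to\infty$) and never enters a gap. So ``escaping'' is not the same as ``eventually in $U$''. Moreover $x\in J(f)$. Indeed, $x$ lies in the closure of the cylinders $U_N$ of the itinerary running through the cells adjacent to the main cuts. Changing the tails of those itineraries gives points of $\Omega_0$ arbitrarily close to $x$, which is impossible inside an escaping Fatou component.

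By backward invariance, every cut $\sigma_j\subset f^{-1}(\R_+)$, and the outer real piece of $\widetilde\sigma_1$ for even $d$, also contains such Julia points. So with your reading of $\Omega$ the inclusion $J(f)\subset\Omega$ fails, and the chain breaks at its first link. The first sentence of the proposition must be read with ``cells'' meaning the annuli, which is how $\Omega$ is actually defined.

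\textbf{With the paper's $\Omega$, Step 1 becomes trivial.} The circles $\partial C_q$ map into gaps, so $\C\setminus\Omega=\bigcup_n f^{-n}(U)$, which is open and contained in the Fatou set; this is the first sentence of the paper's proof. The real work then moves to your Step 2.

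\begin{itemize}
\item Step 2 must now also handle points of $\Omega$ lying on cuts. These have no itinerary of open cells, so approximate them through the cylinders of an adjacent cell, as above.
\item For escaping itineraries, the bound ``$\diam U_N\le 2^{-N}\cdot\diam$'' does not give $\diam U_N\to 0$. A cell in $C_q$ has Euclidean size comparable to $r_q$, which grows superexponentially. Lemma~\ref{lemma:shrinking} works only because non-escaping itineraries return to a fixed cell.
\item You therefore need a scale-invariant estimate. For example, use the metric $|dz|/|z|$, in which the cells of $C_q$ ($q\ge1$) have uniformly bounded diameter and $f$ behaves like a power map.
\end{itemize}

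Your Step 3 is fine: it is the same derivative argument as the paper's, namely that a finite limit function would have bounded derivative at $z$.
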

\begin{proof}
    Since $f^n\rightarrow \infty$ on the complement of the cells, uniformly on compact subsets, it follows that $J(f)$ must be contained in $\Omega$. 
     If $z\in \Omega$, then $|(f^n)'(z)|\to \infty$. Hence if $z$ is contained in the Fatou set, it follows that $f$ diverges uniformly on a neighborhood of $z$, but this is not possible since
by Lemma \ref{lemma:shrinking} there exists a point $w \in \Omega_0$ in any neighborhood of  $z$.
\end{proof}

 Theorem~\ref{thm:invariant measures BB} is a direct consequence of Proposition~\ref{cor:homeo} and Proposition~\ref{prop:Julia}

\begin{remark}
    We recall that there was a lot of freedom in the construction of the Markov process of infinite entropy. By changing finitely many non-zero transition probabilities slightly, the ergodicity of the process can be preserved, and the entropy can be guaranteed to remain infinite. Since each change in the collection of transition probabilities induces a different invariant measure on the space of sequences, we can obtain infinitely many different examples of measures on $\mathbb C$ with support $J(f)$, all having infinite entropy.
\end{remark}

\begin{remark}\label{lemmameasurepullback}
Let $U\subset {\rm Cell}_{i_1}(j_1)$ be an open subset such that $f|_U$ is injective and $f(U)\subset {\rm Cell}_{i_2}(j_2)$. 
Then, if we denote
 $s:= (i_1, j_1)$ and $t := (i_2, j_2)$, 
 it follows from \eqref{eq:measure of preimages integral} and Corollary \ref{Pmustationary} that  
 $$\widetilde \mu(U)=Q_{ts}\cdot \widetilde \mu(f(U)).$$
\end{remark}
We now discuss equidistribution.
\begin{prop}
Let $w\in \Omega_0$. Let $\theta$ be the itinerary followed by $w$. Then the sequence of probability measures
$$\varphi_*Q^n_\theta=\sum_{z\in f^{-n}(w)}Q^n(\theta,\varphi^{-1}(z))\delta_z$$ converges weakly to $\tilde \mu$ as $n\to\infty$. 

\end{prop}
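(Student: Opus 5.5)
The plan is to transport Proposition~\ref{equiprop} through the homeomorphism $\varphi$ of Proposition~\ref{cor:homeo}, using that weak convergence is preserved under push-forward by a continuous map.

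\emph{Step 1: identify preimages of $w$ with symbolic preimages.} Since $w=\varphi(\theta)$ with $\theta\in\Theta_0$, and $\Theta_0$ is completely invariant under $\sigma|_\Theta$, every $\eta\in\sigma^{-n}(\theta)\cap\Theta$ lies in $\Theta_0$. The commutative diagram gives $f^n(\varphi(\eta))=\varphi(\sigma^n\eta)=w$. Hence $\varphi$ maps $\sigma^{-n}(\theta)\cap\Theta_0$ injectively into $f^{-n}(w)$. Conversely, a point $z\in f^{-n}(w)$ that follows an allowable itinerary is $\varphi(\eta)$ for some $\eta\in\Theta_0$ with $\sigma^n\eta=\theta$. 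Preimages $z$ that are not in the image of $\varphi$ are assigned weight $0$; this is the meaning of $Q^n(\theta,\varphi^{-1}(z))$ in the statement. Moreover, $Q^n_\theta$ is supported on $\sigma^{-n}(\theta)\cap\Theta\subset\Theta_0$, so $\varphi_*Q^n_\theta$ is well defined and equals the sum in the statement.

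\emph{Step 2: push forward the convergence.} By Proposition~\ref{equiprop}, $Q^n_\theta\to\mathbb{P}^\mu$ weakly on $S^\N$. All these measures are carried by $\Theta_0$, and $\mathbb{P}^\mu(\Theta_0)=1$. The main obstacle is that $\varphi$ is continuous only on $\Theta_0$, which is not closed in $S^\N$, and weak convergence on $S^\N$ does not automatically restrict to a non-closed subset. To handle this, I will check convergence on the space $\Theta_0$ itself by repeating the proof of Proposition~\ref{equiprop} there.

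The cylinder sets intersected with $\Theta_0$ form a countable basis of the relative topology on $\Theta_0$ that is closed under finite intersections. On each such set the computation $Q^n_\theta(C)\to\mathbb{P}^\mu(C)$ is unchanged, because both measures give $\Theta_0$ full mass. Applying \cite[Theorem 2.2]{Bil99} on the separable metric space $\Theta_0$ then yields $Q^n_\theta\to\mathbb{P}^\mu|_{\Theta_0}$ weakly on $\Theta_0$. Alternatively, one can argue by the portmanteau theorem with open sets: every open $U\subset\Theta_0$ is a countable union of basic sets, which gives $\liminf Q^n_\theta(U)\ge\mathbb{P}^\mu(U)$.

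\emph{Step 3: conclude.} For any bounded continuous $g\colon\C\to\R$, the function $g\circ\varphi$ is bounded and continuous on $\Theta_0$ by Proposition~\ref{cor:homeo}. Therefore
\[
\int g\,d\varphi_*Q^n_\theta=\int_{\Theta_0}g\circ\varphi\,dQ^n_\theta\longrightarrow\int_{\Theta_0}g\circ\varphi\,d\mathbb{P}^\mu=\int g\,d\widetilde\mu,
\]
which proves the weak convergence $\varphi_*Q^n_\theta\to\widetilde\mu$.
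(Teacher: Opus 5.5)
Your proof is correct and follows the same route as the paper. You apply Proposition~\ref{equiprop}, note that $Q^n_\theta$ and $\mathbb{P}^\mu$ are carried by $\Theta_0$ so that the weak convergence passes to $\Theta_0$, and then push forward by the homeomorphism $\varphi$ of Proposition~\ref{cor:homeo}. Your Step 2 simply spells out the restriction step that the paper asserts in one line, and that step is in fact automatic by the portmanteau theorem: relatively open subsets of $\Theta_0$ are traces of open subsets of $S^\N$, and every measure involved gives $\Theta_0$ full mass.
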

 \begin{proof}
Let $n\geq 1$. Notice that every itinerary in $\sigma^{-n}(\theta)$ is non-escaping.
The probability measure $Q^n_\theta$ on $S^\N$ defined in Section \ref{sectionequidistribution} is a weighted sum of the measures $\delta_\alpha$ with $\alpha \in \sigma^{-n}(\theta)\cap \Theta_0$.
It follows that the measures $Q^n_\theta$  converge weakly to $ \mathbb{P}^\mu$  also  when  restricted   to the subset $\Theta_0$, and
$\varphi_*Q^n_\theta\to \widetilde \mu$ weakly.
\end{proof}
Similarly one obtains equidistribution  of periodic points.
Let $\nu_n$ be the finite measure on $\N^N$ introduced in Subsection \ref{equiperiod}.
\begin{prop}
The sequence of probability measures
$$\varphi_*\nu_n=\sum_{z: f^n(z)=z}Q^n(\varphi^{-1}(z),\varphi^{-1}(z))\delta_z$$ converges weakly to $\tilde \mu$ as $n\to\infty$. 
\end{prop}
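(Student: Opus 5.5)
The plan is to reduce the statement to Theorem~\ref{lido}, by transporting periodic points between $S^\N$ and $\C$ through the homeomorphism $\varphi\colon\Theta_0\to\Omega_0$ of Proposition~\ref{cor:homeo}.

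First I identify the support of $\nu_n$. By construction $\nu_n$ only charges periodic itineraries $y\in{\rm Per}_n(\sigma)$ all of whose consecutive transition probabilities are positive. Such an itinerary is allowable, and it is non-escaping, since it takes only finitely many values. Hence the support of $\nu_n$ lies in ${\rm Per}_n(\sigma)\cap\Theta_0$, so $\varphi_*\nu_n$ is well defined. Because $\varphi$ conjugates $\sigma|_{\Theta_0}$ to $f|_{\Omega_0}$ and is injective, $\varphi$ maps ${\rm Per}_n(\sigma)\cap\Theta_0$ bijectively onto $\{z\in\Omega_0: f^n(z)=z\}$.

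Next I must check that every periodic point of $f$ of period $n$ that carries positive weight lies in $\Omega_0$. A periodic point is non-escaping. By Proposition~\ref{prop:Julia} and the forward invariance of the escaping region in Theorem~\ref{thm:Construction Theorem 1D}(iv), a periodic point outside the cells would escape, which is impossible. Consequently every periodic point of $f$ lies in $\Omega=J(f)$, and in fact in $\Omega_0$. Periodic points that follow a non-allowable itinerary receive weight $Q^n=0$. With this convention the sum in the statement is exactly
\[
\varphi_*\nu_n=\sum_{y\in{\rm Per}_n(\sigma)\cap\Theta_0}\nu_n(y)\,\delta_{\varphi(y)} .
\]

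Finally, let $g$ be a bounded continuous function on $\C$. Then $g\circ\varphi$ is bounded and continuous on $\Theta_0$. Since $\mathbb{P}^\mu(\Theta_0)=1$ and every $\nu_n$ is concentrated on $\Theta_0$, the weak convergence $\nu_n\to\mathbb{P}^\mu$ of Theorem~\ref{lido} (for the modified process, using the remark giving the bound $(4+\epsilon)\mu(s)$) also holds for the restrictions to $\Theta_0$, viewed as a subspace with the induced topology. To see this, I would repeat the proof of Theorem~\ref{lido} using cylinder sets intersected with $\Theta_0$; these still form a countable basis closed under finite intersections, so \cite[Theorem 2.2]{Bil99} applies, together with convergence of total masses. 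It follows that
\[
\int g\,d\varphi_*\nu_n=\int_{\Theta_0} g\circ\varphi\,d\nu_n\longrightarrow\int_{\Theta_0} g\circ\varphi\,d\mathbb{P}^\mu=\int g\,d\widetilde\mu .
\]

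I expect the main obstacle to be this restriction step. Weak convergence on $S^\N$ does not automatically pass to the non-closed subset $\Theta_0$, because $g\circ\varphi$ need not extend continuously to $S^\N$. The argument therefore relies on the Portmanteau-type criterion being applied directly in $\Theta_0$, where open sets are countable unions of relatively open cylinders, and on the fact that the total masses $\nu_n(\Theta_0)=\nu_n(S^\N)$ tend to $1$.
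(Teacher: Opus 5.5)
Your argument is correct and is essentially the paper's own: for Baker--Bishop maps the paper only says ``similarly'' after the preimage version, whose proof is that the measures are carried by $\Theta_0$. Hence the weak convergence to $\mathbb{P}^\mu$ (here Theorem~\ref{lido}) persists on $\Theta_0$ and can be pushed forward by the homeomorphism $\varphi$ of Proposition~\ref{cor:homeo}. Your additional check that every periodic point of $f$ lies in $\Omega_0$ correctly identifies the sum in the statement with $\varphi_*\nu_n$.

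The restriction step you flag as the main obstacle is in fact automatic. Relatively open subsets of $\Theta_0$ are of the form $G\cap\Theta_0$ with $G$ open in $S^\N$, and all measures involved give full mass to $\Theta_0$, so the Portmanteau criterion (with $\nu_n(S^\N)\to 1$) transfers directly. What genuinely fails in the disjoint-type analogue is the continuity of $\varphi$, which is why the paper there works on the closed sets $\QQ_B$ instead.
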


 This  completes the proofs of Theorems \ref{Theorem: Main1} and  \ref{Theorem: Main2} for the subclass of Baker--Bishop maps.


\section{Invariant measures for disjoint-type maps}\label{sec:class B}
In this section we prove Theorems \ref{Theorem: Main1} and \ref{Theorem: Main2} for disjoint-type transcendental functions of finite order.  The setup is classical (\cite{EL92}, \cite{BK07},\cite{RRRS}). 
\subsection{Tracts and fundamental domains}
Let $f\colon \C\to \C$ be a transcendental  function. Let us define the set of singular values   $S(f)$  as  the closure of the set of critical and asymptotic values.
 The restriction
 $f: \C\setminus f^{-1}(S(f))\ra \C\setminus S(f)$ is a covering.

\begin{defn}[Disjoint-type]
A transcendental  function is called of  \emph{disjoint type} if there exists a holomorphic disk $D\Subset \C$ with real-analytic boundary,
 containing  $S(f)$, for which $f(D)\Subset D$.
\end{defn}

For disjoint-type  functions, forward invariance ensures that  $D$  contains the postsingular set  $\ov{\bigcup_{i\in\N}f^i(S(f))}$. We recall (\cite[Lemma 3.1]{BK07}) that $f$ is  of disjoint type if and only if it is  hyperbolic (i.e. its postsingular set is a compact subset of the Fatou set) and the Fatou set is connected. This happens if and only if  $f$ has one attracting fixed point and the set of  singular values is compactly contained in  its attracting basin.
For disjoint-type  functions, it is known  that the Julia set equals the set of points whose iterates never enter $D$ (see\cite[Proposition 3.2]{RempeArclike}).

 The preimage of $\C\setminus\ov {D}$ under $f$ consists of finitely or countably many unbounded holomorphic disks which are called \emph{tracts}. For any such tract $T$, the restriction $f|_T:T\ra \C\setminus \ov D$ is a universal cover, hence equivalent to the exponential map in the following sense: there exist  biholomorphisms  $\phi: T\ra\H  :=\{z\in\C: \re z>0\}$ and $\psi: \mathbb C \setminus \overline{D} \ra \C\setminus \overline \D$ such that $f|_{T}=\psi^{-1}\circ\exp\circ\phi$. For $f$ of disjoint type, the closures of the tracts are  disjoint from $\ov D$. 
By the Denjoy--Carleman--Ahlfors Theorem, disjoint-type functions of finite order have finitely many tracts.
The assumption  that $f$ has finitely many tracts is made to simplify the exposition.  The assumption that $f$ is a finite composition of functions of finite order is technical and simplifies the proofs in Subsection \ref{sec:proof of class B covering of cells}.

\begin{definition}[Fundamental domains and cells]\label{funddom}
 Let $\TT$ be the union of the tracts. Let $\delta\subset \C\setminus (\ov{\TT\cup D})$ be an analytic curve connecting $D$ to infinity. Such a curve is not difficult to find  (see e.g. \cite[Lemma 2.1]{BF15}). Then for every tract $T$ the open set $T\setminus f^{-1}(\delta)$ has
  countably many connected components called \emph{fundamental domains}, and for each fundamental domain  $F$,  the map $f:F\ra \C\setminus( \ov{D}\cup \delta)$ is a biholomorphism.
Let $r\in\R_+$ be sufficiently large such that $\D_r\supset D$, and let $F$ be a fundamental domain. We define the \emph{cell} $C(F,r)$ as
$$
C(F,r):= f^{-1}(\D_r)\cap F.
$$
\end{definition}

Notice that each cell $C(F,r)$ is bounded. The  cell $C(F,r)$ is disconnected if the curve $\delta$ and the circle $C_r$ of radius $r$ intersect in more than one point. However, there are at most finitely many intersections for each $r$, so each cell consists of finitely many connected components. Also, by definition
 $$
f(C(F,r))=\D_r\setminus( \ov{D}\cup \delta).
$$

 An illustration of the tracts, fundamental domains and cells for the classical example $f(z) = \cos(z)/2$ is provided in Figure~\ref{fig:disjoint}. 

\subsection{Logarithmic coordinates}

Let $f\colon \C\to \C$ be of disjoint type.  Let $D\Subset  \C$ be a holomorphic  disk with real analytic boundary containing $S(f)$ such that $f(D)\Subset D$.
Up to  conjugacy  by an affine map
 we can assume that $0\in D$,  that $D$ is contained in  the unit disk  $\D$, and that $1\not\in \overline D\cup \delta$. Let $\widetilde H:={\rm exp}^{-1}(\C\setminus\overline D)$. We have that  ${\rm exp}\colon \widetilde H\to \C\setminus \overline D$ is a universal covering, and that $\widetilde H$ contains  the right half plane $\H$.  
The preimage $f^{-1}(\C\setminus\overline D)$ is the union of the tracts $\bigcup_i T_i$. Every tract is simply connected and does not contain 0.
 Hence for every connected component $\widetilde T$ of the preimage ${\rm exp}^{-1}(f^{-1}(\C\setminus\overline D))$ there exists $i$ such that  $${\rm exp}\colon \widetilde T\to T_i$$ is a biholomorphism. It follows that  $f\circ {\rm exp}\colon \widetilde T\to \C\setminus \overline D$ is a universal covering, and thus there exists a biholomorphism $\widetilde f\colon \widetilde T\to \widetilde H$ which satisfies
 $${\rm exp}\circ \widetilde f=f \circ {\rm exp}.$$ 
In this way we obtain a holomorphic function $\widetilde f$, defined on $\exp^{-1}(f^{-1}(\C\setminus\overline D))$, which we normalize in such a way that 
$$
\widetilde f(z+2\pi i)=\widetilde f(z).
$$ 
We call the function  $\widetilde f$ the \emph{lift} of $f$, and we call every   connected component $\widetilde T$ of its domain ${\rm exp}^{-1}(f^{-1}(\C\setminus\overline D))$   a \emph{tract}  for $\widetilde f$. 
By construction $\widetilde f|_{\widetilde T}\colon \widetilde T\to \widetilde H$ is a biholomorphism, whose inverse  we denote $\varphi_{\widetilde T}\colon \widetilde H\to \widetilde T$.
Notice that every tract $\widetilde T$ is contained in $\widetilde H$, and the closure of each tract in the sphere $\widehat\C$ only intersects the closure of $\widetilde H$ in the sphere in the point $\infty$. 
\begin{definition}[Fundamental domains and cells for $\widetilde f$]
The open set $\widetilde H \setminus\exp^{-1}(\delta)$ has countably many connected components that we call \emph{strips}. Every strip $ S$ is mapped by the exponential map biholomorphically to $\C\setminus (\overline D\cup \delta).$
Given a tract $\widetilde T$ for $\widetilde f$, the open set $$\widetilde T\setminus \widetilde f^{-1}(\exp^{-1}(\delta))=\widetilde T\setminus(\exp^{-1}(f^{-1}(\delta))$$ has countably many connected components called
\emph{fundamental domains} for $\widetilde f$.  
Each fundamental domain $\widetilde F$ for $\widetilde f$ is mapped biholomorphically by $\widetilde f$ to one of the strips, and is mapped  biholomorphically by the exponential map to one of the fundamental domains for $f$.

 For any fundamental domain $\widetilde F$ let $\widetilde T$ be the tract containing $\widetilde F$, let $ S$ be the strip such that $ S=\widetilde f(\widetilde F)$, and  let  $F$ be  the fundamental domain for $f$ such that  $\exp (\widetilde F)=F$.  Let $r\in\R_+$ be sufficiently large such that $\D_r\supset D$. We define the \emph{cell} $C(\widetilde F,r)$ as
$$
 C(\widetilde F,r):= \widetilde F \cap \exp^{-1}(C(F,r)).
$$
Notice that by construction 
\begin{equation}\label{eq:cells as preim}
   C(\widetilde F,r)= \widetilde F \cap  \widetilde f^{-1}(\{\Re z<  \log r\}) = \widetilde T \cap\widetilde f^{-1}(\{\Re z<  \log r\}\cap  S).
\end{equation}
\end{definition}
\begin{remark}[Labeling of fundamental domains]
Denote by $ S_0$ the strip containing $0$, and for all $k\in \Z$ denote $ S_k:=  S_0+k2\pi i$. Let $T$ be a tract for $f$.  For all $k\in \Z$ we denote by $F_k(T)$ the fundamental domain contained in $T$ such that 
$$\widetilde f(\exp^{-1}(F_k(T)))= S_k.$$
Clearly in this way every fundamental domain $F_i(T)$  is adjacent to
$F_{i-1}(T)$  and to $F_{i+1}(T)$. 
\end{remark}


\subsection{Proof of Theorems \ref{Theorem: Main1} and \ref{Theorem: Main2}}

We will need the two following results.
\begin{prop}[Covering properties of cells]\label{prop:class B covering of cells logarithmic version}
 Let $\widetilde f$ be the lift of a function $f$ of disjoint type which is a finite composition of functions of finite order.
There exists $\alpha>0$ such that the following holds.  For every tract $T$ for $f$ there exist $t_{ T}>0$ such that for all $t\geq t_{ T}$,  for all   $|i|\leq \alpha\log t$, and
 for every fundamental domain $\widetilde F$ for $\widetilde f$,      there is a fundamental domain $ \widehat F$ for $\widetilde f$ such that
$ \exp( \widehat F)=F_i(T)$ and 
  $$\widetilde f(C(\widetilde F,t))\Supset C( \widehat F, e^{ t^\alpha} ).$$
\end{prop}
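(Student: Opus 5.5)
We unwind the claim through \eqref{eq:cells as preim}. Since $\widetilde f$ maps $\widetilde F$ biholomorphically onto a strip $S=S_k$, we have $\widetilde f(C(\widetilde F,t)) = S_k\cap\{\Re w<\log t\}$, a half-strip truncated at $\Re w=\log t$ whose "width" is essentially the vertical period $2\pi$ (up to the shape of $\exp^{-1}(\delta)$). On the other side, $C(\widehat F,e^{t^\alpha})=\widehat F\cap \widetilde f^{-1}(\{\Re w<t^\alpha\})$ with $\widehat F\subset \widetilde T$ some lift of $F_i(T)$. Because of $2\pi i$-periodicity of $\exp$, we may choose the lift $\widetilde T$ of $T$ (i.e. the translate by $2\pi i m$) freely, and pick the one meeting $S_k$. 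So the statement reduces to three estimates in the lifted tract $\widetilde T_0$ fixed once and for all:
\begin{enumerate}
\item the part $\widetilde T_0\cap \widetilde f^{-1}(\{\Re w< t^\alpha\})$ lies in $\{\Re z < \log t - c\}$ for a constant $c$;
\item its imaginary extent is less than $2\pi$ minus a margin, so it fits inside one strip after translation;
\item its lateral fundamental domains $\widehat F_i$, $|i|\le\alpha\log t$, sit within this region.
\end{enumerate}

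The key step is a growth estimate: points $z\in\widetilde T_0$ with $\Re \widetilde f(z)\le t^\alpha$ have $\Re z\le \log t - c$. Translating back, this means that if $|f(\zeta)|\le e^{t^\alpha}$ with $\zeta$ in $T$ and in the fundamental domains $F_i(T)$, $|i|\le\alpha\log t$, then $|\zeta|\le t/C$. For $f$ of finite order $\rho$ this is essentially the statement that on tract points the modulus grows at least like $\exp(|\zeta|^{\beta})$ for some $\beta>0$. I would derive it from the standard expansion estimate for disjoint-type lifts, $|\widetilde f'(z)|\ge \frac{1}{4\pi}(\Re\widetilde f(z)-\log R)$ (Eremenko–Lyubich), combined with the finite-order bound on how tracts can be shaped. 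Concretely, there is a logarithmic-area estimate: the width of $\widetilde T_0$ at real part $x$ is at most $2\pi$, and on a bounded number of fundamental domains $\Re\widetilde f$ is comparable to $e^{\Re z \cdot \beta}$. Integrating the Eremenko–Lyubich bound along a curve in $\widetilde T_0$ gives $\Re\widetilde f(z)\gtrsim$ an exponential in the hyperbolic distance. Finite order (via Ahlfors distortion, or simply $\log\log M(r)\le\rho\log r$ together with the lower bound in tracts) converts this to $\Re\widetilde f(z)\ge e^{\beta\Re z}$ whenever $z$ lies in a fundamental domain of index at most about $\alpha\log t$. This is because moving through $|i|$ fundamental domains costs only $2\pi|i|$ in $\Im\widetilde f$, which is negligible compared to $t^\alpha$ once $\alpha<\beta$.

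For finite compositions $f=g_1\circ\dots\circ g_m$, I would iterate the estimate through each factor, with the exponents multiplying; this is where the hypothesis is used. The main obstacle is making that lower growth on tracts uniform across the fundamental domains $|i|\le\alpha\log t$, and controlling the imaginary extent so the region lies compactly inside the strip $S_k$. For this I would use Koebe distortion for $\varphi_{\widetilde T}$ on the half-strip $\{\Re w<t^\alpha, |\Im w|\le 2\pi\alpha\log t+2\pi\}$ (of hyperbolic size $O(\log t)$), ensuring its image has diameter small compared with $\log t$. The compact containment "$\Supset$" then follows because the boundary of $S_k\cap\{\Re w<\log t\}$ stays a definite distance away. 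Choosing $\alpha$ small, depending only on the orders, and $t_T$ large finishes the argument.
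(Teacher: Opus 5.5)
\textbf{Verdict: genuine gap.} The control of the strip geometry, which is the one place finite order enters, is missing, and the part of the cell near $\partial\widetilde H$ is not treated.

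Your reduction is the right one: take $\widehat F$ in the lift $\widehat T\subset\widetilde S$ of $T$ and show $\varphi_{\widehat T}(S_i\cap\{\Re w<t^\alpha\})\Subset\widetilde S\cap\{\Re z<\log t\}$. Integrating the Eremenko--Lyubich bound is also essentially the paper's tool. It gives $|\varphi_{\widetilde T}(w)-\varphi_{\widetilde T}(w')|\le C\,\dist_{\widetilde H}(w,w')$, so $\Re z$ is bounded \emph{linearly} by the hyperbolic distance from $\widetilde f(z)$ to a base point.

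The gap is in the next step. To conclude $\Re z\le \log t - c$, you must show that every $w\in S_i\cap\{1\le \Re w<t^\alpha\}$ with $|i|\le\alpha\log t$ lies within hyperbolic distance $O(1)+O(\alpha\log t)$ of $1$. This needs control of $|\Im w|$ in terms of $\Re w$ on the strip $S_0$, i.e.\ of the shape of $\exp^{-1}(\delta)$. You set this aside (``up to the shape of $\exp^{-1}(\delta)$''), and your half-strip $\{|\Im w|\le 2\pi\alpha\log t+2\pi\}$ tacitly assumes horizontal strips. This is exactly where finite order enters in the paper. By \cite{RRRS} the tracts of such $f$ satisfy the bounded slope condition, hence so do the strips, and then
$\dist_{\H}(1,w)\le\log\Re w+|\Im w|/\Re w\le M_1+M_2+(2\pi+1)\alpha\log t$.

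The substitutes you offer do not provide this:
\begin{itemize}
\item $\log\log M(r)\le\rho\log r$ is an \emph{upper} bound on growth and yields no lower bound for $|f|$ on $F_i(T)$.
\item An Ahlfors-distortion argument at best bounds from below the maximal growth of $\Re\widetilde f$ across the tract. It says nothing about its value at every point of the fundamental domains of index $\lesssim\log t$.
\item ``Iterating through the factors with exponents multiplying'' has no basis, since the intermediate points $g_{j+1}\circ\cdots\circ g_m(\zeta)$ need not lie where $g_j$ is large.
\end{itemize}
If $\exp^{-1}(\delta)$ were allowed to spiral, say with $\Im w\sim e^{\Re w}$, the relevant hyperbolic distances would be of order $t^\alpha$ and your estimate would give nothing.

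Two further points. First, the region $S_i\cap\{\Re w\le 1\}$ reaches $\partial\widetilde H$, where both the Eremenko--Lyubich bound and the hyperbolic metric of $\widetilde H$ degenerate. The paper treats it separately by extending $\varphi_{\widetilde T}$ to a larger domain $\widetilde H'\supset\overline{\widetilde H}$ (Remark~\ref{enlarge}). There $S_0\cap\{\Re w\le1\}$ has finite hyperbolic diameter $q$, which gives a bound $C+2\pi q\alpha\log t$ and a second smallness condition on $\alpha$. Your proposal does not address this part of the cell.

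Second, your items (1)--(2) are misstated. Without restricting to indices $|i|\le\alpha\log t$, the set $\widetilde T_0\cap\widetilde f^{-1}(\{\Re w<t^\alpha\})$ is unbounded. The imaginary extent of a tract also need not be less than $2\pi$, since bounded slope allows linear growth. Item (2) is moreover unnecessary: $\varphi_{\widehat T}$ takes values in $\widehat T$, whose closure lies in $\widetilde S$. So containment in the strip and compactness are automatic once the real part is bounded by $\log t-c$.
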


\begin{prop}[Size of cells]\label{lem: size of cells log plane} Let $\widetilde f$ be the lift of a function $f$ of disjoint type which is a finite composition of functions of finite order. Then there exists a   constant $k\geq 0$ such that
    for any fundamental domain $\widetilde F$, and for any constant  $\beta>1$, we have
\begin{equation}\label{eq:class B size cells log plane}
\diam_{\widetilde H} C(\widetilde F, \beta^n )\leq (\log\log \beta )(\log n)+k \text{ for all  $n\in\N$}.
\end{equation}
\end{prop}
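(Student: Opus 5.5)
The plan is to reduce to a single tract map and estimate the hyperbolic diameter of the cell through the geometry of the strip $S=\widetilde f(\widetilde F)$. By \eqref{eq:cells as preim}, $C(\widetilde F,\beta^n)=\varphi_{\widetilde T}(\{\Re z<n\log\beta\}\cap S)$. The map $\varphi_{\widetilde T}\colon\widetilde H\to\widetilde T$ is a biholomorphism onto $\widetilde T\subset\widetilde H$, so by the Schwarz--Pick lemma it does not increase hyperbolic distances: for $z,w\in\widetilde H$ we have $d_{\widetilde H}(\varphi_{\widetilde T}(z),\varphi_{\widetilde T}(w))\le d_{\widetilde T}(\varphi_{\widetilde T}(z),\varphi_{\widetilde T}(w))=d_{\widetilde H}(z,w)$. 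It therefore suffices to bound the $\widetilde H$-hyperbolic diameter of the truncated strip $S\cap\{\Re z<n\log\beta\}$. This crude bound grows linearly in $n$, so it is not good enough by itself. The estimate has to exploit the strict inclusion $\widetilde T\subset\widetilde H$, and in particular that $\widetilde T$ lies well inside $\widetilde H$ far out to the right.

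Here is the refinement I would use. Fix the base point $p\in S$ with $\Re p=1$, say. Let $z\in S$ with $\Re z=R\le n\log\beta$. Since $S$ is a strip of bounded width asymptotic to a horizontal direction (its boundary curves are lifts of the analytic curve $\delta$), the hyperbolic distance in $\widetilde H\supset\H$ from $p$ to $z$ is at most $\log R+O(1)$. This uses the half-plane metric $|dz|/\Re z$ along a horizontal path whose imaginary part stays bounded, with a uniform constant because $S$ has width $2\pi$. Applying Schwarz--Pick once more gives
\[
d_{\widetilde H}\bigl(\varphi_{\widetilde T}(p),\varphi_{\widetilde T}(z)\bigr)\le \log(n\log\beta)+O(1)=\log n+\log\log\beta+O(1).
\]
The diameter is at most twice this quantity, which is bounded by $(\log\log\beta)(\log n)+k$ after adjusting $k$, at least when $\log\log\beta$ is bounded below. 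The case of small $\beta$ needs care, but the claimed inequality there allows us to enlarge $k$.

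The remaining issue is uniformity in $\widetilde F$. The point $p$ depends only on the strip $S$, and translations by $2\pi i$ preserve both $\widetilde H$ and the lift, while the strips are all translates $S_k$. So the constant depends only on the tract, and there are finitely many tracts up to $2\pi i$-translation. The hypothesis that $f$ is a finite composition of finite-order maps should not be needed for this upper bound. I would double check the part of $S$ near $\partial\widetilde H$, where $\Re z$ can be small. There I would use that $\exp(S)$ stays a positive distance from $\overline D$ and that the cell is bounded, which gives a uniform bound. I expect the main obstacle to be this near-boundary region together with the comparison of the $\widetilde H$ metric with the $\H$ metric.
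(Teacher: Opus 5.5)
Your treatment of the part of the strip with $\Re z\ge 1$ is the paper's argument: Schwarz--Pick to pass from the cell to the strip, then the metric of $\H\subset\widetilde H$ from a base point on $\{\Re z=1\}$.

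The gap is the part with $\Re z\le 1$. Your reduction, ``it suffices to bound $\diam_{\widetilde H}(S\cap\{\Re z<n\log\beta\})$'', fails. The set $\exp(S)=\C\setminus(\overline D\cup\delta)$ has $\partial D$ in its closure, so $S$ accumulates on $\partial\widetilde H$ and that diameter is infinite. For the same reason your suggested remedy, that $\exp(S)$ stays a positive distance from $\overline D$, is false. Boundedness of the cell gives a finite diameter for each fixed $\widetilde F$. It gives no bound uniform over the infinitely many fundamental domains $\varphi_{\widetilde T}(S_k)$, $k\in\Z$, of a tract. Your translation argument does not help: $\varphi_{\widetilde T}$ does not commute with $z\mapsto z+2\pi i$, so translation only gives uniformity for quantities computed on the strip side, and there the relevant quantity is infinite.

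The missing idea is Remark~\ref{enlarge}. Because $f$ is of disjoint type, you can shrink $D$ to an absorbing disk $D'\Subset D$. Then $\widetilde H'=\exp^{-1}(\C\setminus\overline{D'})\supset\overline{\widetilde H}$, and $\varphi_{\widetilde T}$ extends to a biholomorphism $\varphi_{\widetilde T'}\colon\widetilde H'\to\widetilde T'\subset\widetilde H$. Write the cell as $X_1\cup X_2$ with $X_1=\varphi_{\widetilde T}(S\cap\{\Re z\le 1\})$. Schwarz--Pick for the inclusion $\widetilde T'\subset\widetilde H$ gives $\diam_{\widetilde H}X_1\le\diam_{\widetilde H'}(S\cap\{\Re z\le 1\})$. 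This is finite because the set has compact closure in $\widetilde H'$. It is independent of $S$ because $\widetilde H'$ is $2\pi i$-periodic and every strip is a translate of $S_0$. Since $X_1\cap X_2\neq\emptyset$, this bound adds to your estimate for $X_2$.

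Two further points need correcting.

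First, your estimate $\dist_{\H}(p,z)\le\log\Re z+O(1)$ needs $|\Im z-\Im p|=O(\Re z)$ on $S$, which is the bounded slope condition. This does not follow from $\partial S$ consisting of lifts of $\delta$: if the tracts spiral, $\delta$ must spiral too, and the imaginary part need not stay bounded. This is exactly where the paper uses that $f$ is a finite composition of finite-order functions, via \cite{RRRS} and Remark~\ref{prop:bounded slope}. With bounded slope, the vertical leg of the path costs at most $(M_1\Re z+M_2)/\Re z\le M_1+M_2$. So your claim that this hypothesis is not needed is unjustified.

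Second, $2\log(n\log\beta)+C$ is not bounded by $(\log\log\beta)(\log n)+k$ with $k$ independent of $\beta$, even when $\log\log\beta$ is bounded below. At $n=1$ the right-hand side is just $k$. But $C(\widetilde F,\beta)$ exhausts $\widetilde F$ as $\beta\to\infty$, so its $\widetilde H$-diameter is unbounded. Hence the inequality as printed cannot hold. The paper's own last step, which replaces $\log(n\log\beta)$ by $(\log\log\beta)(\log n)$, has the same slip. What both arguments actually prove is the additive bound $\diam_{\widetilde H}C(\widetilde F,\beta^n)\le 2\log n+2\log\log\beta+k$ for $n\log\beta\ge 1$. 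That is all that is used in Proposition~\ref{prop:class B realization of itineraries}, where the bound is multiplied by $\lambda^n$.
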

Propositions~\ref{prop:class B covering of cells logarithmic version} and  \ref{lem: size of cells log plane} are proved in Subsection~\ref{sec:proof of class B covering of cells}. 
The following result  is implicitly contained in \cite{EL92}, \cite{BK07} and thus we omit the proof; compare with  \cite[Proposition 3.7]{RempeArclike}.

\begin{prop}[Hyperbolic contraction]\label{prop:strict contraction tracts}
    Let $\widetilde f$ be the lift of a function of disjoint type $f$. Then there exists $0<\lambda<1$ such that for any tract $\widetilde T$ and  for any $z,w\in\widetilde H$,
$$\dist_{\widetilde H}(\phi_{\widetilde T}(z),\phi_{\widetilde T}(w))\leq \lambda \dist_{\widetilde H}(z,w).$$
\end{prop}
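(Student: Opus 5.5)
The plan is to compare the hyperbolic metrics of $\widetilde T$ and $\widetilde H$, and then use that $\widetilde T$ is uniformly "thin" inside $\widetilde H$.

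\textbf{Step 1 (Schwarz--Pick).} The map $\varphi_{\widetilde T}\colon \widetilde H\to \widetilde T$ is a biholomorphism onto its image. Hence it is an isometry from $\rho_{\widetilde H}$ to $\rho_{\widetilde T}$, so
$$\dist_{\widetilde T}(\varphi_{\widetilde T}(z),\varphi_{\widetilde T}(w))=\dist_{\widetilde H}(z,w).$$
Since $\widetilde T\subset \widetilde H$, the inclusion does not increase hyperbolic distance. It therefore suffices to find a uniform $\lambda<1$ with
$$\rho_{\widetilde H}(\zeta)\le \lambda\,\rho_{\widetilde T}(\zeta)\quad\text{for all }\zeta\in\widetilde T \text{ and all tracts } \widetilde T.$$
Integrating this inequality along a $\widetilde T$-geodesic then gives the claim.

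\textbf{Step 2 (uniform density ratio).} Write $\rho_{\widetilde H}=\rho_{\widetilde T}\cdot \iota(\zeta)$, where $\iota\le 1$ is the infinitesimal contraction of the inclusion. By Schwarz--Pick, $\iota(\zeta)<1$ strictly, since $\widetilde T\neq\widetilde H$. Uniformity is the issue, and I would obtain it from a standard estimate: if $\zeta\in\widetilde T$ and there is a point $\xi\in \widetilde H\setminus \widetilde T$ with $\dist_{\widetilde H}(\zeta,\xi)\le R$, then $\iota(\zeta)\le c(R)<1$. This follows by normalizing with a universal covering of $\widetilde H$ sending $\zeta$ to $0$ in $\D$. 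The image of $\widetilde T$ then lifts to a simply connected subdomain of $\D$ that omits a point at distance at most $R$ from $0$, and its hyperbolic density at $0$ is at least $1/c(R)$ times that of $\D$ (compare with $\D$ minus a point, or use Koebe for the simply connected case).

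\textbf{Step 3 (uniform $R$).} This is the main obstacle: show that every point of every tract lies within a bounded $\widetilde H$-distance $R$ of $\widetilde H\setminus\widetilde T$. I would proceed as follows.

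First, the tracts $\widetilde T$ are $2\pi i$-translates of finitely many tracts in a strip of height $2\pi$, by finiteness of tracts for $f$. Distinct tracts have disjoint closures, so each $\widetilde T$ has Euclidean height less than $2\pi$ in the imaginary direction. Consequently every $\zeta\in\widetilde T$ has a point of $\partial\widetilde T\subset\widetilde H\setminus \widetilde T$ within Euclidean distance $2\pi$ on a vertical segment.

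Second, since $\widetilde H\supset \H$ and $\overline{D}\subset\D$, points of $\widetilde T$ have real part bounded below. Near $\partial\widetilde H$ they stay a definite distance away, because the closures of the tracts for $f$ are disjoint from $\overline D$. So the Euclidean distance from $\zeta$ to $\partial \widetilde H$ is bounded below by some $\eta>0$. The standard estimate $\rho_{\widetilde H}(\zeta)\le 2/\dist(\zeta,\partial\widetilde H)$ then converts the Euclidean bound $2\pi$ into a hyperbolic bound $R$. This requires checking that the vertical segment stays in $\widetilde H$ at distance $\ge\eta$ from $\partial\widetilde H$; periodicity and compactness modulo $2\pi i$ near $\partial \widetilde H$ handle this.

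\textbf{Conclusion.} Steps 2 and 3 give $\lambda=c(R)$, which is independent of the tract, and Step 1 then yields the stated inequality.
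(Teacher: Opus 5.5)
Your argument is correct once the repairs listed below are made. The paper gives no proof of this proposition to compare against: it omits it and refers to \cite{EL92}, \cite{BK07} and \cite[Proposition 3.7]{RempeArclike}.

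A common way to prove such a statement splits $\widetilde T$ into two regimes.
\begin{itemize}
\item Far to the right, one compares $\rho_{\widetilde T}\ge 1/(2\pi)$ (every point of a tract is within Euclidean distance $\pi$ of $\partial\widetilde T$) with $\rho_{\widetilde H}(\zeta)\le 1/\Re\zeta$.
\item On $\{\Re\zeta\le C\}$, one uses compactness modulo $2\pi i$, the inclusion $\overline{\widetilde T}\subset\widetilde H$, and strict Schwarz--Pick.
\end{itemize}
You replace this by one quantitative lemma plus a uniform distance bound. The lemma says the inclusion contracts by a factor $c(R)<1$ at any point within $\widetilde H$-distance $R$ of $\widetilde H\setminus\widetilde T$; you then bound that distance by $R=4\pi/\eta$. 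This gives an explicit $\lambda$ depending only on $\dist(\mathcal T,\overline D)$. It needs no compactness and no case distinction. It also works verbatim with infinitely many tracts, which the statement (placed before the paper's finiteness assumption) does not exclude.

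The repairs are all in Steps 2 and 3.

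(i) The claim that each $\widetilde T$ has Euclidean height less than $2\pi$ is not justified and is false in general. The paper only has the bounded slope condition, which allows $\Im z$ to grow linearly in $\Re z$ along a tract. Finiteness of tracts plays no role here. What you actually need is $\widetilde T\cap(\widetilde T+2\pi i)=\emptyset$, which holds because $\exp$ is injective on $\widetilde T$.

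(ii) Take $\xi$ to be the first point where the segment from $\zeta$ to $\zeta+2\pi i$ leaves $\widetilde T$. Then $[\zeta,\xi]\subset\overline{\widetilde T}$, so your final ``check'' is automatic and no appeal to periodicity or compactness is needed. The whole segment $[\zeta,\zeta+2\pi i]$ need not even lie in $\widetilde H$ when $D$ is not round. The constant $\eta$ is also explicit:
\begin{itemize}
\item since $f(\overline D)\subset D$, the union $\mathcal T$ of the tracts satisfies $\dist(\mathcal T,\overline D)>0$;
\item $|\exp'|<e$ on the half-plane $\{\Re z<1\}$, which contains $\partial\widetilde H$;
\item hence every point of $\exp^{-1}(\overline{\mathcal T})$ lies at distance at least $\eta=\min\{1,\dist(\mathcal T,\overline D)/e\}$ from $\partial\widetilde H$.
\end{itemize}

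(iii) In Step 2 only the punctured-disk comparison works for arbitrary $R$. The Koebe alternative gives a ratio of at most $4|a|$, which is useless once $|a|\ge 1/4$, i.e.\ for large $R$. By contrast, $\rho_{\D}(0)/\rho_{\D\setminus\{a\}}(0)=2|a|\log(1/|a|)/(1-|a|^2)<1$ for every $0<|a|<1$.
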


From now on, to simplify the exposition,  we assume that  there are finitely many tracts $T_1\ldots T_p$. So far fundamental domains have been labeled by $p$ copies of $\Z$. It is convenient to relabel fundamental domains by $\N$,  in the following way.
 Reorder the fundamental domains as 
$$
F_0(T_1)\ldots F_0(T_p)F_1(T_1)\ldots F_1(T_p)F_{-1}(T_1)\ldots F_{-1}(T_p)F_2(T_1)\ldots F_2(T_p)\dots
$$
and label them as  $F(0), F(1), F(2)\ldots$   respecting the new ordering, that is,   $F(0)=F_0(T_1), F(p)=F_0(T_p), F(p+1)=F_1(T_1)\dots $. In this way fundamental domains are now labeled by indexes in $\N$.

\begin{remark}\label{remarkcovercells}
In view of Proposition~\ref{prop:class B covering of cells logarithmic version}, since $f$ has   finitely many tracts, there exists $t_0>0$  such that for all $t\geq t_0$ the following holds.
For every fundamental domain $\widetilde F$ for $\widetilde f$, and any $i\leq \alpha\log t$ there is a fundamental domain $\widehat F$ such that  $ \exp( \widehat F)=F(i)$
and   $$\widetilde f(C(\widetilde F,t))\Supset C( \widehat F, e^{ t^\alpha} ).$$
\end{remark}

We call a sequence  $x=(x_0,x_1,\ldots)\in\N^\N$ an \emph{itinerary}, and we say that a point $z\in \bigcup T_i$ \emph{follows the itinerary} $x$ if
$$
f^n(z)\in F(x_n) \text{ for all $n\in\N$.}
$$
The next proposition is similar in spirit  to Theorem B in \cite{BK07}, however the language used is different and the translation does not seem trivial. The growth condition is also different.
Recall (Definition \ref{def:quadratic growth}) that $\mathcal{Q}\subset \N^\N$ denotes the subset of sequences growing at most quadratically, which is completely invariant by the shift $\sigma\colon \N^\N\to \N^\N.$

\begin{prop}[Realization of itineraries]\label{prop:class B realization of itineraries}
Let $f$ be a  function of disjoint type which is  a finite composition of functions of finite order. Assume  that it has finitely many tracts.
Let $x=(x_0, x_1, \ldots)\in \mathcal{Q}$. Then if $\beta$ is sufficiently large (depending on $x$), there
 exists exactly one point $z_x\in \C$ which satisfies 
\begin{equation}\label{Realization}
f^n(z_x)\in C(F(x_n),\beta^{(n+1)^2}),\quad \forall n\in \N.
\end{equation}
In particular  $z_x$ follows the itinerary $x$.
\end{prop}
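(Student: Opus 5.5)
We work in logarithmic coordinates and build the point as a limit of nested pullbacks of cells. Set $t_n:=\beta^{(n+1)^2}$. The first step is to check that, for $\beta$ large depending on $x$, Remark~\ref{remarkcovercells} applies at every time $n$. Since $x\in\QQ$, there are $a,b$ with $x_{n+1}\le (a+b(n+1))^2$. Remark~\ref{remarkcovercells} needs $t_n\ge t_0$ and $x_{n+1}\le \alpha\log t_n=\alpha (n+1)^2\log\beta$, and both hold for all $n$ once $\log\beta\ge \alpha^{-1}(a+b)^2$ and $\beta\ge t_0$. We also need the target radius to be large enough, namely $e^{t_n^\alpha}\ge t_{n+1}$, that is $\beta^{\alpha(n+1)^2}\ge (n+2)^2\log\beta$. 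This also holds for $\beta$ large, uniformly in $n$, because the left side grows much faster. Since cells are monotone in $r$, we then get the following: for every fundamental domain $\widetilde F$ of $\widetilde f$ with $\exp\widetilde F=F(x_n)$, there is a lift $\widehat F$ of $F(x_{n+1})$ with $\widetilde f(C(\widetilde F,t_n))\Supset C(\widehat F,t_{n+1})$.

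Next we construct the nested sequence. Starting from a lift $\widetilde F_0$ of $F(x_0)$, we inductively choose lifts $\widetilde F_n$ of $F(x_n)$ as above. We define $K_n:=\varphi_{\widetilde T_0}\circ\cdots\circ\varphi_{\widetilde T_{n-1}}\big(\overline{C(\widetilde F_n,t_n)}\big)$, where $\widetilde T_j$ is the tract containing $\widetilde F_j$, so that $\varphi_{\widetilde T_j}$ is the branch of $\widetilde f^{-1}$ mapping into $\widetilde F_j$. The compact inclusion above gives $K_{n+1}\subset K_n$, with each $K_n$ compact and nonempty, so $\bigcap_n K_n\neq\emptyset$. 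Any point $\widetilde z$ in the intersection satisfies $\widetilde f^n(\widetilde z)\in \overline{C(\widetilde F_n,t_n)}$. To obtain open cells rather than closures, we use the strict compact containment $\Supset$ at the next step: $\widetilde f^n(\widetilde z)\in \varphi_{\widetilde T_n}(C(\widetilde F_{n+1},t_{n+1}))\subset C(\widetilde F_n,t_n)$. Then $z_x:=\exp\widetilde z$ satisfies \eqref{Realization}, by the relation $\exp\circ\widetilde f=f\circ\exp$ and the definition of $C(\widetilde F,r)$.

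For uniqueness, and also to see that the intersection is a single point, we use Propositions~\ref{lem: size of cells log plane} and~\ref{prop:strict contraction tracts}. The hyperbolic diameter of $K_n$ in $\widetilde H$ is at most $\lambda^n\big((\log\log\beta)\log(n+1)^2+k\big)$, after applying Proposition~\ref{lem: size of cells log plane} with base $\beta$ and exponent $(n+1)^2$. This tends to $0$, so $\bigcap K_n$ is a point.

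The subtle part of uniqueness is that a second point $w$ satisfying \eqref{Realization} might have a lift following a different sequence of fundamental-domain lifts. We handle this as follows. Fix a lift $\widetilde w$ with $\exp\widetilde w=w$ lying in the same lift $\widetilde F_0$; such a lift exists because $\exp|_{\widetilde F_0}$ is a bijection onto $F(x_0)$. Then $\widetilde f^n(\widetilde w)$ lies in some lift of $F(x_n)$, and the branches of $\widetilde f^{-1}$ along the orbit are $\varphi$ of the tracts actually visited. Both $\widetilde z$ and $\widetilde w$ are images of points of $n$-th stage cells under compositions of $n$ inverse branches, so their distance is at most $\lambda^n$ times $\dist_{\widetilde H}(\widetilde f^n\widetilde z,\widetilde f^n\widetilde w)$. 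We therefore need the two orbits to stay in the same lifted cell at each stage. We argue by induction: the fundamental domain $\widetilde F_n\ni\widetilde f^n\widetilde z$ determines a strip $\widetilde f(\widetilde F_n)$, and since $\widetilde f^n\widetilde w\in\widetilde F_n$ as well, the image $\widetilde f^{n+1}\widetilde w$ lies in that same strip. Within a given strip there is exactly one lift of each fundamental domain of $f$, because distinct lifts differ by translates by $2\pi i$ and hence lie in distinct strips. So $\widetilde f^{n+1}\widetilde w$ and $\widetilde f^{n+1}\widetilde z$ lie in the same lift $\widetilde F_{n+1}$. Both orbits thus lie in the cells $C(\widetilde F_n,t_n)$ at every stage. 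This gives $\dist_{\widetilde H}(\widetilde z,\widetilde w)\le \lambda^n\,\diam_{\widetilde H}C(\widetilde F_n,t_n)\to0$, hence $z=w$.

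I expect the main obstacle to be this bookkeeping of lifts, in particular checking that each strip contains a unique lift of each $F(i)$. The quantitative requirements on $\beta$ from the first step are routine.
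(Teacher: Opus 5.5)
Your proposal is correct and follows the paper's proof. You use the same two largeness conditions on $\beta$, the same inductive choice of lifts $\widetilde F_n$ via Remark~\ref{remarkcovercells}, the same nested pullbacks under $\varphi_{\widetilde T_0}\circ\cdots\circ\varphi_{\widetilde T_{n-1}}$, and the same diameter bound $\lambda^n\bigl(2(\log\log\beta)\log(n+1)+k\bigr)\to 0$ from Propositions~\ref{lem: size of cells log plane} and~\ref{prop:strict contraction tracts}; your extra argument that any solution of \eqref{Realization} has a lift forced into the same $\widetilde F_n$ (each strip contains exactly one lift of each $F(i)$) makes explicit a uniqueness step the paper leaves implicit, since the paper only shows that the nested intersection itself is a single point.
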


\begin{proof}
Let $\beta\geq {\rm max}\{1,t_0\}$ be large enough that
$$x_{n+1}\leq (n+1)^2\alpha\log\beta,\quad \forall\,n\in \N,$$
and
$$\beta^{(n+1)^2\alpha}\geq (n+2)^2\log\beta,\quad \forall\,n\in \N.
$$
Let $\widetilde  F_0$ be a fundamental domain for $\widetilde f$ such that ${\rm exp}(\widetilde F_0)=F(x_0).$
Thanks to Remark \ref{remarkcovercells}, we can choose fundamental domains $(\widetilde F_n)_{n\geq 1}$ for $\widetilde f$ such that 
${\rm exp}(\widetilde F_n)=F(x_n)$ for all $n\geq 1$ and
$$\widetilde f(C(\widetilde F_n,\beta^{(n+1)^2}))\Supset C(\widetilde F_{n+1},\beta^{(n+2)^2}),\quad \forall \, n\geq 0.$$
For all $n\geq 0$, set $$U_n:=\bigcap_{j=0}^n \widetilde f^{-j}(C(\widetilde F_n,\beta^{(n+1)^2})).$$
Then $U_0\Subset U_1\Subset U_2\dots$, and thus $\bigcap_{n\geq 0}U_n$ is not empty.

We now show that  $\bigcap_{n\geq 0}U_n$   contains a unique point. 
For all $n\geq 0$, denote by $\widetilde T_n$ be the tract for $\widetilde f$ containing the fundamental domain $\widetilde F_n$, and denote
$$
\phi_n:=\phi_{\widetilde T_0}\circ\phi_{\widetilde T_1}\ldots\circ \phi_{\widetilde T_{n-1}}.
$$
 Notice that   $\phi_n$ is defined on $\widetilde H$, and that  $$\varphi_n(C(\widetilde F_n,\beta^{(n+1)^2}))=U_n.$$
By Proposition \ref{prop:strict contraction tracts}, $\phi_n$ contracts the Poincar\'e distance on $\widetilde H$ by a factor $\lambda^n$.
Hence, by Proposition \ref{lem: size of cells log plane},
 $$
 \diam_{\widetilde H}\,\phi_n({C(\widetilde F_n,\beta^{(n+1)^2})})\leq \lambda^n(2(\log\log \beta)(\log(n+1))+k)\ra0 \text{ \  \ \ as $n\ra\infty$.}
 $$
\end{proof}

Let $\varphi\colon \mathcal Q\to \C$ be the injective map which sends every itinerary $x\in \mathcal Q$ to the  point $z_x\in \C$ given by the previous proposition.
Its image $\varphi(Q)$ is the subset of $z\in \C$ such that there exists $\beta\geq 0$ and $x\in \mathcal{Q}$ satisfying
$$
f^n(z)\in C(F(x_n),\beta^{(n+1)^2}),\quad\forall\, n\in\N.
$$ 
Notice that $\varphi(Q)$ is completely invariant under $f$. The map $\varphi\colon \mathcal Q\to \C$ satisfies 
\begin{equation}\label{eq: phi is a conjugacy}
\phi\circ\sigma=f\circ \phi \text{ for all $x\in\N^\N$}.   
\end{equation}
We will now use $\phi$ to define a measure $\tilde \mu $ on $\phi(\QQ)$.

\begin{remark}
The inverse map $\varphi^{-1}\colon \varphi(\mathcal{Q})\to Q$ is clearly continuous thanks to the continuity of the maps $f^n$. 
However, the map $\varphi\colon \mathcal Q\to \C$ is not continuous. 
Given a sequence $(t_k)$ of non-negative integers consider the sequences $x^{(k)}\in\QQ$ defined by $x^{(k)}_j=0$ if $j\neq k$,  $x^{(k)}_k=t_k$. Then $x^{(k)}$ converges to the zero sequence $\ov{0}$,
which has  image $\phi(\ov{0})=z_{\ov{0}}\in\C$.  However, since
 every compact set intersects at most finitely many fundamental domains (see Lemma 2.1 in \cite{BeniniRempe}),
 there exists  a sequence $(t_k)$ such that $z_k:=z_{x^{(k)}}\ra\infty$, contradicting continuity.

%
%
%
\end{remark}

For  all $B\in\N$ let  
$$
\QQ_B:=\{x=(x_0,x_1,\ldots)\in \N^\N \text{ such that $x_n\leq B(n+1)^2$ for all $n\in\N$}\}.
$$
Notice that every $\QQ_B$ is closed, that $\QQ_B\subset\QQ_{B+1}$, and that $\QQ=\bigcup_B \QQ_B$.
Since $\mathbb{P}^\mu(\mathcal{Q})=1$ by Proposition~\ref{prop:modify process to prescribe support}, it follows that for $B>0$ large enough, $\mathbb{P}^\mu(\mathcal{Q}_B)>0$.
\begin{lem}\label{lem: phi on QB}

For all $B\in\N$, the map $\phi_{|_{\QQ_B}}$ is continuous.     
\end{lem}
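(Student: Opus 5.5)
The key observation is that on $\QQ_B$ a single value of $\beta$ works for every itinerary simultaneously. Once $\beta$ is uniform, $\phi(x)$ is the intersection of nested pullbacks of cells whose hyperbolic diameters shrink at a rate independent of $x$, and continuity follows from stabilization of finitely many coordinates.

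\emph{Step 1: choose $\beta$ uniformly.} Fix $B$. In the proof of Proposition~\ref{prop:class B realization of itineraries}, the only conditions on $\beta$ were
$$\beta\geq\max\{1,t_0\},\qquad x_{n+1}\leq (n+1)^2\alpha\log\beta,\qquad \beta^{(n+1)^2\alpha}\geq (n+2)^2\log\beta.$$
For $x\in\QQ_B$ we have $x_{n+1}\leq B(n+2)^2\leq 4B(n+1)^2$. So any $\beta$ with $\alpha\log\beta\geq 4B$ that also satisfies the third inequality (true for $\beta$ large) works for every $x\in\QQ_B$ at once. Fix such a $\beta=\beta_B$. By uniqueness in Proposition~\ref{prop:class B realization of itineraries}, $\phi(x)$ is then the unique point with $f^n(\phi(x))\in C(F(x_n),\beta_B^{(n+1)^2})$ for all $n$, and this holds for every $x\in\QQ_B$.

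\emph{Step 2: nested neighbourhoods.} Let $x^{(k)}\to x$ in $\QQ_B$. For each $N$ there is $K$ such that $x^{(k)}_n=x_n$ for all $n\leq N$ and $k\geq K$. In the construction, the lifted fundamental domains $\widetilde F_0,\dots,\widetilde F_N$ are chosen successively from $x_0,\dots,x_N$ via Remark~\ref{remarkcovercells}. Hence for $k\geq K$ we may use the same $\widetilde F_0,\dots,\widetilde F_N$, and the same set $U_N$, for $x^{(k)}$ as for $x$. Lifts of both $\phi(x^{(k)})$ and $\phi(x)$ then lie in
$$\phi_{\widetilde T_0}\circ\cdots\circ\phi_{\widetilde T_{N-1}}\bigl(C(\widetilde F_N,\beta_B^{(N+1)^2})\bigr).$$
This needs a small check: the point produced for $x^{(k)}$ with these particular lifts projects to the same $z_{x^{(k)}}$. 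That follows from uniqueness, since any choice of lifts yields a point satisfying \eqref{Realization}. By Propositions~\ref{prop:strict contraction tracts} and \ref{lem: size of cells log plane}, this set has $\widetilde H$-diameter at most $\lambda^N\bigl(2(\log\log\beta_B)\log(N+1)+k\bigr)$, which tends to $0$ independently of $k$.

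\emph{Step 3: conclude.} All these sets lie inside the fixed fundamental domain $\widetilde F_0$ and the fixed cell $C(\widetilde F_0,\beta_B)$, which is relatively compact in $\widetilde H$. So small hyperbolic diameter implies small Euclidean diameter, and applying $\exp$ gives $|\phi(x^{(k)})-\phi(x)|\to0$.

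The main obstacle is Step 1: making sure every constraint on $\beta$ in the realization proof is uniform over $\QQ_B$, including the implicit choice of $t_0$ in Remark~\ref{remarkcovercells}. The lift-independence issue in Step 2 is secondary and is handled by uniqueness as above.
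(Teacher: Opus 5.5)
Your proof is correct and follows the same route as the paper: fix a single $\beta$ valid for every itinerary in $\mathcal{Q}_B$, note that agreement of the first $N+1$ coordinates puts lifts of $\phi(x^{(k)})$ and $\phi(x)$ in the same set $U_N$, and use that $\diam_{\widetilde H} U_N \to 0$. You also spell out details the paper leaves implicit: the bound $x_{n+1}\le 4B(n+1)^2$ that yields a uniform $\beta$, independence of the chosen lifts via uniqueness, and the passage from hyperbolic to Euclidean distance on the relatively compact cell $C(\widetilde F_0,\beta_B)$.
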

\begin{proof}
Let $B>0$.
Then the  constant $\beta$ in Proposition \ref{prop:class B realization of itineraries} can be chosen to be the same for all $x\in \mathcal{Q}_B$.
In the product topology a sequence of itineraries $(x^{(k)})$ converges to $x^*$ if and only if for every $n\in\N$ there exists $k_n$ such that $x^{(k)}_j =x^*_j$ for all $j\leq n$ whenever $k\geq k_n$. Let $z_k:=z_{x^{(k)}}$, $z_*:=z_{x^*}$.
By the proof of Proposition \ref{prop:class B realization of itineraries} for all $n\geq 0$ there exists $k_n$ such that 
for all $k\geq k_n$ the points $z_k$ and $z_*$ belong to the open set
$$\exp(U_n):=\exp(\cap_{j=0}^n \widetilde f^{-j}(C(\widetilde F_j,\beta^{(j+1)^2}))).$$
The result follows since  $\diam_{\widetilde H}(U_n)\to 0$ as $n\to \infty$.

\end{proof}

It immediately follows that $\varphi\colon \QQ\to \C$ is Borel measurable. 
Since $\varphi^{-1}\colon \varphi(\mathcal{Q})\to Q$ is continuous, it follows that  $\varphi\colon \QQ\to \varphi( \QQ)$ is a Borel isomorphism.
Denote by $\tilde \mu$ the push-forward probability measure $$\tilde \mu:=\varphi_*(\mathbb{P}^\mu).$$
It is clear that $\tilde \mu$ is  $f$-invariant, mixing (hence ergodic), and has infinite entropy. 
\begin{prop}\label{supportcontains}
The support of the measure $\tilde \mu$ is  $\overline{\varphi(\QQ)}=J(f)$.
\end{prop}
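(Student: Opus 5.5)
We will prove the three inclusions $\supp\tilde\mu\subset\overline{\varphi(\QQ)}$, $\overline{\varphi(\QQ)}\subset J(f)$ and $J(f)\subset\supp\tilde\mu$.

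\emph{First inclusion.} Since $\tilde\mu(\varphi(\QQ))=\mathbb{P}^\mu(\QQ)=1$ by Proposition~\ref{prop:modify process to prescribe support}, the closed set $\overline{\varphi(\QQ)}$ has full measure, so it contains the support.

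\emph{Second inclusion.} Every $z\in\varphi(\QQ)$ satisfies $f^n(z)\in F(x_n)\subset\TT$ for all $n$. The tracts are disjoint from $\overline D$, so the orbit of $z$ never enters $D$. By the characterization of the Julia set of disjoint-type maps recalled above (\cite[Proposition 3.2]{RempeArclike}), $z\in J(f)$. Since $J(f)$ is closed, $\overline{\varphi(\QQ)}\subset J(f)$.

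\emph{Third inclusion.} This is the main step. We use the modification with $E'=\N^2$, so that $\supp\mathbb{P}^\mu=\N^\N$, and every cylinder $C_{x_0,\dots,x_m}$ has positive $\mathbb{P}^\mu$-measure. Fix $z\in J(f)$ and a neighbourhood $V$ of $z$; we show $\tilde\mu(V)>0$.
\begin{enumerate}
\item The orbit of $z$ never enters $D$, so each $f^n(z)$ lies in $\overline{\TT}$. After perturbing $z$ slightly inside $V$ (points whose orbits hit $f^{-n}(\delta)$ form a countable union of curves, and $J(f)$ has no isolated points), we may assume $f^n(z)\in F(x_n)$ for suitable indices $x_0,\dots,x_m$, with $m$ still to be chosen.
\item Pull back through the tracts, using the inverse branches $\phi_{\widetilde T}$ in logarithmic coordinates. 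By Proposition~\ref{prop:strict contraction tracts}, the set $W_m$ of points following $x_0,\dots,x_m$ with $f^m$-image in a fixed bounded region has hyperbolic diameter at most $\lambda^m$ times the diameter of that region.
\item Choose $m$ so large that $W_m\subset V$. Then every itinerary $y\in\QQ$ with $y_j=x_j$ for $j\le m$, whose point $\varphi(y)$ has $m$-th iterate in that bounded region, satisfies $\varphi(y)\in V$.
\item The bounded-region condition is handled via Lemma~\ref{lem: phi on QB} and the cell bounds of Proposition~\ref{lem: size of cells log plane}: restricting to $y\in\QQ_B$ confines the points to cells $C(F(y_n),\beta^{(n+1)^2})$ with $\beta$ uniform in $y$, hence to controlled bounded cells.
\end{enumerate}
It remains to see that this set of itineraries has positive measure. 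Write $\sigma^{m}$ for the tail: the set of $y$ with $y_j=x_j$ for $j\le m$ and $\sigma^{m}y\in\QQ_B$ has $\mathbb{P}^\mu$-measure
\[
\mathbb{P}^\mu(C_{x_0,\dots,x_m})\cdot\mathbb{P}^{\delta_{x_m}}(\QQ_B\text{-type tail}),
\]
and this is positive for $B$ large. Indeed, $\mathbb{P}^{\delta_s}(\QQ)=1$ by the proof of Proposition~\ref{prop:modify process to prescribe support}, and $\QQ=\bigcup_B\QQ_B$ is an increasing union.

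\emph{Main obstacle.} The delicate point is the uniformity in step 4. The starting radius of the cells (the size of $\beta$) must be compatible with the prescribed initial symbols $x_0,\dots,x_m$, so that the pulled-back set is really contained in $V$. I would handle this with a shifted version of Proposition~\ref{prop:class B realization of itineraries}: realise the tail $\sigma^m y$ with a uniform $\beta$, and then apply the $m$ fixed inverse branches determined by $x_0,\dots,x_m$, which form a single continuous map onto a small neighbourhood of $z$.
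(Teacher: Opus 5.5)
Your first two inclusions are correct and coincide with the paper's argument. The third one has a genuine gap. The diameter estimate in steps 2--3 only places the realised points $\varphi(y)$ near $z$ if $f^m(z)$ itself lies within bounded hyperbolic distance, in logarithmic coordinates, of the cell $C(F(x_m),\beta_B)$ that contains the tails $\varphi(\sigma^m y)$. Your final claim, that the $m$ inverse branches map this cell onto a small neighbourhood of $z$, is false for escaping $z$, and escaping points make up most of $J(f)$.

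Here is a concrete counterexample. Let $f(z)=\lambda e^z$ with $0<\lambda<1/e$, let $b>1$ be its repelling real fixed point, and take $z\in(b,\infty)\subset J(f)$.
\begin{enumerate}
\item All iterates of $z$ lie in the fundamental domain $F$ containing $[b,\infty)$, so $x_m$ is constant and $B$ can even be fixed.
\item Every point you construct equals $L^m(w)$, where $w$ lies in the bounded cell $C(F,\beta_B)$ and $L$ is the branch of $f^{-1}$ onto $F$ fixing $b$.
\item $L$ maps $\C\setminus(\overline D\cup\delta)$ into $F$, whose closure lies inside that domain, so $L^m\to b$ uniformly on the cell.
\end{enumerate}
Hence your sets $W_m$ shrink to $b$, not to $z$, and no choice of $m$ gives $W_m\subset V$ for a small neighbourhood $V$ of $z$. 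In general your bound is $\lambda^m$ times a quantity of order $\log\log|f^{m+1}(z)|+\log\log\beta_B$. Since you also need $B\ge x_m$, neither term is controlled for general $z\in J(f)$.

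What is missing is a density statement. Since $\supp\tilde\mu$ is closed, it suffices to show $\tilde\mu(V)>0$ around points of some dense subset of $J(f)$, and for the right subset your construction does work.
\begin{itemize}
\item The paper uses $\varphi(\QQ)$. If $z=\varphi(x)$ with $x\in\QQ_B$, then $f^m(z)$ lies in the same cell $C(F(x_m),\beta^{(m+1)^2})$ as the competing points, and this cell has hyperbolic size $O(\log m)$. This is exactly the continuity of $\varphi|_{\QQ_B}$ (Lemma~\ref{lem: phi on QB}).
\item Combined with $\mathbb{P}^\mu(C\cap\QQ_B)>0$ for admissible cylinders $C$, this gives $\varphi(\QQ)\subset\supp\tilde\mu$.
\item Density then follows because $\overline{\varphi(\QQ)}$ is closed, completely invariant and has more than two points, so it contains $J(f)$ by Montel's theorem.
\end{itemize}
Alternatively, your tail-based version works verbatim for any $z\in J(f)$ whose orbit returns infinitely often to a fixed bounded set. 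Along those return times $x_m$ takes only finitely many values, so $B$ and the cell can be fixed and the diameter is $O(\lambda^m)$. You could then conclude using the density of repelling periodic points in $J(f)$.

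Finally, the perturbation in step 1 is unnecessary. For $z\in J(f)$ one has $f^{n+1}(z)\in\TT$ and $\delta\cap\overline{\TT}=\varnothing$, so $f^n(z)\notin f^{-1}(\delta)$ for every $n$. The reason you give would not suffice anyway, since a perfect set can lie inside a countable union of curves.
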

\begin{proof}
It is clear that the support of $\tilde \mu$ is contained in $\overline{\varphi(\QQ)}$. 
We now show that it contains $\overline{\varphi(\QQ)}$. Since, by the previous lemma, the map $\phi_{|_{\QQ_B}}$ is continuous for all $B>0$,  it is enough to show that, if $B>0$ is large enough, then for every cylinder set $C=C_{y_0,\dots, y_k}\subset \N^\N$ 
such that $y_i\leq B(i+1)^2$ for all $0\leq i\leq k$,
we have that 
$$
\mathbb{P}^\mu(C\cap \QQ_B)>0.
$$
Let $B>0$ large enough so that $\mathbb{P}^\mu(\QQ_B)>0$.
Define a continuous branch $g\colon \N^\N\to \N^\N$ of $\sigma^{-(k+1)}$ by
$$g(z_0,z_1,\dots)=(y_0,\dots, y_k, z_0, z_1, \dots).$$
Notice that $C\cap \QQ_B\supset g(\mathcal{Q}_B)$, and recall that the support $\Theta$ of $\P^\mu$ is the whole  $\N^\N$.  By \eqref{endsummer}, we have
$$\mathbb{P}^\mu(C\cap \QQ_B)\geq \mathbb{P}^\mu(g(\QQ_B))=\int_{\QQ_B}Q^n(w,g(w))d\mathbb{P}^\mu(w),$$ which is strictly positive since $\mathbb{P}^\mu(\QQ_B)>0$ and $Q^n(w,g(w))>0$ for all $w\in \N^\N$.

 We now show that $\overline{\varphi(\QQ)}=J(f)$.
Since $f$ is of disjoint type, the Julia set consists of the points whose iterates always belong to the tracts, so $\ov{\phi(\QQ)}\subset \ov{J(f)}=J(f)$.
Since $\overline{\varphi(\QQ)}$ is closed, completely invariant, and contains at least three points, it contains the Julia set $J(f)$.  It follows that the support of $\tilde \mu$ equals the Julia set.
\end{proof}

We now prove equidistribution of preimages. 

\begin{lemma}\label{lem:convergence MP}
Let $(P_{st})$ be an ergodic Markov process on a countable state space $S$. Let $\mu$ be its unique stationary measure. Let $G\subset S^\N$ be a Borel subset with $\mathbb{P}^\mu(G)>0$. Let  $\eta_n$ denote the probability measure on $S$ defined by $\eta_n(A):=\mathbb{P}^\mu(x_n\in A \; | \; x \in G)$. Then $\eta_n$ converges to $\mu$ weakly as $n\to\infty$.
\end{lemma}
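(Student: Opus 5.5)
Since $S$ is countable and discrete, and $\eta_n$ and $\mu$ are both probability measures, weak convergence of $\eta_n$ to $\mu$ is equivalent to pointwise convergence $\eta_n(\{t\})\to\mu(t)$ for every $t\in S$ (Scheffé's lemma). So the plan is to fix $t\in S$ and prove $\mathbb{P}^\mu(\{x_n=t\}\cap G)\to \mu(t)\,\mathbb{P}^\mu(G)$. Dividing by $\mathbb{P}^\mu(G)>0$ then gives the claim.

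We first treat the case where $G=C$ is a cylinder set $C_{z_0,\dots,z_m}$. For $n>m$, the Markov property and \eqref{eq:cylinder measure} give
$$\mathbb{P}^\mu(\{x_n=t\}\cap C)=\mu(z_0)\Big(\prod_{i=0}^{m-1}P_{z_iz_{i+1}}\Big)(P^{n-m})_{z_mt}=\mathbb{P}^\mu(C)\,(P^{n-m})_{z_mt}.$$
By Theorem \ref{theoremergodic}, $(P^{n-m})_{z_mt}\to\mu(t)$, so this tends to $\mathbb{P}^\mu(C)\mu(t)$. The same statement extends by additivity to finite disjoint unions of cylinder sets. These unions form an algebra $\mathcal{A}$ generating the Borel $\sigma$-algebra of $S^\N$, since cylinders form a countable basis of the product topology.

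For a general Borel set $G$, fix $\epsilon>0$. By the standard approximation theorem for measures on an algebra, there is $A\in\mathcal{A}$ with $\mathbb{P}^\mu(G\triangle A)<\epsilon$. Then
$$\bigl|\mathbb{P}^\mu(\{x_n=t\}\cap G)-\mathbb{P}^\mu(\{x_n=t\}\cap A)\bigr|\le \mathbb{P}^\mu(G\triangle A)<\epsilon$$
uniformly in $n$, and $|\mu(t)\mathbb{P}^\mu(G)-\mu(t)\mathbb{P}^\mu(A)|<\epsilon$. Hence $\limsup_n|\mathbb{P}^\mu(\{x_n=t\}\cap G)-\mu(t)\mathbb{P}^\mu(G)|\le 2\epsilon$, and letting $\epsilon\to0$ gives the claim. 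This is simply the mixing property of $(S^\N,\sigma,\mathbb{P}^\mu)$ from Theorem \ref{theoremergodic}, applied to $G$ and the cylinder $\{x_0=t\}$, since $\{x_n=t\}=\sigma^{-n}\{x_0=t\}$. One can cite mixing directly instead of reproving it.

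The only delicate point is passing from pointwise convergence on atoms to weak convergence. On a countable discrete space, for a bounded test function $\psi$ we write $\sum_t\psi(t)\eta_n(t)$. Pick a finite $F\subset S$ with $\mu(F)>1-\epsilon$. Pointwise convergence gives $\eta_n(F)\to\mu(F)$, so the tail mass $\eta_n(S\setminus F)$ is eventually below $2\epsilon$. This tail term is the main (though mild) obstacle, and handling it this way completes the proof.
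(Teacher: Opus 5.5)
Your proof is correct, but it takes a different route from the paper. The paper derives the lemma from the Blackwell--Freedman result (Orey's theorem: the tail $\sigma$-field of an ergodic chain is $\mathbb{P}^\mu$-trivial) together with the reverse martingale convergence theorem. That gives $\mathbb{P}^\mu(G\mid x_n,x_{n+1},\dots)\to\mathbb{P}^\mu(G)$ in $L^1$. So $G$ becomes asymptotically independent of the whole future $\sigma$-field $\sigma(x_n,x_{n+1},\dots)$, uniformly over its events, and in particular of $\{x_n=t\}$.

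You instead observe that $\eta_n(\{t\})\,\mathbb{P}^\mu(G)=\mathbb{P}^\mu(\sigma^{-n}\{x_0=t\}\cap G)$. The claim is then exactly the mixing property already recorded in Theorem~\ref{theoremergodic}, or, as you also show, a direct consequence of $(P^n)_{st}\to\mu(t)$ via cylinder approximation. Scheff\'e's lemma then upgrades convergence on atoms to weak (indeed total variation) convergence. Your argument is more elementary and stays inside the paper's framework. It also delivers exactly what is used in Lemma~\ref{othertopology}, namely convergence at the single atom $x_0$. The paper's route yields the stronger uniform decorrelation from tail events, which is not needed there.

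One small slip: when $S$ is infinite, finite disjoint unions of the basic cylinders $C_{z_0,\dots,z_m}$ do not form an algebra, since the complement of $C_{z_0}$ is not such a finite union. There are two easy fixes:
\begin{enumerate}
\item Work with the algebra of finite-dimensional cylinders $\{(x_0,\dots,x_m)\in E\}$, with $E\subset S^{m+1}$ arbitrary. Summing your cylinder identity over $z\in E$ and applying dominated convergence gives the limit $\mu(t)\,\mathbb{P}^\mu(\cdot)$ for these sets.
\item Truncate each such countable union to a finite one, at the cost of an extra $\epsilon$.
\end{enumerate}
If you simply cite mixing directly, as you suggest, the issue does not arise.
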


\begin{proof}
It follows from  \cite{BF} and the Reverse Martingale Convergence Theorem.
\end{proof}

 Recall the definition of $Q^n_x$ from Section~\ref{sec:Equidistribution preimages Markov process}

\begin{lemma}\label{othertopology}
Let $x=(x_0,x_1,\dots)\in \QQ$. Then if $B$ is large enough so that  $\mathbb{P}^\mu(\QQ_B)>0$, and if $C=C_{y_0,\dots, y_k}\subset \N^\N$ is a cylinder set, we have that
\begin{equation}\label{othertopologyeq}
Q^n_x(C\cap\QQ_B)\stackrel{n\to\infty}\longrightarrow \mathbb{P}^\mu(C\cap\QQ_B).
\end{equation}
\end{lemma}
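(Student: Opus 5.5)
The plan is to express $Q^n_x(C\cap\QQ_B)$, for $n$ large, as a conditional probability for the Markov chain with respect to $\mathbb{P}^\mu$, and then to squeeze it between an upper and a lower bound that both tend to $\mathbb{P}^\mu(C\cap\QQ_B)$.

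\emph{Step 1: rewrite the left-hand side.} By Section~\ref{sec:Equidistribution preimages Markov process}, $Q^n_x$ is supported on the sequences $z=(z_0,\dots,z_{n-1},x_0,x_1,\dots)$. Such a $z$ has weight
$$Q^n(x,z)=\mathbb{P}^\mu(x_0=z_0,\dots,x_{n-1}=z_{n-1}\mid x_n=x_0).$$
Fix $n>k$. Then $z\in C\cap\QQ_B$ if and only if three conditions hold:
\begin{enumerate}
\item $z_i=y_i$ for $i\le k$;
\item $z_i\le B(i+1)^2$ for $i<n$;
\item $x_j\le B(n+j+1)^2$ for all $j\ge 0$.
\end{enumerate}
Since $x\in\QQ$, we have $x_j\le(a+bj)^2$. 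I will take $B$ large enough (in particular $B\ge b^2$; I read ``$B$ large enough'' as allowed to depend on $x$ as well). Then condition (3) holds for all $n\ge a/\sqrt B$.

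Set $w:=x_0$, $G_n:=\{x_i\le B(i+1)^2 \ \forall i<n\}$ and $A_n:=C\cap G_n$. For $n$ large, this gives
$$Q^n_x(C\cap\QQ_B)=\frac{\mathbb{P}^\mu(A_n\cap\{x_n=w\})}{\mu(w)}.$$
The sets $A_n$ decrease to $A_\infty:=C\cap\QQ_B$.

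\emph{Step 2: upper bound.} Fix $m$ and let $n>m$. Since $A_n\subset A_m$ and $A_m$ depends only on the coordinates up to time $m$, the Markov property gives
$$\mathbb{P}^\mu(A_n\cap\{x_n=w\})\le \mathbb{E}^\mu\big[\mathbf 1_{A_m}\,(P^{n-m})_{x_m w}\big].$$
By Theorem~\ref{theoremergodic}, $(P^{n-m})_{x_m w}\to\mu(w)$ pointwise as $n\to\infty$. The integrands are bounded by $1$, so dominated convergence applies. Hence
$$\limsup_n Q^n_x(C\cap\QQ_B)\le \mathbb{P}^\mu(A_m).$$
Letting $m\to\infty$, continuity from above gives $\mathbb{P}^\mu(A_m)\to\mathbb{P}^\mu(C\cap\QQ_B)$.

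\emph{Step 3: lower bound.} We have $\mathbb{P}^\mu(A_n\cap\{x_n=w\})\ge \mathbb{P}^\mu(A_\infty\cap\{x_n=w\})$. If $\mathbb{P}^\mu(A_\infty)=0$ there is nothing to prove. Otherwise this equals $\mathbb{P}^\mu(A_\infty)\,\eta_n(w)$, where $\eta_n$ is the measure of Lemma~\ref{lem:convergence MP} with $G=A_\infty$. That lemma gives $\eta_n(w)\to\mu(w)$, and so
$$\liminf_n Q^n_x(C\cap\QQ_B)\ge \mathbb{P}^\mu(C\cap\QQ_B).$$

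Combining Steps 2 and 3 proves \eqref{othertopologyeq}.

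The main obstacle is Step 1: the lower bound fails unless the tail of $x$ eventually fits inside the constraint defining $\QQ_B$. This is why $B$ must be chosen large relative to the quadratic growth constants of $x$. If the intended statement really allows $B$ to depend only on $\mathbb{P}^\mu(\QQ_B)>0$, I would first try to replace $\QQ_B$ by $\QQ_{B'}$ with $B'\ge B$ adapted to $x$, since the $\QQ_B$ increase to $\QQ$.
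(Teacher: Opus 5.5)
Your proof is correct, and reading ``$B$ large enough'' as allowed to depend on $x$ is in fact necessary, not just convenient. Fix $B$ and take $x_j=4B(j+1)^2$. Then every $z\in\sigma^{-n}(x)$ has $z_{2n}=4B(n+1)^2>B(2n+1)^2$, so $Q^n_x(C\cap\QQ_B)=0$ for all $n$. On the other hand $\mathbb{P}^\mu(C\cap\QQ_B)>0$ whenever $C\cap\QQ_B\neq\varnothing$, as shown in the proof of Proposition~\ref{supportcontains}. This costs nothing in Proposition~\ref{autumn}, where $x$ is fixed before $B$ is chosen.

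The paper's proof is essentially your Step~3 alone. For $n>k$ it writes $Q^n_x(C\cap\QQ_B)=\mathbb{P}^\mu(\alpha\in C\cap\QQ_B\mid\alpha_n=x_0)$, applies Bayes, and invokes Lemma~\ref{lem:convergence MP} with $G=C\cap\QQ_B$. That identity, however, treats the tail after time $n$ as random rather than as the fixed sequence $x$. By the Markov property, its right-hand side equals your quantity $\mathbb{P}^\mu(A_n\cap\{\alpha_n=x_0\})/\mu(x_0)$ multiplied by $\mathbb{P}^{\delta_{x_0}}(\alpha_j\le B(n+j+1)^2 \text{ for all } j\ge 0)$. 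The left-hand side, by contrast, equals your quantity when condition (3) holds and $0$ otherwise.

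Your approach splits the constraint into the decreasing ``past'' events $A_n$ and a deterministic tail condition, and gets the upper bound from Theorem~\ref{theoremergodic} at a fixed finite time $m$. This is what makes the argument exact. It also lets you drop Lemma~\ref{lem:convergence MP}, and the Blackwell--Freedman tail-field input behind it. For $n>m\ge k$ one has $\mathbb{P}^\mu(A_n\cap\{\alpha_n=w\})\ge\mathbb{P}^\mu(A_m\cap\{\alpha_n=w\})-\mathbb{P}^\mu(A_m\setminus A_\infty)$. Letting $n\to\infty$ and then $m\to\infty$, with $\mu(w)>0$ fixed, gives the lower bound from Theorem~\ref{theoremergodic} alone.
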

\begin{proof}
If $C\cap\QQ_B=\varnothing$ the statement is trivially true. Assume thus that $C\cap\QQ_B\neq \varnothing$, that is,  $y_i\leq B(i+1)^2$ for all $0\leq i\leq k$.
If $n>k$, then we have
$$Q^n_x(C\cap \QQ_B)=\mathbb{P}^\mu(\alpha\in  C\cap \QQ_B|\alpha_n=x_0).$$
Hence, by Bayes' theorem,
$$Q^n_x(C\cap \QQ_B)=\frac{\mathbb{P}^\mu(\alpha_n=x_0|C\cap \QQ_B)\mathbb{P}^\mu(C\cap \QQ_B)}{\mathbb{P}^\mu(\alpha_n=x_0)}.$$ Notice that the conditional probability $\mathbb{P}^\mu(\alpha_n=x_0|C\cap \QQ_B)$ is well defined since by the proof of Proposition \ref{supportcontains} we have $\mathbb{P}^\mu(C\cap \QQ_B)>0$.
We have that $\mathbb{P}^\mu(\alpha_n=x_0)=\mu(x_0)$, hence it is sufficient to prove that
\begin{equation}\label{storm}
\mathbb{P}^\mu(\alpha_n=x_0|C\cap \QQ_B)\stackrel{n\to\infty}\longrightarrow\mu(x_0).
\end{equation}
The result follows by Lemma \ref{lem:convergence MP} setting $G:=C\cap \QQ_B.$

\end{proof}

\begin{prop}\label{autumn}
Let $w\in \varphi(\QQ)$. Let $x$ be the itinerary followed by $w$. Then the sequence of probability measures
$$\varphi_*Q^n_x=\sum_{z\in f^{-n}(w)}Q^n(x,\varphi^{-1}(z))\delta_z$$ converges weakly to $\tilde \mu$ as $n\to\infty$. 

\end{prop}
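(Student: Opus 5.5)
We prove weak convergence by testing against a bounded continuous $h\colon\C\to\R$. The difficulty is that $\varphi$ is not continuous on all of $\QQ$, so we cannot simply push forward the convergence of Proposition~\ref{equiprop}. Instead we work on the closed sets $\QQ_B$, where $\varphi$ is continuous by Lemma~\ref{lem: phi on QB}, and use Lemma~\ref{othertopology} on cylinders intersected with $\QQ_B$.

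\textbf{Preliminaries.} Every preimage $z\in f^{-n}(w)$ with nonzero weight has itinerary $y\in\sigma^{-n}(x)\cap\QQ$, since $\QQ$ is completely invariant and prepending finitely many symbols keeps quadratic growth. Also $\varphi(y)=z$, because the cell conditions \eqref{Realization} pull back under $f$ after enlarging $\beta$. Hence
\[
\int h\,d\varphi_*Q^n_x=\int_{\QQ} h\circ\varphi\,dQ^n_x .
\]
Now fix $\epsilon>0$ and choose $B$ with $\mathbb{P}^\mu(\QQ\setminus\QQ_B)<\epsilon$, which is possible since $\mathbb{P}^\mu(\QQ)=1$ and $\QQ_B\nearrow\QQ$.

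\textbf{Step 1: no mass leaks out of $\QQ_B$.} We need $\limsup_n Q^n_x(\N^\N\setminus\QQ_B)\le\epsilon$. Apply Lemma~\ref{othertopology} with the trivial cylinder $C=\N^\N$ (or sum over the finitely many admissible values of $y_0$) to get $Q^n_x(\QQ_B)\to\mathbb{P}^\mu(\QQ_B)\ge 1-\epsilon$. This gives the bound, so the total-mass contribution of $\N^\N\setminus\QQ_B$ is at most $\epsilon\|h\|_\infty$ in the limit.

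\textbf{Step 2: convergence on $\QQ_B$.} We show $\int_{\QQ_B} g\,dQ^n_x\to\int_{\QQ_B}g\,d\mathbb{P}^\mu$ for the bounded continuous function $g=h\circ\varphi|_{\QQ_B}$. The set $\QQ_B$ is compact, being a closed subset of the product $\prod_n\{0,\dots,B(n+1)^2\}$, so $g$ is uniformly continuous. Hence for some $k$, $g$ varies by less than $\epsilon$ on each set $C_{y_0,\dots,y_k}\cap\QQ_B$, and there are only finitely many such nonempty sets. Approximate $g$ within $\epsilon$ by a step function constant on these finitely many pieces. Lemma~\ref{othertopology} gives convergence on each piece, and summing finitely many terms gives convergence of the step-function integrals. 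Combining Steps 1 and 2 bounds the limsup of the difference of integrals by $C\epsilon\|h\|_\infty+2\epsilon$, and letting $\epsilon\to0$ concludes.

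The main obstacle is Step 2. The compactness of $\QQ_B$ is what makes the finite cylinder approximation work, and the identification of preimages of $w$ with $\varphi(\sigma^{-n}x)$ must be checked carefully, because $\beta$ depends on the itinerary.
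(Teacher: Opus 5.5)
Your proof is correct and is essentially the paper's argument: restrict to the sets $\QQ_B$ where $\varphi$ is continuous, get convergence there from Lemma~\ref{othertopology}, and absorb $\N^\N\setminus\QQ_B$ by taking $B$ large. Note that $B$ should also be large enough that $x\in\QQ_B$, since otherwise $\sigma^{-n}(x)\cap\QQ_B$ can be empty for every $n$; Lemma~\ref{othertopology} and the paper's proof tacitly require the same.

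The differences are only in execution. The paper invokes Billingsley's $\pi$-system criterion adapted to finite measures, with the mass bound coming from closedness of $\QQ_B$ and Proposition~\ref{equiprop}, and then pushes forward by continuity. You instead use compactness of $\QQ_B=\prod_n\{0,\dots,B(n+1)^2\}$ to approximate $h\circ\varphi$ uniformly by step functions on finitely many cylinder pieces, and you get $Q^n_x(\QQ_B)\to\mathbb{P}^\mu(\QQ_B)$ directly by summing Lemma~\ref{othertopology} over $y_0\le B$, which is a bit more elementary and self-contained.
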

\begin{proof}
Let $B$ large enough so that Lemma \ref{othertopology} holds. We claim that the sequence of finite measures
$(Q_x^n|_{\QQ_B})_{n\geq 1}$ converges weakly to the  measure  $\mathbb{P}^\mu|_{\QQ_B}$.
Lemma \ref{othertopology} would immediately imply this if the measures were probability measures, see e.g.  \cite[Theorem 2.2]{Bil99}. However the proof can be adapted to our situation since there is no mass loss in the limit. Indeed, since $\QQ_B$ is closed, and $(Q_x^n)$ converges weakly to 
$\mathbb{P}^\mu$ by Proposition \ref{equiprop}, we have that 
$$\mathbb{P}^\mu(\QQ_B)\geq \limsup_{n\to\infty}Q_x^n(\QQ_B).$$

Since $\varphi|_{\QQ_B}\colon \QQ_B\to \C$ is continuous, it follows that 
$$(\varphi_* Q_x^n)|_{\varphi(\QQ_B)}\stackrel{n\to\infty}\longrightarrow\tilde \mu|_{\varphi(\QQ_B)}$$ weakly. 
We now show that this implies that 
$$
\varphi_*Q^n_x\stackrel{n\to\infty}\longrightarrow\tilde \mu
$$
 weakly. Let $\varepsilon>0$, and let $h\in C_b(\C).$ Let $M>0$ such that $|h(z)|\leq M$ for all $z\in \C$. Let $B>0$ be large enough such that $\tilde \mu(\varphi(\QQ_B))\geq 1-\epsilon/(6M).$ It follows that if $n$ is large enough, we have that 
$$\varphi_* Q_x^n(\varphi(\QQ_B))\geq 1-\frac{\epsilon}{3M}.$$
Thus, if $n$ is large enough,
\begin{align*}\left|\int_\C h d\varphi_* Q_x^n-\int_\C h d\tilde\mu\right|&\leq \left|\int_{\varphi(\QQ_B)} h d\varphi_* Q_x^n-\int_{\varphi(\QQ_B)} h d\tilde\mu\right|+\left|\int_{\C\setminus \varphi(\QQ_B)} h d\varphi_* Q_x^n-\int_{\C\setminus\varphi(\QQ_B)} h d\tilde\mu\right|\\
&\leq \epsilon/2+M(\varphi_* Q_x^n(\C\setminus\varphi(\QQ_B))+\tilde \mu(\C\setminus\varphi(\QQ_B)))\leq \epsilon.
\end{align*}
\end{proof}
Finally, we prove equidistribution of periodic points. Let $\nu_n$ be the finite measure on $\N^N$ introduced in Subsection \ref{equiperiod}.

\begin{lemma}\label{othertopology2}
Let $B>0$ large enough so that  $\mathbb{P}^\mu(\QQ_B)>0$, and let  $C=C_{y_0,\dots, y_k}\subset \N^\N$ be a cylinder set, then
\begin{equation}\label{othertopologyeq}
\nu_n(C\cap\QQ_B)\stackrel{n\to\infty}\longrightarrow \mathbb{P}^\mu(C\cap\QQ_B).
\end{equation}
\end{lemma}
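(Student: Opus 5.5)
We follow the proof of Lemma \ref{othertopology}, replacing the preimage weights $Q^n_x$ by the periodic-point measures $\nu_n$. If $C\cap\QQ_B=\varnothing$ there is nothing to prove, so we assume $y_i\le B(i+1)^2$ for $0\le i\le k$. Then $\mathbb{P}^\mu(C\cap\QQ_B)>0$ by the argument in Proposition \ref{supportcontains}. The aim is to write $\nu_n(C\cap\QQ_B)$ as a sum over states $s$ of conditional probabilities of returning to $s$ at time $n$, and then apply Lemma \ref{lem:convergence MP} with a suitable conditioning set.

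\textbf{Decomposition.} A periodic point $y\in{\rm Per}_n(\sigma)$ is determined by its first $n$ symbols $(y_0,\dots,y_{n-1})$ together with $y_n=y_0$. For $n>k$,
$$
\nu_n(C\cap\QQ_B)=\sum_{s}\ \sum_{\substack{y\in{\rm Per}_n(\sigma)\cap C\cap \QQ_B\\ y_0=s}}\prod_{i=0}^{n-1}P_{y_iy_{i+1}}.
$$
Since $y\in C$, only $s=y_0$ contributes. We compare this with
$$
\mathbb{P}^\mu\bigl(\alpha\in C\cap\QQ_B^{(n)},\ \alpha_n=y_0\bigr)\big/\mu(y_0),
$$
where $\QQ_B^{(n)}$ imposes the bound $\alpha_j\le B(j+1)^2$ only for $j<n$. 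The periodic point $y$ repeats its block forever, so $y\in\QQ_B$ also requires $y_{j+mn}=y_j\le B(j+mn+1)^2$. This holds automatically once $y_j\le B(j+1)^2$ for $j<n$, because $B(j+1)^2\le B(j+mn+1)^2$. Hence $y\in\QQ_B$ if and only if the first block satisfies the constraint, and
$$
\nu_n(C\cap\QQ_B)=\mathbb{P}^\mu(\alpha\in C\cap\QQ_B^{(n)},\ \alpha_n=y_0)/\mu(y_0).
$$

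\textbf{Limit.} By Bayes,
$$
\nu_n(C\cap\QQ_B)=\mathbb{P}^\mu(\alpha_n=y_0\mid G_n)\,\mathbb{P}^\mu(G_n)/\mu(y_0),\qquad G_n:=C\cap\QQ_B^{(n)}.
$$
The sets $G_n$ decrease to $G:=C\cap\QQ_B$, so $\mathbb{P}^\mu(G_n)\to\mathbb{P}^\mu(G)>0$. The remaining task is to show $\mathbb{P}^\mu(\alpha_n=y_0\mid G_n)\to\mu(y_0)$. Lemma \ref{lem:convergence MP} gives $\mathbb{P}^\mu(\alpha_n=y_0\mid G)\to\mu(y_0)$. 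The difference between conditioning on $G_n$ and on $G$ is controlled by
$$
\mathbb{P}^\mu(G_n\setminus G)/\mathbb{P}^\mu(G)\to 0,
$$
so the two conditional probabilities differ by $o(1)$ and the claim follows.

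\textbf{Main obstacle.} The key point is the identification of $\nu_n$ restricted to $C\cap\QQ_B$ with a probability computed under $\mathbb{P}^\mu$, which rests on the monotonicity of the bound $B(j+1)^2$ in $j$. If that identification fails to be exact, an alternative is to dominate the tail using Lemma \ref{lemma: convergence of mass} and its modified version (mass at most $(4+\epsilon)\mu(s)$). A second point to handle is that $C\cap\QQ_B^{(n)}$ is not a single cylinder. This is harmless, since Lemma \ref{lem:convergence MP} applies to any Borel set of positive measure, and here we apply it to the fixed set $G$.
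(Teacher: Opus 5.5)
Your argument is correct and follows the paper's route: express $\nu_n(C\cap\QQ_B)$ through the conditional probability of $\{\alpha_n=y_0\}$, apply Bayes, and invoke Lemma~\ref{lem:convergence MP} with the fixed set $G=C\cap\QQ_B$. Your version is in fact more careful than the paper's. You get an exact identity using the truncated constraint $\QQ_B^{(n)}$ and then pass $G_n\downarrow G$, whereas the paper writes $\nu_n(C\cap\QQ_B)=\mathbb{P}^\mu(\alpha\in C\cap\QQ_B\mid\alpha_n=y_0)$ in one line; that identity also imposes the $\QQ_B$ bound on the random tail after time $n$, so it holds only up to the factor $\mathbb{P}^{\delta_{y_0}}(\alpha_j\le B(n+j+1)^2 \text{ for all } j)$.
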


\begin{proof}
If $C\cap\QQ_B=\varnothing$ the statement is trivially true. Assume thus that $C\cap\QQ_B\neq \varnothing$, that is,  $y_i\leq B(i+1)^2$ for all $0\leq i\leq k$.
If $n>k$, then we have
$$\nu_n(C\cap \QQ_B)=\mathbb{P}^\mu(\alpha\in  C\cap \QQ_B|\alpha_n=y_0).$$
The proof continues   as the proof of Lemma \ref{othertopology}.

\end{proof}

\begin{prop}
The sequence of probability measures
$$\varphi_*\nu_n=\sum_{z: f^n(z)=z}Q^n(\varphi^{-1}(z),\varphi^{-1}(z))\delta_z$$ converges weakly to $\tilde \mu$ as $n\to\infty$. 

\end{prop}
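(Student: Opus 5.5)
The plan is to copy the proof of Proposition \ref{autumn}, replacing $Q^n_x$ by $\nu_n$ and Proposition \ref{equiprop} by Theorem \ref{lido}. First I need to identify $\varphi_*\nu_n$ with the displayed sum. If $y\in\mathrm{Per}_n(\sigma)$ lies in $\QQ$ (a periodic sequence is bounded, hence lies in every $\QQ_B$ with $B\ge\max_i y_i$), then $\varphi(y)$ is a periodic point of $f$ of period dividing $n$ by \eqref{eq: phi is a conjugacy} and injectivity of $\varphi$. Conversely, every fixed point of $f^n$ that lies in $\varphi(\QQ)$ comes from such a $y$. Hence $\varphi_*\nu_n$ is the displayed measure, where the sum runs over periodic points in $\varphi(\QQ)\supset$ the periodic points in $J(f)$, with weights $\prod P_{y_iy_{i+1}}$.

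The first step is to fix $B$ with $\mathbb{P}^\mu(\QQ_B)>0$ and show that the finite measures $\nu_n|_{\QQ_B}$ converge weakly to $\mathbb{P}^\mu|_{\QQ_B}$. Lemma \ref{othertopology2} gives convergence on the cylinder sets intersected with $\QQ_B$. These sets form a countable $\pi$-system generating the topology of the closed set $\QQ_B$, so the argument of \cite[Theorem 2.2]{Bil99} applies once I control the total mass. That control comes from Theorem \ref{lido}: $\nu_n\to\mathbb{P}^\mu$ weakly with total mass tending to $1$, and $\QQ_B$ is closed, so the portmanteau theorem gives $\limsup_n\nu_n(\QQ_B)\le\mathbb{P}^\mu(\QQ_B)$. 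Together with the cylinder convergence, this yields $\nu_n(\QQ_B)\to\mathbb{P}^\mu(\QQ_B)$, so there is no mass loss and the normalized measures converge.

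The second step uses Lemma \ref{lem: phi on QB}: $\varphi|_{\QQ_B}$ is continuous, so $(\varphi_*\nu_n)|_{\varphi(\QQ_B)}\to\tilde\mu|_{\varphi(\QQ_B)}$ weakly. The third step is the $\epsilon/3$ argument of Proposition \ref{autumn}. Given $h\in C_b(\C)$ with $|h|\le M$, choose $B$ such that $\tilde\mu(\varphi(\QQ_B))\ge 1-\epsilon/(6M)$. This is possible because $\mathbb{P}^\mu(\QQ)=1$ by Proposition \ref{prop:modify process to prescribe support}. For large $n$, $\nu_n(\QQ_B)$ is close to $\mathbb{P}^\mu(\QQ_B)$ and the total mass $\nu_n(\N^\N)$ is close to $1$. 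Hence the mass of $\varphi_*\nu_n$ outside $\varphi(\QQ_B)$ is at most $\epsilon/(3M)$, and splitting the integral of $h$ into the part on $\varphi(\QQ_B)$ and its complement gives the result.

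The main obstacle is the mass bookkeeping. The measures $\nu_n$ are not probability measures, and one must also account for the mass of $\nu_n$ on periodic sequences outside $\QQ$ that $\varphi$ does not see. Periodic sequences are bounded, so $\nu_n$ gives no mass outside $\QQ$; what remains is to justify that $\nu_n(\N^\N)\to1$ for the modified process used in this section, via Theorem \ref{lido} and the remark following Lemma \ref{lemma: convergence of mass}. I would make sure the $(4+\epsilon)\mu(s)$ bound there holds for the modification of Proposition \ref{prop:modify process to prescribe support}, by choosing the added transition probabilities small enough. If that fails, I would instead obtain uniform integrability from Lemma \ref{lemma: return chance} together with tightness of $\mu$.
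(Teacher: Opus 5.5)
Your proposal is correct and is essentially the paper's proof. The paper only says the argument is analogous to Proposition~\ref{autumn}, with Lemma~\ref{othertopology2} replacing Lemma~\ref{othertopology} and Theorem~\ref{lido} replacing Proposition~\ref{equiprop}. Your extra bookkeeping is exactly what that analogy silently needs: periodic sequences are bounded, so $\nu_n$ is concentrated on $\QQ$, and $\nu_n(\N^\N)\to 1$ is required in the final splitting. That total-mass convergence for the modified process rests, as in the paper, on the unproved remark before Theorem~\ref{lido}. Your fallback via Lemma~\ref{lemma: return chance} and tightness of $\mu$ would not suffice on its own, since pointwise convergence of $(P^n)_{ss}$ gives no uniform control of the tails of $\sum_s (P^n)_{ss}$; a domination such as $(4+\epsilon)\mu(s)$ is still needed.
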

\begin{proof}
The proof is analogous to the proof of Proposition \ref{autumn}, using  Lemma \ref{othertopology2} and Theorem \ref{lido}.

\end{proof}

\begin{figure}[h]
  \includegraphics[width=12cm]{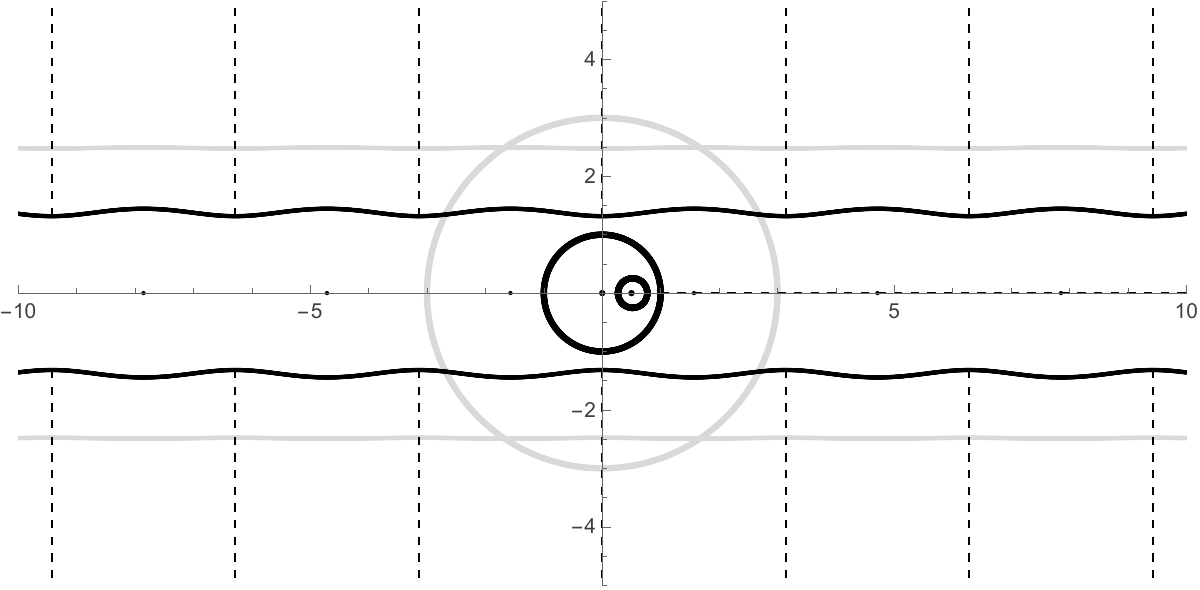}
  \caption{An illustration of important subsets for the disjoint-type map $\cos(z)/2$. Illustrated in black is the unit circle, its forward image, which is compactly contained in the unit disk, and its inverse image, which is the union of two horizontal curves, partitioning the complex plane into three topological disks: the middle sector, which is the inverse image of the unit disk, and the two tracts above and below. The curve $\delta$ is chosen on the positive real axis. The dashed lines illustrate $\delta$ and the inverse images of $\delta$. These inverse images partition each of the tracts into infinitely many fundamental domains $F_i$. Depicted in light gray are the circle of radius 3 and its preimage, which consists of two components, one in each of the tracts. These two curves partition each fundamental domain $F$ in two components; the bounded component is the cell $C(F,3)$.}
  \label{fig:disjoint}
\end{figure}


\subsection{Proofs of Propositions~\ref{prop:class B covering of cells logarithmic version} and  \ref{lem: size of cells log plane}} \label{sec:proof of class B covering of cells}

The following condition is introduced in  \cite{RRRS}. 
\begin{definition}
Let $\Omega\subset\C$ be a domain which is bounded to the left. 
The domain $\Omega$ satisfies the \emph{bounded slope condition} if 
there exist constants $M_1\geq 0, M_2\geq 0$ such that  for all $z,w\in \Omega$,
\begin{equation}\label{eq:bounded slope}
|\Im z-\Im w|\leq M_1 \max\{\Re z, \Re w\} +M_2.
\end{equation}
Notice that if $\Omega$ satisfies the bounded slope condition, then for all $t\in \R$ the domain $\Omega+it$ also satisfies the bounded slope condition, with the same constants $M_1,M_2$.
\end{definition}

 The following result is a combination of  Theorem 5.6 and Corollary 5.8 in  \cite{RRRS} (see also \cite[Proposition 2.5]{BF15}). The two results in \cite{RRRS} are stated for a function with bounded set of singular values; we state it in the more restrictive disjoint type setting. 
 
 \begin{prop}[\cite{RRRS}]
Let $f$ be a disjoint type function which is a finite composition of functions of finite order. Let $\widetilde f$ be a lift for $f$. Then
every tract $\widetilde T$ for $\widetilde f$ satisfies the bounded slope condition.
\end{prop}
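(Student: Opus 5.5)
The plan is to deduce the statement directly from the two results of \cite{RRRS} named just before it, with a reduction step in between. Theorem 5.6 there says that a function with bounded singular set which is a finite composition of finite order functions satisfies a uniform linear head-start condition. Corollary 5.8 says that such a function is quasiconformally (in fact affinely, in logarithmic coordinates up to bounded error) equivalent to a function whose logarithmic tracts satisfy the bounded slope condition.

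The first step is to check that the hypotheses of \cite{RRRS} hold. A disjoint-type $f$ has $S(f)\subset D$ bounded, so $f$ is of bounded type. The finite-composition-of-finite-order hypothesis is assumed verbatim. The lift $\widetilde f$ constructed above is a logarithmic transform in the sense of \cite{RRRS}: it is a biholomorphism from each tract $\widetilde T$ onto $\widetilde H\supset \H$, and it is $2\pi i$-periodic.

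The second step uses the geometric content of Theorem 5.6 together with Corollary 5.8. The key point is that the tracts of a logarithmic transform satisfying the head-start condition for finite compositions of finite order maps are contained in sectors or strips of controlled width. Following the argument of \cite[Proposition 2.5]{BF15}, I would deduce that each tract $\widetilde T$ lies within a region $\{|\Im z - c_{\widetilde T}|\le M_1\Re z+M_2\}$. This containment gives \eqref{eq:bounded slope} immediately for all $z,w\in\widetilde T$, with constants $2M_1$ and $2M_2$. Since the $2\pi i$-translates of a tract are again tracts with the same shape, the constants are uniform across the translates. Because there are finitely many tracts of $f$ up to translation, a maximum over the finitely many constants gives uniform $M_1,M_2$.

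The main obstacle is that Corollary 5.8 is phrased for a conjugate or normalized model rather than for our specific lift. To handle it, I would note that two logarithmic transforms of the same $f$ differ by a conformal change of coordinates on $\widetilde H$ near infinity. That change is of the form $z\mapsto z+O(1)$, coming from $\psi$ near $\infty$ and the choice of branch of $\log$. A bounded additive perturbation preserves the bounded slope condition, with $M_2$ enlarged, so the conclusion transfers to our normalization. This bounded-perturbation claim is the step I would verify most carefully.
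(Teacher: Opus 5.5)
At the top level you do what the paper does: the paper gives no argument beyond quoting Theorem~5.6 and Corollary~5.8 of \cite{RRRS} (see also \cite[Proposition~2.5]{BF15}) and noting that disjoint-type maps have bounded singular set. The problems are in the steps you add around that citation.

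As written, your ``key point'' infers that tracts lie in sectors from the head-start condition. That inference is not available. Head-start conditions only compare real parts of points and of their images, so by themselves they say nothing about the imaginary extent of a tract. Bounded slope has to be read off directly from the cited results.

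Similarly, ``finitely many tracts up to translation'' does not follow from the hypotheses. The map $\lambda e^{\sin z}$ with $\lambda>0$ small is of disjoint type and is a composition of two functions of order one. Yet it has infinitely many tracts: two in each vertical strip $2k\pi<\Re z<(2k+1)\pi$. This is harmless only because the statement is tract by tract, and $2\pi i$-translation invariance handles the translates. The paper imposes finiteness of tracts as a separate assumption later.

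Your ``main obstacle'' is misdiagnosed. Because $\exp\circ\widetilde f=f\circ\exp$, the tracts of $\widetilde f$ are exactly the components of $\exp^{-1}(f^{-1}(\C\setminus\overline D))$. They depend only on $f$ and $D$. Two lifts differ by a constant in $2\pi i\mathbb Z$ on each tract, and neither the branch of $\log$ nor $\psi$ plays any role. So there is no coordinate change $z\mapsto z+O(1)$ between lifts.

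What can genuinely differ from the setting of \cite{RRRS} (the paper is silent on this) is twofold. First, there may be an affine conjugation; in logarithmic coordinates this is $z\mapsto z+c+O(e^{-\Re z})$, a true bounded perturbation. Second, and more substantially, there is the choice of $D$. Passing from a larger domain $D_2$ with $\overline D\subset D_2$ to $D$ does not move the tracts; it enlarges each one by the layer $\widetilde f^{-1}(\exp^{-1}(\overline{D_2}\setminus\overline D))$. You must show this layer stays within bounded Euclidean distance of the smaller tract.

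This follows from Remark~\ref{enlarge}, by the same argument used for $X_1$ in the proof of Proposition~\ref{lem: size of cells log plane}:
\begin{itemize}
\item Each $2\pi i$-fundamental piece of $\exp^{-1}(\overline{D_2}\setminus\overline D)$ has compact closure in $\widetilde H'$, hence uniformly bounded hyperbolic diameter there.
\item Its image under $\varphi_{\widetilde T'}$ therefore has bounded hyperbolic diameter in $\widetilde T'$.
\item Since $\widetilde T'$ contains no vertical segment of length $2\pi$, that image also has bounded Euclidean diameter.
\end{itemize}
Only with this in hand does bounded slope transfer to the larger tract, with $M_2$ enlarged.
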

\begin{remark}\label{prop:bounded slope}
It is easy to see that, under the assumptions of the previous proposition, every strip $ S$ satisfies the bounded slope condition.
\end{remark}

\begin{remark}\label{enlarge}

Let $D'\Subset D$ be a holomorphic disk  with real-analytic boundary,
 containing  $S(f)\cup \{0\}$, for which $f(D')\Subset D'$. Define  $\widetilde H':={\exp}^{-1}(\mathbb C \setminus \overline{D'})$, and notice that $\overline{\widetilde H} \subset  \widetilde H'$. If $\widetilde T$ is a tract for $\widetilde f$, the inverse  map $\varphi_{\widetilde T}\colon \widetilde H\to \widetilde T$ extends to a biholomorphism 
 $$\varphi_{\widetilde T'}\colon \widetilde H'\to \widetilde T',$$ where $\widetilde T'$ is a simply connected open set containing $\widetilde T$ (the domain $\widetilde T'$  is actually a tract for a lift of $f$ which extends $\widetilde f$ and is defined on the larger open set $\exp^{-1}(f^{-1}(\C\setminus D')) )$.  We can choose $D'$ in such a way that  every $\widetilde T'$ is contained in $ \widetilde H$.

\end{remark}

\begin{proof}[Proof of Proposition \ref{prop:class B covering of cells logarithmic version}]
Let $M_1,M_2\geq 0 $ be the constants given by the bounded slope condition for $S_0$, which also apply to any other strip $S_i$. 
Let $\widetilde F$ be any fundamental domain for $\widetilde f$ and   let $\widetilde S=\widetilde f (\widetilde F)$.
Let $F_i(T)$ be a fundamental domain for $f$ and let  $\widehat F$ the fundamental domain for $\widetilde f$ contained in $\widetilde S$ such that $\exp(\widehat F)=F_i(T).$  Recall that by the ordering of the fundamental domains,   $\widetilde{f} (\widehat{F}) = S_i$.

Let $\alpha>0$ to be determined later. By definition 
\begin{align}
\label{eq:C1}\widetilde f ( C(\widetilde F,t))&=\widetilde S \cap \{\Re z< \log t\}, \\
  \label{eq:C2}  \widetilde f ( C(\widehat F,e^{t^\alpha }))&= S_i \cap \{\Re z<  t^\alpha\}.
\end{align}
Consider the   tract $\widehat T\subset \widetilde S$  such that $\exp(\widehat T)=T$.
Let $\phi_{\widehat T}\colon \widetilde H\to \widehat T$ be the inverse of $\widetilde f$.

In view of \eqref{eq:C1} and \eqref{eq:C2},   in order to show that
$$
C(\widehat F, e^{t^\alpha} )\Subset \widetilde f(C(\widetilde F,t))  \text{ for $|i|\leq \alpha\log t$},
$$
it is enough to show that
$$
\varphi_{\widehat T} ( S_i \cap \{\Re z< t^\alpha\})\Subset \widetilde S\cap \{\Re z< \log t\}    \text{ for $|i|\leq \alpha\log t$}.
$$

We first show that there exists $t_T\geq 0$ such that for all $t\geq t_T$
\begin{equation}\label{eq:class B condition 1}\varphi_{\widehat T} ( S_i \cap \{1\leq \Re z< t^\alpha\})\Subset \widetilde S\cap \{\Re z< \log t\}    \text{ for $|i|\leq \alpha\log t$}.
\end{equation}
Let $t$ be large enough so that $\varphi_{\widehat T}(1)<\log t$. Let $z\in S_i \cap \{1\leq \Re z< t^\alpha\}$ where $|i|\leq \alpha\log t$. We now estimate the real part of $z$. Since $\H\subset \widetilde H$ we have, thanks to the bounded slope condition,
\begin{align*}
\dist_{\widetilde T}(\phi_{\widetilde T}(1), \phi_{\widetilde T}(z))&=\dist_{\widetilde H}(1,z)\leq  \dist_\H(1, z) \leq \int_{1}^{\Re z}\frac{1}{t}dt+\left|\int_{0}^{\Im z}\frac{1}{\Re z}dt\right|=
\log \Re z  +\frac{|\Im z|}{\Re z}\leq\\
&\leq \log {\Re z}+\frac{M_1 \Re z+M_2+2\pi \alpha \log t }{\Re z}\leq M_1+M_2+(2\pi+1) \alpha \log t .
\end{align*}

Since points in tracts have distance at most $\pi$ from the boundary, we have $|x-y|\leq 2\pi \dist_{\widetilde T}(x,y)$  for all $x,y\in \widetilde T$,  and thus
$$
|\Re \phi_{\widetilde T}(1)-\re \phi_{\widetilde T}(z)|\leq |\phi_{\widetilde T}(1)-\phi_{\widetilde T}(z)|\leq 2\pi \dist_{\widetilde T}(\phi_{\widetilde T}(1), \phi_{\widetilde T}(z)).
$$
It follows that 
$$
\re \phi_{\widetilde T}(z)\leq \re \phi_{\widetilde T}(1)+2\pi (M_1+M_2)+2\pi(2\pi+1) \alpha \log t,
$$
and if $\alpha<\frac{1}{2\pi(1+2\pi)}$, then the right-hand side  is smaller than $\log t$ if $t_T$ is large enough, and this proves \eqref{eq:class B condition 1}.

We now show that   there exists $t_T\geq 0$ such that for all $t\geq t_T$
\begin{equation}\label{eq:secondcase}\varphi_{\widehat T} ( S_i \cap \{ \Re z\leq 1\})\Subset \widetilde S\cap \{\Re z< \log t\}    \text{ for $|i|\leq \alpha\log t$}.
\end{equation}
Let $t$ be large enough so that $\varphi_{\widehat T}(1)<\log t$. Let $z\in S_i \cap \{\Re z\leq 1\}$ where $|i|\leq \alpha\log t$. We now estimate the real part of $z$. Let $\widetilde T', \widetilde H',\varphi_{\widetilde T'}$ as defined in Remark \ref{enlarge}. Fix  a base point $z_0\in S_0 \cap \{\Re z=1\}$, and denote 
$$q:=\diam_{\widetilde H'}(S_0\cap \{ \Re z\leq 1\})<+\infty.$$
We have $$\dist_{\widetilde T'}(\phi_{\widetilde T}(z_0), \phi_{\widetilde T}(z))=\dist_{\widetilde H'}(z_0,z)\leq q\alpha \log t,$$
and thus
$$
|\Re \phi_{\widetilde T'}(z_0)-\re \phi_{\widetilde T'}(z)|\leq |\phi_{\widetilde T'}(z_0)-\phi_{\widetilde T'}(z)|\leq 2\pi \dist_{\widetilde T'}(\phi_{\widetilde T'}(z_0), \phi_{\widetilde T'}(z))\leq 2\pi q\alpha\log t.
$$
It follows that 
$\re \phi_{\widetilde T'}(z)\leq \Re \phi_{\widetilde T'}(z_0)+ 2\pi q\alpha\log t,$ and 
 if $\alpha<\frac{1}{2\pi q}$, then the right-hand side  is smaller than $\log t$ if $t_T$ is large enough, and this proves \eqref{eq:secondcase}.
\end{proof}

%

\begin{proof}[Proof of Proposition \ref{lem: size of cells log plane}]
Let $M_1,M_2\geq 0 $ be the constants given by the bounded slope condition for $S_0$, which apply to   any strip $S_i$. 
 Let $\widetilde S=\widetilde f(\widetilde F)$, and $\widetilde T$ be the tract containing $\widetilde F$.  Let $\beta>1$,  $n\in\N$.
The cell 
$$
C(\widetilde F, \beta^n)=\varphi_{\widetilde T}(\widetilde S\cap \{\Re z\leq n\log \beta\})
$$
is the union of  the two sets
$$
X_1:=\varphi_{\widetilde T}(\widetilde S\cap \{\Re z\leq 1\}),\quad X_2:=\varphi_{\widetilde T}(\widetilde S\cap \{1\leq \Re z\leq n\log \beta\}).
$$
 
  Since $X_1\cap X_2\neq\emptyset$,
$$
   \diam_{\widetilde H} C(\widetilde F, \beta^n)  \leq\diam_{\widetilde H}(X_1)+\diam_{\widetilde H}(X_2).
$$

We first estimate $\diam_{\widetilde H}(X_2)$. We have
$$
\diam_{\widetilde H}(X_2) \leq \diam_{\widetilde T} (X_2)=\diam_{\widetilde H}( \widetilde S\cap \{1\leq \Re z\leq n\log \beta\}).
$$

Fix  a base point $z_0\in \widetilde S\cap \{\re z=1\}$, and let $z\in  \widetilde S\cap \{1\leq \Re z\leq n\log \beta\}$.
Since $\H\subset \widetilde H$ we have, thanks to the bounded slope condition,
\begin{align*}
\dist_{\widetilde H}(z_0,z)\leq \dist_\H(z_0,z)&\leq\int_{1}^{\re z} \frac{dt}{t}+\left|\int_{\im z_0}^{\im z}\frac{dt}{\re z}\right|\leq \log \re z+ \frac{|\Im z-\Im z_0|}{\re z}\\
&\leq \log \re z+\frac{M_1 \re z+M_2}{\re z}\\
&\leq (\log\log \beta )(\log n)+ M_1+M_2.
\end{align*}

We now estimate $\diam_{\widetilde H}(X_1)$.
Let $\widetilde T', \widetilde H',\varphi_{\widetilde T'}$ as defined in Remark \ref{enlarge}.
We have 
$$
\diam_{\widetilde H}(X_1)\leq   \diam_{\widetilde T'} (X_1)=\diam_{\widetilde H'}( \widetilde S\cap \{ \Re z\leq 1\}):=B.
$$
Notice that the constant $B$ does not depend on the strip $\widetilde S$, and thus does not depend on the fundamental domain $\widetilde F$. The estimates
$$
    \diam_{\widetilde H} C(\widetilde F, \beta^n)  \leq\diam_{\widetilde H}(X_1)+\diam_{\widetilde H}(X_2)\leq (\log\log \beta )(\log n)+M_1+M_2+B.
$$
complete the proof.
\end{proof}


\section{Invariant measures for elementary strongly polynomial-like maps} \label{sec: strongly polynomial-like}

\subsection{Transfer operator for rational functions.}

Let us recall the method that Lyubich used in \cite{Lyubich83} to obtain equidistribution for rational functions, and discuss how we will adapt this method to the transcendental setting.

Let $f$ be a rational function of degree $d\ge 2$ acting on the Riemann sphere $\widehat{\mathbb C}$, and let us assume for the sake of simplicity that the exceptional set $\mathcal{E}(f)$ is empty. The \emph{transfer operator} $A = A_f$, acting on the Banach space of continuous functions $C(\widehat{\mathbb C})$ equipped with the sup-norm, is then defined by
\[
A(\varphi)(w) = \sum_{z \in f^{-1}(w)} \frac{1}{d} \varphi(z). 
\]

\begin{prop}[Lyubich]
For every $\varphi \in C(\widehat{\mathbb C})$ the family $(A^n(\varphi))_{n \in \mathbb N}$ is equicontinuous. 
\end{prop}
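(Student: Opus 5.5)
To prove that the family $(A^n(\varphi))_{n\in\mathbb N}$ is equicontinuous, I would first reduce to a dense class of test functions and then handle the critical values, which are the only real obstruction.

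\emph{Reduction.} Since $A$ is a positive operator with $A(1)=1$, we have $\|A^n\varphi\|_\infty\le\|\varphi\|_\infty$ for all $n$. If $\|\varphi-\psi\|_\infty<\epsilon$, then $|A^n\varphi-A^n\psi|<\epsilon$ uniformly in $n$. Hence the set of $\varphi$ for which $(A^n\varphi)$ is equicontinuous is closed in $C(\widehat{\mathbb C})$. It is also a linear subspace. By Stone--Weierstrass it therefore suffices to treat Lipschitz functions, for instance $C^2$ functions with respect to the spherical metric.

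\emph{Estimate away from critical values.} Fix $w_0\in\widehat{\mathbb C}$ and let $V_n$ be the finite set of critical values of $f^n$. On a small disk $D$ around $w_0$ avoiding $V_n$, the map $f^{-n}$ splits into $d^n$ univalent branches $g_1,\dots,g_{d^n}$, and
$$A^n\varphi(w)=\frac{1}{d^n}\sum_i \varphi(g_i(w)).$$
For $w,w'\in D$ this gives
$$|A^n\varphi(w)-A^n\varphi(w')|\le \mathrm{Lip}(\varphi)\,\frac1{d^n}\sum_i \operatorname{diam} g_i(D).$$
The key estimate I would aim for is Lyubich's area argument: the images $g_i(D_{1/2})$ are disjoint, so $\sum_i \mathrm{area}(g_i(D_{1/2}))\le \mathrm{area}(\widehat{\mathbb C})$. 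Koebe distortion on the branches defined on $D$ then gives $\operatorname{diam} g_i(D_{1/2})^2\lesssim \mathrm{area}(g_i(D_{1/2}))$. By Cauchy--Schwarz,
$$\frac1{d^n}\sum_i \operatorname{diam} g_i(D_{1/2})\lesssim \frac{1}{d^n}\sqrt{d^n\cdot C}=O(d^{-n/2}),$$
which is uniformly small in $n$. To be accurate, this needs only $\sqrt{d^n}$ of the branches to be controlled, and a bounded fraction of branches could have large diameter. That is harmless, because those branches contribute at most $2\|\varphi\|_\infty\times(\text{fraction})$.

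\emph{The obstacle: critical values.} The disk $D$ cannot avoid $V_n$ uniformly in $n$, since the postcritical set may be dense, so the previous step does not directly give a modulus of continuity independent of $n$. I would handle this by counting branches. Given a disk $D$ of radius $r$ around $w_0$, there are at most $2d-2$ critical values of $f$. Pulling $D$ back step by step, a component of $f^{-k}(D)$ fails to be univalent only if it contains one of the $2d-2$ critical points. Hence at step $k$ at most $2d-2$ of the $d^k$ components are bad, and their descendants carry weight at most $(2d-2)/d^{k}$ times the mass. Summing, the total proportion of mass sitting on non-univalent preimage components by time $n$ is at most
$$\sum_{k\ge K}\frac{2d-2}{d^k}$$
from the levels $k\ge K$. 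For the first $K$ levels, I would choose $r$ small enough relative to $K$ that $D$ avoids the critical values of $f^{K}$ near $w_0$, or at least up to a bounded number of them. Then given $\epsilon>0$, I would first pick $K$ so that this tail is below $\epsilon$. Next I would pick $r$ so that the first $K$ pullbacks of a slightly larger disk are univalent, except on a set of branches of weight less than $\epsilon$. Finally I would apply the area/Koebe estimate from the previous step to the univalent branches at all later times, which gives a bound uniform in $n$. The most delicate part will be making the bookkeeping of "bad" branches across iterates compatible with the shrinking Koebe disks (from $D$ to $D_{1/2}$ at each univalent pullback stage). I would attempt this by fixing a single pair $D_{1/2}\subset D$ and only ever restricting branches, never re-shrinking.
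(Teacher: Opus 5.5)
Most of your plan is sound. The paper does not reprove Lyubich's proposition, but its transcendental analogue (Propositions \ref{lemma:inverse branches}, \ref{prop: exponentially shrinking} and the proof of Theorem \ref{almostperiodictheorem}) follows the same scheme you propose:
\begin{itemize}
\item a density reduction to Lipschitz $\varphi$;
\item a disjointness count showing that the mass which \emph{newly} becomes non-univalent at level $k$ is $O(d^{-k})$;
\item the area/Koebe/Cauchy--Schwarz bound for univalent branches.
\end{itemize}

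The gap is in the first $K$ levels. The step ``pick $r$ so that the first $K$ pullbacks \dots\ are univalent, except on a set of branches of weight less than $\epsilon$'' cannot be carried out when $w_0$ is itself a critical value of $f^j$ for some $j\le K$. Every disk around $w_0$ contains that critical value. The non-univalent components then have weight bounded below independently of $r$: if $w_0=f(c)$ with $c$ critical of local degree $m$, the component of $f^{-1}(D)$ through $c$ already has weight at least $m/d$. Allowing ``a bounded number'' of critical values in $D$ does not help, because what must be small is the weight of the bad components, not their number. As written, your argument proves equicontinuity only at points outside $\bigcup_K V_K$, the set of critical values of all iterates, which may be dense.

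This is a genuine gap, not bookkeeping: you never use the standing hypothesis $\mathcal E(f)=\emptyset$, and without it the statement is false. For $f(z)=z^2$ one has $A^n\varphi(0)=\varphi(0)$ for all $n$, while for each fixed $w\neq 0$,
$$A^n\varphi(w)\to\frac{1}{2\pi}\int_0^{2\pi}\varphi(e^{i\theta})\,d\theta .$$

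The missing idea is to pull back first:
\begin{itemize}
\item Write $A^n\varphi(w)=d^{-n_0}\sum_{z\in f^{-n_0}(w)}A^{n-n_0}\varphi(z)$.
\item Use that the preimage multiset of $f^{n_0}$ varies continuously with $w$.
\item Apply your univalent argument at the preimages $z\notin V_K$.
\item Bound the remaining preimages by $2\|\varphi\|$ times their weight.
\end{itemize}
For this you need $\sum_{v\in V_K,\ f^{n_0}(v)=w_0}\deg_v(f^{n_0})/d^{n_0}\to 0$ as $n_0\to\infty$. Since $V_K$ is finite, this can only fail if $w_0$ is periodic of some period $p$ with $\deg_{w_0}(f^p)=d^p$. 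That means its cycle is totally invariant, which is exactly what $\mathcal E(f)=\emptyset$ excludes. This is the reduction the paper invokes (the ``standard argument similar to the argument used for rational functions by Lyubich'') at the end of the proof of Theorem \ref{almostperiodictheorem}, for $w$ in the critical set of $f^k$.
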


By Arzel\`a-Ascoli it follows that the transfer operator $A$ is almost periodic: every subsequence of $(A^n(\varphi))_{n \in \mathbb N}$ has a uniformly convergent subsequence. Lyubich then shows that whenever $A(\varphi) = \lambda \cdot \varphi$ with $|\lambda | = 1$, then $\lambda = 1$ and $\varphi$ must be a constant function. The existence of the invariant probability measure then follows from the following abstract functional analytic result.

\begin{thm}[Lyubich]
Let $B$ be a complex Banach space, and let $A: B \rightarrow B$ be an almost periodic operator. Suppose that the unitary spectrum of $A$ equals $\{1\}$, and that the corresponding eigenspace is simple and spanned by $h \in B$. Then there exists a unique functional $\mu \in B^*$ such that  $A^n(\varphi) \rightarrow \mu(\phi) h$ as $n \rightarrow \infty$ for every $\varphi \in B$.
\end{thm}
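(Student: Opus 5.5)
The statement to prove is Lyubich's abstract theorem. I would start from the almost periodic structure. Since every orbit $\{A^n\varphi\}$ is relatively compact in $B$, the operators $A^n$ are uniformly bounded: this follows from the uniform boundedness principle, since each orbit is bounded. Consider the closure $\mathcal{G}$ of $\{A^n\}_{n\ge 0}$ in the strong operator topology. By Arzelà--Ascoli type arguments (relative compactness of orbits plus equicontinuity from the uniform bound) $\mathcal{G}$ is a compact abelian semigroup. Its minimal ideal $K$ is therefore a compact abelian group, with identity an idempotent $P$.

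The key step is the Jacobs--de Leeuw--Glicksberg splitting $B=B_r\oplus B_s$, where $P$ is the projection onto $B_r$ and $\ker P = B_s$. Here $B_r$ is the closed span of the eigenvectors of $A$ with unimodular eigenvalues (the ``reversible'' part, on which $A$ acts invertibly, generating the group $K$). On $B_s$ we have $A^n\varphi\to 0$ (the ``flight'' vectors: $0$ lies in the orbit closure, and since orbits are relatively compact with $\|A^n\|$ bounded, $\|A^n\varphi\|$ is nonincreasing along the relevant subsequence, forcing convergence to $0$). Concretely, for $\varphi\in B_s$ some $A^{n_k}\varphi\to 0$, and $\|A^{m}\varphi\|\le \sup_j\|A^j\|\,\|A^{n_k}\varphi\|$ for $m\ge n_k$, so the whole sequence tends to $0$.

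For $B_r$, I would use the Peter--Weyl/Fourier decomposition on the compact abelian group $K$. Every $\psi\in B_r$ is approximated by finite sums of joint eigenvectors, i.e. vectors $v$ with $Av=\lambda v$, $|\lambda|=1$. By hypothesis the only such $\lambda$ is $1$ and its eigenspace is $\C h$, so $B_r=\C h$. Hence $P\varphi=\mu(\varphi)h$ for a bounded linear functional $\mu$, namely $\mu=$ the coordinate of the bounded projection $P$. Then $A^n\varphi = A^nP\varphi + A^n(\varphi-P\varphi)=\mu(\varphi)h + o(1)$. Uniqueness of $\mu$ is immediate, since the limit determines $\mu(\varphi)h$ and $h\neq 0$.

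The main obstacle is the justification that $B_r$ is spanned by unimodular eigenvectors. This requires the structure theory of the compact group $K$: its characters $\chi$ give projections $P_\chi=\int_K \overline{\chi(g)}\,g\,dg$ onto eigenspaces of $A$ with eigenvalue $\chi(A|_{B_r})$, and density of the span of these eigenspaces follows because $\int_K g\,\varphi\,dg$-approximations by trigonometric polynomials on $K$ converge in norm (the map $g\mapsto g\varphi$ is continuous). I would first try this character-projection argument directly. As a fallback, I would take a limit point $L$ of $A^{n}$ along a subsequence, show that $L$ commutes with $A$ and is a projection whose range carries an isometric-type action, and then invoke the spectral hypothesis on $A|_{\mathrm{range}(L)}$.
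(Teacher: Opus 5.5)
The paper does not prove this statement. It quotes it from \cite{Lyubich83} only as background, and then deliberately bypasses it: in the transcendental setting the orbits $(A^n(\varphi))$ are merely locally uniformly precompact, so Theorem~\ref{thm: non-uniform Lyubich} is proved directly instead. There is therefore nothing to compare line by line. Your proposal is a correct proof along the classical Jacobs--de Leeuw--Glicksberg route:
\begin{enumerate}
\item Uniform boundedness gives $\sup_n\|A^n\|<\infty$.
\item Tychonoff makes the strong-operator closure $\mathcal{G}$ compact, and multiplication is jointly continuous on this bounded set.
\item Its minimal ideal is $K=\mathcal{G}P$, a compact abelian group whose identity is an idempotent $P$.
\item For $\varphi\in\ker P$, the vector $0=P\varphi$ lies in the norm closure of the orbit. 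Your estimate $\|A^m\varphi\|\le\sup_j\|A^j\|\,\|A^{n_k}\varphi\|$ for $m\ge n_k$ then gives $A^m\varphi\to 0$.
\item Stone--Weierstrass on $K$ shows that the ranges of the character projections $P_\chi$ span a dense subspace of $PB$.
\end{enumerate}

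The one step worth writing out is that a joint eigenvector $v\in PB$ of $K$ is an eigenvector of $A$. Indeed $Av=APv$ with $AP\in K$, so $Av=\chi(AP)v$ with $|\chi(AP)|=1$. The hypothesis then gives $PB\subseteq\C h$, which is all you need. (In fact $h\in PB$ since $A^nh=h$, and $\mu(h)=1$.)

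Your parenthetical claim that $\|A^n\varphi\|$ is nonincreasing along a subsequence is unjustified, since nothing forces $\|A\|\le 1$. It plays no role in the argument, however, and the fallback plan is unnecessary.

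It is worth seeing how the paper's substitute argument mirrors yours:
\begin{enumerate}
\item Subsequential limits of $A^n(\varphi)$ are shown to be ``stable''; this is the analogue of lying in the reversible part $PB$.
\item Lemma~\ref{lemma: stable implies constant} shows that stable functions are constant on $J_Q(f)$; this plays the role of the spectral hypothesis.
\item Tightness compensates for the missing norm compactness of orbits when the limit is identified.
\end{enumerate}
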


In the transcendental setting we will work with a similar transfer operator $A = A_f$, acting on the complex Banach space $C_b(\mathbb C)$ of bounded continuous complex-valued functions on $\mathbb C$,  equipped with the sup norm $\|\cdot\|_{\mathbb C}$. Following ideas that are similar in spirit as those used by Lyubich, we will prove that for each $\varphi \in C_b(\mathbb C)$ the family $(A^n(\varphi))$ is equicontinuous. However, Arzel\`a-Ascoli only implies that every subsequence of  $(A^n(\varphi))_{n \in \mathbb N}$ has a subsequence that converges locally uniformly, and thus we will not be able to use the above functional analytic result. 
In fact, it is clear that we cannot expect the sequence $(A^n(\varphi) )$ to converge uniformly to a nonzero constant: indeed, if $\varphi$ has compact support, then so does $A^n(\varphi)$ for every $n \in \mathbb N$, hence the functions $A^n(\varphi)$ cannot converge uniformly to any non-zero constant.

Instead of relying on an abstract functional analytic result, we will prove directly in Theorem \ref{thm: non-uniform Lyubich} that the equicontinuity of the family $(A^n(\varphi))_{n \in \mathbb N}$ and the tightness  of the sequence of probability measures $((A^*)^n \delta_w)$ imply  that for every $\varphi\in C_b(\C)$ the sequence  $(A^n(\varphi))$ converges locally uniformly to a constant function $\varphi_\infty$. 
Again by tightness the functional $\mu \in C_b(\mathbb C)^*$ that assigns  to a function $\varphi$ the value $\varphi_\infty$ is a probability measure on $\C$.

There will be no condition that corresponds to the description of the unitary spectrum. However, we will work in the concrete setting of the transfer operator, and we will prove in Lemma \ref{lemma: stable implies constant} and Proposition \ref{prop: limits are constants} that any subsequential limit of  the sequence $(A^n(\varphi))$ is constant. The reader familiar with the proofs from \cite{Lyubich83} will recognize many of the steps in these proofs.

Let us end this preamble by discussing the family of test functions $C_b(\mathbb C)$.
 Due to the tightness assumption, the sequence $((A^*)^n \delta_w)$ converges weakly to a probability measure $\mu$ if and only if  $$\int_\C\varphi\, d(A^*)^n \delta_w\to \int_\C\varphi \,d\mu$$ for all $\varphi$ in the space $C_0(\C)$ of continuous functions vanishing at $\infty$.  Hence one could work instead with the Banach space  $C_0(\mathbb C)$, but working in this smaller Banach space does not affect any of the proofs.


\subsection{Transfer operator for entire functions}
Let $f\colon \C\to \C$ be an entire function. We limit this discussion to the case where $f$ is surjective, but note that it can easily be adapted to a transcendental function with an omitted value.

\begin{definition}[Weight function]
A \emph{weight function} is a function $Q(w,z)$ with values in $[0,1]$ defined for every $(z,w)\in \C^2$ such that $f(z)=w$, which  for all $w\in \C$ satisfies $$\sum_{z\in f^{-1}(w)}Q(w,z)=1.$$
Notice that here and throughout the section we sum over preimages counting with multiplicities. With slight abuse of notation we refer to the function $Q\colon \C\to [0,1]$ defined by 
$$
Q(z):=Q(f(z),z),
$$
 also as a weight function, which clearly carries the same information as the function $(z,w)\mapsto Q(w,z)$.

Let $Q$ be a weight function. For every $w\in \C$ we define a probability measure
$Q_w$ with support contained in $f^{-1}(z)$ by
$$
Q_w:=\sum_{z\in f^{-1}(w)}Q(z)\delta_z.
$$ 
Moreover, for every function $\varphi\in C_b(\C)$, we define a function
\begin{equation}\label{eq: transfer operator}
A(\varphi)\colon \C\to \C,\quad A (\phi)(w) := \sum_{z\in f^{-1}(w)} Q(z) \phi(z)=\int_\C \varphi dQ_w,
\end{equation}
and notice that $\|A(\varphi)\|_\C\leq \|\varphi\|_\C.$
\end{definition}
Note that for a polynomial of degree $d$  we can take the constant function $Q(z) = \frac{1}{d}$.

\begin{lemma}
Let $Q$ be a weight function. The following are equivalent:
\begin{enumerate}
\item for every $\varphi\in C_b(\C)$ the function $A(\varphi)$ is continuous,
\item the map $w\mapsto Q_w$ from $\C$ to $(C_b(\C))^*$ endowed with the weak-$*$ topology is continuous.
\end{enumerate}
\end{lemma}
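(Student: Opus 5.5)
This equivalence is essentially a restatement of the definitions, since $A(\varphi)(w)=\int_\C\varphi\,dQ_w$. The plan is to prove each implication by unwinding what continuity in the weak-$*$ topology of $(C_b(\C))^*$ means. Recall that a net (or sequence, since $\C$ is first countable) $\Lambda_k\to\Lambda$ in the weak-$*$ topology if and only if $\Lambda_k(\varphi)\to\Lambda(\varphi)$ for every $\varphi\in C_b(\C)$. Here each $Q_w$ is regarded as the functional $\varphi\mapsto\int_\C\varphi\,dQ_w$; this is well defined and bounded by $\|\varphi\|_\C$, because $Q_w$ is a probability measure (countable sum, weights summing to $1$).

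\emph{(2)$\Rightarrow$(1).} Fix $\varphi\in C_b(\C)$ and let $w_k\to w$. By (2), $Q_{w_k}\to Q_w$ weak-$*$. Evaluating at the fixed test function $\varphi$ gives
$$A(\varphi)(w_k)=\int_\C\varphi\,dQ_{w_k}\to\int_\C\varphi\,dQ_w=A(\varphi)(w),$$
so $A(\varphi)$ is sequentially continuous. Since $\C$ is a metric space, $A(\varphi)$ is continuous.

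\emph{(1)$\Rightarrow$(2).} By definition of the weak-$*$ topology, the map $w\mapsto Q_w$ into $(C_b(\C))^*$ is continuous if and only if its composition with every evaluation map $\Lambda\mapsto\Lambda(\varphi)$, $\varphi\in C_b(\C)$, is continuous. This holds because the weak-$*$ topology is the initial topology for these evaluations. The composition is exactly $w\mapsto A(\varphi)(w)$, which is continuous by (1). Hence (2) holds.

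There is no genuine obstacle here. The only points needing care are bookkeeping. First, one must check that $Q_w$ really defines an element of $(C_b(\C))^*$, i.e.\ that the possibly infinite sum $\sum_{z\in f^{-1}(w)}Q(z)\varphi(z)$ converges absolutely; this follows since $\sum_z Q(z)=1$ and $|\varphi|\le\|\varphi\|_\C$. Second, one should invoke the universal property of the initial (weak-$*$) topology rather than arguing with sequences in the dual. That dual space is not metrizable, so a sequential argument would not suffice for the direction (1)$\Rightarrow$(2).
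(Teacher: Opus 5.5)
Your proof is correct and essentially matches the paper's, which simply observes that both conditions amount to $\int_\C\varphi\,dQ_{w_k}\to\int_\C\varphi\,dQ_w$ whenever $w_k\to w$ and $\varphi\in C_b(\C)$. One correction to your closing remark: because the domain $\C$ is first countable, sequential continuity of $w\mapsto Q_w$ already implies continuity whether or not the target is metrizable, so the paper's purely sequential argument for (1)$\Rightarrow$(2) is also valid.
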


\begin{proof}
Both properties are equivalent to the fact that if $w_k\to w$ and $\varphi\in C_b(\C)$ then $$\int_\C\varphi \, dQ_{w_k}\to\int_\C\varphi\,dQ_{w}.$$
\end{proof}
\begin{remark}
Notice that (1) implies that  $A\colon C_b(\C)\to C_b(\C)$ defined in (\ref{eq: transfer operator}) is  a bounded continuous operator with $\|A\|\leq 1$, called the \emph{transfer operator} associated with $Q$. 
Also, (2) can be restated by saying  that  $(Q_w)_{w\in\C}$ is the transition kernel of a Feller Markov process on $\C$.
\end{remark}

\begin{definition}
We say that the weight function $Q$ is \emph{weakly continuous} if one of the two equivalent conditions from the previous lemma is satisfied. 
\end{definition}
We now show that if a weight function $Q$ is continuous as a function from $\C$ to $[0,1]$, then it is weakly continuous and thus 
its associated transfer operator is well defined.
\begin{lemma}\label{lem: transfer operator well defined}
If the weight function $Q\colon \C\to [0,1]$ is continuous, then it is weakly continuous.
\end{lemma}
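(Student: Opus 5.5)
We have to show that for every $\varphi\in C_b(\C)$ the function $A(\varphi)(w)=\sum_{z\in f^{-1}(w)}Q(z)\varphi(z)$ is continuous. So we fix $w_0\in\C$, a sequence $w_k\to w_0$ and $\varepsilon>0$, and aim to prove $|A(\varphi)(w_k)-A(\varphi)(w_0)|<\varepsilon$ for $k$ large. The strategy is to split the preimages of $w_0$ into finitely many and "the rest". Since $\sum_{z\in f^{-1}(w_0)}Q(z)=1$, we can choose finitely many preimages $z_1,\dots,z_m$ of $w_0$, with multiplicities $d_1,\dots,d_m$, such that
\[
\sum_{i=1}^m d_iQ(z_i)>1-\varepsilon.
\]

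\textbf{Step 1: local behaviour near each chosen preimage.} Take small pairwise disjoint disks $D_i$ around the $z_i$. For $k$ large, $f$ has exactly $d_i$ preimages of $w_k$ in $D_i$, counted with multiplicity. This follows from Rouch\'e, or from the local normal form $f(z)-w_0\sim c(z-z_i)^{d_i}$. Shrinking the radii of the $D_i$ as $k\to\infty$, every preimage of $w_k$ in $D_i$ converges to $z_i$. Since $Q$ and $\varphi$ are continuous, we get
\[
\sum_{z\in f^{-1}(w_k)\cap D_i}Q(z)\varphi(z)\;\to\; d_iQ(z_i)\varphi(z_i),
\qquad
\sum_{z\in f^{-1}(w_k)\cap D_i}Q(z)\;\to\; d_iQ(z_i).
\]

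\textbf{Step 2: controlling the remaining preimages.} This is the key step, and it uses the normalization $\sum Q=1$ at $w_k$. By Step 1, for $k$ large the total weight of preimages of $w_k$ inside $\bigcup_i D_i$ exceeds $1-2\varepsilon$. The normalization at $w_k$ then forces the weight of the preimages of $w_k$ outside $\bigcup_i D_i$ to be less than $2\varepsilon$. Their contribution to $A(\varphi)(w_k)$ is therefore at most $2\varepsilon\|\varphi\|_\C$. The corresponding tail for $w_0$ contributes at most $\varepsilon\|\varphi\|_\C$.

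\textbf{Step 3: conclusion.} Combining Steps 1 and 2 gives
\[
\limsup_k |A(\varphi)(w_k)-A(\varphi)(w_0)|\le 3\varepsilon\|\varphi\|_\C .
\]
Since $\varepsilon$ is arbitrary, $A(\varphi)$ is continuous, and so $Q$ is weakly continuous.

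The main obstacle is that $f^{-1}(w_k)$ is infinite and its points need not stay near $f^{-1}(w_0)$: preimages can run off to infinity. Continuity of $Q$ gives no uniform control on those escaping points. It is only the normalization $\sum_{z\in f^{-1}(w)}Q(z)=1$ holding at every $w$, combined with the lower semicontinuity of the mass captured near the finitely many chosen preimages, that rules out any escaping mass.
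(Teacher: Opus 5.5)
Your proof is correct and follows essentially the same route as the paper. You pick finitely many preimages of $w_0$ carrying all but $\varepsilon$ of the weight, use continuity of $Q$ and $\varphi$ together with Rouch\'e-type stability of preimages (counted with multiplicity) near these points, and then use the normalization $\sum_{z\in f^{-1}(w_k)}Q(z)=1$ to force the remaining preimages of $w_k$ to carry small total weight. If anything, you state the role of that normalization at the nearby point more explicitly than the paper does.
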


\begin{proof}
Let $\varphi \in C_b(\mathbb C)$, $w \in \mathbb C$ and $\epsilon>0$, and write $\|\varphi\|_\C = M$. Without loss of generality we may assume that $M>0$. We now show that  there exists $\delta>0$ such that  if $|w-w'|\leq \delta$, then 
$$
\left|\int_\C\varphi \, dQ_{w'}-\int_\C\varphi\,dQ_{w}\right|=| \sum_{z' \in f^{-1}(w')} Q(z')\phi(z')- \sum_{z\in f^{-1}(w)} Q(z)\phi(z)|\leq \epsilon,
$$
which proves the result.

Since the probability measure $Q_w$ is supported on the countable set $f^{-1}(w)$, we can select a finite subset $S \subset f^{-1}(w)$
for which
 $$
Q_w(S) >   1-\frac{\epsilon}{4M},
$$
and thus
$$
|\sum_{z\in f^{-1}(w)\setminus S} Q(z)\phi(z)|\leq \epsilon/4.
$$

For $w'$ sufficiently close to $w$ we can find a corresponding subset $S' \subset f^{-1}(w')$ such that each $z' \in S^\prime$ is close to a point $z\in S$. Note that there may be multiple points in $S^\prime$ corresponding to a single point $z \in S$ if $z$ is a critical point of $f$. Since $Q$ and $\varphi$ are continuous, we can choose $\delta>0$ sufficiently small such that for $|w-w'|\leq \delta$,
\[
|\sum_{z^\prime \in S^\prime} Q(z^\prime) \varphi(z^\prime) - \sum_{z \in S} Q(z) \varphi(z)| \leq \frac{\epsilon}{2}.
\]
Thanks to the continuity of $Q$, by reducing $\delta$ if necessary we can guarantee that 
\[
Q_{w'}( S^\prime) >   1-\frac{\epsilon}{4M},
\]
 and thus
\[
|\sum_{z^\prime \in f^{-1}\{w^\prime\} \setminus S^\prime} Q(z^\prime) \varphi(z^\prime) - \sum_{z \in f^{-1}\{w\} \setminus S} Q(z) \varphi(z)| \leq \frac{\epsilon}{2},
\]
which completes the proof.
\end{proof}

\begin{remark}
The converse of the previous lemma does not hold. It is easy to see that a weakly continuous weight function $Q$ has to be continuous at every regular point of $\varphi$, but it could be discontinuous at critical points. 
\end{remark}
In the rest of this subsection, we merely assume that $Q$ is weakly continuous. However, in the examples that we will consider in the remaining of the section the weight function $Q$ will actually be continuous.  

%
%
%
%
\begin{remark}
The transpose operator $A^*\colon (C_b(\C))^*\to  (C_b(\C))^*$ sends the Dirac measure $\delta_w$ to the measure $Q_w$. For all $n\geq 1$ denote by
\begin{equation} \label{eq:etan}
Q^n_w:=(A^*)^n(\delta_w)=\sum_{z\in f^{-n}(w)}Q^n(z)\delta_z,
\end{equation}
 where 
\begin{equation}\label{nweights}Q^n(z):=\prod_{j=0}^{n-1}Q(f^j(z)).
\end{equation}
Notice that, for every probability measure $\nu$ on $\C$ and for all measurable $B\subset \C$, 
$$(A^*)^n(\nu)(B)=\int_\C Q^n_w(B)d\nu(w).$$
Hence a probability measure $\nu$ satisfies $A^*\nu=\nu$ if and only if
\begin{equation}\label{eq:measure of preimages integral2}
\nu(B)=\int_\C Q_w(B)d\nu(w)=\int_\C\left( \sum_{z\in f^{-1}(w)\cap B} Q(z)\right) d\nu(w).
\end{equation}
In particular, if $g\colon B\to \C$ is a holomorphic branch of $f^{-1}$ defined on an open set $B\subset \C$, then
\begin{equation}\label{integralweight}
\nu(g(B))=\int_BQ(g(w))d\nu(w) .
\end{equation}
\end{remark}

%
%
%
%
%
\subsection{Weighted equidistribution of preimages}
We  introduce  conditions on a weakly continuous weight function $Q$ that will imply weighted equidistribution of the preimages of $f$, that is the existence of a probability measure $\mu$ on $\C$ such that $Q_w^n\to \mu$ weakly for all $w\in \C$.
\begin{defn}[Normality]
    We say that a weakly continuous weight function $Q$ is \emph{normal} if for every $\phi \in C_b(\C)$ the family $(A^n (\phi))_{n \in \mathbb N}$ is equicontinuous on $\mathbb C$.
\end{defn}
\begin{defn}[Tightness]
    We say that a weakly continuous weight function $Q$ is \emph{tight} if for every $\epsilon>0$ there exists a compact $K \subset \mathbb C$ such that for every $w \in \mathbb C$ there exists $n_0 \in \mathbb N$ such that for $n \ge n_0$ one has that
        \[
        Q^n_w(K) = \sum_{z \in f^{-n}(w)\cap K} Q^n(z) \ge 1-\epsilon.
        \]
\end{defn}

\begin{defn}[Irreducibility]\label{defirr}
    We say that a weakly  continuous weight function $Q$ is \emph{irreducible} if one of the two following conditions holds:
    \begin{enumerate}
        \item there exists a repelling  fixed point $p$  with  $0<Q(p)<1$ such that for every $r>0$ and  every compact $K \subset \mathbb C$ there exists an $n \in \mathbb N$ such that for every $w \in K$ there exists $z\in D(p,r)$ with $f^n(z) = w$ and $Q^n(z) > 0$,
    \item
 there exist   two distinct repelling  periodic points $p_1,p_2$  with coprime period and with  $Q(p_i)>0$ such that for every $i=1,2$, every $r>0$, and  every compact $K \subset \mathbb C$ there exists an $n \in \mathbb N$ such that for every $w \in K$ there exists $z\in D(p_i,r)$ with $f^n(z) = w$ and $Q^n(z) > 0$.

    \end{enumerate}

\end{defn}
In the proofs that follow we will assume to be in case (1) of the previous definition for simplicity of exposition, but the proofs can be easily adapted to case (2).
\begin{remark}
We recall that for general transcendental functions there may not exist any repelling fixed point,  however, by 
\cite[Theorem 1]{BerPer} every transcendental function is known to have infinitely many repelling cycles of any period at least $2$.
\end{remark}


\begin{remark}
If the weight function $Q$  is nowhere zero, thanks to the previous remark the weight function $Q$ is irreducible if and only if the function $f$ has  no exceptional point.
\end{remark}

\begin{defn}[$Q$-Julia set]
    The $Q$-Julia set of $f$, denoted by $J_Q(f)$, is defined as the set of all points $z_0 \in \mathbb C$ which have the property that for every $r>0$ the set 
    \[
    \Sigma(z_0,r):=\{w\in \mathbb C \mid \exists z\in D(z_0,r), n \in \mathbb N: f^n(z) = w, Q^n(z) > 0\}
    \]
    avoids at most one point in $\C$.
  
  
\end{defn}

\begin{remark}
Let us say that a point $z$ is $Q$-\emph{exceptional} if it is fixed and $Q(z)=1$. Clearly an exceptional point is $Q$-exceptional. 
 Let $z_0\in\C$ and $r>0$. If $ \Sigma(z_0,r)$ avoids exactly one point $z$, then $z$ is $Q$-exceptional. 
 Notice that if $Q$ is irreducible, then no point of $\C$ can be $Q$-exceptional, and thus for all $z_0\in \C, r>0$ we have
$ \Sigma(z_0,r)=\C$.
\end{remark}

\begin{remark}
The $Q$-Julia set is closed and   forward invariant, but is backward invariant only with respect to preimages with positive weight.
Notice moreover that if the weight function $Q$ is nowhere zero, then $J_Q(f) = J(f)$.
\end{remark}
We  now prove weighted equidistribution of preimages  in the case of a nowhere zero weight function.
\begin{thm}[Weighted equidistribution of preimages]\label{thm: non-uniform Lyubich}
 Assume $f$ has no exceptional points.
  Let $Q$ be a weakly continuous weight function which is normal, tight, and nowhere zero. 
Then for every $\phi \in C_b(\C)$ the sequence of functions $(A^n (\phi))$ converges locally uniformly to a constant function $\varphi_\infty$, and the functional in $(C_b(\C))^*$ defined by
$ \varphi\mapsto \varphi_\infty$ is a Borel probability measure $\mu$ on $\mathbb C$ whose support equals the Julia set $J(f)$.

Moreover, the probability measure $\mu$ is $A^*$-invariant (and thus $f$-invariant) and mixing,  and if $\nu$ is an arbitrary Borel probability measure on $\C$, then the sequence of probability measures  
$((A^*)^n(\nu))$ converges weakly to $\mu$.  In particular, for all $w\in \C$ we have that $(Q^n_w)$ converges  weakly to $\mu$.

\end{thm}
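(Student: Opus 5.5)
The plan follows Lyubich's scheme, with Arzelà--Ascoli applied only on compact sets. Fix $\varphi\in C_b(\C)$ and set $\varphi_n=A^n(\varphi)$. By normality the family $(\varphi_n)$ is equicontinuous, and it is bounded by $\|\varphi\|_\C$. By Arzelà--Ascoli and a diagonal argument, every subsequence has a further subsequence converging locally uniformly to some $\psi\in C_b(\C)$. The main step is to show that every such limit $\psi$ is constant. We will first treat real-valued $\varphi$; the complex case follows by splitting into real and imaginary parts.

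\textbf{Step 1: the functions $M_n=\sup\varphi_n$ and $m_n=\inf\varphi_n$.} Since $A$ is positive and $A(1)=1$, the sequence $M_n$ is non-increasing and $m_n$ is non-decreasing; call their limits $M$ and $m$. Tightness is what guarantees that the local limit $\psi$ actually sees these values. Fix $\epsilon$, let $K$ be the compact set provided by tightness, and fix $w$. Then $\varphi_{n+k}(w)=\int \varphi_n\,dQ^k_w$, and $Q^k_w(K)\ge 1-\epsilon$ for $k$ large. From this we get $\sup_K \psi\ge M-O(\epsilon)$: choose $n$ in the subsequence, and note that $M_{n+k}\ge M$ forces $\varphi_{n+k}(w)$ to be close to $M$ at some $w$.

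The argument then uses limits along two subsequences at once. Take a limit $\psi$ of $\varphi_{n_j}$, and a limit $\psi'$ of $\varphi_{n_j+k}$ for each fixed $k$ (possible after a further diagonal extraction). Then $\psi'=\lim \int\varphi_{n_j}\,dQ^k_w$. Using tightness to control the mass outside $K$, this gives $\psi'(w)=A^k\psi(w)$ up to $O(\epsilon)$, so $\psi'=A^k\psi$ exactly. In addition $\sup\psi'=\sup\psi=M$, where the supremum is approached inside $K$ because of tightness and equicontinuity.

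\textbf{Step 2: the maximum principle, which is the key obstacle.} Let $\psi$ be a limit along a subsequence such that $\sup\psi=M$ is attained at some point $w_0$. Attainment comes from compactness of $K$ together with the tightness estimate above. We also have $A^k\psi\le M$ everywhere and $A^k\psi(w_0')=M$ at the corresponding points. Since $Q$ is nowhere zero, $A^k\psi(w)=M$ forces $\psi\equiv M$ on the whole of $f^{-k}(w)$.

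Now use that the limits are shift-compatible, as in Lyubich's argument (compare Lemma \ref{lemma: stable implies constant} and Proposition \ref{prop: limits are constants}). One chooses $\psi$ to be a limit of $\varphi_{n_j-k}$ for all $k$ simultaneously, so that $A^k\psi_k=\psi$. Then $\psi_k=M$ on $f^{-k}(w_0)$. The backward orbit of $w_0$ accumulates on all of $J(f)$, because there are no exceptional points. Equicontinuity, which is uniform in $k$, then gives $\psi_k\ge M-\delta$ near any point of $J$ once $k$ is large. Applying $A^{k'}$ transfers this back: every point has a preimage of positive weight near a repelling point of $J$, since $\Sigma(z_0,r)=\C$. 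This forces $\psi\ge M-O(\delta)$ on compacts.

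This step is the delicate one, because positivity of $Q$ only gives a positive weight, not a weight bounded below. To get around this, the plan is to combine the maximum attained at $w_0$ with the fact that the average $\int\psi_k\,dQ^{k'}_w$ equals $M$ only if $\psi_k=M$ on the whole support. This yields $\psi\equiv M$. The same argument gives $\psi\equiv m$, so $M=m$ and $\varphi_n\to\varphi_\infty$ locally uniformly.

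\textbf{Step 3: the limit measure and its properties.} The map $\varphi\mapsto\varphi_\infty$ is positive, linear, and sends $1\mapsto1$. By tightness, $|\varphi_\infty|\le\epsilon\|\varphi\|_\C$ whenever $\varphi$ vanishes on $K_\epsilon$. Riesz representation on $C_0$, combined with tightness, then gives a Borel probability measure $\mu$ with $Q^n_w\to\mu$ weakly.

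For general $\nu$, dominated convergence gives $\int\varphi_n\,d\nu\to\varphi_\infty$, because $\varphi_n$ converges locally uniformly and is bounded. Invariance follows from $(A\varphi)_\infty=\varphi_\infty$. Mixing follows from $\int (\varphi\circ f^n)\,\psi\,d\mu=\int\varphi\,A^n(\psi)\,d\mu\to\int\varphi\,d\mu\int\psi\,d\mu$, where we use $A(\psi\cdot\varphi\circ f)=\varphi\,A\psi$ together with invariance.

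For the support: $Q^n_w$ is supported on backward orbits, which accumulate only on $J(f)$ when $w\in J$, so $\mathrm{supp}\,\mu\subset J(f)$. Conversely, for $z_0\in J$ take a bump function $\varphi$ near $z_0$. Since $Q>0$ and there are no exceptional points, $A^n\varphi(w)>0$ for a suitable $n$ at a repelling point. Invariance then gives $\mu(\varphi)=\mu(A^n\varphi)>0$, provided $\mu$ charges a neighbourhood of that point; this follows by iterating from any point of $\mathrm{supp}\,\mu$.
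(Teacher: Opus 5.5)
Your Steps 1--2 rest on $M=\lim_n\sup_\C A^n\varphi$ and $m=\lim_n\inf_\C A^n\varphi$. The conclusion you aim for (every subsequential limit is $\equiv M$ and $\equiv m$, so $M=m$) is false in this setting.

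Here is a counterexample. Let $K$ be the tightness compact for $\epsilon=1/2$, and take $\varphi$ continuous with compact support, $0\le\varphi\le 1$ and $\varphi=1$ on $K$. Since $A^n\varphi$ vanishes outside the compact set $f^n(\operatorname{supp}\varphi)$, you get $m_n=0$ for all $n$. On the other hand, for each fixed $w$ you have $A^n\varphi(w)\ge Q^n_w(K)\ge 1/2$ once $n\ge n_0(w)$, so $M\ge 1/2>m$. If you use $-\varphi$ instead, then $M=0$ while every subsequential limit satisfies $\psi\le -1/2$ everywhere. So no limit attains $M$, and your estimate $\sup_K\psi\ge M-O(\epsilon)$ fails.

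The flaw in Step 1 is that $n_0(w)$ in the definition of tightness is not uniform in $w\in\C$. It cannot be: if $\varphi$ has compact support, so does every $A^n\varphi$. Hence the near-maximal values of $A^{n+k}\varphi$ can sit at points escaping to infinity. The point $w_0$ from which Step 2 starts therefore need not exist. Step 2 also leaves unresolved the obstacle you flag yourself, that the weights are positive but not bounded below.

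What is missing is a mechanism that compares limits with limits rather than with $\sup_\C A^n\varphi$. The paper's mechanism is \emph{stability}: if $\psi=\lim A^{n_j}\varphi$ and $\eta=\lim A^{m_l}\psi$ locally uniformly, then $\|\eta\|_\C=\|\psi\|_\C$. This is proved by applying tightness at each fixed $w$ separately, so the dependence of $n_0$ on $w$ does no harm. The paper then argues as follows.
\begin{enumerate}
\item A stable function has maximal modulus at every point of $J(f)$. A strict drop of $|\psi|$ near $z_0\in J(f)$ gives $|A^n\psi|<\|\psi\|_\C$ on a disk around the repelling fixed point $p$, via a positive-weight $n$-th preimage of $p$ near $z_0$.
\item Irreducibility then gives $|A^{n+m}\psi|<\|\psi\|_\C$ at every point of the compact set $K$. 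By compactness this is a uniform gap, and this (not a lower bound on the weights) is what handles your ``delicate'' issue. Tightness then pushes the gap to every further limit, contradicting stability.
\item Cancellation in $A^m\psi$ forces $\psi$ to be constant on $J(f)$.
\item Constancy on the Fatou set is a separate step: the $Q^n_w$-mass of compact pieces of the Fatou set decays geometrically.
\item Two distinct constant limits are excluded by another pointwise tightness estimate.
\end{enumerate}

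Your Step 3 is essentially the paper's: the measure via tightness, invariance, mixing via $A((\varphi\circ f)\psi)=\varphi\,A\psi$, and the support. It is fine once locally uniform convergence to a constant is established.
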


We need several preliminary results. It follows by Arzela--Ascoli that for every $\phi \in C_b(\C)$ and every sequence in the family $(A^n (\phi))_{n \in \mathbb N}$ there exist locally uniform subsequential limit functions. We refer to those simply as subsequential limits of $\phi$.

\begin{defn} Let $Q$ be a  weakly continuous normal weight function. A bounded continuous function $\phi \in C_b(\C)$ is said to be \emph{stable} for $A$ if for any subsequential limit $\psi$ of $\phi$ we have
\[
\|\psi\|_{\mathbb C} = \|\phi\|_{\mathbb C}.
\] 
\end{defn}
We note that $\|\psi\|_{\mathbb C} \le \|\phi\|_{\mathbb C}$ is always satisfied, since $A$ is a bounded linear operator with norm $1$, and stability implies that $\|A^n (\phi)\|_{\mathbb C} = \|\phi\|_{\mathbb C}$ for every $n \in \mathbb N$.

\begin{lemma} Let $Q$ be a weakly continuous weight function  that  is  normal and tight.    If $\psi$ is a subsequential limit of $\phi\in C_b(\C)$, then $\psi$ is stable.
\end{lemma}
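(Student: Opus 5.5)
The plan is to compare norms along the whole orbit $(A^n(\phi))$ and to use tightness to move the supremum onto a fixed compact set, where locally uniform convergence can be used. Since $\|A\|\le 1$, the sequence $c_n:=\|A^n(\phi)\|_{\mathbb C}$ is non-increasing and converges to some $c\ge 0$. Write $\psi=\lim_k A^{n_k}(\phi)$, locally uniformly. A pointwise limit cannot exceed the norms, so $\|\psi\|_{\mathbb C}\le c$.

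Next I will show that every subsequential limit $\psi'$ of $\psi$ is again a subsequential limit of $\phi$. Suppose $\psi'=\lim_j A^{m_j}(\psi)$. For fixed $j$, the operator $A^{m_j}$ is continuous for locally uniform convergence on uniformly bounded families. This uses weak continuity together with tightness, so that the mass of $Q^{m_j}_w$ outside a compact set is small for $w$ in a compact set; I expect a short extra argument here. Hence $A^{m_j}(\psi)=\lim_k A^{m_j+n_k}(\phi)$, and a diagonal argument shows $\psi'$ is a subsequential limit of $\phi$. Consequently it suffices to prove the single claim
\[
\text{every subsequential limit } \psi \text{ of } \phi \text{ satisfies } \|\psi\|_{\mathbb C}= c .
\]
Indeed, applied to both $\psi$ and $\psi'$ this gives $\|\psi'\|_{\mathbb C}=c=\|\psi\|_{\mathbb C}$, which is stability of $\psi$.

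To prove $\|\psi\|_{\mathbb C}\ge c$, fix $\epsilon>0$ and let $K$ be the compact set given by tightness. For any $w$ and any $m\ge n_0(w)$ we have $A^{n}(\phi)(w)=\int_{\mathbb C} A^{n-m}(\phi)\,dQ^{m}_w$. Splitting the integral over $K$ and $\mathbb C\setminus K$ gives
\[
|A^{n}(\phi)(w)|\le \sup_K|A^{n-m}(\phi)|+\epsilon\|\phi\|_{\mathbb C}.
\]
The aim is to choose $n$, $m$ and $w$ so that $n-m=n_k$ and the left-hand side is at least $c-\epsilon$. Then $\sup_K|A^{n_k}(\phi)|\ge c-2\epsilon$ for infinitely many $k$, so by uniform convergence on $K$ we get $\sup_K|\psi|\ge c-2\epsilon$, and letting $\epsilon\to0$ finishes.

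The main obstacle is that $n_0(w)$ in the definition of tightness depends on $w$, while the point $w$ nearly realizing $\|A^n(\phi)\|_{\mathbb C}$ depends on $n$. To break this circularity I will first establish a locally uniform version of tightness: for $w$ ranging in a compact set $L$ there is a uniform $n_0(L)$. I will try to obtain this from tightness combined with normality: equicontinuity of $A^n(\chi)$, applied to a bounded continuous cutoff $\chi$ equal to $1$ on a neighborhood of $K$, gives a uniform modulus of continuity for $w\mapsto Q^n_w(\text{neighborhood of }K)$. Compactness of $L$ then yields a uniform $n_0$.

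With uniform tightness in hand, I will argue by contradiction. If $\sup_K|A^{n_k}(\phi)|\le c-\eta$ for all large $k$, then $|A^{n}(\phi)|\le c-\eta+\epsilon\|\phi\|_{\mathbb C}$ for $n\ge n_k+n_0(L)$, uniformly on every compact $L$. To upgrade this to a bound on all of $\mathbb C$, which would contradict $c_n\to c$, I will iterate the same decomposition once more. Every $w$ satisfies $A^{n+m}(\phi)(w)=\int_{\mathbb C} A^{n}(\phi)\,dQ^m_w$ with most of the mass of $Q^m_w$ in $K$, and on $K$ we now control $A^n(\phi)$ uniformly. This forces $c_{n+m}\le c-\eta+2\epsilon\|\phi\|_{\mathbb C}<c$, a contradiction. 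This last bookkeeping step, choosing the order of quantifiers so that $m$ can be taken independent of $w$, is where I expect the real difficulty to lie.
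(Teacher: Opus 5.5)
\textbf{The plan cannot be completed.} The single claim you reduce to, namely $\|\psi\|_{\C}=c:=\lim_n\|A^n(\phi)\|_{\C}$ for every subsequential limit $\psi$, is false under exactly the hypotheses of the lemma. There are $\phi$ for which every subsequential limit has norm strictly smaller than $c$.

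\textbf{Counterexample.} Fix $\epsilon<1$ and let $K$ be the compact set given by tightness. Choose a continuous $\chi\colon\C\to[0,1]$ with compact support and $\chi\equiv 1$ on $K$, and set $\phi=1-\chi$.
\begin{itemize}
\item Since each $Q^n_w$ is a probability measure, $A^n(\phi)=1-A^n(\chi)$.
\item $A^n(\chi)$ vanishes outside the compact set $f^n(\supp\chi)$. Hence $\|A^n(\phi)\|_{\C}=1$ for every $n$, so $c=1$.
\item For each fixed $w$ and $n\ge n_0(w)$ one has $0\le A^n(\phi)(w)\le 1-Q^n_w(K)\le\epsilon$. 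So every subsequential limit satisfies $\|\psi\|_{\C}\le\epsilon<c$.
\end{itemize}
The same example shows why your last step breaks down. For any fixed $m$, every $w\notin f^m(K)$ has $Q^m_w(K)=0$. So no $m$ independent of $w$ can put most of the mass of $Q^m_w$ into $K$. Your locally uniform tightness, obtained from equicontinuity, is correct, but it can never be made uniform on $\C$. Consequently $\sup_{\C}|A^n(\phi)|$ is not controlled by locally uniform limits.

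\textbf{The missing idea.} You never need to bound $\sup_{\C}|A^n(\phi)|$ at a fixed time. It suffices to bound $|\psi(w)|$ pointwise, and for fixed $w$ the dependence of $n_0(w)$ on $w$ is harmless, because you may go arbitrarily far along the subsequence. Write $\psi=\lim_k A^{n_k}(\phi)$ and $\psi'=\lim_j A^{m_j}(\psi)$. For every fixed $N$ and $w$, as soon as $n_k-N\ge n_0(w)$,
\[
|A^{n_k}(\phi)(w)|=\Bigl|\int_{\C}A^{N}(\phi)\,dQ^{n_k-N}_w\Bigr|\le \epsilon\|\phi\|_{\C}+(1-\epsilon)\|A^{N}(\phi)\|_K .
\]
Letting $k\to\infty$ gives $\|\psi\|_{\C}\le\epsilon\|\phi\|_{\C}+(1-\epsilon)\|A^{N}(\phi)\|_K$, a bound uniform in $w$.

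Now take $N=n_{k'}+m_j$, with $j$ large and then $k'$ large. By exactly the continuity you establish in your second paragraph, $\|A^{n_{k'}+m_j}(\phi)\|_K$ is then close to $\|\psi'\|_K\le\|\psi'\|_{\C}$. Letting $\epsilon\to0$ yields $\|\psi\|_{\C}\le\|\psi'\|_{\C}$, and the reverse inequality is trivial. This is the paper's argument, phrased there by contradiction. In your language: all subsequential limits of $\phi$ do share a common norm, but that common value need not be $c$.

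A minor point: continuity of $A^m$ under bounded locally uniform convergence does not come from the asymptotic tightness hypothesis. It comes from weak continuity of $w\mapsto Q^m_w$ together with Prokhorov's theorem applied to the compact family $\{Q^m_w\}_{w\in L}$.
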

\begin{proof}
    Let us suppose that $A^{n_j}( \phi)$ converges locally uniformly to $\psi$, and that $A^{m_l} (\psi)$ converges locally uniformly to  $\eta$. Suppose for the purpose of a contradiction that $\|\psi\|_{\mathbb C} > \|\eta\|_{\mathbb C}$. Let $\epsilon>0$ sufficiently small such that
    \[
    \epsilon \|\phi\|_{\mathbb C} + (1-\epsilon) \|\eta\|_{\mathbb C} < \|\psi\|_{\mathbb C}.
    \]
    Since $Q$ is assumed to be tight, we can find a compact set $K$ such that for each $w \in \mathbb C$ there exists $n_0(w)$ such that 
    \[
    Q^n_w(K) \ge 1-\epsilon, \quad \forall\, n\geq n_0(w),
    \]
    and such that 
    \[
    \epsilon \|\phi\|_{\mathbb C} + (1-\epsilon) \|\eta\|_{K} < \|\psi\|_{\mathbb C}.
    \]
Since $(A^{m_l}( \psi))$ converges uniformly on $K$ to the function $\eta$, we can choose $l\in \mathbb N$ sufficiently large such that
    \[
    \epsilon \|\phi\|_{\mathbb C} + (1-\epsilon) \|A^{m_l}( \psi)\|_K < \|\psi\|_{\mathbb C}.
    \]
    Since $(A^{n_j}(\phi))$ converges locally uniformly to $\psi$ as $j \rightarrow \infty$, the sequence $(A^{n_j+m_l}(\phi))$ converges locally uniformly to $A^{m_l}(\psi)$ as $j \rightarrow \infty$, for $l$ fixed. It follows that we can choose $j$ sufficiently large such that 
    \[
    \epsilon \|\phi\|_{\mathbb C} + (1-\epsilon) \|A^{n_j+m_l} (\phi)\|_K < \|\psi\|_{\mathbb C}.
    \]
    Since $\|A^{n_j+m_l}( \phi)\|_{\mathbb C} \le \|\phi\|_{\mathbb C}$ this implies
    \[
    \epsilon \|A^{n_j+m_l} (\phi)\|_{\mathbb C} + (1-\epsilon) \|A^{n_j+m_l}( \phi)\|_K < \|\psi\|_{\mathbb C}.
    \]
    
Let $w\in \C$, and  let $j'\geq 0$ large enough such that $n_{j'}\geq n_j+m_l+n_0(w)$. Then by tightness  we have
    $$|A^{n_{j'}}(\varphi)(w)|=\left|\int_\C  A^{n_j+m_l}(\varphi)       dQ_w^{n_{j'}-n_j-m_l}\right|\leq \epsilon \|A^{n_j+m_l} (\phi)\|_{\mathbb C} + (1-\epsilon) \|A^{n_j+m_l} (\phi)\|_K,$$
    which contradicts the assumption that the sequence $(A^{n_{j}} (\phi))$ converges locally uniformly to $\psi$.
\end{proof}

\begin{lemma} \label{lemma: stable implies constant} Let $Q$ be a weakly continuous weight function that is normal, tight, and irreducible.
    The function $\phi \in C_b(\C)$ is stable if and only if $\phi$ is constant on $J_Q(f)$ and obtains its maximal modulus on $J_Q(f)$.
\end{lemma}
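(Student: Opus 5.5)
We prove the two implications separately. Throughout, $M:=\|\phi\|_{\mathbb C}$, and $p$ is the repelling fixed point from Definition \ref{defirr}(1). Case (2) is handled in the same way, with $p_1,p_2$ in place of $p$.

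\emph{Preliminary: backward invariance of $J_Q(f)$ along positive weights.} First we check that $z\in J_Q(f)$ whenever $f(z)=w\in J_Q(f)$ and $Q(z)>0$. Since $z$ is a regular point or an isolated critical point, $f(D(z,r))$ contains a disc $D(w,r')$. Weak continuity gives that $Q$ is continuous at regular points, so $Q>0$ on preimages near $z$. Hence $\Sigma(z,r)\supset \Sigma(w,r')$ up to the positive-weight condition, which shows $z\in J_Q(f)$.

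\emph{The implication ($\Leftarrow$).} Suppose $\phi\equiv c$ on $J_Q(f)$ with $|c|=M$, and take $w\in J_Q(f)$. By the preliminary step every point of $f^{-n}(w)$ carrying positive $Q^n$-weight lies in $J_Q(f)$. Therefore $A^n(\phi)(w)=\int\phi\,dQ^n_w=c$ for all $n$. It follows that every subsequential limit $\psi$ satisfies $\psi(w)=c$, so $\|\psi\|_{\mathbb C}\ge M$. The reverse inequality always holds, hence $\phi$ is stable. We also need $J_Q(f)\neq\emptyset$. This holds because $p\in J_Q(f)$: by irreducibility $\Sigma(p,r)$ contains every compact set.

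\emph{The implication ($\Rightarrow$).} Let $\phi$ be stable. The tool is strict convexity of the disc. If $\nu$ is a probability measure and $|\phi|\le M$, then $\bigl|\int\phi\,d\nu\bigr|\ge M-\delta$ forces $\int|\phi-c|\,d\nu\le \eta(\delta)$, where $c$ is a suitable point with $|c|=M$ and $\eta(\delta)\to0$ as $\delta\to 0$.

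The argument then proceeds in five steps.

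Step 1. By stability $\|A^N\phi\|_{\mathbb C}=M$ for every $N$. Tightness, used exactly as in the previous lemma through the identity $A^{N}\phi=\int A^{N-k}\phi\,dQ^{k}_w$, shows that this supremum is nearly attained on a fixed compact set $K$. So there are $w_N\in K$ with $|A^N\phi(w_N)|\to M$.

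Step 2. Fix $z_0\in J_Q(f)$ and $r>0$. By irreducibility and the definition of $J_Q$, there exist $n_0$ and $a>0$ such that every $w\in K$ has a preimage $z\in D(z_0,r)$ with $f^{n_0}(z)=w$ and $Q^{n_0}(z)\ge a$. The uniform lower bound $a$ comes from compactness of $K$ together with continuity of $Q$ along regular branches.

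Step 3. Write $A^{N+n_0}\phi=A^{N}\bigl(A^{n_0}\phi\bigr)$ and apply convexity. This shows that $A^{n_0}\phi$ is uniformly close to a unimodular constant $c_N$ on most of the $Q^N_{w}$-mass.

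Step 4. Combine Step 3 with the lower bound $a$ from Step 2 and with equicontinuity of $(A^n\phi)$. Passing to a limit, $\phi$ is within $o(1)$ of $c$ on points of $D(z_0,r)$ that carry positive weight.

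Step 5. Let $r\to0$ and use continuity of $\phi$. This gives $\phi(z_0)=c$ with $|c|=M$, for every $z_0\in J_Q(f)$. The constant does not depend on $z_0$, because all the preimages of Step 2 can be routed through neighbourhoods of the single point $p$.

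\emph{Main obstacle.} I expect Steps 3--4 to be the hardest part. Convexity controls the weights $Q^N$ on the $N$-th backward level, whereas irreducibility provides preimages only at a fixed depth $n_0$ with weight bounded below. The delicate point is converting ``$\phi$ is nearly constant $Q^N_{w}$-almost everywhere'' into ``$\phi$ is nearly constant at a specific point near $z_0$''. My first attempt would be to pass to a subsequential limit $\eta=\lim A^{N_j-m}\phi$, which satisfies $\|\eta\|_{\mathbb C}=M$ by stability. For such limits the convexity inequality becomes an equality: $\eta$ is exactly constant on positive-weight preimages of a maximum point of $A^m\eta$. From there I would transfer the conclusion back to $\phi$ via equicontinuity at the repelling point $p$.
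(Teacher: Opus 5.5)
Your ($\Leftarrow$) direction and your preliminary on backward invariance are fine. At a critical point $z$, weak continuity does not give continuity of $Q$, but it does keep the total weight of nearby preimages positive, and that is all you use.

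The ($\Rightarrow$) direction, however, is not established. Steps 3--5 are only a sketch, and the remedy you propose does not work. Exact information about a subsequential limit $\eta=\lim A^{N_j-m}\phi$ only translates back into $\phi$ being nearly constant on most of the $Q^{N}_x$-mass at large depth $N$, which is where you started. Nothing in ``equicontinuity at $p$'' turns that into information about $\phi$ near a prescribed $z_0$.

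The detour is forced by your Step 1 being too weak. Take a subsequential limit $\psi$ with $\|\psi\|_{\mathbb C}=M$ and a point $w^*$ with $|\psi(w^*)|\ge M-\delta$. Writing $A^{n_j}\phi(w^*)=\int A^n\phi\,dQ^{n_j-n}_{w^*}$ with $n_j-n\ge n_0(w^*)$ gives $M-2\delta\le(1-\epsilon)\|A^n\phi\|_K+\epsilon M$. Letting $\delta\to0$ yields $\|A^n\phi\|_K=M$ for every fixed $n$.

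Now suppose $|\phi|<M$ on $D(z_0,r)$. A positive-weight point of $D(z_0,r)$ mapping to $p$ gives $|A^n\phi|<M$ on some $D(p,s)$, and irreducibility then gives $\|A^{n+m}\phi\|_K<M$, a contradiction. This proves $|\phi|=M$ on $J_Q(f)$. It is essentially the paper's first step, which reaches the contradiction through tightness directly.

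The more serious gap is Step 5: the passage from $|\phi|=M$ to $\phi$ being constant on $J_Q(f)$ is only asserted. The unimodular constants produced by your convexity argument depend on the base point and on the depth, so different $z_0$ a priori give different constants. ``Routing through neighbourhoods of $p$'' does not by itself identify them.

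The missing idea is the following.
\begin{itemize}
\item Each $A^m\phi$ is stable, because its subsequential limits are subsequential limits of $\phi$.
\item Applying the modulus statement to $A^m\phi$ at $p\in J_Q(f)$ gives $|A^m\phi(p)|=M$.
\item Since $p$ is fixed with $Q(p)>0$, $p$ is itself a point of $f^{-m}(p)$ with $Q^m(p)>0$.
\item Equality in $\bigl|\sum_{z\in f^{-m}(p)}Q^m(z)\phi(z)\bigr|\le M$ therefore forces $\phi(z)=\phi(p)$ for every $z\in f^{-m}(p)$ with $Q^m(z)>0$.
\item Taken over all $m$, these points are dense in $J_Q(f)$ by irreducibility and the definition of $J_Q(f)$, so continuity gives $\phi\equiv\phi(p)$ there.
\end{itemize}
This is exactly how the paper concludes, and your proposal does not contain it.
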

\begin{proof}    
    The backwards implication is immediate, we will prove the forward implication.
    Let us first prove that the modulus of $\phi$ must be maximal at each point in $J_Q(f)$. For the purpose of a contradiction, assume that there is a point $z_0 \in J_Q(f)$ such that $|\phi(z_0)| < \|\phi\|_{\mathbb C}$. Since $\phi$ continuous, there exists a disk $D(z_0,r)$ such that $\|\phi\|_{D(z_0,r)} < \|\phi\|_{\mathbb C}$. 
    Let $p$ be the repelling fixed point of $f$ given by  the definition of irreducibility.
    By the definition of the $Q$-Julia set, we can find a point $z \in D(z_0,r)$ such that $f^n(z) = p$ and $Q^n(z) > 0$.
    It follows that $|A^n(\phi)(p)|< \|\phi\|_{\mathbb C}$, and therefore, there exists $s>0$ such that $|A^n(\phi)|_{D(p,s)}< \|\phi\|_{\mathbb C}$.

    Let $\epsilon \in (0,1)$, and let $K$ be the compact subset given by tightness relative to the constant $\epsilon$. By the irreducibility assumption, there exists  $m \geq 0$ such that for all $w\in K$ there exists $z\in D(p,s)$ such that $f^n(z)=w$ and $Q^n(z)>0$. It follows that $\|A^{n+m}(\phi)\|_K < \|\phi\|_{\mathbb C}$. By tightness, for every $w \in \mathbb C$ and $j\geq n+m+n_0(w)$ we have that 
    \[
    |A^j(\phi)(w)|=\left|\int_\C A^{n+m}(\varphi)dQ^{j-n+m}_w\right| \le (1-\epsilon) \|A^{n+m}(\phi)\|_K  + \epsilon \|\phi\|_{\mathbb C} .
    \]
 It follows that for any subsequential limit $\psi$ we have that
    \[
    \|\psi\|_{\mathbb C} \le (1-\epsilon) \|A^{n+m}(\phi)\|_K  + \epsilon \|\phi\|_{\mathbb C} < \|\phi\|_{\mathbb C},
    \]
    which gives the desired contradiction.

Let now $\phi\neq 0$ be a stable function. The modulus of   $\phi$ in $J_Q(f)$ is constant and non-zero. 
    Notice that the function $A^m(\phi)$ is stable for each $m \in \mathbb N$, and hence the modulus of $A^m(\phi)$ must be  equal to $\|\phi\|_\C$ at all points of $J_Q(f)$. Let $w \in J_Q(f)$. Then all preimages $z \in f^{-m}(w)$ with $Q^m(z) > 0$  have the same value
    $\varphi(z)$, since otherwise the modulus of $A^m(\phi)(w)$ would be strictly smaller than $\|\phi\|_\C$ due to cancellation.
      Since $Q(p)>0$, it follows that every preimage $z\in  f^{-m}(p)$ with $Q^m(z) > 0$ satisfies $\varphi(z)=\varphi(p).$
     By irreducibility every point of $J_Q(f)$ is accumulated by preimages $z \in f^{-m}(p)$ with $Q^m(z) > 0$, hence $\varphi $
      is constantly equal to $\varphi(p)$ on $J_Q(f)$.
    
\end{proof}

\begin{prop} \label{prop: limits are constants} 
 Assume $f$ has no exceptional points. Let $Q$ be a weakly continuous weight function  that is  normal, tight, and nowhere zero.
    If $\psi$ is a subsequential limit of $\phi$, then $\psi$ is constant.
\end{prop}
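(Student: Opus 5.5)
Since $\psi$ is a subsequential limit of $\phi$, the preceding lemma tells us that $\psi$ is stable. We would like to feed this into Lemma \ref{lemma: stable implies constant}. For that we first check its hypotheses under our standing assumptions. Because $Q$ is nowhere zero, the remarks above give $J_Q(f)=J(f)$, and irreducibility is equivalent to $f$ having no exceptional point, which we are assuming. So Lemma \ref{lemma: stable implies constant} applies to $\psi$: $\psi$ is constant on $J(f)$, say equal to $c$, and $|c|=\|\psi\|_\C$.

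It remains to show that $\psi\equiv c$ on all of $\C$, including the Fatou set. The plan is to apply the same reasoning to $\psi-c$. Observe that $A$ fixes constants, because $\sum_{z\in f^{-1}(w)}Q(z)=1$. Hence $A^n(\psi-c)=A^n(\psi)-c$, so every subsequential limit of $\psi-c$ is a limit $\eta-c$ with $\eta$ a subsequential limit of $\psi$.

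Next we identify the limits $\eta$. The function $\psi$ is itself a subsequential limit of $\phi$, say $A^{n_j}(\phi)\to\psi$. Then each $\eta=\lim_l A^{m_l}(\psi)$ is, by a diagonal argument using local uniform convergence, again a subsequential limit of $\phi$: pick $j_l$ with $A^{n_{j_l}+m_l}(\phi)$ close to $A^{m_l}(\psi)$ on $\D_l$. So $\eta$ is stable, and by Lemma \ref{lemma: stable implies constant} we get $\eta\equiv c'$ on $J(f)$ with $\|\eta\|_\C=|c'|$. On the other hand $A^{m}(\psi)=c$ on $J(f)$ for all $m$, since $J(f)$ is completely invariant and $A^m(\psi)(w)$ averages values of $\psi$ at preimages, which lie in $J(f)$ when $w\in J(f)$. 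Hence $c'=c$.

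Now the subsequential limits of $\psi-c$ are exactly the functions $\eta-c$, and each of these vanishes on $J(f)$. The key step is to use tightness: the norm of any subsequential limit should be controlled only by values on a compact set $K$ where most of the mass $Q^n_w$ concentrates. I expect this to be the main obstacle, namely showing that the mass of $Q^n_w$ also concentrates near $J(f)$ and not merely on $K$. My first attempt would be this. For $w$ in the Fatou set $F(f)$, every preimage $z\in f^{-n}(w)\cap K$ lies in $F(f)$. Equicontinuity of $(A^m(\psi-c))$ together with $\|\psi-c\|$ being attained would push towards $\psi-c$ being stable. Then Lemma \ref{lemma: stable implies constant} applied to $\psi-c$ gives that its maximum modulus is attained on $J(f)$, where it vanishes, so $\psi\equiv c$.

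To actually get stability of $\psi-c$, I would rerun the stability lemma's argument with $\psi-c$ in place of $\phi$. That lemma only used normality and tightness, so stability of subsequential limits transfers. The hard point is then to show that $\|\eta-c\|_\C=\|\psi-c\|_\C$, and this is what remains unresolved in the plan above.

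If this fails, the fallback is to argue directly. Suppose $\|\psi-c\|_\C>0$ and attained near a Fatou point $z_0$. Using that the preimages of the repelling point $p$ with positive weight accumulate on $J(f)$, and that $\psi$ has maximal modulus everywhere on $J(f)$, we would compare $A^m(\psi)$ at points near $J(f)$ and in $D(p,s)$. The aim is to derive a strict drop of $\|A^m\psi\|$, contradicting stability of $\psi$ exactly as in the first part of the proof of Lemma \ref{lemma: stable implies constant}.
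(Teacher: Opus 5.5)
Your first paragraph is fine: $\psi$ is stable, $J_Q(f)=J(f)$, $Q$ is irreducible, and Lemma \ref{lemma: stable implies constant} gives $\psi\equiv c$ on $J(f)$ with $|c|=\|\psi\|_\C$. The gap is everything after that. You never show $\psi\equiv c$ on the Fatou set, and you say so yourself.

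Rerunning the stability lemma with $\psi-c$ in place of $\phi$ only tells you that the subsequential limits of $\psi-c$ are stable, not that $\psi-c$ is. Knowing that those limits are functions $\eta-c$ vanishing on $J(f)$ does not give $\|\eta-c\|_\C=\|\psi-c\|_\C$.

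The fallback cannot work at all. By the ``if'' direction of Lemma \ref{lemma: stable implies constant}, any function equal to $c$ on $J(f)$ with modulus at most $|c|$ is stable, whatever its values on the Fatou set. Preimages of $p$ lie in $J(f)$ and never enter the Fatou set, so no strict drop of $\|A^m\psi\|_\C$ can be produced from a Fatou point. Stability of $\psi$ alone carries no information off $J(f)$.

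The missing observation is to apply the stability lemma to $\phi-c$ rather than to $\psi-c$. Since $A$ fixes constants, $A^{n_j}(\phi-c)=A^{n_j}(\phi)-c\to\psi-c$ locally uniformly. So $\psi-c$ is itself a subsequential limit of $\phi-c\in C_b(\C)$, and is therefore stable. The first half of the proof of Lemma \ref{lemma: stable implies constant} uses only stability, tightness and irreducibility, and gives $|\psi-c|\equiv\|\psi-c\|_\C$ on $J(f)\neq\emptyset$. Since $\psi-c$ vanishes there, $\psi\equiv c$.

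With this line your argument is complete, and it is genuinely different from, and shorter than, the paper's. The paper fixes a Fatou point $w$ and uses tightness to restrict to a compact $K$. It then uses the uniform convergence $A^{n_j}(\phi)\to c$ on a neighbourhood $U$ of $J(f)\cap K$. Finally it shows $Q^n_w(K\setminus U)\to 0$ from the dynamics of Fatou components: backward orbits leave compact sets except at attracting cycles and rotation domains, where forward invariance plus irreducibility gives geometric decay. Your completed route avoids the Fatou-component analysis entirely.
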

\begin{proof}
Since $Q$ is nowhere zero, we have that $J_Q(f)=J(f).$
    Let $M = \|\phi\|_{\mathbb C}$, and note that without loss of generality we may assume that $M \neq 0$. Let us say that $\psi$ is the local uniform limit of the subsequence $(A^{n_j}(\phi))$.
    Since $\psi$ is known to be stable, $\psi$ must be equal to a constant $c$ on $J(f)$. We now show that, if $w$ is in the Fatou set of $f$, then the sequence $(A^{n_j}(\phi)(w))$ converges to $c$. Let $\epsilon>0$ and let 
     $K$ be a compact subset of $\mathbb C$ given by  the definition of tightness such that the mass of preimages $z \in f^{-n}(w)$ in $K$ is at least $1 - \epsilon/(6M)$, for $n \ge n_0(w)$.
    By local uniform convergence,
    there exists an open neighborhood $U$ of $J(f)\cap K$ such that for $j$ large enough we have
    $$\|A^{n_j}(\phi)-c\|_U\leq \frac{\epsilon}{3}.$$
    For all $n_l\geq n_j+n_0(w)$ we have 
    \begin{align*}|A^{n_l}(\varphi)(w)-c|&\leq \int_\C|A^{n_j}(\varphi)-c|dQ_w^{n_l-n_j}\\
    &\leq \int_U|A^{n_j}(\varphi)-c|dQ_w^{n_l-n_j}+\int_{\C\setminus K}|A^{n_j}(\varphi)-c|dQ_w^{n_l-n_j}+\int_{K\setminus U}|A^{n_j}(\varphi)-c|dQ_w^{n_l-n_j}\\
    &\leq \frac{\epsilon}{3}+\frac{\epsilon}{3}+\int_{K\setminus U}|A^{n_j}(\varphi)-c|dQ_w^{n_l-n_j}.
    \end{align*}
To conclude it is enough to show that for $n_l$ large enough we have that $\int_{K\setminus U}|A^{n_j}(\varphi)-c|dQ_w^{n_l-n_j}\leq \epsilon/3.$ 
    We will actually show that $Q_w^n(K\setminus U)\stackrel{n\to+\infty}\longrightarrow 0$.
    Note that  $K \setminus U$ consists of finitely many compact subsets $(L_i)$ of finitely many Fatou components. Clearly it is enough to show that $Q_w^n(L_i)\stackrel{n\to+\infty}\longrightarrow 0$ for all $i$.
    Observe that the preimages of $w$ can only lie in $L_i$ for sufficiently large $n$ if $w$ lies in a periodic Fatou component, and moreover the Fatou component must either be a Siegel disk or $w$ is an attracting periodic point, since in all other cases the backward orbits of $w$ eventually leave any compact subset.
In either case we can increase the compact subset $L_i$ if necessary so that it contains $w$  and it is forward invariant, hence if backwards orbits of $w$ at some time leave $L_i$ they can never return.
    Since $f$ has no exceptional points, it is irreducible.
   Hence for every $z\in L_i$ some backward  orbits of $z$ get close to the repelling fixed point $p$,
and thus by the compactness and  forward invariance of $L_i$ we have that 
     there exists $0<a<1$ and $k\geq 0$ such that $Q_z^k(L_i)\leq a$ for all $z\in L_i$. But then we have, for $m\geq 1$,
     $$Q_w^{km}(L_i)\leq a^m.$$

    \end{proof}

\begin{proof}[Proof of Theorem \ref{thm: non-uniform Lyubich}]
Let $\varphi\in C_b(\C)$.
Assume by contradiction that $\varphi$ admits two distinct subsequential limits $\varphi_\infty\neq \psi_\infty$, which by the previous proposition are constant functions.
Notice that  $A(\varphi_\infty)=\varphi_\infty$ and $A(\psi_\infty)=\psi_\infty$. Set $\varphi_0:=\varphi-\varphi_\infty$, and notice that there exists a subsequence $(n_j)$ such that the sequence $(A^{n_j}(\varphi_0))$ converges to 0 locally uniformly, and a subsequence $(m_l)$ 
such that the sequence $(A^{m_l}(\varphi_0))$ converges to  the constant $ \varphi_\infty- \psi_\infty\neq 0$ locally uniformly.
Let $\epsilon:=\frac{1}{2}|\varphi_\infty- \psi_\infty |>0$.
 Let $K\subset \C$ be the compact subset given by tightness relative to the constant $\frac{\epsilon}{2\|\varphi_0\|_\C}$.
 Let $w\in \C$ and let  $N\geq 0$ be such that for all $n\geq N$ we have $Q^n_w(K)\geq 1-\frac{\epsilon}{2\|\varphi_0\|_\C}$. Let $j\geq 0$ be such that 
$\|A^{n_j}(\varphi_0)\|_K\leq \epsilon/2.$
For all $l\geq 0$ such that  $m_l\geq n_j$ we have that 
$$A^{m_l}(\varphi_0)(w)=\int_\C A^{n_j}(\varphi_0)\,dQ_w^{m_l-n_j}.$$ Hence for all $l$ large enough so that $m_l\geq N+n_j$ we have that 
$|A^{m_l}(\varphi_0)(w)|\leq \epsilon$, which contradicts the fact that $A^{m_l}(\varphi_0)(w)\to \varphi_\infty- \psi_\infty.$

We claim that the whole sequence
$(A^n(\varphi_0))$ converges to 0 locally uniformly. Let $H\subset \C$ be a compact subset and let $\epsilon>0$. Let $K\subset \C$ be the compact subset given by tightness relative to the constant $\frac{\epsilon}{2\|\varphi_0\|_\C}$, and let $N$ be such that for all $n\geq N$ and for all $w\in H$ we have $Q^n_w(K)\geq 1-\frac{\epsilon}{2\|\varphi_0\|_\C}$. Let $j\geq 0$ be such that 
$\|A^{n_j}(\varphi_0)\|_K\leq \epsilon/2.$
For all $n\geq n_j$ we have that 
$$A^n(\varphi_0)(w)=\int_\C A^{n_j}(\varphi_0)dQ_w^{n-n_j}.$$ Hence for all $w\in H$ and for all $n\geq N+n_j$ we have that 
$$|A^n(\varphi_0)(w)|\leq \epsilon\|\varphi_0\|_\C+\epsilon/2\leq \epsilon,$$ and the claim is proved.
It follows that the sequence of functions $(A^n (\phi))$ converges locally uniformly to the constant function $\varphi_\infty$.
Let $\mu\in (C_b(\C))^*$ be the linear functional defined by $\varphi\mapsto \varphi_\infty$. Clearly $A^*(\mu)=\mu$, and for all $w\in \C$ the sequence
of probability measures $(Q^n_w)$ converges to $\mu$ in the weak-$*$ topology:
$$\int_\C \varphi d Q^n_{w}=A^n(\varphi)(w)\stackrel{n\to\infty} \longrightarrow \varphi_\infty= \mu(\varphi).$$
Notice that $(C_b(\C))^*$ contains more linear functionals than just finite Borel measures. However, since the sequence of probability measures $(Q^n_{w})$ is tight,  it follows from Prokhorov's Theorem that 
there exists a subsequence converging weakly to a probability measure $\mu'$. It follows that $\mu=\mu'$, and thus $\mu$ is a probability measure. 
The fact that $\mu$ is $f$-invariant follows from the fact that  $\mu$ is $A^*$-invariant and
$A(\varphi \circ f) = \varphi$ for all $\varphi\in C_b(\C)$. 

If $\nu$ is a probability measure on $\C$ and  $\varphi\in C_b(\C)$, then by dominated convergence
$$\int_\C \varphi\, d(A^*)^n(\nu)=\int_\C A^n(\varphi)d\nu\stackrel{n\to\infty}\longrightarrow\int_\C \mu(\varphi) d\nu=\mu(\varphi),$$ 
thus the sequence of probability measures $((A^*)^n(\nu))$ converges weakly to $\mu$.

If $\varphi, \psi\in C_b(\C)$, then by dominated convergence
$$
\int_\C (\varphi \circ f^n)  \psi \, d\mu  = \int_\C A^n \left((\varphi \circ f^n)  \psi\right) \, d\mu
=  \int_\C \varphi  A^n(\psi) \, d\mu  \stackrel{n \rightarrow \infty}{\longrightarrow} \int_\C \varphi \, d\mu  \int_\C \psi \, d\mu,
$$
where the second equality follows from the definition of the transfer operator $A$. Hence the measure $\mu$ is mixing for $f$.

We now show that the support of $\mu$ must be contained in the Julia set. Let $z_0$ lie in the Fatou set of $f$. If $r>0$ is small enough such that $D(z_0,r)$ is contained in the Fatou set, then all forward images $f^n(D(z_0,r))$ are contained in the Fatou set. Let $w\in J(f)$. 
Since $\mu$ is the weak limit of the sequence $Q^n_w=\sum_{z\in f^{-n}(w)}Q^n(z)\delta_z$, it follows that $\mu(D(z_0,r))=0$.


To show that ${\rm Supp}(\mu)=J(f)$ it is therefore sufficient to prove that an arbitrary point $z_0$ in the Julia set must be contained in the support, i.e. the measure of any disk $D(z_0,r)$ with $r>0$ must be positive. 
Since $f$ has no exceptional points, it is irreducible.
Let $p$ be the repelling fixed point given by the definition of irreducibility.
 Since $z_0\in J(f)$,  the family  $(f^n: D(z_0,r)\rightarrow \mathbb C)$ is not normal, and thus its image cannot  avoid both $p$ and a (distinct) preimage of $p$.   It follows that there exists $n_0\geq 0$ such that $p \in f^{n_0}(D(z_0,r))$. 
 Let $K\subset \C$ be a compact set such that $\mu(K)>0$.
 By irreducibility there exists an $n_1 \ge n_0$ such that $K \subset f^{n_1}(D(z_0,r))$.
Since $Q^{n_1}(z)>0$ for all $z\in \C$,
 it follows that  
     $$ \mu(D(z_0,r))=\int_\C Q^{n_1}_w(D(z_0,r))d\mu(w)>0.$$

\end{proof}

\begin{corollary}
 Assume $f$ has no exceptional points.
  Let $Q$ be a weakly continuous weight function that is  normal, tight, and nowhere zero. Then 
  the integer $n_0$ in the definition of tightness can be chosen uniform on compact subsets of $\C$, that is, for every $\epsilon>0$ there exists a compact subset $K\subset \C$ such that for every compact subset $L\subset \C$ there exists $n_0\geq 0$ such that for all $n\geq n_0$ we have that $Q^n_w(K)\geq 1-\epsilon.$

\end{corollary}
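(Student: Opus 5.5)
The plan is to derive the uniformity from the weak convergence $Q^n_w\to\mu$ established in Theorem \ref{thm: non-uniform Lyubich}, upgraded to local uniformity in $w$ through the locally uniform convergence of $A^n(\varphi)$. Fix $\epsilon>0$. Since $\mu$ is a Borel probability measure on $\C$, we choose $R>0$ with $\mu(\DD_R)\ge 1-\epsilon/2$. We then set $K:=\overline{\DD_{R+1}}$ and let $\varphi\in C_b(\C)$ be a continuous cutoff with $0\le\varphi\le 1$, $\varphi\equiv 1$ on $\overline{\DD_R}$ and $\supp\varphi\subset K$. This choice gives $\mu(\varphi)\ge\mu(\DD_R)\ge 1-\epsilon/2$. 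We also have $\varphi\le \mathbf 1_K$, so
$$
Q^n_w(K)\ \ge\ \int_\C\varphi\,dQ^n_w\ =\ A^n(\varphi)(w).
$$

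By Theorem \ref{thm: non-uniform Lyubich}, $A^n(\varphi)$ converges to the constant $\varphi_\infty=\mu(\varphi)$ uniformly on compact subsets of $\C$. Given a compact $L\subset\C$, we therefore pick $n_0$ such that $\|A^n(\varphi)-\mu(\varphi)\|_L\le\epsilon/2$ for all $n\ge n_0$. For every $w\in L$ and every $n\ge n_0$ this yields $Q^n_w(K)\ge \mu(\varphi)-\epsilon/2\ge 1-\epsilon$, as required. The set $K$ depends only on $\epsilon$ (through $\mu$), and only $n_0$ depends on $L$, which is exactly the form of the claim.

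The step that needs care is the passage from the indicator of $K$ to a continuous test function. The locally uniform convergence in Theorem \ref{thm: non-uniform Lyubich} holds only for $\varphi\in C_b(\C)$, so we cannot apply it to $\mathbf 1_K$ directly. The fattening trick above handles this: we enlarge the disk by $1$ so that a continuous function lies below $\mathbf 1_K$ but still has $\mu$-integral close to $1$. Everything else follows directly from the theorem, with no additional use of normality or tightness beyond what that theorem already relies on.
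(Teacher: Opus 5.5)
Your proof is correct and follows the paper's argument: you take a continuous cutoff $\varphi\le \mathbf 1_K$ and use the locally uniform convergence $A^n(\varphi)\to\varphi_\infty$ from Theorem \ref{thm: non-uniform Lyubich}. The only difference is in how the inner compact is chosen: you take it from $\mu(\DD_R)\ge 1-\epsilon/2$ and use $\varphi_\infty=\mu(\varphi)$, whereas the paper takes the inner compact $H$ from the tightness hypothesis and gets $\varphi_\infty\ge 1-\epsilon/2$ directly from $\int\varphi\,dQ^n_w\ge Q^n_w(H)$.
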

\begin{proof}
Let $\epsilon >0$  and let  $H\subset \C$ be the compact given by definition of tightness for the constant $\epsilon/2$. 
Let $K\subset\C$ be a compact subset which contains $H$ in its interior. Let $\varphi\colon \C\to [0,1]$  be a continuous function which is  identically equal to 1 on $H$ and identically 0 in $\C\setminus K$.
By Theorem \ref{thm: non-uniform Lyubich} we have that the sequence $A^n(\varphi)$ converges locally uniformly to a constant function $\varphi_\infty$.  Notice that $\varphi_\infty\geq 1-\epsilon/2$, indeed if $w\in \C$ we have, for $n$ large enough,
$$A^n(\varphi)(w)=\int_\C\varphi\, dQ^n_w\geq Q^n_w(H)\geq 1-\epsilon/2.$$
Let $L\subset \C$ be a compact subset and let $n_0\geq 0$ large enough such that for all $n\geq n_0$ we have
$$\|A^n(\varphi)-\varphi_\infty\|_L\leq \frac{\epsilon}{2}.$$ For every $w\in L$ and for every $n\geq n_0$ we have that 
$$1-\epsilon\leq A^n(\varphi)(w)=\int_\C\varphi\, dQ^n_w\leq \sum_{z\in f^{-n}(w)\cap K}Q^n(z)=Q^n_w(K).$$
\end{proof}

%
%
%

If the weight function $Q$ is allowed to have zeros, the previous proof can be easily adapted by working in the space $J_Q(f)$ instead of $\C$, yielding the following result.
\begin{thm}[Weight functions with zeros]

  Let $Q$ be a weakly continuous weight function which is normal, tight, and irreducible. 
Then for every $\phi \in C_b(\C)$ the sequence of functions $(A^n (\phi))$ converges locally uniformly on $J_Q(f)$ to a  function $\varphi_\infty$ , and the functional in $(C_b(J_Q(f)))^*$ defined by
$ \varphi\mapsto \varphi_\infty$ is a Borel probability measure $\mu$ on $J_Q(f)$ with full support.

Moreover, the probability measure $\mu$ is $A^*$-invariant (and thus $f$-invariant) and mixing,  and if $\nu$ is an arbitrary Borel probability measure on $J_Q(f)$, then the sequence of probability measures  
$((A^*)^n(\nu))$ converges weakly to $\mu$.  In particular, for all $w\in J_Q(f)$ we have that $(Q^n_w)$ converges  weakly to $\mu$.

\end{thm}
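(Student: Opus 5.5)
The plan is to rerun the proof of Theorem \ref{thm: non-uniform Lyubich} with the ambient space $\C$ replaced by the closed set $J_Q(f)$ and the ordinary Julia set $J(f)$ replaced by $J_Q(f)$. First I will check that the transfer operator restricts to $J_Q(f)$. If $w\in J_Q(f)$ and $z\in f^{-1}(w)$ with $Q(z)>0$, then $z\in J_Q(f)$. This holds because every positive-weight backward orbit starting from a disk around $z$ can be prolonged by the step $z\mapsto w$, so $\Sigma(w,r')\supset \Sigma(z,r)$ for small $r$ by continuity of $f$ (the same argument as the remark on backward invariance for positive weights). Consequently, for $w\in J_Q(f)$ the measure $Q_w$ is supported on $J_Q(f)$. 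Hence $A$ induces a bounded operator of norm at most $1$ on $C_b(J_Q(f))$, and weak continuity, normality and tightness pass to the restriction; for tightness one intersects the compact set $K$ with the closed set $J_Q(f)$.

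Next I will redo the three preliminary steps inside $J_Q(f)$. The lemma that subsequential limits are stable carries over verbatim, since it uses only tightness and the identity $A^{m}(\varphi)(w)=\int A^{m-k}(\varphi)\,dQ^{k}_w$. Lemma \ref{lemma: stable implies constant} was already proved under irreducibility using only points of $J_Q(f)$. On $J_Q(f)$ it shows that a stable function is constant there, with value $\varphi(p)$, where $p$ is the repelling fixed point from Definition \ref{defirr}. We need $p\in J_Q(f)$. Irreducibility says that positive-weight preimages of any compact set enter every disk $D(p,r)$, so $\Sigma(p,r)=\C$ and therefore $p\in J_Q(f)$.

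With this in hand, Proposition \ref{prop: limits are constants} is no longer needed. Its only role was to handle Fatou points, and now all test points lie in $J_Q(f)$, so subsequential limits are immediately constant on $J_Q(f)$. The uniqueness-of-limit argument and the upgrade to local uniform convergence of the whole sequence $(A^n\varphi)$ are then copied from the proof of Theorem \ref{thm: non-uniform Lyubich}. After that I will apply Prokhorov's theorem, using tightness with compact sets inside $J_Q(f)$, to identify the functional $\varphi\mapsto\varphi_\infty$ with a Borel probability measure $\mu$ on $J_Q(f)$. Invariance, mixing and the convergence $(A^*)^n\nu\to\mu$ are checked exactly as before, by dominated convergence and the identity $A((\varphi\circ f^n)\psi)=\varphi A(\psi)$ iterated.

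The main obstacle is full support. Fix $z_0\in J_Q(f)$ and $r>0$, and choose a compact set $K\subset J_Q(f)$ with $\mu(K)>0$. By irreducibility there is $n$ such that every $w\in K$ has a positive-weight preimage in $D(p,s)$. I first want to transport mass near $p$ back to $D(z_0,r)$. Since $z_0\in J_Q(f)$, the set $\Sigma(z_0,r)$ contains $p$ (it is all of $\C$, as no point is $Q$-exceptional). So some $z\in D(z_0,r)$ and some $n_0$ satisfy $f^{n_0}(z)=p$ and $Q^{n_0}(z)>0$. By weak continuity of $Q^{n_0}$ near $z$, every point of a small disk $D(p,s)$ then has a preimage in $D(z_0,r)$ with weight bounded below. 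Combining the two steps, every $w\in K$ satisfies $Q^{n+n_0}_w(D(z_0,r))\ge c>0$. Hence, by $A^*$-invariance,
\[
\mu(D(z_0,r))=\int Q^{n+n_0}_w(D(z_0,r))\,d\mu(w)\ge c\,\mu(K)>0.
\]
The delicate point is the lower bound on weights near $p$. Here a merely weakly continuous $Q$ may be discontinuous at critical points, so I would first try to choose $z$ so that its orbit avoids critical points; if that is not possible, I would argue via lower semicontinuity of $w\mapsto Q^{k}_w(U)$ for open sets $U$, which follows from weak continuity by the portmanteau theorem.
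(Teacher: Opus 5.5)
Your proposal is correct and follows the paper's route. The paper only says to rerun the proof of Theorem \ref{thm: non-uniform Lyubich} on $J_Q(f)$ instead of $\C$, and your adaptation carries this out: you drop Proposition \ref{prop: limits are constants}, and for full support you replace the non-normality argument with $p\in\Sigma(z_0,r)$ together with lower semicontinuity of $w\mapsto Q^k_w(U)$.

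One small slip concerns backward invariance. To deduce $z\in J_Q(f)$ from $w\in J_Q(f)$ and $Q(z)>0$, you need the inclusion $\Sigma(z,r)\supset\Sigma(w,r')$, not the reverse one you wrote. The correct inclusion comes from lifting each $x\in D(w,r')$ to a positive-weight preimage in $D(z,r)$, which again uses lower semicontinuity of $x\mapsto Q_x(D(z,r))$.
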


\subsection{Elementary strongly polynomial-like maps.} We now introduce the family of elementary strongly polynomial-like maps that we will study in the remaining of the section.
Let $f\colon \C\to \C$ be a transcendental function. After conjugating with an affine function we may assume that the first order coefficient vanishes, and  that the second coefficient equals $1$, giving a power series expansion of the form
$$
f(z) = a_0 + z^2 + a_3 z^4 + \ldots.
$$
We say that $f$ is an \emph{elementary strongly polynomial-like map} if  none of the coefficients vanish, and  their norms decrease sufficiently rapidly, i.e. given the coefficients $a_0, a_3, \ldots, a_d$, we may assume that norm of the coefficient $a_{d+1}$ is (non-zero and)  sufficiently small. 

Contrary to our treatment of the Baker--Bishop maps, we do not assume that the coefficients are real, nor do we make any assumptions on the critical orbits, which may escape, may be recurrent, and may even be periodic. The dynamical behavior of elementary strongly polynomial-like  maps is therefore much more varied than that of Baker--Bishop maps. For example, there are no restrictions on the multipliers of the periodic orbits that  may occur. 

As a consequence, we  will not aim to obtain a symbolic description of the dynamics, nor construct the ergodic measures via an ergodic Markov process. Instead, we will construct the invariant measure via weighted equidistribution, similarly to the construction of the measure of maximal entropy for rational functions due to Lyubich \cite{Lyubich83}. However, the inspiration for the choice of weights still comes from a symbolic dynamical system: we will associate to our map $f$ a Baker--Bishop map $g$, and the weights of the equidistribution will be defined as (a continuous interpolation of) the weights of the map $g$, that is the transition probabilities of the time-reversed Markov process $Q_{ts}$ inducing the ergodic probability measure for $g$.

The central idea in this section is that the estimates for our strongly polynomial-like map $f$ are derived from estimates for the associated Baker--Bishop map $g$.

%
We define the  domain $C_0$ exactly as we did for Baker--Bishop maps in Lemma~\ref{lem:seed}. In particular $C_0$ is bounded by two disjoint real analytic curves  $\Gamma^\pm_0:=\partial C_0^\pm$.  We note that the definition of $C_0$ only depends on the coefficient $a_0$.
 For all $d\geq 2$ we  set
 $$
\rho_{d+1}:=\frac{10\cdot2d}{2d+1}\frac{|a_{d}|}{|a_{d+1}|}.
$$
 and for all $d\geq 3$ we define the \emph{special annulus}  $$A_d := \{\rho_{d}/10<|z|<10\rho_{d}\}.$$
  For all $d\geq 3$, let $\gamma_d, \Gamma_d$ denote respectively the interior and exterior components of the  boundary of the annulus $A_d$.
 Notice that, in contrast with what we did for Baker--Bishop maps, we do not define the family of annuli $(C_q)_{q\geq 1}$.
 
Let us define  
$$
R_2:=\D_{\frac{\rho_3}{10}}\setminus\overline{ C_0},
$$
 and, for all $d\geq 3$,
$$
R_d:=\{10\rho_{d}< |z|< \frac{\rho_{d+1}}{10}\}.
$$
The domains $R_d$ for $d\geq2$ are called the   \emph{degree-$d$  region}.
Finally, we denote $D_2:=C_0$ and for all $d\geq 3$ we set
 $$
    D_d := \{z \; : \; |z| < 10 \rho_d \}.
    $$


\begin{remark}
    By letting the norms of the coefficients $(a_j)$ decrease sufficiently fast, we may assume that the radii $\rho_d$ are increasing, and in fact, increase arbitrarily rapidly.
\end{remark}
 The proofs of the following results are very similar to the proofs of the analogous results for Baker--Bishop maps, so we omit them.

\begin{lemma}\label{lem:requirements on coefficients}
    By letting the norms of the coefficients $(a_j)$ decrease sufficiently fast, we may assume that for each $d \ge 3$ and for $z$
 in $A_d$ the dominating terms  are $a_{d-1}z^{d-1}$ and $a_dz^d$.
    \begin{enumerate}
        \item[$\bullet$] \emph{Critical points:} The critical points of $f$ are   $x_1=0$ 
        and  points $x_d$ for $d \ge 2$, where
        $$
       x_d \sim \frac{- d a_{d}}{(d+1) a_{d+1}}
        $$
        In particular $|x_d| < \rho_{d+1}/10$,  that is, $x_d\in R_{d}$, close to the inner boundary of $A_{d+1}$.
        All critical points have multiplicity 2, that is $f'(x_d)=0$ and $f''(x_d)\neq 0$.
         \item[$\bullet$]\emph{Winding numbers:}  for all $d\geq 3$
  the curves $f(\gamma_d)$ and $f(\Gamma_d)$   are both contained in the region $R_d$ and have  winding number $d-1$  and $d$ respectively around any point in $D_d$.
        \item[$\bullet$]\emph{Images:}
  For all $d\geq 2$ we have $$f(R_d)\subset R_d\cup \overline{A_{d+1}}\cup R_{d+1},$$ and for all $d\geq 3$ we have
$$f(A_d)\subset \overline{D_d}\cup R_d.$$
Moreover, $$f(C_0)\subset \overline{C_0}\cup R_2.$$
        \item[$\bullet$]\emph{Critical values:} The critical values $v_d := f(x_d)$ satisfy
     $v_1=a_0\in R_2$ and 
$$10\rho_{d+1}+2<|v_d|<|x_{d+1}|-2$$ for all $d\geq 3$, thus $v_d\in R_{d+1}$.
    \end{enumerate}
\end{lemma}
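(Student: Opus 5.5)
We prove Lemma~\ref{lem:requirements on coefficients} by induction on $d$, as in the proof of Theorem~\ref{thm:Construction Theorem 1D}. At stage $d$ the polynomial $p_d=a_0+z^2+\dots+a_dz^d$ is fixed and is assumed to satisfy all four bullet points up to level $d$. We then choose $|a_{d+1}|$ so small that $p_{d+1}$, and every later partial sum, still satisfies them. The key observation is that Lemmas~\ref{lem:basic estimates}, \ref{lem:basic estimates new} and \ref{lem:new critical points} use the coefficients only through their moduli and Remark~\ref{trivialestimate}. So they hold verbatim for complex coefficients once $a_j$ is replaced by $|a_j|$ and $\rho_{d+1}=\frac{10\cdot 2d}{2d+1}\frac{|a_d|}{|a_{d+1}|}$. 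Positivity of the coefficients was needed only for the cell decomposition, which we do not use here.

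\emph{Step 1: dominant terms and winding numbers.} On $A_{d+1}$ we write $f=a_dz^d+a_{d+1}z^{d+1}+E$. By \eqref{stimacruciale}--\eqref{stimacruciale3}, the lower-order terms are small there, and the tail $\sum_{j\ge d+2}a_jz^j$ can be made arbitrarily small on the compact set $\overline{D_{d+1}}$ by shrinking later coefficients. On $\Gamma_{d+1}$ the term $a_{d+1}z^{d+1}$ dominates, and Rouch\'e gives winding number $d+1$ about $\overline{D_{d+1}}$, with image in $R_{d+1}$. On $\gamma_{d+1}$ the term $a_dz^d$ dominates by \eqref{stimacruciale3}, and \eqref{ratiolarge} gives winding number $d$. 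This is the same computation as point (iv) in the proof of Theorem~\ref{thm:Construction Theorem 1D}. The only change is that the image of each circle must also lie in the bounded component of the complement of $A_{d+2}$, that is, in $|w|<\rho_{d+2}/10$. This is ensured by taking $|a_{d+2}|$ small, hence $\rho_{d+2}$ large. The index shift in the statement (winding numbers $d-1$ and $d$ for $A_d$) is just a relabelling.

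\emph{Step 2: images of regions.} On $R_d$ with $d\ge3$ the term $a_dz^d$ dominates by \eqref{stimacruciale3}--\eqref{stimacruciale4} and Lemma~\ref{lem:basic estimates}, so $|f(z)|\approx|a_d||z|^d$. On the inner circle $|z|=10\rho_d$ we get $|f|\gg10\rho_d$, so the image lies in $R_d$. On the outer circle $|z|=\rho_{d+1}/10$ the image lies in $\overline{D_{d+1}}$ by Step 1, hence in $R_d\cup\overline{A_{d+1}}\cup R_{d+1}$ provided $\rho_{d+2}$ is large. The maximum modulus principle, applied on discs, then gives the claimed inclusion for $f(R_d)$. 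For $f(A_d)\subset\overline{D_d}\cup R_d$ we use Step 1 and the maximum principle on $D_d$. The cases $R_2$ and $C_0$ come from Lemma~\ref{lem:seed} together with small perturbations.

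\emph{Step 3: critical points and values.} Lemma~\ref{lem:new critical points} places the new critical point $x_d$ of $p_{d+1}$ near $-da_d/((d+1)a_{d+1})$, inside $R_d$. By Rouch\'e applied to $p_{d+1}'$ on suitable circles, together with the count of zeros of $p_{d+1}'$, every critical point is simple. As in \eqref{criticmoves}, the implicit function theorem shows that the earlier critical points move by an arbitrarily small amount, and Hurwitz gives the limiting count. For the critical values, $|x_d|\approx\frac{d}{d+1}\frac{\rho_{d+1}}{10}$, where $a_dz^d$ and $a_{d+1}z^{d+1}$ are comparable. Hence $|v_d|\asymp|a_d||x_d|^d\asymp\rho_{d+1}^d|a_d|$, which is $\gg10\rho_{d+1}+2$ because $\rho_{d+1}^{d-1}|a_d|$ is large (as in \eqref{ratiolarge}). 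Choosing $|a_{d+2}|$ small makes $|x_{d+1}|\sim\rho_{d+2}/10$ exceed $|v_d|+2$, so $v_d\in R_{d+1}$.

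I expect the main obstacle to be the bookkeeping that lets each inequality, proved for the partial sum $p_{d+1}$, survive all later perturbations. Here I will argue exactly as in Theorem~\ref{thm:Construction Theorem 1D}: every condition is open (Rouch\'e, Remark~\ref{rem:stabilityofcoverings}) on a fixed compact set, and at each step a summable sequence of error margins is kept.
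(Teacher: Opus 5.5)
Your overall plan is what the paper intends: it omits the proof and refers to the inductive construction of Theorem~\ref{thm:Construction Theorem 1D}. Steps~1 and~3 are essentially fine, but the lower bound in Step~2 does not work as written.

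For $d\ge 3$ the inclusion $f(R_d)\subset R_d\cup\overline{A_{d+1}}\cup R_{d+1}=\{10\rho_d<|w|<\rho_{d+2}/10\}$ requires $|f|>10\rho_d$ on all of $R_d$. The maximum modulus principle only gives the upper bound. You obtain the lower bound from $|f(z)|\approx|a_d||z|^d$, citing \eqref{stimacruciale3}, \eqref{stimacruciale4} and Lemma~\ref{lem:basic estimates}. These estimates hold only for $|z|\ge(8d+4)|a_0|/|a_d|$ (resp.\ $|z|\ge 2|a_0|/(\delta|a_d|)$), because they bound all lower-order terms crudely by $2|a_0||z|^{d-1}$. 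But $10\rho_d<100|a_{d-1}|/|a_d|$, while $|a_{d-1}|\le 1$ and $|a_0|$ is large. So the inner part $\{10\rho_d\le|z|<(8d+4)|a_0|/|a_d|\}$ of $R_d$ is not covered, and the lower bound is unproved there. Also, by your own Step~1, $f(\gamma_{d+1})$ lies in $R_{d+1}$, not in $\overline{D_{d+1}}$; all you need from it is $|f|<\rho_{d+2}/10$.

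There are two short repairs.
\begin{itemize}
\item Use the estimate from the previous level. Applying \eqref{stimacruciale} to $p_d=p_{d-1}+a_dz^d$ gives $|p_{d-1}(z)|<\tfrac15|a_d||z|^d$ for $|z|\ge\rho_d$. Hence $|p_d|>\tfrac45|a_d||z|^d$ on the compact set $\{10\rho_d\le|z|\le(8d+4)|a_0|/|a_d|\}$. This set is fixed before $a_{d+1}$ is chosen, and all later perturbations are small on it. This is the split at $\bar q$ in the proof of Theorem~\ref{thm:Construction Theorem 1D}.
\item Argue from Step~1 alone. Both $f(\Gamma_d)$ and $f(\gamma_{d+1})$ wind $d$ times around $0$, so by the argument principle $f$ has no zeros on $\overline{R_d}$. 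The minimum modulus principle then gives $|f|\ge\min_{\partial R_d}|f|>10\rho_d$. The same counting handles $R_2$: $f(\gamma_3)$ winds twice around each point of $\overline{C_0}$, and each such point has one preimage in each $C_0^\pm$, so $f(R_2)\cap\overline{C_0}=\emptyset$.
\end{itemize}

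Finally, in Step~3 the claim $|v_d|\asymp|a_d||x_d|^d$ needs one line, since the two dominant terms are comparable and could a priori cancel. On the disk of Lemma~\ref{lem:new critical points} one has $|1+a_{d+1}z/a_d|>\frac{1}{2(d+1)}$; the factor equals $\frac{1}{d+1}$ at the center. So no cancellation occurs.
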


\begin{corollary}\label{cor: delta in annulus}
    For each $d \ge 3$ there exists a unique simply connected domain $\Delta_d$ contained in the annulus
    $A_d$ which is mapped univalently by $f$ onto the disk $D_d$.
   All points in $A_d \setminus \Delta_d$ are mapped in the complement of $D_d$.
\end{corollary}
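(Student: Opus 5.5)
The plan is to argue with the argument principle on $A_d$, using the winding numbers from Lemma~\ref{lem:requirements on coefficients}.

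\textbf{Preimage count.} Fix $w\in D_d$. The curves $f(\Gamma_d)$ and $f(\gamma_d)$ lie in $R_d$, which is disjoint from $\overline{D_d}$. Hence $f-w$ has no zeros on $\partial A_d$. The number of zeros of $f-w$ in $A_d$, with multiplicity, is the winding number of $f(\Gamma_d)$ around $w$ minus that of $f(\gamma_d)$, namely $d-(d-1)=1$. So every $w\in D_d$ has exactly one preimage in $A_d$, and it is simple.

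\textbf{The univalent piece.} Set $\Delta_d:=f^{-1}(D_d)\cap A_d$. The restriction $f\colon\Delta_d\to D_d$ is proper, because points of $\Delta_d$ cannot accumulate on $\partial A_d$: such limit points would map into $R_d$, away from $\overline{D_d}$. By the count above this proper map has degree one, so it is a holomorphic bijection, hence a biholomorphism. Consequently $\Delta_d$ is connected and simply connected, since it is biholomorphic to the disk $D_d$, and it is the unique domain in $A_d$ mapped univalently onto $D_d$. Any other such domain would consist of preimages of points of $D_d$ in $A_d$, and these all lie in $\Delta_d$; being open and mapped onto $D_d$, it must equal $\Delta_d$. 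The second claim is then the definition of $\Delta_d$: every point of $A_d\setminus\Delta_d$ is mapped outside $D_d$.

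\textbf{Main obstacle.} The only delicate point is the winding-number statement itself, which is taken as given from the lemma. Its proof goes by Rouch\'e comparison with $a_{d-1}z^{d-1}$ on $\gamma_d$ and $a_dz^d$ on $\Gamma_d$, using $\rho_d$ large, exactly as in the Baker--Bishop construction. The one thing to check is that both image curves avoid $\overline{D_d}$ and not merely $D_d$; this holds because $R_d$ is separated from $\overline{D_d}$ in the lemma.
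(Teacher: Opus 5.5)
Your proof is correct and follows the paper's approach: the paper omits the proof and defers to the Baker--Bishop case, where $f(\Gamma)$ and $f(\gamma)$ avoid the disc and have winding numbers differing by one, so $f|_{A_d}$ ``covers once'' $D_d$ and $\Delta_d$ is defined as the preimage of $D_d$ in $A_d$. This is exactly your argument-principle count using Lemma~\ref{lem:requirements on coefficients}, together with the observation that $R_d\cap\overline{D_d}=\emptyset$; the properness step is harmless but not needed, since the count already gives a holomorphic bijection $\Delta_d\to D_d$.
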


\begin{remark}
Notice that there are no Picard exceptional values, so that every point has infinitely many preimages. Also,  there are no asymptotic values, hence the set of singular values is the set of critical values $(f(x_d))$, for $d\geq1$.

\end{remark}

\begin{definition}
Let $d\geq 2$.
 For all $z\in \overline {R_d}$ we define $k(z)$ (also denoted $k_f(z)$) as the minimum integer such that $|f^k(z)|\geq  \frac{\rho_{d+1}}{10}.$
 \end{definition}

\begin{prop}[Almost uniform escape time]\label{prop:k_d}
If the norm of the coefficients decrease sufficiently fast, 
then 
\begin{equation}\label{diff1}
\max_{z\in  \overline{R_d}}\{k(z)\}\leq \min_{z\in \overline{D_d}}\{k(z)\}+1<+\infty.
\end{equation}

\end{prop}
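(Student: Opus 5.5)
The plan is to compare every orbit that starts in $\overline{D_d}$ with the orbit of a point on the inner boundary circle $\{|z|=10\rho_d\}$ of $\overline{R_d}$. We use that $f$ behaves like the monomial $a_dz^d$ on the degree-$d$ region. The escape time $k(z)$ from the definition is then controlled by one real quantity, a normalized logarithm of $|z|$. For this to make sense on $\overline{D_d}$ we read $k(z)$ as the first $k$ with $|f^k(z)|\ge\rho_{d+1}/10$, with $k(z)=+\infty$ if there is no such $k$; since the right-hand side of \eqref{diff1} is a minimum, such points do not matter.

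\textbf{Step 1 (monomial model on $\overline{R_d}$).} On $R_d$ we have $10\rho_d\le|z|\le\rho_{d+1}/10$. By the analogues of \eqref{stimacruciale3}, \eqref{stimacruciale4} in Lemma~\ref{lem:basic estimates new}, and after shrinking $|a_{d+1}|$,
\[
\tfrac{1}{c_d}\,|a_d||z|^d\le|f(z)|\le c_d\,|a_d||z|^d ,
\]
with $c_d$ depending only on $d$ (one may take $c_d=4d+2$ near the outer edge and $6/5$ elsewhere). Set $h(z):=\log|z|+\frac{\log|a_d|}{d-1}$. Then $h(f(z))=d\,h(z)+O(\log c_d)$ as long as the orbit stays in $R_d$. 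Also $h(10\rho_d)$ is huge when the coefficients decay fast, because $|a_d|(10\rho_d)^{d-1}$ can be made arbitrarily large, as in \eqref{ratiolarge}. Hence the additive errors are negligible against the geometric growth $d^n$. In particular $h$ increases along orbits in $R_d$ and every point of $\overline{R_d}$ leaves within finitely many steps. The slowest point is one with $|z|=10\rho_d$, so $\max_{\overline{R_d}}k$ is finite and is essentially attained on the inner circle; call this value $k^*$.

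\textbf{Step 2 (orbits from $\overline{D_d}$ pass through $R_d$ at a controlled height).} By Lemma~\ref{lem:requirements on coefficients} we have $f(R_j)\subset R_j\cup\overline{A_{j+1}}\cup R_{j+1}$ and $f(A_d)\subset\overline{D_d}\cup R_d$. So an orbit from $\overline{D_d}$ that reaches modulus $\ge\rho_{d+1}/10$ must jump from $\overline{A_d}$ (or possibly the boundary circle) into $\overline{R_d}$. By the winding statements and the maximum principle, $f(\overline{A_d})$ lies in the disk bounded by $f(\Gamma_d)$, which has radius at most $c\,|a_d|(10\rho_d)^d\cdot 10^{d}$. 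Before that jump the point has spent at least one step of its orbit, and at most one of its steps lands in $A_d$ before entering $R_d$. Thus for $z\in\overline{D_d}$ with finite escape time,
\[
k(z)\ge 1+k(w)\quad\text{for some } w\in\overline{R_d}\text{ with } h(w)\le d\,h(10\rho_d)+C_d ,
\]
where $C_d$ depends only on $d$.

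\textbf{Step 3 (comparison and rounding).} Let $v_0$ lie on $|v_0|=10\rho_d$. Then $h(f(v_0))\ge d\,h(v_0)-C_d'$, so $w$ is at most a bounded additive amount (in $h$) above $f(v_0)$. Iterating the model from Step 1, the orbit of $w$ is at height at most $h(f^{n+1}(v_0))+C''_d$ at each time $n$. Therefore $w$ escapes at most one step before $f(v_0)$ does, that is, $k(w)\ge k(v_0)-2$, which combined with Step 2 gives $k(z)\ge k^*-1$. This is exactly \eqref{diff1}.

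\textbf{Main obstacle.} The hardest point is the rounding issue: a bounded $h$-error could push an orbit across the threshold $\log(\rho_{d+1}/10)$ one step earlier than the comparison orbit. I would handle it as in the inductive construction of Theorem~\ref{thm:Construction Theorem 1D}, using the freedom in choosing $a_{d+1}$ (equivalently $\rho_{d+1}$). Choose $\rho_{d+1}$ so that $\log(\rho_{d+1}/10)$ is far, measured in $h$, from every level $h(f^n(v_0))$. This is possible since consecutive levels differ by a factor $d$ in $h$, leaving room for a margin much larger than $C''_d$. With that margin, the additive errors cannot change escape times by more than the one step allowed in \eqref{diff1}.
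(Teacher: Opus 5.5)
Your Step~1 is sound, but Step~3 contains a false claim, and the rest of the argument rests on it, including your ``rounding'' repair. Since $h\circ f=d\,h+O(\log c_d)$, a discrepancy in height is \emph{multiplied} by $d$ at each iterate. Already for the exact model $f=a_dz^d$ one has $h(f^n(w))-h(f^{n+1}(v_0))=d^n\bigl(h(w)-h(f(v_0))\bigr)$. So there is no constant $C''_d$, independent of $n$, with $h(f^n(w))\le h(f^{n+1}(v_0))+C''_d$. Likewise, no absolute margin around $\log(\rho_{d+1}/10)$, however large, can absorb errors of size comparable to $d^n$.

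Even granting the comparison, you only get $\min_{\overline{D_d}}k\ge k(v_0)-1$ for one point $v_0$ of the inner circle. You never show $\max_{\overline{R_d}}k\le k(v_0)$. The two-sided bounds of Step~1 only locate escape times within a window, so another point of $\overline{R_d}$ may escape one step later than $v_0$; in that case your chain yields only $\max\le\min+2$. Finally, tuning $\rho_{d+1}$ would prove \eqref{diff1} only for special values of $a_{d+1}$. The statement, and the definition of elementary strongly polynomial-like maps, asks for all sufficiently small ones.

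The fix is to use the observation you already make in Step~1, that the errors are small \emph{relative} to the growth, and to compare the two extremes directly. Let $\epsilon:=\log c_d/(d-1)$, $H_0:=h(10\rho_d)$ and $\theta:=\log(\rho_{d+1}/10)+\frac{\log|a_d|}{d-1}$. As long as an orbit stays in $\overline{R_d}$, one has $d^n(h(u)-\epsilon)+\epsilon\le h(f^n(u))\le d^n(h(u)+\epsilon)-\epsilon$.

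\begin{itemize}
\item Every $u\in\overline{R_d}$ has $h(u)\ge H_0$, hence $k(u)\le N_1:=\min\{n: d^n(H_0-\epsilon)\ge\theta\}$.
\item For an escaping $z\in\overline{D_d}$, let $w$ be its first iterate outside $\overline{D_d}$. Apply the maximum principle on the whole disk $\overline{D_d}$, not only on $\overline{A_d}$, since points of $R_{d-1}$ can land in $R_d$ directly. This gives $h(w)+\epsilon\le d(H_0+\epsilon)$.
\item The orbit of $w$ never re-enters $\overline{D_d}$, so the upper bound applies along it and yields $k(z)\ge 1+k(w)\ge N_2:=\min\{n: d^n(H_0+\epsilon)\ge\theta\}$.
\end{itemize}

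As $H_0$ is huge, $d(H_0-\epsilon)\ge H_0+\epsilon$, so $N_1\le N_2+1$. This is \eqref{diff1} for every sufficiently small $|a_{d+1}|$, with no reference point and no tuning.

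The case $d=2$ needs separate care. There $R_2=\mathbb{D}_{\rho_3/10}\setminus\overline{C_0}$ is not an annulus of the type used in Step~1, and $f$ is not monomial-like near $\partial C_0$.

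Once corrected, your height-function argument is a genuinely different and more quantitative route than the paper's sketch. The paper instead decomposes $R_d$ into Baker--Bishop annuli $C_q$ (with $r_1=\rho_d$, $r_{q+1}=|a_d|r_q^d$) and gaps that map into gaps, and argues inductively in $d$ as in Section~\ref{sec: Bishop maps}.
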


\begin{proof} 
The argument is analogous to the argument in   Section \ref{sec: Bishop maps}, so we only briefly sketch it.
Assume that we have chosen the coefficients $a_0,a_3,\dots, a_d$ in such a way that  \eqref{diff1}, denoting $p_d(z)=a_0+a_3z^3+\dots +a_{d}z^{d}$,
$$\max_{z\in  \overline{R_d}}\{k_{p_d}(z)\}\leq \min_{z\in  \overline{D_d}}\{k_{p_d}(z)\}+1<+\infty, \quad \forall \,2\leq j\leq d.$$

Clearly by choosing $|a_{d+1}|$ small enough we can ensure
that, for all $2\leq j\leq d$,  $$\max_{z\in  \overline{R_j}}\{k_{p_{d+1}}(z)\}=\max_{z\in  \overline{R_j}}\{k_{p_{d}}(z)\},\quad \quad \min_{z\in  \overline{D_d}}\{k_{p_{d+1}}(z)\}= \min_{z\in  \overline{D_d}}\{k_{p_d}(z)\}.$$

Inductively define a sequence of radii $(r_q)_{q\geq 1}$ setting
$r_1:=\rho_d$, and $r_{q+1}=|a_d|r_{q}^d$ for all $q\geq 1$. We have $C_1=A_d$.
For all $q\geq 1$ consider the annuli  $C_q:=\{ \frac{r_q}{10}<|z|<10 r_q\}$ and the gaps $G_q:=\{ 10 r_q<|z|<\frac{r_{q+1}}{10}\}$.
Then $p_d$ sends each gap $G_q$ compactly inside the next gap $G_{q+1}$ and  every annulus $C_q$  covers exactly $d$ times the next annulus $C_{q+1}$, sending   the inner boundary of $C_q$ in the gap $G_q$ and the outer boundary in the gap $G_{q+1}$.
If  $|a_{d+1}|$ is small enough,  then arguing as in Section \ref{sec: Bishop maps} we get  
$$\max_{z\in  \overline{R_{d+1}}}\{k_{p_{d+1}}(z)\}\leq \min_{z\in  \overline{D_{d+1}}}\{k_{p_{d+1}}(z)\}+1<+\infty.$$
\end{proof}

\begin{definition}[Escape time from the $d$-degree region]\label{def:escape time} We define 
 $k_2:=\max_{z\in \overline{R_2}}k(z),$ and for all $d\geq 3$,
 $$k_d:=1+\max_{z\in \overline{R_d}}k(z).$$
\end{definition}

The role of the constants $k_d$ is similar to the role that these constants play for the Baker--Bishop-maps, except that it does not take exactly $k_d$ steps to pass from the degree $d$ region to the degree $d+1$ region, but it could take a slightly shorter time. However, the difference between the maximal number of steps and the minimal number of steps is bounded.
A key idea in what follows is that by letting the integers $k_d$ be sufficiently large, the impact of this bounded difference can be made as small as we wish.

\begin{definition}[Associated Baker--Bishop map]\label{defassBB}
If the coefficients $(a_d)$ of $f$ decrease fast enough, then the sequence $(k_d)$ grows fast enough to ensure  the existence (see Remark \ref{importantremark} (1)) of a Baker--Bishop map $g\colon \C\to \C$ such that, for all $d\geq 2$,
$$k_d=q_{d+1}-q_d.$$ 
Although $g$ is not unique, we chose one, and we say that the Baker--Bishop map $g$ is {\sl associated} with $f$.

\end{definition}
\subsection{Defining the weight function}\label{sec: inverse probabilities}

 Let $(P_{st})$  be the model Markov process  defined by   Proposition \ref{prop:existence}  for the sequence of escape times $(k_d)$ associated with the map $f$.  For Baker--Bishop maps we observed that the  transition probabilities $(Q_{ts})$ of the time-reversed Markov process
 can be used as weights to obtain weighted equidistribution of preimages.
The central idea in this section is that we can  use the same transition probabilities $(Q_{ts})$  to define   weights  for the function $f$, obtaining  weighted equidistribution of preimages even though we do not establish a conjugacy between $f$ and the symbolic dynamical system associated to the Markov process. The fact that we can still use the probabilities $(Q_{ts})$ without defining the  cells depends on Lemma \ref{computationtrans} which shows that $Q_{(q+1,j)(q,i)}$
 is independent of $i,j$, and only depends on 
 the integer $d$ such that $(q,i)\in S_d$.
 
  Recall  that the complex plane is partitioned as follows
$$
\C=\overline{C_0}\cup\bigcup_{d\geq3} \overline {A_d}\cup \bigcup_{d\geq2} R_d.
$$
Taking inspiration from the weights of the associated Baker--Bishop map,  we now define the weights of the preimages of points via the map $f$.
Let $c_2 , c_3, \ldots$ be a sequence of strictly positive real numbers satisfying
$$
\sum_{d\geq 2} c_d = \frac{1}{2},\quad \quad \sum_{d\geq 2} c_d \log d=+\infty.
$$ 
Recall the definition of the sequence $(c'_d)_{d\geq 2}$ given in Proposition \ref{prop:existence}.
\begin{definition}[Weight function]\label{def:weights summary}\, Let $w\in\C$ and $z\in f^{-1}(w)$.  In the sequel, preimages are counted with multiplicity. 
\begin{enumerate}
\item If $w \in C_0$, then $w$ has two preimages in $C_0$ and a preimage in $ A_d$ for each  $d \ge 3$. We define
\begin{align*}
Q(w,z)&=\frac{1}{2}-\frac{c_2}{k_2} &\text{if $z\in C_0$,}\\
Q(w,z)&=\alpha_d=\frac{2c'_{d-1}}{k_{d-1}}-\frac{2c'_d}{k_d}   &\text{if $z\in A_d$, $d\geq3$.}
\end{align*} 
 \item The map $f|_{R_{2}\cup C_0}$ covers $2$ times the region $R_2$, thus
if  $w \in R_2$, 
it has  $2$ preimages in $R_2\cup C_0$. Moreover $w$ has  
 one preimage in each  annulus $A_D$ for $D> 2$.
 We define
\begin{align*}
Q(w,z)&=\frac{1}{2} &\text{if $z\in R_2\cup C_0$},\\
Q(w,z)&=0  &\text{ if $z\in A_D$, $D> 2$}.
\end{align*}  
 \item 
 Let $d\geq3$. The map $f|_{R_{d-1}}$ covers $d-1$ times the special annulus $A_d$. Thus
 if $w\in A_d$, then $w$ has  $d-1$  preimages    in $R_{d-1}$. Moreover $w$ has  one preimage in each annulus $A_D$ for $D\geq d$. 
We define
\begin{align*}
Q(w,z)&=\frac{1}{d-1} &\text{if $z\in R_{d-1}$,}\\
Q(w,z)&=0  &\text{ if $z\in A_D$, $D\geq d$}.
\end{align*}  
\item 
Let $d\geq3$. The map $f|_{R_{d-1}\cup A_d \cup R_d}$ covers $d$ times the region $R_d$. Thus
if $w \in R_d$,  then $w$ has $d$ preimages   in $R_{d-1}\cup A_d\cup R_d$. Moreover, $w$ has 
 one preimage in each  annulus $A_D$ for $D> d$.
We define
\begin{align*}
Q(w,z)&=\frac{1}{d} &\text{if $z\in R_d\cup A_d$},\\
Q(w,z)&=0  &\text{ if $z\in A_D$, $D> d$}.
\end{align*}  

 \end{enumerate}
\end{definition}

The weights given with the preimages of $w$ are therefore  constant, except on countably many curves:   the outer boundaries of each annulus $A_d$ (where there is a sudden jump in the weights from $1/(d-1)$ to $1/d$,  and one  inverse image receives nonzero weight), and on the boundary of $C_0$, where the weights jump from two preimages of weight $1/2$ outside, to infinity many preimages of points inside that all receive positive weights. In order to prove equidistribution we will modify these weights near each of the curves where the discontinuities occur.

\subsection{Continuous interpolation of the weight function} \label{subsection: smoothening}

 We now show that the weight function can be made continuous by interpolation.
Notice that  the only singular values of $f$  are critical values, and each of them  has exactly one critical preimage and infinitely many regular preimages.
Recall that  for any simply connected open set  $D\subset \C$  which does not contain singular values for $f$ we have that its preimage  $f^{-1}(D)$  consists of infinitely many simply connected domains $U_i$ which are mapped conformally to $D$ under $f$, that is,  there exists a countable number of conformal maps $z_i: D\ra U_i$ such that $f\circ z_i=\Id$ on $D$. 

Let  $d\geq 3$. 
By Lemma~\ref{lem:requirements on coefficients},    the  distance between the critical value $v_d$ and  the outer boundary of $A_d$ is strictly more than $2$.  Arguing as in Corollary~\ref{cor: delta in annulus} one shows that  then there exists
a holomorphic disk   $\widetilde \Delta_d\Subset A_d$  such that $f\colon \widetilde \Delta_d\to \D_{10\rho_d+2}$ is a biholomorphism which extends as a homeomorphism of the closure.

Define  
$$
B_d:=\{10\rho_d<|z|<10\rho_d+1\}\subset R_d.
$$
For the case $d=2$, we define $B_2$ as the subset of $C_0$ of points whose distance from $\partial C_0$ is strictly smaller than 1. Clearly $B_2$ is homeomorphic to the disjoint union of two annuli.

 We now modify the weight function $Q(w,z)$ when $w\in \bigcup_{d\geq 2}\overline{B_d}$. Fix $d\geq 3$.
For all $w\in \overline B_d$ there are exactly  $d-1$ distinct  preimages $z_1,\dots, z_{d-1}$ in  $\D_{\rho_d/10}$  for which $Q(w_0,z_i)=\frac{1}{d-1}$, and one preimage $z_d$ in $\widetilde \Delta_d$ for which $Q(w_0,z_d)=0$. (We ignore all other preimages.) 
For $w\in \overline B_d$ let $$t(w):=|w|-10\rho_d\in [0,1],$$ 
 then we redefine the weights setting
\begin{align}\label{eq:weights on Bn1}
Q(w,z_i)&= (1-t)\frac{1}{d-1}+t\frac{1}{d}  \text{\ \ \ for $i=1,\ldots ,d-1$,}\\
Q(w,z_d)&=t\frac{1}{d}.
\end{align}
This clearly coincides with the weights from Definition~\ref{def:weights summary} for $w$ in the outer boundary of $B_d$ as well as for points in the outer boundary of $A_d$.
Notice the weight of the preimages $z_1,\dots, z_{d-1}$ is the same, in particular it is independent on how we number them.

 Finally, we modify the weights on $\overline{B_2}$. For all $w\in \overline B_2$ there are exactly  $2$ distinct  preimages $z_1,z_2$ in  $C_0$  for which $Q(w,z_i)=\frac{1}{2}-\frac{c_2}{k_2}$, and for all $d\geq 3$ one preimage $z_d$ in $\widetilde \Delta_d$ for which $Q(w,z_d)=\alpha_d$. 
For $w\in \overline B_2$ let $$t(w):=d_{\partial C_0}(w)\in[0,1],$$ where  $d_{\partial C_0}$ denotes the distance from the boundary of $C_0$.
We redefine the weights by 
\begin{align}\label{eq:weights on B2}
Q(w,z_i)&= (1-t)\frac{1}{2}+t(\frac{1}{2}-\frac{c_2}{k_2})=1-\frac{tc_2}{k_2} \text{\ \ \ for $i=1,2$,}\\
Q(w,z_d)&=t\alpha_d\text{\ \ \ for $d\geq 3$.}
\end{align}


\begin{lemma}\label{lemma: smoothened weights}
 Let $Q$ be the interpolated weight function.
Let  $w_0$ be a regular value, and let  $r>0$ be small enough (in particular, $D(w_0,r)$ does not contain critical values). 
Let $(z_i)_{i\in I}$ be the family of the inverse branches of $f$ on $D(w_0,r)$.
Then for  all $w\in D(w_0,r)$ we have
\begin{equation}\label{eq: Lipshitz weights}
\sum_{i\in I}|Q(w,z_i(w)) - Q(w_0,z_i(w_0))| \le |w - w_0|.
\end{equation}
\end{lemma}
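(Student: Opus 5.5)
The plan is to use that, away from the interpolation collars $\overline{B_d}$, the weight $Q(w,z_i(w))$ depends only on which region the preimage $z_i(w)$ lies in. Inside a collar, it depends only on the parameter $t(w)$, and $t$ is $1$-Lipschitz in $w$. Fix $w_0$ regular and $r$ so small that $D(w_0,r)$ contains no critical value. We also require $D(w_0,r)$ to meet at most one collar $\overline{B_d}$ and at most one of the discontinuity curves of Definition~\ref{def:weights summary}. This is possible because these sets are locally finite (the radii $\rho_d$ increase), so a small disk meets only one $d$. The branches $z_i$ are conformal on $D(w_0,r)$. Since the regions $C_0$, $A_D$, $R_d$ are determined by moduli and each branch moves continuously, we take $r$ smaller if necessary. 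Then each branch $z_i(w)$ stays in the same region (or in $\widetilde\Delta_D$) for all $w\in D(w_0,r)$, except possibly when $w$ crosses a boundary circle. That crossing is exactly where the interpolation acts.

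The first case is $D(w_0,r)$ disjoint from $\bigcup_d \overline{B_d}$. I claim that $Q(w,z_i(w))$ is then constant in $w$ for every $i$, so the left-hand side of \eqref{eq: Lipshitz weights} vanishes. The only jumps of the original weights occur at the outer boundaries of $A_d$ and at $\partial C_0$. These are precisely the inner boundaries of $B_d$ and $B_2$, which the interpolation has absorbed. On the other side of each collar, the interpolated formula at $t=1$ coincides with Definition~\ref{def:weights summary}. So away from the collars, the weight depends only on the case (1)–(4) of Definition~\ref{def:weights summary} for $w$ and on the region containing $z_i(w)$, and both are locally constant. One point needs care. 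For $w\in R_d$ the weight $1/d$ is assigned to every preimage in $R_{d-1}\cup A_d\cup R_d$. Hence a branch crossing from $R_{d-1}$ into $A_d$ or $R_d$ does not change its weight. I expect this bookkeeping to be routine.

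The second case is $D(w_0,r)$ meeting $\overline{B_d}$ with $d\ge3$. The only branches with nonconstant weight are the $d-1$ branches $z_1,\dots,z_{d-1}$ in $\D_{\rho_d/10}$ and the branch $z_d$ into $\widetilde\Delta_d$. Here $t(w)$ is extended by $0$ inside the collar and by $1$ outside it, which is consistent with the matching at the collar boundaries. This extension of $t$ is $1$-Lipschitz. Each of the first $d-1$ weights changes by $|t(w)-t(w_0)|\,\bigl(\frac{1}{d-1}-\frac1d\bigr)$, and the last changes by $|t(w)-t(w_0)|\frac1d$. The total is
\[
|t(w)-t(w_0)|\Bigl((d-1)\bigl(\tfrac{1}{d-1}-\tfrac1d\bigr)+\tfrac1d\Bigr)=\tfrac{2}{d}\,|t(w)-t(w_0)|\le |w-w_0|.
\]
All other branches go to $A_D$ with $D>d$ and have weight $0$ throughout, or they are handled as in the first case.

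The case $d=2$ is analogous, with $t=d_{\partial C_0}$, which is $1$-Lipschitz. The two branches in $C_0$ each change by $\frac{c_2}{k_2}|\Delta t|$, and the branches into $\widetilde\Delta_D$ change by $\alpha_D|\Delta t|$. The sum is $\bigl(\frac{2c_2}{k_2}+\sum_{D\ge3}\alpha_D\bigr)|\Delta t|$. The series telescopes to $\sum_{D\ge3}\alpha_D=\frac{2c_2'}{k_2}$, and $c_2'=c_2$, so the factor is $\frac{4c_2}{k_2}\le1$ since $c_2<1/2$.

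The main obstacle I expect is the first case. It requires checking rigorously that no hidden discontinuities exist, in particular at the inner boundary $\gamma_d$ of $A_d$ and along $\partial\widetilde\Delta_d$ versus $\Delta_d$. It also requires checking that the interpolated weights still sum to $1$, which is immediate from the computation above.
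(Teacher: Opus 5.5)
Your proof is correct and follows the paper's argument. Away from the collars $\overline{B_d}$ the weights along each branch are locally constant, and near a collar only the $d$ interpolated branches (or, for $d=2$, the two branches in $C_0$ and the branches into the $\widetilde\Delta_D$) change, each by a fixed multiple of $|t(w)-t(w_0)|\le|w-w_0|$. Your version is somewhat more complete than the paper's: extending $t$ by $0$ and $1$ covers disks that straddle a collar boundary, and you carry out the $d=2$ computation explicitly, where the bound $\frac{4c_2}{k_2}\le 1$ tacitly uses $k_2\ge 2$.
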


\begin{proof}
We only have to prove the lemma for points in $\cup_{d\geq 2}\ov B_d$.  
 Let $d\geq 3$, and let $w_0\in B_d$.
 Let $r>0$ be small enough so that $D(w_0,r)\subset B_d$.
 Let $z_d$ be the branch of $f^{-1}$ with image in $\widetilde \Delta_d$, and let $z_1,\dots, z_{d-1}$ be the branches with image in $\D_{\rho_d/10}$.
 Then $ |t(w)-t(w_0)|\leq |w-w_0|$, and thus for all $i=1,\ldots ,d$ we have
 $$|Q(w_0,z_i(w_0))-Q(w,z_i(w))|\leq \frac{1}{d}|w-w_0|,$$ which yields the result.
 The other cases are similar. 
\end{proof}

\begin{corollary}\label{prop: smoothened weights2}
The interpolated weight function $Q\colon \C\to [0,1]$ is continuous, and thus weakly continuous.
\end{corollary}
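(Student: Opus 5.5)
We need to show that the interpolated weight function $Q\colon\C\to[0,1]$, $Q(z)=Q(f(z),z)$, is continuous. Weak continuity then follows from Lemma~\ref{lem: transfer operator well defined}. The plan is a local argument, separating regular points from critical points.

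\textbf{Regular points.} Take $z_0$ with $f'(z_0)\neq0$, and set $w_0=f(z_0)$. Choose $r>0$ so small that $D(w_0,r)$ contains no critical value. This is possible since the critical values $v_d$ form a discrete set: they satisfy $|v_d|\to\infty$ by Lemma~\ref{lem:requirements on coefficients}. Then there is a branch $z_i$ of $f^{-1}$ on $D(w_0,r)$ with $z_i(w_0)=z_0$, and $z_i(D(w_0,r))$ is a neighborhood of $z_0$. On this neighborhood $Q(z)=Q(w,z_i(w))$ with $w=f(z)$. Lemma~\ref{lemma: smoothened weights} gives
$$|Q(w,z_i(w))-Q(w_0,z_0)|\le \sum_{j\in I}|Q(w,z_j(w))-Q(w_0,z_j(w_0))|\le|w-w_0|.$$
So $Q$ is continuous at $z_0$, being the composition of $f$ with a Lipschitz function along the branch.

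One point needs checking here: Lemma~\ref{lemma: smoothened weights} is stated for $w_0$ a regular value, but $w_0$ might sit on one of the curves where Definition~\ref{def:weights summary} switches cases. These curves are $\partial C_0$, the circles $|w|=10\rho_d$ and $|w|=10\rho_d+1$, and $\partial A_d$. So I would verify directly that the interpolated weights agree from both sides on each of them.
\begin{itemize}
\item On the outer circle of $B_d$ ($t=1$): the formulas give weight $1/d$ to $z_1,\dots,z_d$ and $0$ elsewhere, which matches case (4).
\item On the circle $|w|=10\rho_d$, the outer boundary of $A_d$ ($t=0$): they give $1/(d-1)$ to the preimages in $R_{d-1}$ and $0$ to the preimage in $\widetilde\Delta_d$, which matches case (3).
\item For $B_2$: at $t=0$ the weights match case (2), and at $t=1$ they match case (1).
\end{itemize}
A second check concerns the labelling of preimages. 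The preimages in $R_{d-1}$ of a point of $\overline{A_d}$ all carry the same weight, and the preimage in $\widetilde\Delta_d$ (respectively $A_D$) is tracked continuously by its branch. Hence the assignment of a weight to a branch does not depend on case labels across these interfaces. With these checks, the lemma applies at every regular value.

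\textbf{Critical points.} Take $z_0=x_d$, with critical value $v=f(x_d)$. By Lemma~\ref{lem:requirements on coefficients}, $v$ lies in the interior of a single region on which the weights are locally constant: $v_1=a_0\in R_2$ away from $B_2$, and $v_d\in R_{d+1}$ with $|v_d|>10\rho_{d+1}+2$, so $v_d$ avoids $B_{d+1}$. The critical point itself lies in $R_d$, or in $R_2\cup C_0$ when $d=1$. Preimages near $x_d$ of points near $v$ stay in the same region, and the weight depends only on which region the preimage lies in. Therefore $Q$ is constant near $z_0$, and in particular continuous there.

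The main obstacle is the interface bookkeeping in the regular case, that is, confirming that the piecewise definition together with the interpolation leaves no residual jump. The delicate spot is the boundary of $C_0$ and the set $B_2$. There, infinitely many preimages in the various $\widetilde\Delta_d$ switch on with weights $t\alpha_d$. This is already covered by the summed estimate in Lemma~\ref{lemma: smoothened weights}, since $\sum_d\alpha_d=2c_2/k_2$ is finite.
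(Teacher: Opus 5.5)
\textbf{The gap.} Your overall strategy is the paper's, but the way you split the cases leaves points uncovered. The strategy is: $Q$ is locally constant wherever the weights do not depend on $w$, and elsewhere Lemma~\ref{lemma: smoothened weights} gives a Lipschitz estimate along a local inverse branch.

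In the regular case you take any $z_0$ with $f'(z_0)\neq 0$ and choose $r>0$ so that $D(w_0,r)$ contains no critical value. This is impossible whenever $w_0=f(z_0)$ is itself a critical value. Every critical value $v_d$ has exactly one critical preimage and infinitely many regular ones, so infinitely many regular points $z_0$ fall into this situation. Discreteness of the critical values only lets you make $D(w_0,r)$ avoid critical values other than $w_0$.

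At these points neither of your cases applies. Lemma~\ref{lemma: smoothened weights} is stated only for regular values $w_0$. Your critical-point argument is formulated only at $z_0=x_d$.

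\textbf{The repair.} It uses something you already observed: every critical value lies outside $\bigcup_{d\ge 2}\overline{B_d}$, in the interior of a region where the weights of Definition~\ref{def:weights summary} do not depend on $w$. Since $f'(z_0)\neq0$, there is a local inverse branch $g$ near $w_0$ with $g(w_0)=z_0$. Then $Q(w,g(w))$ is constant for $w$ near $w_0$, so $Q$ is locally constant near $z_0$.

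It is cleaner to split according to the value rather than the point, as the paper does:
\begin{itemize}
\item If $f(z)\notin\bigcup_d\overline{B_d}$, then $Q$ is locally constant near $z$. This covers all critical points and all preimages of critical values at once.
\item If $f(z)\in\bigcup_d\overline{B_d}$, then $f(z)$ is automatically a regular value, so Lemma~\ref{lemma: smoothened weights} applies directly.
\end{itemize}

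\textbf{The rest of your argument.} Your interface checks and the identity $\sum_d\alpha_d=2c_2/k_2$ are correct. For completeness, also check the inner circle $|w|=\rho_d/10$ of $A_d$, where case (4) for $d-1$ meets case (3). Both cases give weight $1/(d-1)$ to the $d-1$ preimages of degree-$(d-1)$ type and $0$ to the preimages in $A_D$, $D\ge d$, so there is no jump there either.
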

\begin{proof}
The  function $Q\colon \C\to [0,1]$ is locally constant except at points $z\in f^{-1}(\bigcup_{d\geq 2}\overline{B_d})$, in which case $f(z)$ is a regular value and thus continuity follows  from the previous lemma
\end{proof}

\begin{remark}\label{remarkconnect}
Let $d\geq 3$. Let $z_0, z_1,\dots, z_n$ be a piece of backward orbit, that is $f(z_{j+1})=z_j$ for all $0\leq j<n$. Assume that this backward orbit has positive weight, that is
$Q(z_j,z_{j+1})>0$ for all   $0\leq j<n$. It follows from Proposition \ref{prop:k_d} that if  $z_0, z_1,\dots, z_n$ is contained in $\{10\rho_{d}< |z|\leq 10\rho_{d+1}\}$, then 
$n\leq k_d-1$. 
Similarly, if $z_0, z_1,\dots, z_n$ is contained in $\{10\rho_{d}< |z|\leq 10\rho_{i}\}$ with $i\geq d+1$, then
$$n\leq  k_d+\dots +k_{i-1}-1.$$

Similarly, if $d=2$ and $z_0, z_1,\dots, z_n$ is a piece of backward orbit with positive weight contained in $\{|z|\leq 10\rho_i\}\setminus C_0$, where $i\geq 3$, then
$$n\leq  k_2+\dots +k_{i-1}.$$
\end{remark}

\subsection{Tightness of the weight function}

From now on $Q$ will denote the interpolated weight function. Before treating tightness of $Q$ we observe that $Q$ is irreducible:

\begin{lemma}[Irreducibility of $Q$]
There exists a repelling  fixed point $p$  with  $0<Q(p)<1$ such that for every $r>0$ and  every compact $K \subset \mathbb C$ there exists an $n \in \mathbb N$ such that for every $w \in K$ there exists $z \in D(p,r)$ with $f^n(z) = w$ and $Q^n(z) > 0$,
\end{lemma}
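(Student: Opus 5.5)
We will take $p$ to be a repelling fixed point of $f$ in $C_0$, namely the fixed point of the inverse branch $g^{\pm}:=(f|_{C_0^{\pm}})^{-1}$, one of the two conformal maps from (a disk containing) $\overline{C_0}$ into $C_0^\pm$. By Lemma~\ref{lem:seed}, $f|_{C_0^\pm}$ maps $C_0^\pm$ biholomorphically onto a disk containing $\overline{C_0}$. Hence $g^+$ maps $\overline{C_0^+}$ strictly inside $C_0^+$. It is therefore a strict contraction of the hyperbolic metric, so it has a unique attracting fixed point $p\in C_0^+$, which is a repelling fixed point of $f$. For $p\in C_0\setminus \overline{B_2}$ the weight is $Q(p)=\frac12-\frac{c_2}{k_2}\in(0,1)$. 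If $p$ happens to fall in $B_2$, the interpolated value $1-\frac{tc_2}{k_2}$ still lies in $(0,1)$ for $t>0$. Since $p\in C_0$ is an interior point, $t>0$ except on $\partial C_0$, so $0<Q(p)<1$ in all cases.

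Next, fix $r>0$ and a compact $K$. The plan is to build positive-weight backward orbits from $K$ in two stages: first from $K$ into $C_0$, then inside $C_0$ towards $p$.

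\emph{Stage 1 (from $K$ into $C_0$).} Choose $d$ with $K\subset D_d$, so $K\subset \overline{C_0}\cup\bigcup_{j\le d}\overline{A_j}\cup\bigcup_{j<d}R_j$. By Definition~\ref{def:weights summary} (and its interpolation), every $w\notin C_0$ has preimages of weight at least $\frac{1}{d}\cdot$const lying in the previous region: points in $R_j$ or $A_j$ pull back with weight $1/j$ (or $1/(j-1)$) into $R_{j-1}\cup A_j\cup R_j$. Points of $R_2$ pull back with weight $1/2$ into $R_2\cup C_0$, and the interpolated weights on $B_j$ keep the weights of the inner preimages bounded below. Remark~\ref{remarkconnect} bounds the length of positive-weight backward orbits staying in a degree region. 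So every backward orbit of positive weight from $w\in K$ that follows these branches reaches $\overline{C_0}$ within $N_0:=k_2+\dots+k_{d}$ steps. By choosing preimages inside $C_0$ at the final step, we can land in $C_0$ itself, and we may then apply $g^\pm$ once more if needed to land in $\overline{g^\pm(C_0)}\Subset C_0$. The time of arrival depends on $w$. To make it uniform, we use the fact that once in $C_0$ the orbit can be continued by $g^+$ or $g^-$, both of which have weight $\frac12-\frac{c_2}{k_2}$ or an interpolated positive weight. We pad with such steps so that every $w\in K$ reaches $C_0$ at exactly time $N_0+1$.

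\emph{Stage 2 (contraction to $p$).} Inside $C_0$, iterate $g^+$ $m$ times. Since $g^+$ contracts $C_0$ uniformly towards $p$, we have $(g^+)^m(\overline{C_0})\subset D(p,r)$ for $m$ large, with $m$ uniform in the starting point. Each step has weight at least $\min(\frac12-\frac{c_2}{k_2},\,1-\frac{c_2}{k_2})>0$. Taking $n=N_0+1+m$ then gives, for every $w\in K$, a point $z\in D(p,r)$ with $f^n(z)=w$ and $Q^n(z)>0$.

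The main obstacle is Stage 1: checking case by case that every $w$ admits a preimage of positive weight that moves strictly toward $C_0$. This must include $w$ in the interpolation regions $B_j$ and in $\Delta_j$, where some weights vanish and the positive-weight preimages are the inner ones $z_1,\dots,z_{j-1}$. It also requires a uniform bound on the arrival time; Remark~\ref{remarkconnect} and Proposition~\ref{prop:k_d} are the tools for that bound.
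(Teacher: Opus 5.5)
Your argument is correct, but it runs in the opposite direction from the paper's.

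\textbf{The paper's route (forward).} Starting from $U_0=D(p,r)$, the paper builds an increasing exhaustion $U_0\subset U_1\subset\cdots$ of $\C$ with $f(U_n)\supset U_{n+1}$. First it sets $U_{n+1}=f(U_n)$ until the image contains a round disk about $0$ containing $C_0$; reaching that stage uses that $f$ has no exceptional values. Afterwards the $U_n$ are round disks centred at $0$, chosen so that whenever $U_n$ meets $A_d$ it already contains $D(0,\rho_d/10)$. This guarantees $Q(f(z),z)>0$ for every $z\in U_n$ with $f(z)\in U_{n+1}\setminus U_n$. Exact lengths then come from $f(U_0)\supset U_0$ together with positivity of the weights inside $C_0$.

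\textbf{Your route (backward).} You go backward from $K$. The only fact you really need from the case analysis you flag as the main obstacle is the following: positive-weight preimages of a point of $\overline{D_d}\setminus C_0$ lie again in $\overline{D_d}$. This holds because preimages in the far annuli $A_D$ carry weight zero, including on the interpolation collars, where $t=0$ at $|w|=10\rho_d$ and on $\partial C_0$. Consequently Remark~\ref{remarkconnect}, applied to $\{|z|\le 10\rho_d\}\setminus C_0$, forces every positive-weight backward orbit into the open set $C_0$ within a uniform time. This also makes your separate handling of $\partial C_0$ unnecessary. You then pad with $g^\pm$ and contract with $(g^+)^m$.

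\textbf{Comparison.} Your route gives an explicit $n$ in terms of the escape times $k_j$ and the contraction rate of $g^+$, and it makes the uniformity over $w\in K$ transparent. The price is that you rely on the escape-time estimates of Proposition~\ref{prop:k_d}. The paper's route is more topological (Montel plus a careful choice of exhaustion) and does not use those estimates, but the bookkeeping of which backward chains have positive weight is less explicit there. You also justify the existence of the repelling fixed point $p$ via the contraction $g^+$, which the paper takes for granted.

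\textbf{A cosmetic point.} The interpolated weight on $B_2$ should read $\tfrac12-\tfrac{tc_2}{k_2}$; the displayed $1-\tfrac{tc_2}{k_2}$, which you copied from the paper, is a typo. The corrected weight is positive for all $t\in[0,1]$, so your positivity claims for $Q(p)$ and for the steps inside $C_0$ are unaffected.
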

\begin{proof}
Let $p$ be one of the repelling fixed points in $C_0$, and let $r>0$ be sufficiently small. Since the exceptional set of $f$ is empty, the domains $f^n(D(p,r))$ form a nested sequence of increasing domains whose union equals the entire complex plane. The claim does not follow immediately since $Q$ may vanish along some backwards orbits.

We now construct recursively an increasing sequence of domains $(U_n)_{n\geq 0}$ such that 
\begin{enumerate}
\item $U_0=D(p,r)$,
\item $f(U_n)\supset U_{n+1}$ for all $n\geq 0$,
\item $Q(f(z),z)>0$ if $z\in U_n$ and $f(z)\in U_{n+1}\setminus U_n$,
\item $\bigcup_n U_n=\C$,
\end{enumerate}
and from this the result immediately follows  noticing that $f(U_0)$ also contains $U_0$ and $Q(f(z), z)$ is strictly positive when $z$ and $f(z)$ both lie in $C_0$.

To ensure condition (3), it is enough to show that if $U_n$ intersects an annulus $A_d$, then it contains the whole disc $D(0,\rho_d/10)$.
Let $N \in \mathbb N$ be the smallest integer for which $f^N(D(p,r))$ contains an open disk centered at $0$ containing $C_0$.
Set $U_0=D(p,r)$, and define $(U_n)$ recursively with  the following rules:

\begin{itemize}
\item[(a)] $U_{n+1} = f(U_n)$ for $0\leq n < N-1$.
\item[(b)] If $n \in \mathbb N$ is the first integer for which $f(U_n)$ contains the open disk $D(0,\rho_d/10)$ for some $d \ge 2$, then $U_{n+1} = D(0,\rho_d/10)$.
\item[(c)] In all other cases $U_{n+1}$ is the largest open disk centered at the origin that is contained in $f(U_n)$. 
\end{itemize}

 \end{proof}

\begin{prop}\label{prop:tightness}
The weight function $Q$ is tight.
\end{prop}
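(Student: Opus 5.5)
The plan is to reduce tightness to a renewal argument for the \emph{weighted backward orbits} of $f$, with the model Markov process of Proposition~\ref{prop:existence} as a dominating comparison chain. Fix $w\in\C$ and consider the random backward orbit $z_0=w,z_1,z_2,\dots$ with $f(z_{j+1})=z_j$, where $z_{j+1}$ is chosen among the preimages of $z_j$ with probability $Q(z_j,z_{j+1})$. The law of $z_n$ is exactly $Q^n_w$, so it suffices to show the following: given $\epsilon>0$, there is a disk $K=\overline{\D}_{10\rho_D}$ such that, for every $w$ and all $n$ large (depending on $w$), $\mathbb{P}(z_n\notin K)\le\epsilon$.

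\textbf{Step 1: from any $w$ the backward orbit reaches $C_0$ in bounded time.} By Definition~\ref{def:weights summary} and its interpolation on the sets $B_d$, a preimage of positive weight of a point outside $C_0$ never lies in an annulus $A_D$ of higher level. It either stays in the same degree region or annulus, or descends to a lower one. Remark~\ref{remarkconnect}, which is stated for the interpolated weights, then gives a deterministic bound. If $w\in\{|z|\le 10\rho_i\}\setminus C_0$, every positive-weight backward orbit enters $C_0$ within $N(w)=k_2+\dots+k_{i-1}$ steps.

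\textbf{Step 2: excursions from $C_0$.} Once in $C_0$, a backward step either stays in $C_0$, with weight between $1-2c_2/k_2$ and $1$ (the interpolation on $B_2$ only increases this), or jumps into some $\widetilde\Delta_d\subset A_d$ with weight at most $\alpha_d$. After a jump into $A_d$, the forward images of the orbit points return to $C_0$, so the backward orbit lies in $\D_{10\rho_d}$. By Remark~\ref{remarkconnect} again, it re-enters $C_0$ after at most $T_d:=k_2+\dots+k_{d-1}+1$ further steps.

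Thus the successive visits to $C_0$ form a renewal process with the following properties:
\begin{itemize}
\item the inter-visit time $\tau$ satisfies $\mathbb{P}(\tau>T_d)\le\sum_{D>d}2\alpha_D$;
\item $\mathbb{E}[\tau]\le 1+\sum_d 2\alpha_d T_d$, which is finite because it is comparable to the expected return time $m_{00}$ of the time-reversed model chain, and that chain is positive recurrent (it has the same stationary measure $\mu$);
\item $\mathbb{P}(\tau=1)>0$, so the renewal process is aperiodic.
\end{itemize}
The interpolated weights are not exactly those of the model chain, but all the needed estimates are inequalities in the right direction. Because of this, I would bound the excursion tails directly by the $\alpha_d$ and $T_d$ rather than claim an exact identification.

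\textbf{Step 3: the renewal estimate.} Let $n\ge N(w)$. The point $z_n$ lies outside $K=\overline{\D}_{10\rho_D}$ only if the last visit to $C_0$ before time $n$ was followed by a jump to some $A_d$ with $d>D$. For a renewal process with finite mean, the probability that at time $n$ the current excursion is of this type converges, as $n\to\infty$, to
$$\frac{1}{\mathbb{E}[\tau]}\,\mathbb{E}\bigl[\tau\,\mathbf 1_{\{\text{excursion to some }A_d,\ d>D\}}\bigr]\;\le\;\frac{1}{\mathbb{E}[\tau]}\sum_{d>D}2\alpha_d T_d.$$
This is the tail of a convergent series, hence less than $\epsilon$ for $D$ large. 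Moreover $D$ is independent of $w$, and only the time $n_0(w)$ depends on $w$ (through $N(w)$ and the renewal convergence). Choosing $K=\overline{\D}_{10\rho_D}$ gives tightness.

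\textbf{Main obstacle.} The step I expect to be hardest is the rigorous comparison between the weighted backward process of $f$ and a genuine renewal process. The region a point lies in does not determine the exact excursion length, because of the bounded escape-time discrepancy in Proposition~\ref{prop:k_d} and the interpolation on the sets $B_d$. My first attempt would avoid exact renewal structure and use only a stochastic domination. The excursion lengths are bounded by $T_d$ with jump probabilities at most $\alpha_d$, and $\sum_d\alpha_dT_d<\infty$ follows from the choice of $c'_d$ and $k_d$. I would then apply the elementary bound $\mathbb{P}(\text{at time } n \text{ inside an excursion of level}>D)\le\sum_{m\le n}\mathbb{P}(\text{renewal at } m)\cdot\mathbb{P}(\tau>n-m,\ \text{level}>D)$, together with the fact that the expected number of renewals in a unit time window is at most $1$. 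This bounds the quantity by $\sum_{d>D}2\alpha_dT_d$ without needing a limit.
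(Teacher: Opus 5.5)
Your fallback in the last paragraph is essentially the paper's proof. The paper bounds $Q^n_{w_0}(\C\setminus K_d)$ by summing the jump weight $\alpha_i$ over the level $i>d$ and over the at most $\sum_{j=d}^{i-1}k_j$ possible times of the last jump out of $C_0$. This gives the tail of the series $\sum_i(1+\sum_{j<i}k_j)\alpha_i=1$ in \eqref{importantexpansion}. It then handles $w_0\notin K_d$ by a bounded entry time, as in your Step 1. You should drop the renewal-limit version of Step 3. The visits to $C_0$ are not an i.i.d.\ renewal process, because jump weights and excursion lengths depend on the exact point through the interpolation on $B_2$ and the escape-time discrepancy. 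Your union bound does not need that structure, so the proposal stands on the fallback alone.
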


\begin{proof} 
The idea is to compare the measure of the complement of $\overline D_d$ with the corresponding measure for an associated Baker--Bishop map $g$ (Definition \ref{defassBB}).
 Let $(P_{st})$  be the Markov process defined by   Proposition \ref{prop:existence}  for the sequence $k_2,k_3,\dots$ associated with the map $f$, and  let $\nu$ be its unique  stationary measure. Let $(Q_{ts})$ be the time-reversed Markov process, and recall that its transition probabilities are computed in  Lemma \ref{computationtrans}. 
  Notice that $\nu$ is a stationary measure also for $(Q_{ts})$.
 For $d\geq 3$ recall that   $\alpha_d$ denotes the transition probability from one of the two base-states to the special state $(q_d,1)$, that is
$$
 \alpha_d:=Q_{(0,1)(q_d,1)}=Q_{(0,2)(q_d,1)}.
 $$

For all $d\geq 3$ let $\widetilde S_d$ be the subset of the state space $S$ defined as
$$
\widetilde S_d := \bigcup_{q_d<q\leq q_{d+1}}\mathscr{C}_q.
$$
We denote $\widetilde S_2$ the subset $\widetilde S_d := \bigcup_{1\leq q\leq q_{3}}\mathscr{C}_q.$

Notice that $S_0, \widetilde S_2, \widetilde S_3,\widetilde S_4,\dots$ is a partition of $S$, and that  every column in $\widetilde S_d$ has the same mass.
 By stationarity of the measure $\nu$ we have that
 $$\nu(\widetilde S_2)=\nu(S_0)(k_{2}+1)\sum_{j\geq 3}\alpha_{j},$$ and for all $i\geq 3$,
$$\nu(\widetilde S_i)= \nu(S_0)k_{i}\sum_{j\geq i+1}\alpha_{j}.$$ 
Recall that  $\nu(\cup_{i=2}^{\infty} \widetilde S_i)=1-\nu(S_0)=1/2$. Hence 
\begin{equation}\label{importantexpansion}1=\frac{\nu(\cup_{i=2}^{\infty} \widetilde S_i)}{\nu(S_0)}= \sum_{i\geq 3}(1+\sum_{j=2}^{i-1}k_j)\alpha_i,
\end{equation}
and if $d\geq 3$,
$$\frac{\nu(\cup_{i=d}^{\infty} \widetilde S_i)}{\nu(S_0)}=  \sum_{i\geq d+1}(\sum_{j=d}^{i-1}k_j)\alpha_i.$$

For all $d\geq 3$ let   $K_d:=\overline{D_{d}}=\{z : |z| \leq  10\rho_{d} \}$. 
Let $d\geq 3$ large enough such that $w_0\in K_d$.
We claim that, for all $n\in\N$,
  \begin{equation}
\label{eq:weights of complement comparison}
Q^n_{w_0}(\C\setminus K_{d})\leq \frac{\nu(\cup_{i=d}^{\infty} \widetilde S_i)}{\nu(S_0)}.
\end{equation}
Notice that the result immediately follows from the claim since $\nu(\cup_{i=d}^{\infty} \widetilde S_i)\to 0$ as $d\to+\infty$.
Now we estimate $Q^n_{w_0}(\C\setminus K_{d})$. Notice that if $w\in C_0$ and $z\in A_i$ is such that $f(z)=w$, then by definition of the interpolation of the weights we have that $Q(w,z)\leq \alpha_i$. Let $z\in f^{-n}(w_0)$ be a point with nonzero weight $Q^n(w_0,z)$ in $\C\setminus K_d$.  Let $m\geq 1$ be the smallest integer such that $f^m(z)\in C_0$ (notice that such an $m$ exists since $w_0\in K_d$). Then clearly $f^{m-1}(z)$ belongs to some special annulus $A_i$ with $i\geq d+1$.
 Since the weight of $z$ is nonzero, it follows that the piece of orbit
$$z, f(z), \dots, f^{m-1}(z)$$ is contained in  $\{ 10\rho_d < |z|\leq 10\rho_i\}$. By Proposition \ref{prop:k_d} (and Remark \ref{remarkconnect}) we have that 
$m\leq \sum_{j=d}^{i-1}k_j.$
Denote the integer $i$ by $\gamma(z)$. For all $i\geq d+1$ we define $\Gamma_i$ as the subset of  points $z$ in $f^{-n}(w_0)\cap (\C\setminus K_d)$ with nonzero weight $Q^n(w_0,z)$ which satisfy $\gamma(z)=i$.   Then we have
$$Q^n_{w_0}(\Gamma_i)\leq ( \sum_{j=d}^{i-1}k_j)\alpha_i,$$ and thus
$$Q^n_{w_0}(\C\setminus K_d)=\sum_{i\geq d+1}Q^n_{w_0}(\Gamma_i)\leq\sum_{i\geq d+1} (\sum_{j=d}^{i-1}k_j)\alpha_i\leq \frac{\nu(\cup_{i=d}^{\infty} \widetilde S_i)}{\nu(S_0)}.$$
Hence we have proved that for every $\epsilon>0$ there exists a compact subset $K_d$ such that 
$Q_{w_0}^n(K_d)\geq 1-\epsilon$ for all $w_0\in K_d$ and for all $n\geq 0$.
Assume now that $w_0\in \C\setminus K_d$. For every backward orbit with positive weight $(z_n^{(i)})_{n\geq 0}$ starting at $w_0$ let $n_i\geq 0$ be the smallest integer such that $w_i:=z_{n_i}^{(i)}\in K_d$.
 Notice that $n_i$ is uniformly bounded by an integer $N$.
Then if $n\geq N$, $$Q^n_{w_0}(K_d)=\sum_iQ^{n_i}(w_i)\cdot Q^{n-n_i}_{w_i}(K_d)\geq 1-\epsilon,$$ since $\sum_iQ^{n_i}(w_i)=1$ and 
$Q^{n-n_i}_{w_i}(K_d)\geq 1-\epsilon$ for all $i$.
\end{proof}

\begin{remark}
In the previous proof, the quantity $\nu(\cup_{i=d}^{\infty} \widetilde S_i)$ can be interpreted in terms of the associated Baker--Bishop map as follows. Denote the $g$-invariant measure $\varphi_*\mathbb{P}^\nu$ constructed in Section \ref{sec: Bishop maps} by $\mu_g$.
Then $$\nu(\cup_{i=d}^{\infty} \widetilde S_i)=\mu_g(K_d(g)),$$
where $K_d(g)$ is the subset of $\C$ corresponding to $K_d$ for the map $g$, that is
$K_d(g):=\{z : |z| \leq  10\rho_{d}(g) \}$.

\end{remark}

\subsection{Normality of the weight function}

In this subsection we prove the following result.
\begin{theorem}\label{almostperiodictheorem}
The weight function $Q$ is normal.
\end{theorem}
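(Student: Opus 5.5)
Fix $\varphi\in C_b(\C)$ and $w_0\in\C$. The goal is to show that $|A^n(\varphi)(w)-A^n(\varphi)(w_0)|$ is small, uniformly in $n$, when $w$ is close to $w_0$. The natural route is to pair the preimages $z\in f^{-n}(w)$ with the preimages $z_0\in f^{-n}(w_0)$ along inverse branches of $f^n$, and then write
\[
A^n(\varphi)(w)-A^n(\varphi)(w_0)=\sum_{\text{branches}}\bigl(Q^n(z)-Q^n(z_0)\bigr)\varphi(z)+\sum_{\text{branches}}Q^n(z_0)\bigl(\varphi(z)-\varphi(z_0)\bigr).
\]
The first sum should be controlled by a telescoping form of Lemma~\ref{lemma: smoothened weights}, and the second by showing that inverse branches carrying most of the weight shrink.

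\textbf{Regular values first.} Start with $w_0$ such that a small disk $D(w_0,r)$ contains no critical values of any iterate along the branches carrying most of the weight. Writing $Q^n$ as a product and telescoping gives
\[
\sum|Q^n(z)-Q^n(z_0)|\le\sum_{j=0}^{n-1}\sum|Q(f^{j+1}z,f^jz)-Q(f^{j+1}z_0,f^jz_0)|\,Q^{j}\cdot Q^{n-j-1}.
\]
Applying \eqref{eq: Lipshitz weights} at each level bounds this by $\|\varphi\|\sum_j \mathbb{E}\,|f^{n-j}\text{-branch image distance}|$. So both sums reduce to one estimate: the $Q$-weighted average of $|g_j(w)-g_j(w_0)|$ over inverse branches $g_j$ of $f^j$ must be summable in $j$, and small when $|w-w_0|$ is small.

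\textbf{Contraction along backward orbits.} The expected contraction comes from the geometry already established. In the degree-$d$ region and on the special annuli, $f$ behaves like $a_dz^d$ (Lemma~\ref{lem:requirements on coefficients}), so inverse branches contract strongly, with a factor comparable to $|z|^{1-d}$, away from critical points. A backward orbit of positive weight that enters $C_0$ does so through the univalent pieces $\widetilde\Delta_d$ and the two univalent branches of $p_2$ on $C_0$. Since $f(C_0^\pm)\supset\D_{15Mr_1}\Supset C_0$, those branches contract by a definite factor in the hyperbolic metric of $\D_{15Mr_1}$.

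I would make this precise with Koebe distortion. As long as the branch is univalent on a disk $D(w_0,2r)$, the image $g_j(D(w_0,r))$ has diameter at most a constant times $|g_j'(w_0)|r$. One then shows that $|g_j'|$ decays geometrically in $j$ for all but a set of branches of small total weight. The tightness estimate \eqref{eq:weights of complement comparison}, together with Remark~\ref{remarkconnect}, gives that a backward orbit spends at most about $k_d$ steps in each degree-$d$ region before returning toward $C_0$. It also gives that the weight of orbits reaching $\C\setminus K_d$ is small, so only a compact part of the plane matters.

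\textbf{Main obstacle: critical values.} The hardest step is handling critical values and nonhyperbolic behaviour. Critical points may lie on the Julia set or be recurrent, so inverse branches need not be univalent, and $|g_j'|$ need not decay along every branch. My plan is a Lyubich-style argument. The critical values $v_d$ lie in $R_{d+1}$ at definite distance from the annuli, and each critical point $x_d$ is simple. Consider the set of backward orbits of length $n$ from $D(w_0,r)$ that pass within $\eta$ of some $v_{d}$ with $d$ bounded. Its $Q$-weight should be small, uniformly in $n$, because each visit to a fixed small disk costs a factor below one. This uses irreducibility together with the fact that $Q$ splits weight evenly, $1/d$, among the $d$ preimages in a region, so a single branch near a critical point carries only a fraction of the mass. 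Critical values with large $d$ are handled by tightness. On the remaining, good branches, univalence holds on $D(w_0,r)$, so the Koebe and contraction estimates apply.

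Putting these together should give, for every $\epsilon>0$, some $\delta>0$ with $|A^n(\varphi)(w)-A^n(\varphi)(w_0)|<\epsilon$ for all $n$ whenever $|w-w_0|<\delta$, which is equicontinuity. The delicate point I expect is making the bad-weight estimate near critical values uniform in $n$; I would first attempt this with the shrinking-lemma approach of Lyubich, the ``most branches shrink'' principle.
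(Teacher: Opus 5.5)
Your skeleton matches the paper's: pair preimages along inverse branches, telescope the weight differences with \eqref{eq: Lipshitz weights}, localize with tightness, and use Koebe. However, both core estimates are left unproved, and the mechanism you propose for critical values would fail.

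\textbf{Critical values.} The $Q$-weight of length-$n$ backward orbits that enter a fixed disk $D(v_d,\eta)$ is nondecreasing in $n$, since children's weights sum to the parent's. Now suppose $v_d$ lies in the support of the limit measure, for instance a critical value on the Julia set, which is exactly the case you single out. Positive-weight backward orbits keep returning there, so this weight stays above roughly $\mu(D(v_d,\eta))>0$ and in fact tends to $1$. Your principle ``each visit costs a factor below one'' only controls orbits with many visits. Proximity is also the wrong criterion: a branch fails to be univalent on $D(w_0,r)$ only if some pulled-back component \emph{contains} a critical value.

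The paper's count (Proposition~\ref{lemma:inverse branches}) runs as follows. Since $Q\le 1/2$, a single branch of $f^{-m}$ carries weight at most $2^{-m}$. Since $v_{d-1}$ needs at least $k_d-3$ iterates to leave $R_d$ and $k_d\ge d+1$, at depth $m$ at most $m$ components contain singular values. Hence if $D(w_0,r)\cap S(f^k)=\emptyset$, the non-regular weight is at most $\sum_{m\ge k}m2^{-m}$ for every $n$. You also need a separate reduction for $w_0\in S(f^k)$, for example by first pulling back $n_0$ steps. Indeed, for $w_0=v_1=a_0$ the critical preimage $0$ carries all the weight at depth one.

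\textbf{Shrinking.} Geometric decay of $|g_j'|$ does not follow from $|f'|\ge 2$ off $\bigcup_i D(x_i,1/6)$ together with contraction on $C_0$. Typical backward orbits pass near critical points on $J(f)$ again and again, and there even a univalent inverse branch expands like $|y-v|^{-1/2}$. The paper never proves derivative decay along orbits; it uses two separate ingredients.
\begin{enumerate}
\item Proposition~\ref{lemma: uniform diameter bound} shows that for suitable $r$, \emph{every} component of $f^{-n}(D(w,r))$ has diameter at most $1/2$. This comes from safe sizes $\delta_i\ge 2^{-(k_{i+1}-4)}$ near critical values, matched with the at least $k_d-4$ expanding steps between consecutive visits. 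Combined with Schwarz--Pick and monotonicity of the hyperbolic metric, this gives uniform closeness $|z_i-z_i'|\le s$ on all regular components. Together with tightness, that handles the $\varphi$-difference sum with no decay at all.
\item Exponential shrinking is needed only for components meeting $\Lambda=\bigcup_d\overline{B_d}$. Elsewhere $Q$ is locally constant, so only those levels contribute to the telescoped sum. The shrinking comes from an area count (disjoint components, exponentially small individual weights, Koebe), made summable by \eqref{eq: area estimate}, which uses that $B_d$ is reached only after at least $k_d$ backward steps.
\end{enumerate}
Asking for summable weighted displacement over \emph{all} branches is stronger than needed. A bare appeal to Lyubich's shrinking lemma does not supply it here, because the preimages form infinitely many components spread over unbounded area.
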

%
%
The first  step is  to prove that for any $w\in\C$ and any sufficiently small disk  $D$ centered at $w$,  most preimages of $D$ under $f^n$  are regular (see Proposition \ref{lemma:inverse branches}). Here most refers not to the number of preimages, which is  infinite, but to the weights of the preimages   of the base point $w$. In step 2  we will argue that in fact the diameter  of most preimages of $D$ shrinks exponentially fast (see Proposition~\ref{prop: exponentially shrinking}), a result that also holds for rational functions, see for example the ``Telescope Lemma'' in the paper~\cite{Prz1990} by Przytycki.

\subsubsection{Step 1. Most components are regular preimages.}


 \begin{prop}\label{lemma: uniform diameter bound}
 Assume that  $|a_n|$ decreases sufficiently fast. Then for every $w\in \C$ there exists $r>0$ such that for every $n\geq 1$ every connected component $U$  of $f^{-n}(D(w,r))$ satisfies 
$$\diam(U)\le 1/2.$$
\end{prop}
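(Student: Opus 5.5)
The plan is to separate the backward dynamics into a far region, where $f$ is strongly expanding and pull-backs shrink, and a bounded core, where we use compactness and the polynomial-like structure of $f$. Fix $d_0\ge 3$ large and set $K:=\overline{D_{d_0}}$. By Lemma~\ref{lem:requirements on coefficients}, on each $R_d$ and $A_d$ with $d\ge d_0$ the map $f$ is a small perturbation of $a_{d}z^{d}$ or of $a_{d-1}z^{d-1}+a_dz^d$. As in Lemma~\ref{lem:large derivative on annuli}, this gives $|f'(z)|\ge \Lambda_d$ on these sets away from a fixed neighbourhood $N_d$ of the critical point $x_d$, with $\Lambda_d\to\infty$ as the coefficients decrease.

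\textbf{Far region.} We prove by backward induction on $k$ that every component $U_k$ of $f^{-k}(D(w,r))$ lying outside $K$ has diameter $\le 1/2$. Suppose $U_k$ has diameter $\le 1/2$ and let $U_{k+1}$ be a component of $f^{-1}(U_k)$ outside $K$.
\begin{enumerate}
\item If the $1$-neighbourhood of $U_k$ contains no critical value, then Lemma~\ref{lemma: smoothened weights} and the surrounding discussion give a univalent inverse branch on a disk of radius $1$ around $U_k$. Koebe distortion then gives $\diam U_{k+1}\le C\,\Lambda_d^{-1}\diam U_k$, which is much smaller than $1/2$.
\item If a critical value $v_d$ lies within distance $1$ of $U_k$, then $v_d$ is isolated: it is at distance more than $2$ from the relevant annulus boundaries, by the last bullet of Lemma~\ref{lem:requirements on coefficients}. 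Near $x_d$ the map is a degree-$2$ branched cover with $f''(x_d)$ huge, so the quadratic estimate gives $\diam U_{k+1}\lesssim (|f''(x_d)|^{-1}\diam U_k)^{1/2}$. Requiring $|a_{d+1}|$ small makes this $\le 1/2$.
\end{enumerate}
Cases 1 and 2 only need $\diam U_k\le 1/2$ and that $U_k$ itself has not yet entered $K$. So the far estimate is uniform in $n$ and in the starting point.

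\textbf{Bounded core.} Backward orbits that enter $K$ stay in $K$ until their next excursion outward, and $f^{-1}(K)\cap K$ is governed by the polynomial-like restriction of degree $d_0$. For $w\in J(f)$ we choose $r$ small. Since there are only finitely many critical points in a neighbourhood of $K$, we use the area argument from Lyubich's setting: the components of $f^{-n}(D(w,r))$ inside $K$ are pairwise disjoint. Combined with Koebe applied to univalent pull-backs of $D(w,2r)$, this shows that all but boundedly many components are tiny. The remaining components, those through critical points, are controlled because each pull-back of bounded degree multiplies the diameter at most by a power, so $r$ can be chosen depending on $w$ to keep them below $1/2$.

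\textbf{Fatou points and the main obstacle.} For $w$ in the Fatou set we choose $r$ with $D(w,r)$ inside a Fatou component and handle the possible types separately. Wandering and escaping components are handled by the far-region expansion. For Siegel disks, pull-backs are conjugate to rotations, so their sizes do not grow.

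The main obstacle is the case where $w$ lies in an attracting or parabolic basin in the bounded core. The component of $f^{-n}(D(w,r))$ containing the periodic point exhausts the immediate basin, so the claim as stated can fail unless every such basin has diameter less than $1/2$. I would first try to show that the rapid decay of $|a_n|$ forces every bounded Fatou component meeting $K$ to be small, using $|f'|$ large on the annuli and the escape structure of Proposition~\ref{prop:k_d}. If that fails, I would weaken the statement to components that carry positive weight from a Julia base point, which is what Step 1 actually needs.
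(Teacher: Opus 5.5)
\textbf{Verdict: genuine gap.} As it stands the proposal does not prove the statement. You leave the attracting/parabolic case open, and even consider weakening the claim. More seriously, the bounded-core step does not give a bound that is uniform in $n$:
\begin{enumerate}
\item The Lyubich-type area/Koebe argument controls, for each fixed $n$, all but boundedly many components of $f^{-n}(D(w,r))$. It is a statement about \emph{most} components, which is how the paper uses such an argument later, in Proposition~\ref{prop: exponentially shrinking}. It gives no bound $1/2$ for the exceptional components, univalent or not.
\item Your fallback for components through critical points (each pull-back of bounded degree multiplies the diameter at most by a power) only works for a bounded number of critical passages. Along a backward orbit the number of passages near critical values is unbounded in $n$. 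Since $|f''|\approx 2$ at $x_1=0$ and at $x_2$, pulling a piece of diameter $1/2$ back through these points gives a piece of diameter of order $1$. So no choice of $r=r(w)$ controls all $n$ this way.
\item Backward orbits leave $K$ through the disks $\Delta_d$, $d>d_0$ (every point of $K$ has a preimage in each of them), and then return. Your far-region induction therefore needs from the core a one-step statement about arbitrary pieces of diameter at most $1/2$. A statement about pull-backs of a fixed disk $D(w,r)$ with $w\in J(f)$ is not enough.
\end{enumerate}

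\textbf{Missing idea.} The paper uses a single induction along backward orbits that works for every $w\in\C$. It makes no distinction between Julia and Fatou points, or between core and far region.
\begin{enumerate}
\item Off small disks around the critical points one has $|f'|\ge 2$ (Lemma~\ref{lem:expansive almost everywhere}). Near $x_1$ and $x_2$, where $|f'(z)|\approx 2|z-x_i|$, these disks actually need radius about $1$ rather than $1/6$; by my own check this only changes constants. Hence between two critical passages the convex hulls $W_n$ of the pull-backs shrink by a factor $1/2$ per step.
\item Critical passages are far apart in time. If $W_n$ meets the image of the disk around $x_{d-1}$, then $w_n$ lies in $R_d$ near $v_{d-1}$. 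By Proposition~\ref{prop:k_d} it needs at least $k_d-3$ forward iterates to leave $R_d$. So the previous passage occurred roughly $k_d$ steps earlier, and $\diam(W_n)$ is of order $2^{-k_d}$.
\item The threshold $\delta_{d-1}$ depends only on $a_0,\dots,a_d$. It is chosen so that pulling back a piece of size $\delta_{d-1}$ through the disk around $x_{d-1}$ gives diameter at most $1/2$. One then chooses $a_{d+1}$, and hence $k_d$, so that $2^{-(k_d-4)}\le\delta_{d-1}$.
\item Taking $r\le\delta_j$ for $w\in R_j$ handles the first passage.
\end{enumerate}
This gives the statement for all $w$. It also shows a posteriori that no immediate attracting or parabolic basin has diameter larger than $1/2$, so the obstacle you raise does not occur.

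Your far-region estimates (Koebe, and the quadratic estimate using that $|f''(x_d)|$ is huge for $d\ge 3$) are a valid one-step replacement for the $\delta_d$ there. The memory supplied by the spacing of critical passages is needed exactly at $x_1$ and $x_2$.
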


The proof of Proposition~\ref{lemma: uniform diameter bound} requires a  lemma.
Recall that $(x_i)_{i\geq 1}$ are the critical points of $f$ and $(v_i)=(f(x_i))$ are the critical values of $f$. 
\begin{lemma}\label{lem:expansive almost everywhere}
If   $|a_n|$ decreases sufficiently fast, we have that
\begin{equation}\label{eae}
|f'(z)|\geq 2,\quad \forall \,z\in \C\setminus( \bigcup_{i\ge 1}\D(x_i,1/6)),
\end{equation}
\end{lemma}
\begin{proof}
This is similar to the proof of Lemma \ref{lem:new critical points} and we leave it to the reader. It involves estimating the derivative  on the boundary of disks of radius 1/6 centered at the critical points  and using the minimum principle.
\end{proof}

\begin{lemma}
If   $|a_n|$ decreases sufficiently fast,  there exists a sequence $(\delta_i)_{i\geq 1}$ of strictly positive numbers such that:
\begin{enumerate} 
\item  for all $i\geq 1$ we have $$\frac{1}{2^{k_{i+1}-4}}\leq \delta_i ,$$
\item if $V\subset \C$ is a domain intersecting $f(\D(x_i,1/6))$ with $\diam V\leq  \delta_i$, then any connected component $U$ of $f^{-1}(V)$ intersecting $\D(x_i,1/6)$ has to be contained in $\D(x_i,1/4)$, and thus 
$$\diam(U)\le 1/2.$$
\end{enumerate}

\end{lemma}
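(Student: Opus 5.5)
The plan is to exploit that every critical point $x_i$ is simple ($f'(x_i)=0$, $f''(x_i)\neq 0$, by Lemma~\ref{lem:requirements on coefficients}), so near $x_i$ the map behaves like $z\mapsto v_i + c_i(z-x_i)^2$ with $c_i=f''(x_i)/2$. Step (2) is a purely local statement about $f$ on $\D(x_i,1/4)$. Step (1) then reduces to an inductive choice of coefficients: when we fix $a_{i+1}$ (which determines $x_i$ and the local geometry up to tiny later perturbations), the number $\delta_i$ is a definite positive quantity. Afterwards the later coefficients, and hence $k_{i+1}$, are still free. Since $k_{i+1}$ grows without bound as $|a_{i+2}|\to 0$ (the escape time from the degree-$(i+1)$ region tends to infinity as $\rho_{i+2}\to\infty$), we may shrink later coefficients until $2^{-(k_{i+1}-4)}\le\delta_i$. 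Shrinking later coefficients only perturbs $f$ slightly on $\overline{\D(x_i,1/4)}$, by Remark~\ref{rem:stabilityofcoverings} and Hurwitz-type arguments as in the proof of Theorem~\ref{thm:Construction Theorem 1D}. We therefore define $\delta_i$ with a safety factor, say half of the value computed for $p_{i+1}$, so that it remains valid for $f$.

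For step (2) we take $\delta_i$ to be the distance between the compact sets $f(\overline{\D(x_i,1/6)})$ and $f(\partial\D(x_i,1/4))$, or half of it. We first check that $f$ is a degree-two branched cover of $\D(x_i,1/4)$ onto its image, with no other critical points and no other preimages of points near $v_i$ inside. This follows from Rouché on $\partial \D(x_i,1/4)$, comparing $f-v_i$ with $c_i(z-x_i)^2$. We use the estimate that the dominant terms near $x_i$ are $a_iz^i$ and $a_{i+1}z^{i+1}$, as in Lemma~\ref{lem:new critical points}, so the cubic remainder is small relative to $|c_i|(1/4)^2$ when $|a_{i+1}|$ is small. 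Concretely, on $\partial\D(x_i,1/4)$ we get $|f-v_i|\ge |c_i|/16-\text{error}$, while on $\overline{\D(x_i,1/6)}$ we get $|f-v_i|\le |c_i|/36+\text{error}$. This gives a positive gap and hence $\delta_i>0$.

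Now let $V$ be a domain of diameter at most $\delta_i$ meeting $f(\D(x_i,1/6))$, and let $U$ be a component of $f^{-1}(V)$ meeting $\D(x_i,1/6)$. If $U$ left $\D(x_i,1/4)$, connectedness would give a point $u\in U\cap\partial\D(x_i,1/4)$. Then $f(u)\in V$ and also $f(u)\in f(\partial\D(x_i,1/4))$, while $V$ contains a point of $f(\D(x_i,1/6))$. This forces $\diam V$ to be at least the gap, which contradicts the choice of $\delta_i$ (strict inequality, by using half the gap). Hence $U\subset\D(x_i,1/4)$ and $\diam U\le 1/2$.

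The main obstacle is the uniformity in (1): checking that the lower bound on $|c_i|$ (of order $a_i\rho_{i+1}^{i-2}$, hence huge) and the smallness of the error terms survive all later perturbations. We also need the dependence order right: $\delta_i$ depends only on $a_0,\dots,a_{i+1}$, up to the safety factor, while $k_{i+1}$ depends on $a_{i+2}$. The fact that $|c_i|$ is large actually makes $\delta_i$ large, so (1) is easy once this order is respected.
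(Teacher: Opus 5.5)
Your proposal is correct and follows essentially the same inductive scheme as the paper: $\delta_i$ is fixed once $p_{i+1}$ (hence $x_i$) is determined, and then $|a_{i+2}|$ is taken small enough that $k_{i+1}$ is large and the property survives all later perturbations. The differences are minor. The paper obtains robustness by requiring relative compactness in the intermediate disk $\D(x_i,1/5)$ rather than through your safety factor, and it leaves implicit the positivity of the gap between $f(\overline{\D(x_i,1/6)})$ and $f(\partial\D(x_i,1/4))$, which you verify explicitly via the quadratic model.
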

\begin{proof}
We argue inductively.
Assume that we constructed $p_d=a_0+z^2+\dots+a_dz^d$  and $\delta_1,\dots,\delta_{d-2}$ in such a way that (1) holds for all $1\leq i\leq d-2$, and in such a way that for all critical points 
$x^{(d)}_1,\dots, x^{(d)}_{d-2}$ it holds that, 
if $V\subset \C$ is a domain intersecting $p_d(\D(x^{(d)}_i,1/6))$ with $\diam V\leq  \delta_i$, then any connected component of $p_d^{-1}(V)$ intersecting $\D(x^{(d)}_i,1/6)$ has to be relatively compact  in $\D(x^{(d)}_i,1/5)$.
 Let $x^{(d)}_{d-1}$ the critical point  of $p_d$ close to the inner boundary of the special annulus $A_d$. Let 
$\delta_{d-1}>0$ be such that if $V\subset \C$ is a domain intersecting $p_d(\D(x^{(d)}_{d-1},1/6))$ with $\diam V\leq \delta_{d-1}$, then any connected component of $p_d^{-1}(V)$ intersecting $\D(x^{(d)}_{d-1},1/6)$ has to be relatively compact   in $\D(x^{(d)}_{d-1},1/5)$.

Now choose $|a_{d+1}|$ small enough as to preserve the previous construction for the map $p_{d+1}$, and in such a way that  $\frac{1}{2^{k_d-4}}\leq\delta_{d-1} $.

\end{proof}

\begin{proof}[Proof of Proposition~\ref{lemma: uniform diameter bound}] 
If $w$ belongs to some $R_j$ set $r:={\rm min}\{\delta_j,1/2\}$. Otherwise set $r:=1/2$. Let $\XX:=\bigcup_{i\geq 1 } \D(x_i,1/6).$ 
Let $(w_n)$ be a backward orbit of $w$, set $W_0:=\D(w,r)$ and for all $n\geq 1$ let $W_n$ be the convex hull of  the connected component of $f^{-1}(\D(w,r))$ containing $w_n$.
Assume first that $W_n\cap f(\XX)=\varnothing $ for all $n\geq 1$. Then for every $n\geq 0$ there exists a  branch $\varphi_n$ of $f^{-1}$ defined on $W_n$ whose image contains $w_{n+1}$, and $|\varphi'_n|\leq 1/2$ on $W_n$.
Hence we get inductively that  for all $n\geq 1$ $$\diam W_n\le 1/2.$$
 
 Assume now that there exist   $n\geq 1$ such that $W_n\cap f(\XX)=\varnothing $, and denote such indexes by $n_1\leq n_2\leq n_3\dots$.
Let $i\geq 1$ and assume inductively that $$\diam W_{k}\le 1/2, \quad \forall\, k\leq n_i+1.$$ Clearly
we have $\diam W_{k}\le 1/2$ also for all $k\leq  {n_{i+1}}.$
Let $d\geq 2$ be the integer such that  $W_{n_{i+1}}\cap f(\D(x_{d-1},1/6))\neq \varnothing$.  
 Since by Proposition~\ref{prop:k_d} we have $$ \min_{z\in \overline{D_d}}\{k(z)\}\geq k_d-2,$$  it  follows that it takes the point $w_{n_{i+1}}$ at least $k_d-3$ forward iterates to exit $R_d$, that is 
 $$f^m(w_{n_{i+1}})=w_{n_{i+1}-m}\in R_d,\quad \forall\, 1\leq m\leq k_d-4.$$  
Hence  $$n_{i+1}-n_{i}+1\geq k_d-4.$$ It follows from (1) of the previous lemma that ${\rm diam}(W_{n_{i+1}})\leq \delta_{d-1}$, and then by (2) by the previous lemma we get ${\rm diam}(W_{n_{i+1}+1})\leq 1/2$.

\end{proof}

\begin{definition}[Regular preimages]
Let $w\in \C$ and let $D$ be a disc centered at $w$.
Let $n\geq 1$ and let $U$ be a connected component of $f^{-n}(D)$. We say that $U$ is  
a {\sl regular preimage} of $D$ under $f^n$ if  $f^n|_U\colon U\to D$ is a biholomorphism.
We define the {\sl weight} of $U$ as $Q^n_w(U)=Q^n(w,z),$ where $z=f^{-n}(w)\cap U$.
\end{definition}
Since $Q(w,z)\leq1/4$ for all $w,z$, a regular preimage of $D$ under $f^n$ has weight at most $1/4^n$.
 Recall that all singular values of $f$  are critical values, and that the set of singular values for $f^k$ is $S(f^k)=\bigcup_{0\leq j\leq k-1}f^j(S(f))$, where $S(f)$ is the set of singular values of $f$.

\begin{prop}\label{lemma:inverse branches}
Assume that $|a_n|$ decreases sufficiently fast. 
Then   for all $\epsilon>0$ there exists $k\geq 1$ such that, if $w\in\C$ and $r>0$ is small enough  that 
$$D(w,r) \cap S(f^k)=\emptyset,$$
 then  for all $n \in \mathbb N$ the total weight 
    of the regular preimages of $D(w,r)$ under $f^{n}$    is at least $1-\epsilon/2$.
\end{prop}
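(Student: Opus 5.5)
We use a weighted counting argument along backward orbits. A preimage component $U$ of $D(w,r)$ under $f^n$ fails to be regular exactly when $f^n|_U$ has a critical point. Equivalently, along the backward orbit $w=z_0,z_1,\dots,z_n$ with $z_n\in U$, the pulled-back disk $W_j$ (the component of $f^{-j}(D(w,r))$ containing $z_j$) meets a critical value at some step $j<n$. Since $D(w,r)\cap S(f^k)=\emptyset$, no critical value of $f$ lies in $W_j$ for $j<k$; in fact the first $k$ pullbacks are univalent. So every bad event happens at a time $j\ge k$. We bound the total $Q^n_w$-weight of backward orbits for which some $W_j$ with $j\geq k$ contains a critical value $v_i$. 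That weight is at most $\sum_{j\ge k}\sum_i Q^{j}_w(\{z_j: v_i\in W_j\})$, and we aim to show it is $<\epsilon/2$ uniformly in $n$ once $k$ is large.

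The first step is to control which $z_j$ can have $v_i\in W_j$. By Proposition \ref{lemma: uniform diameter bound} (shrinking $r$ if necessary, which only helps), every $W_j$ has diameter at most $1/2$. Hence $v_i\in W_j$ forces $z_j\in D(v_i,1/2)$, and these disks are pairwise disjoint and located in $R_{i+1}$, at distance more than $2$ from the annuli by Lemma \ref{lem:requirements on coefficients}. So it suffices to estimate $\sum_{j\ge k} Q^j_w(\bigcup_i D(v_i,1/2))$. We split the critical values into the finitely many $v_1,\dots,v_{N}$ with $|v_i|\le 10\rho_d$ and the rest. The rest have total backward weight bounded via the tightness estimate \eqref{eq:weights of complement comparison}, which makes the mass outside $K_d$ small uniformly in time. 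Note, however, that we need a sum over $j$, not a single time, so tightness alone does not suffice here.

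The key step, and the main obstacle, is this summability in $j$. Backward orbits of positive weight spend at most about $k_d$ consecutive steps in $R_d$ (Remark \ref{remarkconnect}). A visit to the small disk $D(v_i,1/2)$ at time $j$ means that the regular backward branch passes near $v_i$. For the finitely many small critical values we use the exponential smallness of individual regular branch weights: each regular preimage of a disk under $f^m$ has weight at most $4^{-m}$, and the expansion $|f'|\ge 2$ from Lemma \ref{lem:expansive almost everywhere} off $\bigcup_i\D(x_i,1/6)$ implies the following. The set of $z_j$ lying in a fixed disk $D(v_i,1/2)$ arising from the first $j$ univalent pullbacks has $Q^j_w$-weight decaying geometrically, roughly like $(\text{const})\cdot c^{j}$ with $c<1$. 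The comparison with the model Markov chain is useful here. The weight of being at a given bounded region after $j$ backward steps, conditioned on passing through $S_0$-type states (i.e. $C_0$), is dominated by return probabilities of the time-reversed chain, and $Q\le 1/2-c_2/k_2<1/2$ on $C_0$ preimages inside $C_0$. We will try to make this precise by induction on the first time the orbit enters $C_0$, as in the proof of Proposition \ref{prop:tightness}. If the geometric decay proves hard, a fallback is to show directly that the $Q^j_w$-mass of $D(v_i,1/2)$ is at most $C_i\,\theta^{j}$ using Lemma \ref{lemma: smoothened weights} and the fact that each of the $\le d$ relevant branches has weight $\le 1/d$ while being contracted by $1/2$.

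Finally we choose $d$ so that the contribution of critical values outside $K_d$ is below $\epsilon/4$, uniformly in $n$. We then choose $k$ so that $\sum_{i\le N}\sum_{j\ge k} C_i\theta^j<\epsilon/4$. This $k$ depends only on $\epsilon$ (through $d$), not on $w$ or $r$, which gives the claimed bound $1-\epsilon/2$ for all $n$.
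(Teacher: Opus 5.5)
There is a genuine gap in your key step. Your reduction to bounding $\sum_{j\ge k}\sum_i Q^j_w(\{z_j: v_i\in W_j\})$ is fine.

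The problem comes next. You enlarge $\{z_j : v_i\in W_j\}$ to all of $f^{-j}(w)\cap D(v_i,1/2)$ and then claim $Q^j_w(D(v_i,1/2))\le C_i\theta^j$. This decay is false in general. The backward process conserves mass, since $\sum_{z\in f^{-1}(w)}Q(w,z)=1$. The expansion $|f'|\ge 2$ shrinks each pulled-back component, but it does not reduce the total mass carried by a fixed region. Concretely, for the stationary measure $\mu$ one has $\int Q^j_w(D)\,d\mu(w)=\mu(D)$ for every $j$. So a bound $C_i\theta^j$ uniform in $w$ would force $\mu(D(v_i,1/2))=0$. That fails whenever this disk meets $\supp\mu$, for instance when $v_i$ lies in the Julia set, which the paper explicitly allows for this class.

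Your other suggestions have the same defect. In the Markov-chain comparison, return probabilities converge to the stationary mass rather than decaying. In the fallback, $d$ branches of weight $1/d$ carry total weight $1$. For the critical values outside $K_d$ you note yourself that tightness gives no summability in $j$, and nothing replaces it.

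The missing idea is to count \emph{components} instead of measuring neighborhoods. You quote the per-component weight bound but never combine it with disjointness. The paper's argument (for $w\in C_0$; other $w$ are similar) runs as follows:
\begin{enumerate}
\item The components of $f^{-m}(D(w,r))$ are pairwise disjoint, so each critical value lies in at most one of them.
\item A regular component carries weight $Q^m(w,z)\le 2^{-m}$, since $Q\le 1/2$.
\item If the coefficients decay fast enough that $k_d\ge d+1$, then $v_{d-1}\in R_d$ needs at least $k_d-3\ge d-2$ iterates to leave $R_d$. Hence only $v_1,\dots,v_m$ can lie in a component of $f^{-m}(D(w,r))$.
\item All components up to level $k$ are regular because $D(w,r)\cap S(f^k)=\emptyset$. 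So the weight that ceases to be regular is at most $\sum_{m\ge k}m/2^m$.
\end{enumerate}
Choosing $k$ with this sum at most $\epsilon/2$ gives the claim uniformly in $n$ and $r$. The far critical values are handled by the same count, with no appeal to tightness.
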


\begin{proof}
Assume that $|a_n|$ decreases sufficiently fast to ensure  $k_d\geq d+1$ for all $d\geq 2$.
By Proposition~\ref{lemma: uniform diameter bound} we can assume that, for all $n\geq1$, all connected components of $f^{-n}(D(w,r))$ have diameter bounded by 1/2. 
Recall that if   a regular preimage $U$ of $D(w,r)$ under $f^n$ contains no critical values, then 
 every connected component of $f^{-1}(U)$ is a regular preimage of  $D(w,r)$ as well (under $f^{n+1}$).  Notice also that for all $w$ and $z\in f^{-1}(w)$ we have that $Q(w,z)\leq 1/2,$ and thus by \eqref{integralweight}
the weight of  a regular preimage of $D(w,r)$ under $f^n$ is at most $1/2^n$.
Let $k$ be sufficiently large such  that  $$\sum_{m =  k}^\infty m/2^m\leq \epsilon/2.$$
For all $1\leq j\leq k$ we have that all connected components of $f^{-j}(D(w,r))$ are regular preimages. 
 Assume now that $w\in C_0$.   We claim that for any $n\geq1$, there are at most $n$ connected components of  $f^{-n}(D(w,r))$ which can contain singular values. 
Indeed, recall that for all $d\geq 3$, the critical value $v_{d-1}$ needs at least $k_d-3$ iterates to exit $R_d$.
Hence if $j\leq d-2\leq k_d-3$, then $f^{-j}(D(w,r))$ cannot contain $v_{d-1}$.
This means that  for all $n\geq 1$ the preimage $f^{-n}(D(w,r))$ can contain only $v_1, \dots, v_n$.

 Let now $n> k $.  Then the total weight 
    of the regular preimages of $D(w,r)$ under $f^{n}$    is at least $$1-\sum_{m=k}^{n-1} m/2^m\geq 1-\epsilon/2.$$
 The case $w\not\in C_0$ is similar.

\end{proof}

\subsubsection{Step 2. Most inverse branches decrease exponentially fast.}

Let us recall that the weights $Q(w,z)$ are locally constant outside the domains $B_d$. For $d \ge 3$ these are annuli of unitary width whose inner boundary coincides with the outer boundary of $A_d$, while $B_2$ is  the disjoint union of two annuli   whose outer  boundaries coincide with the outer boundaries of the two components of $C_0$. 

\begin{lemma}
By letting   $|a_n|$ decrease sufficiently fast we can ensure that
\begin{equation}\label{eq: area estimate}
\sum_{n \in \mathbb N} \frac{1}{2^n} \sum_{d \, : \, k_d \le n} \mathrm{Area} (B_d) < \infty.
\end{equation}
\end{lemma}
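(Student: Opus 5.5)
The plan is to compute $\mathrm{Area}(B_d)$ explicitly, reorganize the double sum by exchanging the order of summation, and then use the freedom in the inductive choice of coefficients to make $k_d$ large compared to $\log \rho_d$.

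For $d\ge 3$ the set $B_d=\{10\rho_d<|z|<10\rho_d+1\}$ is a round annulus of width $1$, so
\[
\mathrm{Area}(B_d)=\pi\bigl((10\rho_d+1)^2-(10\rho_d)^2\bigr)=\pi(20\rho_d+1)\le 21\pi\rho_d .
\]
The set $B_2$ is a fixed bounded set of finite area depending only on $a_0$. It enters only for those $n\ge k_2$, so it contributes at most $\mathrm{Area}(B_2)\sum_n 2^{-n}<\infty$.

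Interchanging the order of summation, which is legitimate because all terms are nonnegative, gives
\[
\sum_{n}\frac{1}{2^n}\sum_{d:\,k_d\le n}\mathrm{Area}(B_d)=\sum_{d}\mathrm{Area}(B_d)\sum_{n\ge k_d}2^{-n}=2\sum_d \frac{\mathrm{Area}(B_d)}{2^{k_d}} .
\]
It therefore suffices to arrange that
\[
\frac{\rho_d}{2^{k_d}}\le \frac{1}{d^2}\quad\text{for all } d\ge 3,
\]
or any other summable bound.

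The key point, and the only real obstacle, is the order in which quantities are fixed during the inductive construction. Recall that
\[
\rho_d=\frac{10\cdot 2(d-1)}{2d-1}\,\frac{|a_{d-1}|}{|a_d|}
\]
is determined once $a_d$ is chosen. By contrast, $k_d$ is defined through escape times from the region $R_d=\{10\rho_d<|z|<\rho_{d+1}/10\}$, so it depends on $\rho_{d+1}$ and hence on $a_{d+1}$. At the step where $a_{d+1}$ is chosen, $\rho_d$ is already fixed.

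I would then argue that making $|a_{d+1}|$ smaller pushes $\rho_{d+1}$ to infinity. Points of $\overline{R_d}$ (and of $\overline{D_d}$) must iterate under a map that is essentially $a_dz^d$ until they reach modulus $\rho_{d+1}/10$. Using the sequence of radii $r_{q+1}=|a_d|r_q^d$ from the proof of Proposition~\ref{prop:k_d}, each iterate increases $\log\log|z|$ by about $\log d$. Hence the minimal escape time from $\overline{D_d}$ is at least of order $\log\log\rho_{d+1}/\log d$, which tends to infinity as $a_{d+1}\to 0$. Since $k_d\ge \min_{\overline{D_d}} k$ by Definition~\ref{def:escape time}, we can choose $|a_{d+1}|$ small enough, compatibly with the finitely many other smallness requirements imposed at that step, so that $2^{k_d}\ge d^2\rho_d$.

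The main care needed is to verify that shrinking $a_{d+1}$ does not decrease $k_j$ for $j<d$. This holds because those quantities are stabilized by the same perturbation argument used in Proposition~\ref{prop:k_d}. With that in place, the series is bounded by $2\,\mathrm{Area}(B_2)+42\pi\sum_d d^{-2}<\infty$.
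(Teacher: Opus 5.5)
Your proof is correct and takes essentially the same route as the paper. Both arguments reduce the double sum to a sum over $d$ of $2^{-k_d}$ times areas, then use that $\mathrm{Area}(B_d)$ is fixed once $a_{d-1},a_d$ are chosen, while $k_d$ can be made arbitrarily large through the choice of $a_{d+1}$. Your exact interchange of summation, which gives $2\sum_d \mathrm{Area}(B_d)2^{-k_d}$ without assuming $(k_d)$ monotone, and your $\log\log$ escape-time estimate spell out what the paper states in one line.
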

\begin{proof}

Observe that we can write \eqref{eq: area estimate} as
$$\sum_{n=k_2}^{k_3-1}\frac{1}{2^n}\mathrm{Area} (B_2)+ \sum_{n=k_3}^{k_4-1}\frac{1}{2^n}(\mathrm{Area} (B_2)+\mathrm{Area} (B_3))+\dots\leq 2\sum_{d\geq 2}\frac{1}{2^{k_d}}\sum_{j=2}^d\mathrm{Area} (B_j).$$
Notice that the area of the  domains $B_d$   depends only on $a_d$ and $a_{d-1}$, and that having fixed $a_0, \dots, a_d$ we can choose the next coefficient $a_{d+1}$ such that the integer $k_d$ is arbitrarily large, ensuring convergence of this series.

\end{proof}
Set $$
 \Lambda  := \bigcup_{d\ge 2} \overline{B_d}.
 $$ The next result shows that    
 most inverse branches intersecting $\Lambda$
are exponentially small.
\begin{prop}\label{prop: exponentially shrinking}
Assume that  $|a_n|$ decrease sufficiently fast.  Let $\delta>0$, let $\epsilon>0$, and let  $k\in \N$ given by  Proposition \ref{lemma:inverse branches} for $\epsilon$. If  $w \not\in S(f^k)$,
then
 there exist $r>0$ and subsets
 $\mathscr{S}_n\subset f^{-n}(w)$ such that for all $n\geq 1$:
    \begin{enumerate}
        \item the image $f(\mathscr{S}_{n+1})$ is a subset of $\mathscr{S}_n$;
        \item every $z\in  \mathscr{S}_n$ is contained in a  regular preimage $U_n(z)$ of $ D(w,r)$ under $f^n$;
        \item $Q^n_w(\mathscr{S}_n)\geq 1-\epsilon$;
        \item for each $z\in \mathscr{S}_n$, if $U_n(z)\cap\Lambda\neq\emptyset$, then the Euclidean  diameter of $U_n(z)$ is at most $\delta/\sqrt{2}^n$.
    \end{enumerate}
\end{prop}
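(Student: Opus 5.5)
We start from Proposition~\ref{lemma:inverse branches}. Given $\epsilon$ and the corresponding $k$, and $w\notin S(f^k)$, we choose $r>0$ so that $D(w,r)\cap S(f^k)=\emptyset$ and so that Proposition~\ref{lemma: uniform diameter bound} applies to $D(w,r)$; in particular every component of every $f^{-n}(D(w,r))$ has diameter at most $1/2$. We then let $\mathscr{R}_n\subset f^{-n}(w)$ be the set of points lying in regular preimages of $D(w,r)$ under $f^n$. The preimage under $f$ of a regular preimage that contains no critical value consists of regular preimages, while a regular preimage of $D(w,r)$ under $f^{n+1}$ is mapped by $f$ onto a regular preimage under $f^n$. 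Hence $f(\mathscr{R}_{n+1})\subset\mathscr{R}_n$, and $Q^n_w(\mathscr{R}_n)\ge 1-\epsilon/2$ for every $n$. This already gives items (1)--(3) with $\epsilon/2$ in place of $\epsilon$. The remaining half of $\epsilon$ is what we spend on item (4).

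For item (4) we use the expansion estimate of Lemma~\ref{lem:expansive almost everywhere}. Take $z\in\mathscr{R}_n$ with $U=U_n(z)$, and consider the orbit $U, f(U),\dots,f^{n}(U)=D(w,r)$. Inverse branches along this orbit contract by a factor at most $1/2$ at every step where the current piece avoids $\bigcup_i\D(x_i,1/6)$. If the orbit avoids these critical disks for at least $n/2$ of the steps, Koebe distortion on a slightly smaller disk (say $D(w,r/2)$, which is why we shrink $r$) converts this derivative bound into $\diam U\lesssim r\,2^{-n/2}$. Taking $r$ small then gives $\diam U\le\delta/\sqrt2^{\,n}$. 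So the points we discard are those $z$ with $U_n(z)\cap\Lambda\ne\emptyset$ whose backward orbit spends many steps near critical points. A near-critical visit in degree $d$ is followed by at least $k_d-4$ steps in $R_d$ before the next one (as in the proof of Proposition~\ref{lemma: uniform diameter bound}), and those steps are expanding. Consequently the near-critical steps are rare, and they can only be many when the orbit has recently visited high-degree regions. The weight of such orbits is controlled by $\alpha_d$-type quantities, as in the proof of tightness.

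The main obstacle is making the discarded weight summable in $n$ while also keeping $f(\mathscr{S}_{n+1})\subset\mathscr{S}_n$. For this I would define $\mathscr{S}_n$ hereditarily: $z\in\mathscr{S}_n$ if $z\in\mathscr{R}_n$ and, for every $m\le n$, the point $f^{n-m}(z)$ is not \emph{bad at level $m$}. Here bad at level $m$ means that $U_m(f^{n-m}(z))$ meets $\Lambda$ and $\diam U_m(f^{n-m}(z))>\delta/\sqrt2^{\,m}$. With this definition item (1) holds automatically. The excluded weight is then at most $\sum_m Q^m_w(\{\text{bad at level } m\})$, and this is where \eqref{eq: area estimate} enters. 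A bad regular preimage at level $m$ that meets $\Lambda$ must meet some $\overline{B_d}$. Since the weights are locally constant off $\Lambda$ and at most $1/2$ per step, the total weight of level-$m$ regular preimages meeting $B_d$ should be bounded by roughly $2^{-m}\,\mathrm{Area}(B_d)$ times a constant, using the Koebe area comparison with the fixed disk. Moreover $B_d$ is only reachable after at least $k_d$ backward steps. Summing over $m$ and over $d$ with $k_d\le m$ gives $\sum_m 2^{-m}\sum_{d:\,k_d\le m}\mathrm{Area}(B_d)<\infty$ by \eqref{eq: area estimate}. Choosing $r$ smaller, and starting the sum late via Proposition~\ref{lemma:inverse branches}, makes this tail less than $\epsilon/2$, which yields item (3). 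The weight-versus-area bound in this last step is the delicate point: I would prove it by pulling back the Euclidean area of $D(w,r)$ through univalent branches and comparing it with the weights via \eqref{integralweight}.
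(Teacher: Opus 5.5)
Your skeleton matches the paper's. You restrict to regular preimages (total weight at least $1-\epsilon/2$ by Proposition~\ref{lemma:inverse branches}) and define $\mathscr{S}_n$ hereditarily, so (1) is automatic. You then bound the discarded weight level by level and feed \eqref{eq: area estimate} through Koebe. The paper calls a regular preimage of a fixed disc $D(w,s)$ bad when it meets $\Lambda$ and has area at least $C/2^m$. Only at the end does it turn the area bound into the diameter bound for the pullback of $D(w,r)$, $r\ll s$, via Koebe's $1/4$ and distortion theorems. Your diameter threshold is equivalent, with small $r$ playing the role of large $C$. The expansion discussion based on Lemma~\ref{lem:expansive almost everywhere} is not needed.

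The gap is the step you yourself call delicate, and the route you propose for it fails. Area does not control weight component by component: a component of tiny area can carry as much weight as any other at the same level. So no bound on the total weight of all level-$m$ preimages meeting $B_d$ can come from area. Moreover, \eqref{integralweight} is an identity for an $A^*$-invariant measure, not for Lebesgue measure.

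The missing idea is to control only the large components, by counting them:
\begin{itemize}
\item By Proposition~\ref{lemma: uniform diameter bound}, every component meeting $B_d$ lies in the $1/2$-neighbourhood $\mathcal{N}(B_d)$, whose area is less than $2\,\mathrm{Area}(B_d)$.
\item Distinct components of $f^{-m}(D(w,s))$ are disjoint, so at most $2^{m+1}\mathrm{Area}(B_d)/C$ of them have area at least $C/2^m$.
\item Each component carries weight at most $\theta^m$, where $\theta$ bounds the one-step weights.
\item $B_d$ is reached only when $k_d\le m$.
\end{itemize}
Write $\mathscr{B}_m$ for the points of $f^{-m}(w)$ lying in large components that meet $\Lambda$. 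Combining the four facts gives $\sum_m Q^m_w(\mathscr{B}_m)\le \frac{2}{C}\sum_m (2\theta)^m\sum_{d:\,k_d\le m}\mathrm{Area}(B_d)$. So you need $\theta<1/2$. With the bound $1/2$ that you quote, every level $m\ge k_d$ contributes up to $\frac{2}{C}\mathrm{Area}(B_d)$, and the bound diverges.

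The paper takes $\theta=1/4$, asserting $Q\le 1/4$. However, Definition~\ref{def:weights summary} assigns weight $1/2$ to preimages of points of $R_2$, so that assertion is doubtful. The safe fix is to keep $\theta=1/2$ and prove (4) with rate $\delta\lambda^{-n}$ for some $1<\lambda<\sqrt2$. This uses area threshold $C\lambda^{-2m}$, and \eqref{eq: area estimate} with $(\lambda^2/2)^n$ in place of $2^{-n}$, which is arranged the same way by taking the $k_d$ large. Summability of $\sum_j\delta\lambda^{-j}$ is all that Theorem~\ref{almostperiodictheorem} uses.
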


\begin{proof} Let $s>0$ be such that   $D(w,s)\cap S(f^k)=\emptyset$.
By Proposition ~\ref{lemma:inverse branches}, for all $n\geq 1$ the total weight of the regular preimages of $D(w,s)$ under $f^n$ is at least $1 - \epsilon/2$. We will consider only these connected components of $f^{-n}(D(w,s))$ and disregard the other connected components.

   By Proposition~\ref{lemma: uniform diameter bound}, for each $d$ we may assume that the regular preimages intersecting $B_d$ must be contained in the neighborhood of $B_d$ of radius $1/2$, which we denote by $\mathcal{N}(B_d)$. We have  that   $
    \mathrm{Area} (\mathcal{N}(B_d)) < 2\mathrm{Area} (B_d).
    $
    It follows from equation~\eqref{eq: area estimate} that
    \begin{equation}\label{finiteseries}
    \sum_{n \geq 1} \frac{1}{2^n} \sum_{d \, : \, k_d \le n} \mathrm{Area} (\mathcal{N} (B_d)) < \infty.
    \end{equation}
    Let $C>0$, $n\in \mathbb N$.  Clearly  the  number of connected components of $f^{-n}(D(w,s))$  intersecting $B_1\cup \dots\cup B_d$, and whose  area is greater or equal than $C/2^n$,     is bounded by 
    $$
    \frac{2^n}{C} \cdot \sum_{j=1,\ldots,d} \mathrm{Area} (\mathcal{N}(B_j)).
    $$
       For all $n\geq 1$ let $\mathscr{B}_n$ denote the subset of $f^{-n}(w)$ of points which are contained in a regular preimage $U_n(z)$, such that $U_n(z)\cap \Lambda\neq \varnothing$ and  $\mathrm{Area} (U_n(z))\geq C/2^n$.
       We claim that if  $C=C(\epsilon)$ is sufficiently large, then $ \sum_{n\geq 1}Q_w^n(\mathscr{B}_n)\leq \epsilon/2$.
      Assume first that $w\in C_0$. Then for all $d\geq 3$ we have that $f^{-n}(D(w,s))$ can intersect $B_d$ only if $n\geq k_d$.
  Moreover  every   regular preimage of $D(w,s)$ under $f^n$ has weight at most $1/4^n$, and thus 
      \begin{equation}\label{estimate neighbor}
  \sum_{n\geq 1}Q_w^n(\mathscr{B}_n)\leq   \frac{1}{C}\sum_{n \geq 1} \frac{1}{2^n } \sum_{d \, : \, k_d \le n} \mathrm{Area} (\mathcal{N} (B_d))\leq \epsilon/2,
    \end{equation}
  where the last inequality follows using  \eqref{finiteseries} and choosing $C=C(\epsilon)$ sufficiently large.
If $w\not\in C_0$ the proof of the claim is similar.
    
   Define $\mathscr{S}_n$ as the set of all $z\in f^{-n}(w)$ contained in a regular preimage $U_n(z)$ such that, for all $0\leq j\leq n-1$, if $U_{n-j}(f^{j}(z))\cap \Lambda\neq \varnothing$, then $\mathrm{Area}(U_{n-j}(f^{j}(z)))<\frac{C}{2^{n-j}}.$
  Properties   (1) and (2) are immediate from the definition of $\mathscr{S}_n$. Property (3) follows from \eqref{estimate neighbor}.
  
   It remains to prove (4).
   Let $z\in \mathscr{S}_n$ be such that $U_n(z)\cap\Lambda\neq \varnothing$, and let $\varphi\colon D(w,s)\to U_n(z)$ be the corresponding inverse branch of $f^n$.  By Koebe's $1/4$-Theorem we have $$|\varphi'(w)|\leq \frac{4}{\sqrt{\pi}s}\sqrt{\mathrm{Area}(U_n(z))}\leq  \frac{4}{\sqrt{\pi}s}\frac{\sqrt{C}}{\sqrt{2}^n}. $$
   Let $0<r<s$. By Koebe's distortion Theorem,
   $${\rm diam}\,\varphi(D(w,r))\leq 2\frac{sr}{s^2-r^2}|\varphi'(w)|s\leq \delta/\sqrt{2}^n $$ for $r$ sufficiently small.

\end{proof}

\begin{proof}[Proof of Theorem \ref{almostperiodictheorem}]
Let $w \in \mathbb C$ and let $\epsilon>0$. We want to find $r>0$   such that 
$$
|A^n(\varphi)(w^\prime) - A^n(\varphi)(w)| < \epsilon \text{\ \ \ for $w'\in D(w,r)$,\ $n\geq 1$}.
$$
 Let $M>0$ be such that $|\varphi(z)|\leq  M$ for all $z\in \C$.

By Proposition \ref{lemma: uniform diameter bound} we can choose  $r>0$ small enough  such that for every $n \geq 1$ the diameter of every connected component of $f^{-n}(D(w,r))$ is less than $1/2$.
Let $k$ be given by Proposition \ref{lemma:inverse branches} for  $\epsilon'= \epsilon/(16M)$, and let  $\delta>0$ be such  that    $\sum_{j=0}^\infty\delta/\sqrt{2}^j\leq \epsilon/16M$. 
Let us first assume that $w$ does not lie in the critical set of $f^k$.
By Proposition  \ref{prop: exponentially shrinking}, up to taking a smaller $r$, we have that for every $n\geq 1$ the total weight of the regular preimages of $D(w,r)$ under $f^n$ which contain points of $\mathscr{S}_n$     is at least $1-\epsilon/(16M)$, and such components have diameter at most $\delta/\sqrt{2}^n$ whenever they intersect $\Lambda$. 
We will refer to such connected components of $f^{-n}(D(w,r))$ as \emph{good components}, while the other  connected components  will be called   \emph{bad components}.

  Let $w^\prime \in D(w,r)$.
By combining (4) in Proposition~\ref{prop: exponentially shrinking}   
with the Lipschitz estimate  \eqref{eq: Lipshitz weights}, it follows by induction that, for all $n\geq 0$,
\begin{equation}\label{inductiveinequality}
\sum_i |Q^n(w,z_i(w)) - Q^n(w^\prime,z_i(w^\prime) )| \leq  \sum_{j=0}^n \delta/\sqrt{2}^j.
\end{equation}
Let $K$ be the compact given by Proposition \ref{prop:tightness} for the constant $\epsilon/8M$.  Let $K'$ be the 1/2-neighborhood of the compact $K$.
Let $s>0$ be small enough such that if $x,y\in K'$ satisfy $\|x-y\|\leq s$, then $|\varphi(x)-\varphi(y)|\leq \epsilon/4$.

Let $(U_n(z_i))_{i\in I}$ be the good components of $f^{-n}(D(w,r))$, and for all $i\in I$ let $z_i'$ be the unique point in $U_n(z_i)\cap f^{-n}(w')$.
Every good component $U_n(z_i)$ is a holomorphic disc contained in a euclidean disc of radius $1/2$. By the monotonicity property of the Poincar\'e distance, we can assume that for all $i\in I$, $\|z_i-z_i'\|\leq s$  up to taking a smaller $r$  if necessary.

 We have
\begin{align*}
\left|\sum_{i\in I}Q^n(w,z_i)\varphi(z_i)-Q^n(w',z_i')\varphi(z_i')\right|&\leq\sum_{i\in I}|\varphi(z_i)||Q^n(w,z_i)-Q^n(w',z_i')|+ \sum_{i\in I}|\varphi(z_i)-\varphi(z_i')|Q^n(w,z_i)\\
&\leq \epsilon/4+\sum_{i\in I}|\varphi(z_i)-\varphi(z_i')|Q^n(w,z_i),
\end{align*}
where we used \eqref{inductiveinequality}.
Let $J\subset I$ be the set of indexes $i$ such that $z_i\in K$.
Then 
$$\sum_{i\in I}|\varphi(z_i)-\varphi(z_i')|Q^n(w,z_i)=\sum_{i\in J}|\varphi(z_i)-\varphi(z_i')|Q^n(w,z_i)+\sum_{i\in I\setminus J}|\varphi(z_i)-\varphi(z_i')|Q^n(w,z_i)\leq \epsilon/4+\epsilon/4=\epsilon/2.$$
Let $\mathscr{B}$ be the union of all bad components of $f^{-n}(D(w,r))$. We are left with estimating 
\begin{equation}\label{lastterm}\left|\sum_{z\in f^{-n}(w)\cap \mathscr{B}} Q^n(w,z)\varphi(z)-\sum_{z'\in f^{-n}(w')\cap \mathscr{B}} Q^n(w',z')\varphi(z')\right|.
\end{equation}
Since the total weight of the good components is at least $1-\epsilon/16M$ we have that $\sum_{z\in f^{-n}(w)\cap \mathscr{B}} Q^n(w,z)\leq \epsilon/16M$, and thanks to \eqref{inductiveinequality} we have that
$\sum_{z'\in f^{-n}(w')\cap \mathscr{B}} Q^n(w',z')\leq \epsilon/8M$. Hence \eqref{lastterm} is smaller that $\epsilon/4$, which proves equicontinuity.

Assume now that  $w$ is  in the critical set of $f^k$. One can reduce to the above case by a standard argument similar to the argument used for rational functions by Lyubich in \cite{Lyubich83}. By choosing $n_0$ sufficiently large we can guarantee, using the properties of the entire functions that we consider just as in the proof of Proposition \ref{lemma:inverse branches}, that the preimages $w'\in f^{-n_0}(w)$ that do not lie in the critical set of $f^k$ have weight arbitrarily close to $1$. For each such $w'$ we can therefore choose the radius of the disk such that the above argument holds. The connected components of $f^{-n_0}(D(w,r))$ that contain the points $w'$ will lie inside those disks if $r$ is sufficiently small. Thus the statement holds for arbitrary $w$.

\end{proof}


\subsection{Mass comparison with  Baker--Bishop maps}

Let $f$ be an elementary strongly polynomial-like map, and let $g$ be an  associated Baker--Bishop map. We now compare the $f$-invariant measure $\mu$ with the 
 $g$-invariant measure $\mu_g$ obtained without using Proposition \ref{prop:extension of the support}   to modify the support. (See Subsection \ref{subsection: full measure} below for the analogous result with modified supports). We denote by $C_0(g)$ and $K_d(g)$ the sets   $C_0$ and $K_d$ relative to    the map $g$.

\begin{prop}\label{lemma: measure of C_0}
\,
\[
\mu(\overline C_0) \ge \mu_g(\overline {C_0(g)})=\frac{1}{2}.
\]
\end{prop}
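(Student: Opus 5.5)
The equality $\mu_g(\overline{C_0(g)})=\frac12$ is immediate. For the unmodified model process the push-forward $\mu_g=\varphi_*\mathbb{P}^\nu$ gives $C_0(g)=\mathrm{Cell}_0(1)\cup\mathrm{Cell}_0(2)$ mass $\nu(S_0)=\nu(0,1)+\nu(0,2)=\frac12$ by Proposition~\ref{prop:existence}. The boundary of $C_0(g)$ lies outside $\Omega$, hence outside $J(g)$, so it carries no mass.

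For the inequality I will use $\mu=\lim_n Q^n_w$, from Theorem~\ref{thm: non-uniform Lyubich} together with tightness, normality and irreducibility. Since $\overline C_0$ is closed, the Portmanteau theorem gives $\mu(\overline C_0)\ge\limsup_n Q^n_w(\overline C_0)$. So it suffices to show that $\liminf_n Q^n_{w}(\overline C_0)\ge \frac12$ for a suitable base point $w$, say $w\in C_0$. Alternatively I can avoid weak limits altogether. Since $\mu$ is $A^*$-invariant, \eqref{eq:measure of preimages integral2} gives $\mu(\overline C_0)=\int Q^n_w(\overline C_0)\,d\mu(w)$ for every $n$, so a uniform lower bound in $w$ of the form $\limsup_n Q_w^n(\overline C_0)\ge\frac12$ would give the result directly.

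The core is a renewal comparison with the time-reversed chain $(Q_{ts})$ of Lemma~\ref{computationtrans}. Follow a backward orbit of positive weight from $w$. While it is in $C_0$, it stays in $C_0$ with weight $2(\frac12-\frac{c_2}{k_2})$ or jumps into some $A_i$ with weight $\alpha_i$ (the interpolated weights are at most these values). After such a jump it travels backward through $R_{i-1},A_{i-1},\dots$ down to $C_0$. By Proposition~\ref{prop:k_d} and Remark~\ref{remarkconnect}, this excursion lasts at most $1+\sum_{j=2}^{i-1}k_j$ steps, which is exactly the excursion length in the Markov chain. Let $\tau_i$ and $\tau^g_i$ denote the excursion lengths for $f$ and for the chain. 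Write the mass in $\overline C_0$ at time $n$ as a renewal sequence $u_n=P(\text{backward walk is in }C_0\text{ at time }n)$. The excursion law of $f$ has the same entrance weights $\alpha_i$ but excursion lengths $\tau_i\le\tau_i^g$. By the renewal theorem the density of time spent in $C_0$ equals $1/(1+\sum_i\alpha_i\tau_i)$. This is at least $1/(1+\sum_i\alpha_i\tau_i^g)=\frac12$, using \eqref{importantexpansion}, which says $\sum_i\alpha_i\tau^g_i=1$.

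The main obstacle is making this rigorous. The weights of $f$ are only interpolated: near $\partial C_0$ the mass $1-tc_2/k_2$ versus $\frac12-\frac{c_2}{k_2}$ gives more mass to staying, which only helps, but this still has to be checked. The lengths $\tau_i$ also depend on the point rather than being deterministic. Finally, the renewal theorem gives Ces\`aro convergence, while aperiodicity is needed for pointwise convergence. I would handle all three with a monotone coupling: the excursion lengths of $f$ are dominated by $\tau_i^g$ and the return probabilities dominate those of the chain. This gives $\frac1N\sum_{n<N}Q^n_w(\overline C_0)\ge\frac12-o(1)$. Since $Q^n_w\to\mu$ weakly, the Ces\`aro averages converge to $\mu$ as well, and Portmanteau on the closed set $\overline C_0$ finishes the proof.
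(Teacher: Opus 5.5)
Your argument is correct in outline, but it follows a different route from the paper's.

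\textbf{The paper's route.} The paper never estimates occupation frequencies of the backward walk; it argues by contradiction on the stationary measure $\mu$ itself. It lets $H_d^{(0)}$ be the component of $f^{-1}(\overline C_0)$ in $A_d$, and $H_d^{(j)}$ its successive positive-weight preimages outside $C_0$. Stationarity in the form \eqref{integralweight} gives $\mu(H_d^{(j)})\le\mu(H_d^{(0)})\le\alpha_d\,\mu(\overline C_0)$. Proposition~\ref{prop:k_d} shows that at most $1+k_2+\dots+k_{d-1}$ of these layers are nonempty. A tail estimate for $Q^n_w$, $w\in C_0$, based on \eqref{importantexpansion}, shows that $\overline C_0$ together with all the layers has full $\mu$-measure; this is \eqref{containsallmass}. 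Summing gives the Kac-type inequality
$1\le\mu(\overline C_0)\bigl(1+\sum_d\alpha_d(1+k_2+\dots+k_{d-1})\bigr)=2\mu(\overline C_0)$.

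\textbf{What each route buys.}
\begin{itemize}
\item The paper's version needs only the upper bound $\alpha_d$ on the weight of a jump into $A_d$ and the bound on the number of layers. The larger staying weights on $B_2$, the point-dependence of excursion lengths, and any Ces\`aro/aperiodicity issue are absorbed for free, and no coupling or law of large numbers is needed.
\item Your version avoids the full-mass decomposition \eqref{containsallmass}, whose closedness step is exactly what the paper has to revisit for the modified weights.
\item In exchange, the coupling is the real content of your proof and must actually be carried out. The renewal formula you quote applies to i.i.d.\ cycles, which the walk for $f$ does not have. You need to show that, conditionally on the past, each cycle length of the walk started in $C_0$ is stochastically dominated by the return time of the reversed chain to $S_0$, which has mean $2$ by \eqref{importantexpansion}. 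Then dominate by an i.i.d.\ integrable sequence and apply the strong law; this works even though the $\tau_i^g$ are unbounded.
\end{itemize}

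\textbf{Two small points.}
\begin{itemize}
\item The model weights vanish on some branches, so the convergence $Q^n_w\to\mu$ you need is the version of Theorem~\ref{thm: non-uniform Lyubich} for weights with zeros. That version is stated for $w\in J_Q(f)$, so take $w\in C_0\cap J_Q(f)$, for instance the repelling fixed point $p$.
\item In your alternative via $\mu(\overline C_0)=\int Q^n_w(\overline C_0)\,d\mu(w)$, a pointwise $\limsup$ bound does not suffice. Fatou requires a $\liminf$ of Ces\`aro averages for $\mu$-a.e.\ $w$.
\end{itemize}
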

\begin{proof}
Let us suppose for the purpose of a contradiction  that
\begin{equation}\label{contr1}
\mu(\overline{C_0}) < \mu_g(\overline{C_0(g)}).
\end{equation}
Let $d\geq 3$, and let $H_d^{(0)}$ be the component of $f^{-1}(\overline{C_0})$ contained in the special annulus $A_d$.  Let $\varphi$ be the corresponding branch of $f^{-1}$.
Then $$\mu(H_d^{(0)})=\int_{\overline{C_0}}Q(w,\varphi(w))d\mu\leq \alpha_d \mu(\overline{C_0}).$$
For  all $j\leq 1$ define inductively 
$$H_d^{(j)}:=\{z\in f^{-1}(H_d^{(j-1)}):Q(f(z),z)>0\}\setminus C_0.$$
 By Proposition  \ref{prop:k_d} we have  $H_d^{(k)}=\varnothing$ for all $k\geq 1+k_2+\dots+k_{d-1}$.
Notice that for all $j\geq 1$ we have $\mu(H_d^{(j)})\leq \mu(H_d^{(0)})\leq \alpha_d\mu(\overline C_0).$ 
We claim that   
\begin{equation}\label{containsallmass}
\mu(\overline{C_0}\cup\bigcup_{d\geq 3}\bigcup_{k\geq 0}H_d^{(k)})=1.
\end{equation}
Let $w\in C_0$, and denote $Q^n_w:=(A^*)^n\delta_w$. Then for all $n$ and for all $j\geq 3$,
$$
Q_w^n(\bigcup_{ d\geq j}\bigcup_{k\geq 0}H_d^{(k)}))\leq \alpha_j(1+k_2+\dots+k_{j-1})+\alpha_{j+1}(1+k_2+\dots+k_j)+\dots,
$$
which tends to 0 as $j\to+\infty$ due to \eqref{importantexpansion}.
Hence for all $\varepsilon>0$ there exists $d_0$ such that for all $n$, $$Q_w^n( \overline {C_0}\cup\bigcup_{ 3\leq d\leq d_0}\bigcup_{k\geq 0}H_d^{(k)})\geq 1-\varepsilon.$$ Since 
the set $ \overline {C_0}\cup\bigcup_{ 3\leq d\leq d_0}\bigcup_{k\geq 0}H_d^{(k)}$ is closed, its $\mu$-measure equals at least  $1-\varepsilon$, which proves the claim.

But 
\begin{align*}
\mu(\bigcup_{d\geq 3}\bigcup_{k\geq 0} H_d^{(k)})\leq \sum_{d\geq 3}\alpha_d\mu(\overline{C_0})(1+k_2+\dots+k_{d-1})\leq \sum_{d\geq 3}\alpha_d\mu_g(\overline{C_0})(1+k_2+\dots+k_{d-1})=\frac{1}{2},
\end{align*}
where the last equality follows from  \eqref{importantexpansion}.
Together with \eqref{contr1}, this contradicts \eqref{containsallmass}.
\end{proof}


\subsection{Infinite entropy of the measure $\mu$}\label{sec:infinite entropy SPL}

\begin{remark}
Let $\nu$ be a probability measure on $\C$ satisfying $A^*\nu=\nu$, that is, stationary with respect to the Markov process $(Q_w)_{w\in \C}$.
Then the metric entropy of the measure $\nu$ with respect to the map $f$ is equal to the entropy rate of the Markov process  $(Q_w) $, that is
\begin{equation}\label{continuousentropy}
h_\nu(f)=-\int_\C\left(\sum_{z\in f^{-1}(w)} Q(z)\log (Q(z))  \right) d\nu(w).
\end{equation}
\end{remark}

\begin{prop}\label{prop:infinite entropy strongly poly-like}
We have that $h_{\mu}(f)=+\infty$.
\end{prop}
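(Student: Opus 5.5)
We plan to compute the entropy through formula \eqref{continuousentropy}, which is available because $\mu$ is $A^*$-invariant by Theorem \ref{thm: non-uniform Lyubich}. The integrand $-\sum_{z\in f^{-1}(w)}Q(z)\log Q(z)$ is nonnegative. So it suffices to bound it from below on suitable regions and show that these regions carry enough $\mu$-mass. By Definition \ref{def:weights summary}, for $w\in R_d\setminus \overline{B_d}$ with $d\geq 3$, exactly $d$ preimages have positive weight, each of weight $1/d$. The integrand there therefore equals $\log d$, and
$$h_\mu(f)\geq \sum_{d\geq 3}\mu(R_d\setminus \overline{B_d})\log d.$$
The whole problem thus reduces to a lower bound on $\mu(R_d\setminus\overline{B_d})$ that is comparable to the mass $c_d$ which the model Markov process gives to $S_d$. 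Recall that $\sum c_d\log d=\infty$.

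To obtain the lower bound we mimic Proposition \ref{lemma: measure of C_0}. By Proposition \ref{lemma: measure of C_0} we have $\mu(\overline{C_0})\geq 1/2$. Fix $d$ and $e>d$, and consider the component $H_e^{(0)}\subset A_e$ of $f^{-1}(\overline{C_0})$, with the corresponding inverse branch $\varphi$. Equation \eqref{integralweight} then gives the exact identity $\mu(H_e^{(0)})=\int_{\overline C_0}Q(w,\varphi(w))\,d\mu$. Away from $B_2$ this weight is exactly $\alpha_e$, rather than only bounded above by it. Pulling back further along positive-weight branches gives the sets $H_e^{(j)}$. At each step the total mass is preserved, because the weights of the positive-weight preimages of a point sum to $1$: $\mu(H_e^{(j)})=\mu(H_e^{(j-1)})$ as long as all these preimages stay outside $C_0$. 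Proposition \ref{prop:k_d} and Remark \ref{remarkconnect} say that a backward orbit of positive weight starting in $A_e$ spends at least about $k_d-2$ steps in $R_d$ before reaching $C_0$. Hence roughly $k_d$ of the sets $H_e^{(j)}$ lie in $R_d$. These sets are pairwise disjoint, since they consist of points with different first hitting times of $C_0$. Summing over $e>d$ we get
$$\mu(R_d)\gtrsim \mu(\overline{C_0}\setminus B_2)\,(k_d-2)\sum_{e>d}\alpha_e ,$$
which is comparable to $\tfrac12\nu(\widetilde S_d)/\nu(S_0)\approx c'_d\sim c_d$ by the computation preceding \eqref{importantexpansion}. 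Finally, the sets $B_d$ occupy at most a bounded number of the roughly $k_d$ time steps. This is because an orbit crosses $B_d$ only at the entry into $R_d$. Removing $B_d$ therefore costs a factor $(k_d-C)/k_d\to 1$.

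The main obstacle is the interpolation region $B_2$ together with the requirement that the mass of $\overline{C_0}$ sitting inside $B_2$ is not too large. On $B_2$ the weights $t\alpha_e$ are smaller than $\alpha_e$. I would first try to bound $\mu(B_2)$ using \eqref{integralweight}: the preimages of $B_2$ in $C_0$ and in the $A_e$ are thin, so their mass can be made small, or at least bounded below $1/2$ by a fixed constant. Any uniform positive lower bound $\mu(\overline{C_0}\setminus B_2)\geq c>0$ suffices, since then $\mu(R_d\setminus\overline{B_d})\geq c'\,c_d$ for all large $d$. Divergence of $\sum c_d\log d$ then gives $h_\mu(f)=\infty$.
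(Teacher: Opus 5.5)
Your proposal is correct and is essentially the paper's argument. The paper also bounds the integrand in \eqref{continuousentropy} below by $\log d$ on the degree-$d$ region, and it also obtains the mass there by pulling back from $C_0$ through a special annulus using the almost uniform escape time. The difference is bookkeeping: the paper pulls back the doubly connected domain $C_d(0)=D_{d+1}\setminus E$ along the degree-$d$ branch, obtaining at least $k_d-3$ disjoint annuli $C_d(i)$ of equal mass at least $\mu(C_0)\alpha_{d+1}\geq \mu(C_0)c'_d/k_d$ (so only $e=d+1$ is used), whereas you track the sets $H_e^{(j)}$ for all $e>d$.

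The $B_2$ issue you flag only requires $\mu(\overline{C_0}\setminus B_2)>0$; the paper simply works with $m_0=\mu(C_0)$ at this point. This positivity follows from stationarity rather than from thinness. The set $\overline{C_0}\cap f^{-1}(\overline{C_0})$ lies at distance of order $\sqrt{r_1}\gg 1$ from $\partial C_0$, hence outside $B_2$. Every $w\in\overline{C_0}$ gives its two preimages in that set total weight at least $1-2c_2/k_2$, so $\mu(\overline{C_0}\setminus B_2)\geq \frac{1}{2}(1-2c_2/k_2)$.
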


\begin{proof}
Let $d\geq 2$. Recall that $D_{d+1}$ is the open disk centered at the origin whose boundary coincides with the exterior boundary of the annulus $A_{d+1}$.
We consider the case $d\geq 3$, the proof for the case $d=2$ is analogous.
For each $d \ge 3$ the intersection $f^{-1}(D_{d+1}) \cap D_{d+1}$ consists of the holomorphic disk $\Delta_{d+1} \subset A_{d+1}$, which is mapped to $D_{d+1}$ univalently, and a holomorphic disk
$$
E \subset D_{d+1} \setminus A_{d+1},
$$
which is mapped to $D_{d+1}$ with degree $d$.
 We denote by $C_d(0)$ the doubly connected domain  $D_{d+1} \setminus E$. Let us denote by $C_d(1)$ its preimage in $D_{d+1}\setminus A_{d+1}$. Then $C_d(1)$ is mapped onto $C_d(0)$ and has identical mass thanks to \eqref{integralweight}. Continuing backwards we obtain  a finite family of doubly connected domains $C_d(0), C_d(1), \ldots, C_d(j)$  with the same mass which are  pairwise disjoint, all disjoint from  $D_{d-1}$, and such that  $f^{-1}(C_d(j))\cap D_{d-1}\neq \varnothing.$
 Let $z\in f^{-1}(C_d(j))\cap D_{d-1}$. Then $k(z)\leq j+2$, and thus
 $$k_d\leq \min_{z\in \overline{D_d}}\{k(z)\}+2\leq j+4,$$ that is, there are at least $k_d-3$ doubly connected domains $C_d(i)$.

 We want to estimate from below the mass of the domains $C_d(i)$. 
  By Proposition \ref{lemma: measure of C_0} we have that $m_0:=\mu(C_0)$ is strictly positive. 
  Since $A^*(\mu)=\mu$, we have that  $\mu(C_d(0))\geq m_0\alpha_{d+1} =m_0(\frac{2c'_{d}}{k_{d}}-\frac{2c'_{d+1}}{k_{d+1}}) $. Assume  that $k_{d+1}$ is chosen large enough such that $\frac{2c'_{d+1}}{k_{d+1}}\leq \frac{c'_{d}}{k_{d}}$, and thus
 $$\mu(C_d(0))\geq m_0\frac{c'_{d}}{k_{d}}.$$
 
 Notice that every point in $\bigcup_{i=0}^{j}C_d(i)$ has  $d$ distinct preimages with weight $1/d$ whenever $C_d(i)$ does not contain critical values.  Since there is only one critical value between $A_d$ and $A_{d+1}$, there are at least 
 $k_d-4$ doubly connected domains $C_d(i)$ which do not contain critical values. 
Hence using \eqref{continuousentropy} we have
$$
h_\mu(f)\geq \sum_{d\geq 2}(k_d-4)\cdot m_0\frac{c'_d}{k_d}\cdot \log d= m_0\sum_{d\geq 2}\left (\frac{k_d-4}{k_d}\right )c'_d\log d=+\infty.
$$

%
\end{proof}

\subsection{Obtaining full support}\label{subsection: full measure}

The purpose of this subsection is to modify the weight function $Q$ so that it is nowhere zero. 
We will refer to the previously introduced weight function $Q$ and the resulting measure $\mu$ as the \emph{model system}, and the system with non-zero weights as the \emph{modified system}.

Recall that for the model system the weight $Q(w,z)$ is already positive for any $w = f(z)$, unless $z$ lies in a special annulus $A_d$ and $w$ lies in $D_d \setminus C_0$. For the modified system we increase these probabilities recursively with $d$ by a constant depending only on $d$, with each consecutive increase in $Q(w,z)$ sufficiently small such that a number of properties remain to hold. When increasing $Q(w,z)$ for a given preimage $z \in f^{-1}(w)$, we decrease the weights $Q(w,z^\prime)$ of all other preimages by a constant multiplicative factor so that for given $w$ the weights $Q(w,z)$ still sum up to $1$. If the increased weights are consecutively chosen sufficiently small, then it is clear that all weights can be guaranteed to be non-zero.

In order to guarantee that the construction of the measure $\mu$ carries through, it needs to be checked that a number of properties hold. 

\medskip

{\bf Continuity of the modified weight function.}
By choosing the increased weights sufficiently small and then interpolating the weights linearly near the boundary of the domains where the weights have been changed, one can  guarantee that Lemma \ref{lemma: smoothened weights} remains to hold.

\medskip

{\bf Tightness of the modified weight function.} In Proposition \ref{prop:tightness}, tightness of the pullback measures was obtained by comparing the measures of the degree-$d$ regions for the model system with the measures of the corresponding regions for the corresponding model Baker--Bishop maps. The same argument can be applied here. Recall that for the Baker--Bishop map, we changed the transition probabilities $P_{st}$ of the corresponding Markov process slightly such that all transition probabilities $P_{st}$ corresponding to possible pairs $(z, f(z))$ become positive, yet the Markov process remains positively recurrent, i.e. the expected time for forward orbits to  return to the base state is always finite. Any choice of such transition probabilities leads to a collection of inverse transition probabilities $Q_{ts}$, which can be used as weights for the modified system given by the strongly polynomial-like maps we consider here. It follows that the probability that an inverse orbit travels from a degree $d$ region to a given larger degree region is then at least as large for the corresponding Baker--Bishop map as it is for the strongly polynomial-like map we consider.  We emphasize that in order for this inequality to point in the correct direction, it is important that the interpolation of the weights needs to be defined on the \emph{outside} of each of the special annuli.

The tightness of the sequence of pull-back measures now follows from the tightness of the measures for the corresponding Baker--Bishop maps.

\medskip

{\bf Normality of the modified weight function.} The proof of the equicontinuity of the pull-back measures $Q^n_w$ carries through exactly as in Theorem \ref{almostperiodictheorem}.

\medskip

{\bf Mass comparison with  Baker--Bishop maps.}  We claim that the inequality $\mu_f(C_0(f)) \ge \mu_g(C_0(g))$, proved in Proposition \ref{lemma: measure of C_0} for the model system, still holds for the modified system, as long as the same modifications of the weights are used for both the map $f$ and for the associated Baker--Bishop map $g$.

The argument by contradiction presented in Proposition \ref{lemma: measure of C_0} still holds in the modified setting, with slight changes due to the fact that not every allowable inverse orbit (which is now every inverse orbit) must return to the set $C_0$.  The essential step is to prove that equation \eqref{containsallmass}, that is 
$$
\mu(\overline{C_0} \cup\bigcup_{d\geq 3}\bigcup_{k\geq 0}H_d^{(k)})=1,
$$
still holds, despite the fact that for each $d\ge 3$ the sets $\bigcup_{k\geq 0}H_d^{(k)}$ are not closed, and therefore neither is their union. The proof of the equality follows that proof of Proposition \ref{lemma: measure of C_0}, now by showing that both
$$
\mu(\bigcup_{d\geq j}\bigcup_{k\geq 0}H_d^{(k)}) \rightarrow 0
$$
and for each $d \ge 3$
$$
\mu(\bigcup_{k\geq j}H_d^{(k)}) \rightarrow 0
$$
as $j \rightarrow \infty$. The latter convergence holds trivially for the model system since the sets $H_d^{(k)}$ are empty for $k$ sufficiently large, and remains to hold for the modified system when the changes in weights are sufficiently small. 

{\bf Infinite entropy.} By changing the weights sufficiently little, it can be proved that the measure of each special annulus $A_d$ changes arbitrarily little. In particular we can guarantee that the measure of each $A_d$ for the modified system is greater than half the measure of $A_d$ for the model system, uniformly over all $d$. This provides the same estimate for each degree $d$ region, which guarantees that the entropy estimate that is obtained is at least half the estimate obtained for the model system, and thus still infinite.

%

 This completes the proof of Theorem \ref{Theorem: Main1} for the class of elementary strongly polynomial-like transcendental functions.

\bibliographystyle{amsalpha}


\begin{thebibliography}{ABFP21}

\bibitem[AKM65]{AKM65}
R. L. Adler, A. G. Konheim and M. H. McAndrew, \emph{Topological entropy}, Trans. Amer. Math. Soc. {\bf 114} (1965), 309--319.

\bibitem[ABFP21]{henon3}
L. Arosio, A. M. Benini, J. E. Forn{\ae}ss, and H. Peters,
  \emph{Dynamics of transcendental {H}\'{e}non maps {III}: {I}nfinite entropy},
  J. Mod. Dyn. \textbf{17} (2021), 465--479.


\bibitem[Bak63]{Bak63}I. N. Baker,\emph{ Multiply connected domains of normality in iteration theory}, Math. Z. \textbf{81} (1963), 206--214.

\bibitem[Bak76]{Bak76}I. N. Baker, \emph{An entire function which has wandering domains}, J. Austral. Math. Soc. Ser. A, \textbf{22} (1976), 173--176.

\bibitem[BF14]{BF15}
A. M. Benini and N. Fagella, \emph{{A separation theorem for entire
  transcendental maps}}, Proceedings of the London Mathematical Society
  \textbf{110} (2014), no.~2, 291--324.

\bibitem[BFP20]{BFP1}
A. M. Benini, J. E. Forn{\ae}ss, and H. Peters, \emph{Infinite
  entropy for transcendental entire functions with an omitted value}, Acta
  Math. Vietnam. \textbf{45} (2020), no.~1, 49--52. 

\bibitem[BFP21]{BFP2}
\bysame, \emph{Entropy of transcendental entire functions}, Ergodic Theory
  Dynam. Systems \textbf{41} (2021), no.~2, 338--348. 

\bibitem[Bis18]{Bis18} C. J. Bishop,\emph{ A transcendental Julia set of dimension 1}, Invent. Math. \textbf{212} (2018), 407--460.


\bibitem[BK07]{BK07}
K. Baranski and B. Karpinska, \emph{Coding trees and boundaries of
  attracting basins for some entire maps}, Nonlinearity \textbf{20} (2007),
  391--415.

\bibitem[BLS93]{BLS93}
E. Bedford, M. Lyubich, and J. Smillie, \emph{Polynomial
  diffeomorphisms of {${\bf C}^2$}. {IV}. The measure of maximal entropy and
  laminar currents}, Invent. Math. \textbf{112} (1993), no.~1, 77--125.
  
  \bibitem[BR20]{BeniniRempe}
A. M. Benini and L. Rempe, \emph{A landing theorem for entire functions
  with bounded post-singular sets}, Geometric and Functional Analysis
  \textbf{30} (2020), no.~6, 1465--1530.
  
  \bibitem[Ber91]{BerPer}
W. Bergweiler, \emph{Periodic points of entire functions: proof of a conjecture of Baker}, Complex Var. Theory Appl. \textbf{17(1–2)}, 57--72 (1991)
  
  
  \bibitem[Bil99]{Bil99}
  P. Billingsley, \emph{Convergence of Probability Measures},
  John Wiley and Sons, Inc. (1999)
  
  \bibitem[BF64]{BF}
  D. Blackwell and D. Freedman, \emph{The tail $\sigma$-field of a Markov chain and a theorem of Orey}, Ann. Math. Stat. {\bf 35} (1964), no. 3, 1291--1295. 
  
  \bibitem[Bow71]{Bow71}
  R. Bowen, \emph{Entropy for group endomorphisms and homogeneous spaces}, Trans. Amer. Math. Soc. {\bf 153} (1971), 401--414.

\bibitem[Bro65]{Brolin}
H. Brolin, \emph{Invariant sets under iteration of rational functions}, Ark.
  Mat. \textbf{6} (1965), 103--144. 

\bibitem[CF96]{FiCh}
J.P.R. Christensen and P.~Fisher, \emph{Ergodic invariant probability measures
  and entire functions}, Acta Math. Hungar \textbf{73} (1996), no.~3, 213 --
  218.

\bibitem[DH85]{DH85}
A. Douady and J. H. Hubbard, \emph{On the dynamics of polynomial-like mappings}, Ann. Sci. \'Ec. Norm. Sup\'er. {\bf 18} (1985), no. 2, 287--343.

\bibitem[Din70]{Din70}
E. Dinaburg, \emph{A correlation between topological entropy and metric entropy} Dokl. Akad. Nauk SSSR {\bf 190} (1970),19--22.

\bibitem[EL92]{EL92}
A. {\`E}. Eremenko and M. Lyubich, \emph{Dynamical properties
  of some classes of entire functions}, Ann. Inst. Fourier (Grenoble)
  \textbf{42} (1992), no.~4, 989--1020.

\bibitem[FLMn83]{FLM}
A. Freire, A. Lopes, and R. Ma\~n\'e, \emph{An invariant measure
  for rational maps}, Bol. Soc. Brasil. Mat. \textbf{14} (1983), no.~1, 45--62.
 

\bibitem[Fos53]{Foster}
F.~G. Foster, \emph{{On the Stochastic Matrices Associated with Certain Queuing
  Processes}}, The Annals of Mathematical Statistics \textbf{24} (1953), no.~3,
  355 -- 360.

\bibitem[FS94]{FS94}
J. E. Forn{\ae}ss and N. Sibony, \emph{Complex dynamics in higher
  dimensions}, Complex potential theory ({M}ontreal, {PQ}, 1993), NATO Adv.
  Sci. Inst. Ser. C: Math. Phys. Sci., vol. 439, Kluwer Acad. Publ., Dordrecht,
  1994, Notes partially written by Estela A. Gavosto, pp.~131--186.

\bibitem[Gro03]{Gromov}
M. Gromov, \emph{On the entropy of holomorphic maps}, Enseign. Math. (2)
  \textbf{49} (2003), no.~3-4, 217--235.



\bibitem[KH95]{KatokHasselblatt}
A.~Katok and B.~Hasselblatt, \emph{Introduction to the modern theory of
  dynamical systems}, Encyclopedia of Mathematics and its Applications,
  Cambridge University Press, 1995.
  
  \bibitem[Kol58]{Kol58}
  A.N. Kolmogorov, \emph{New Metric Invariant of Transitive Dynamical Systems and Endomorphisms of Lebesgue Spaces}, Doklady of Russian Academy of Sciences {\bf 119} N5 (1958), 861--864.
  
    \bibitem[Kol59]{Kol59}
  A.N. Kolmogorov, \emph{Entropy per unit time as a metric invariant of automorphism}, Doklady of Russian Academy of Sciences {\bf 124} (1959), 754--755.

\bibitem[Lyu83]{Lyubich83}
M.  Lyubich, \emph{Entropy properties of rational endomorphisms of the
  {R}iemann sphere}, Ergodic Theory Dynam. Systems \textbf{3} (1983), no.~3,
  351--385. 

\bibitem[McM96]{McmullenRenormalization}
C. T. McMullen, \emph{Renormalization and 3-manifolds which fiber over the
  circle}, no. 142, Princeton University Press, 1996.

\bibitem[MP77]{MP1977}
M. Misiurewicz and F. Przytycki, \emph{Topological entropy and
  degree of smooth mappings}, Bull. Acad. Polon. Sci. S\'er. Sci. Math.
  Astronom. Phys. \textbf{25} (1977), no.~6, 573--574. 

\bibitem[MU10]{MaUrBook}
V. Mayer and M. Urba{\'n}ski, \emph{Thermodynamical formalism and
  multifractal analysis for meromorphic functions of finite order}, vol. 203,
  American Mathematical Society, 2010.

\bibitem[MU21]{MaUrSurvey}
\bysame, \emph{Thermodynamic formalism and geometric applications for
  transcendental meromorphic and entire functions}, Thermodynamic Formalism:
  CIRM Jean-Morlet Chair, Fall 2019, Springer, 2021, pp.~99--139.


\bibitem[Osb13]{Osborne}J. Osborne, \emph{Connectedness properties of the set where the iterates of
an entire function are bounded},
Mathematical Proceedings of the Cambridge Philosophical Society {\bf 155}  Issue 03, 2013, 391--410.

\bibitem[Pat76]{Pat76}
S. J. Patterson, \emph{The limit set of a fuchsian group}, Acta Math. (1976),
  no.~136, 241--273.

\bibitem[Pat87]{Pat87}
\bysame, \emph{Lectures on measures on limit sets of kleinian groups, in
  analytical and geometric aspects of hyperbolic space}, London Math. Soc.,
  Lecture Notes, vol. 111, Cambridge Univ. Press, 1987.

\bibitem[Prz90]{Prz1990}
F. Przytycki, \emph{On the {P}erron-{F}robenius-{R}uelle operator for
  rational maps on the {R}iemann sphere and for {H}\"older continuous
  functions}, Bol. Soc. Brasil. Mat. (N.S.) \textbf{20} (1990), no.~2, 95--125.


\bibitem[Rem16]{RempeArclike}
L. Rempe, \emph{Arc-like continua, Julia sets of entire functions, and
  {E}remenko's conjecture}, arXiv preprint arXiv:1610.06278 v4 (2016).

\bibitem[RRRS11]{RRRS}
G. Rottenfusser, J. Rückert, L. Rempe, and D. Schleicher,
  \emph{Dynamic rays of bounded-type entire functions}, Annals of Mathematics
  \textbf{173} (2011), no.~1, 77--125.

\bibitem[Sin59]{Sin59}
Y. G. Sinai, \emph{On the Notion of Entropy of a Dynamical System}, Doklady of Russian Academy of Sciences {\bf 124} (1959), 768--771.

\bibitem[Six18]{SixsmithB}
D. Sixsmith, \emph{Dynamics in the Eremenko-Lyubich class}, Conformal
  Geometry and Dynamics of the American Mathematical Society \textbf{22}
  (2018), no.~9, 185--224.

\bibitem[Smi90]{Smillie90}
J. Smillie, \emph{The entropy of polynomial diffeomorphisms of {${\bf
  C}^2$}}, Ergodic Theory Dynam. Systems \textbf{10} (1990), no.~4, 823--827.


\bibitem[Sul79]{Su79}
D. Sullivan, \emph{The density at infinity of a discrete group}, Inst.
  Hautes Etudes Sci. Pub. Math. \textbf{50} (1979), no.~1.

\bibitem[Sul82]{Su82}
\bysame, \emph{Disjoint spheres, approximation by imaginary quadratic numbers
  and the logarithmic law for geodesics}, Acta Math. \textbf{149} (1982),
  215--237.

\bibitem[Sul84]{Su84}
\bysame, \emph{Entropy, Hausdorff measures old and new, and the limit set of geometrically finite Kleinian groups}, Acta Math. \textbf{153} (1984),
  259--277.


\bibitem[Wen05a]{WendtMan}
M. Wendt, \emph{The entropy of entire transcendental functions},
  arXiv:2011.02163 [math.DS], 2005.

\bibitem[Wen05b]{Wendt}
M. Wendt, \emph{Zuf{\"a}llige {J}uliamengen und invariante {M}a{\ss}e mit
  maximaler {E}ntropie}, Ph.D. thesis, University of Kiel,
  https://macau.uni-kiel.de/receive/dissertation\_diss\_00001412 (German),
  2005.

\end{thebibliography}

\end{document}